\documentclass[12pt]{article}
\usepackage{amsmath,amsfonts,amsthm,amssymb,mathtools}
\usepackage[square]{natbib}
\usepackage[utf8]{inputenc}
\usepackage[english]{babel}
\usepackage[nottoc,notlof,notlot]{tocbibind}
\usepackage{algorithm2e}
\usepackage{pgfplots}
\usepackage{tikz}
\usepackage{subfig}
\usepackage{enumitem}
\usepackage[inner=2.6cm,outer=1.8cm]{geometry}
\allowdisplaybreaks

\newcommand{\R}{\mathbb{R}}
\newcommand{\E}{\mathbb{E}}
\newcommand{\C}{\mathbb{C}}

\newcommand{\N}{\mathbb{N}}
\newcommand{\Z}{\mathbb{Z}}
\newcommand{\scal}[2]{\left\langle #1,#2 \right\rangle}

\newcommand{\Tr}{\mbox{\rm Tr}}
\newcommand{\Id}{\mbox{\rm Id}}
\renewcommand{\Im}{\mathrm{Im}\,}
\renewcommand{\Re}{\mathrm{Re}\,}

\DeclareMathOperator{\Card}{Card}

\DeclareMathOperator{\phase}{phase}
\DeclareMathOperator{\Range}{Range}
\DeclareMathOperator{\Var}{Var}

\newcounter{Exocount}
\newcounter{Exocount_1}
\newcounter{Questcount}
\newcounter{Ssquestcount}
\setcounter{Exocount}{1}
\setcounter{Exocount_1}{0}
\setcounter{Questcount}{1}
\setcounter{Ssquestcount}{1}

\newtheorem{thm}{Theorem}[section]
\newtheorem{lem}[thm]{Lemma}
\newtheorem*{lem*}{Lemma}
\newtheorem{prop}[thm]{Proposition}
\newtheorem{cor}[thm]{Corollary}
\newtheorem{conj}[thm]{Conjecture}
\newtheorem*{conj*}{Conjecture}
\newtheorem*{thm*}{Theorem}
\newtheorem*{prop*}{Proposition}

\title{Phase retrieval with random Gaussian sensing vectors by alternating projections}
\author{Irène Waldspurger
  \thanks{MIT Institute for Data, Systems and Society; e-mail address: \texttt{waldspur@math.mit.edu}.}
  }
\date{}

\begin{document}

\maketitle

\begin{abstract}
We consider a phase retrieval problem, where we want to reconstruct a $n$-dimensional vector from its phaseless scalar products with $m$ sensing vectors. We assume the sensing vectors to be independently sampled from complex normal distributions. We propose to solve this problem with the classical non-convex method of alternating projections. We show that, when $m\geq Cn$ for $C$ large enough, alternating projections succeed with high probability, provided that they are carefully initialized. We also show that there is a regime in which the stagnation points of the alternating projections method disappear, and the initialization procedure becomes useless. However, in this regime, $m$ needs to be of the order of $n^2$. Finally, we conjecture from our numerical experiments that, in the regime $m=O(n)$, there are stagnation points, but the size of their attraction basin is small if $m/n$ is large enough, so alternating projections can succeed with probability close to $1$ even with no special initialization.
\end{abstract}

\section{Introduction}

The problem of reconstructing a low-rank matrix from linear observations appears under many forms in the fields of inverse problems and machine learning. An important amount of work has thus been devoted to the design of reconstruction algorithms coming with provable reconstruction guarantees. The first algorithms of this kind relied mostly on convexification techniques. They tended to have a high recovery rate, but a possibly prohibitive computational complexity. As a result, a need has emerged to prove similar guarantees for algorithms based on non-convex formulations, which are generally much faster.


In this article, we consider a subclass of low-rank recovery problems: \emph{phase retrieval problems}. In the finite-dimensional setting, phase retrieval consists in recovering an unknown vector $x_0\in\C^n$ from $m$ phaseless linear measurements, of the form
\begin{equation*}
b_k=|\scal{a_k}{x_0}|,\quad\quad k=1,\dots,m,
\end{equation*}
where the \textit{sensing vectors} $a_k\in\C^n$ are known. Phaseless measurements do not allow to distinguish $x_0$ from $ux_0$, for $u\in\C,|u|=1$, so the goal is only to recover $x_0$ \textit{up to a global phase}. Motivations for studying these problems come in particular from optical imaging; see \citep*{schechtman} for a recent review. Phase retrieval problems can be seen as low-rank matrix recovery problems, because knowing $|\scal{a_k}{x_0}|$ amounts to knowing
\begin{equation*}
|\scal{a_k}{x_0}|^2=\Tr(a_ka_k^*x_0x_0^*),
\end{equation*}
so reconstructing $x_0$ is equivalent to:
\begin{align}\label{eq:matricial_form}
\mbox{Reconstruct }X_0\in\mathcal{S}_n(\C)&\mbox{ from }\{\Tr(a_ka_k^*X_0)\}_{k=1,\dots,m}\\
&\mbox{ such that }\mathrm{rank}(X_0)=1\nonumber.
\end{align}


The vector $x_0$ is uniquely determined by the $m$ phaseless measurements as soon as $m\gtrsim 4n$ \citep*{balan}; however, reconstructing it is a priori NP-hard \citep*{fickus}. The oldest reconstruction algorithms \citep*{gerchberg, fienup} were iterative: they started from a random initial guess of $x_0$, and tried to iteratively refine it by various heuristics. Although these algorithms are empirically seen to succeed in a number of cases, they can also get stuck in stagnation points, whose existence is due to the non-convexity of the problem.

To overcome these convergence problems, convexification methods have been introduced \citep*{chai,candes2}. These methods consider the matricial formulation \eqref{eq:matricial_form}, but replace the non-convex rank constraint by a more favorable convex constraint. They provably reconstruct the unknown vector $x_0$ with high probability if the sensing vectors $a_k$ are ``random enough'' \citep*{candes_li,candes_li2,gross}. Numerical experiments show that they also perform well on more structured, non-random phase retrieval problems \citep*{maxcut,sun_smith}.


Unfortunately, this good precision comes at a high computational cost: optimizing the $n\times n$ matrix $X_0$ is much slower that directly reconstructing the $n$-dimensional vector $x_0$. Consequently, convexification techniques are impractical when the dimension of $x_0$ exceeds a few hundred. Authors have thus recently begun to design fast non-convex algorithms, for which it is possible to establish similar reconstruction guarantees as for convexified algorithms. The methods that have been developed rely on the following two-step scheme:
\begin{enumerate}[label=(\arabic*)]
\item an initialization step, that returns a point close to the solution;\label{item:step1}
\item a gradient descent (possibly with additional refinements) over a well-chosen non-convex cost function.\label{item:step2}
\end{enumerate}
The intuitive reason why this scheme works is that the cost function, although globally non-convex, enjoys some good geometrical property in a neighborhood of the solution (like convexity or a weak form of it \citep*{white}). So, if the point returned by the initialization step belongs to this neighborhood, gradient descent converges to the true solution.

A preliminary form of this scheme appears in \citep*{netrapalli}, with an alternating minimization in step \ref{item:step2} instead of a gradient descent. Then, considering the cost function
\begin{equation}\label{eq:cost_L1}
L_1(x)=\sum_{k=1}^m\left(b_k^2-|\scal{a_k}{x}|^2\right)^2,
\end{equation}
\citep*{candes_wirtinger} proved the correctness of the two-step scheme, with high probability, in the regime $m=O(n\log n)$, for random independent Gaussian sensing vectors. In \citep*{candes_wirtinger2,kolte}, the same result was shown in the regime $m=O(n)$ for a slightly different cost function, with additional truncation steps. In \citep*{zhang}, it was extended to the following non-smooth cost function:
\begin{equation*}
L_2(x)=\sum_{k=1}^m\left(b_k-|\scal{a_k}{x}|\right)^2.
\end{equation*}
Additionally, \citet*{sun_qu_wright} have shown that, in the regime $m=O(n\log^3 n)$, the cost function \eqref{eq:cost_L1} actually has no ``bad critical point'', and the initialization step is not necessary: the gradient descent in step \ref{item:step2} converges to the global minimum of $L_1$, almost whatever initial point it starts from. These authors have also numerically observed that, in the regime $m=O(n)$, despite the potential presence of bad critical points, the gradient descent succeeds, with at least constant probability, starting from a random initialization.



For other low-rank recovery problems than phase retrieval, we refer for example to \citep*{sun_luo,ge_lee_ma} for matrix completion, to \citep*{tu,bhojanapalli} for the case where the measurement scheme obeys a Restricted Isometry Property, and to \citep*{bandeira_low_rank} for $\Z_2$ synchronization problems.



In the case of phase retrieval, the most recently introduced non-convex algorithms are optimal in terms of both statistical and computational complexity, up to multiplicative constants. However, there is still a need to understand whether their theoretical reconstruction guarantees can be extended to more general classes of algorithms, that would not exactly follow the above two-step scheme, but would be closer to the algorithms that are actually used in applications. This in particular implies to answer the following two questions:
\begin{itemize}
\item In Step \ref{item:step2}, can we replace the explicit minimization of a cost function by a ``less local'' search, like alternating projections \citep*{gerchberg} or Douglas-Rachford \citep*{bauschke}?
\item Is the initialization step \ref{item:step1} necessary, or can Step \ref{item:step2} converge to the global optimum even starting from a random initialization, at least in certain cases?
\end{itemize}




In this article, we answer the first question: we show that, in the optimal regime of $m=O(n)$ random independent Gaussian sensing vectors, replacing gradient descent with alternating projections yields exact recovery with high probability, and convergence occurs at a linear rate.
\begin{thm*}[See Corollary \ref{cor:global_convergence}]
There exist absolute constants $C_1,C_2,M>0$, $\delta\in]0;1[$ such that, if $m\geq Mn$ and the sensing vectors are independently chosen according to complex normal distributions, the sequence of iterates $(z_t)_{t\in \N}$ produced by the alternating projections method satisfies
\begin{equation*}
\forall t\in\N^*,\quad\quad \inf_{\phi\in\R}||e^{i\phi}x_0-z_t||\leq \delta^t||x_0||,
\end{equation*}
with probability at least
\begin{equation*}
1-C_1\exp(-C_2m),
\end{equation*}
provided that alternating projections are correctly initialized, for example with the method described in \citep*{candes_wirtinger2}.
\end{thm*}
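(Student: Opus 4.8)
The plan is to deduce the statement from a local, one-step contraction estimate for the alternating projections map, combined with the initialization guarantee of \citep*{candes_wirtinger2}. Let $A\in\C^{m\times n}$ be the matrix with rows $a_k^*$, so that $b_k=|(Ax_0)_k|$; one step of alternating projections is the least-squares reprojection
\[
z\ \longmapsto\ z^+:=(A^*A)^{-1}A^*\,\tilde y,\qquad \tilde y_k:=b_k\,\phase\big(\scal{a_k}{z}\big),
\]
whose set of fixed points is exactly $\{e^{i\phi}x_0:\phi\in\R\}$. By homogeneity assume $\|x_0\|=1$. It is enough to produce absolute constants $\rho>0$, $\delta\in]0;1[$ and $M$ so that, when $m\geq Mn$, with probability at least $1-C_1\exp(-C_2m)$ one has $\inf_\phi\|e^{i\phi}x_0-z^+\|\leq\delta\inf_\phi\|e^{i\phi}x_0-z\|$ simultaneously for every $z$ with $\inf_\phi\|e^{i\phi}x_0-z\|\leq\rho$: indeed, the initialization of \citep*{candes_wirtinger2} returns (for $m\geq Mn$, with probability $1-C_1\exp(-C_2m)$, possibly after enlarging $M$) a point $z_0$ within distance $\rho$ of $\C x_0$, and the contraction estimate then keeps all iterates in that ball and yields $\inf_\phi\|e^{i\phi}x_0-z_t\|\leq\delta^t$.

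For the one-step estimate, since the map commutes with multiplication by unit scalars we may assume $z=x_0+h$ with $h$ orthogonal to $ix_0$ for the real scalar product (the closest-phase condition), so that $\inf_\phi\|e^{i\phi}x_0-z\|=\|h\|$ and it suffices to bound $\|z^+-x_0\|=\|(A^*A)^{-1}A^*(\tilde y-Ax_0)\|$. Split the indices into the ``good'' set $G=\big\{k:|\scal{a_k}{x_0}|\geq\gamma\ \text{and}\ |\scal{a_k}{h}|\leq\tfrac12|\scal{a_k}{x_0}|\big\}$ and its complement $B$, for a small absolute constant $\gamma$. On $G$ the phase of $\scal{a_k}{x_0}+\scal{a_k}{h}$ may be expanded, giving
\[
\tilde y_k-\scal{a_k}{x_0}=\tfrac12\scal{a_k}{h}-\tfrac12\,\phase(\scal{a_k}{x_0})^2\,\overline{\scal{a_k}{h}}+r_k,\qquad |r_k|\lesssim\tfrac1\gamma\,|\scal{a_k}{h}|^2 .
\]
Applying $(A^*A)^{-1}A^*$ to the two linear terms yields $\tfrac12 h-\tfrac12(A^*A)^{-1}M_1\bar h$ with $M_1=\sum_k\phase(\scal{a_k}{x_0})^2a_ka_k^T$. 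Normalizing so that $\E|\scal{a_k}{u}|^2=1$ for unit $u$, one has $\E[\tfrac1mA^*A]=\Id$, and, reducing to $x_0=\|x_0\|e_1$ by unitary invariance, a short moment computation gives $\E[\tfrac1mM_1]=x_0x_0^*$; since also $\scal{x_0}{h}\in\R$ by the constraint on $h$, one gets $(A^*A)^{-1}M_1\bar h\to x_0x_0^*h$, so the limiting linear part of $z^+-x_0$ is $\tfrac12(\Id-x_0x_0^*)h$, of norm at most $\tfrac12\|h\|$. For $m\geq Mn$, covering-number and Bernstein arguments bound the deviation of $\tfrac1mA^*A$, $\tfrac1m\sum_k a_ka_k^*$ and $\tfrac1mM_1$ from their means by $O(1/\sqrt M)$ outside an event of probability $C_1\exp(-C_2m)$, so the linear contribution to $\|z^+-x_0\|$ is at most $(\tfrac12+O(1/\sqrt M))\|h\|$.

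It remains to bound the remainder. On $G$, $\big\|(A^*A)^{-1}A^*(r_k)_{k\in G}\big\|\lesssim\tfrac1{\gamma\sqrt m}\big(\sum_k|\scal{a_k}{h}|^4\big)^{1/2}$, and $\sum_k|\scal{a_k}{h}|^4\lesssim m\|h\|^4$ holds uniformly over $\|h\|\leq1$ with probability $1-C_1\exp(-C_2m)$ (a fixed-$h$ Bernstein bound plus a net on the sphere), so this term is $\lesssim\gamma^{-1}\|h\|^2\leq(\rho/\gamma)\|h\|$. For $k\in B$ one always has $|\tilde y_k-\scal{a_k}{x_0}|\leq2|\scal{a_k}{h}|$, since $\tilde y_k$ and $\scal{a_k}{x_0}$ both lie on the circle of radius $b_k$; and $\#B\lesssim\gamma^2 m$ together with $\sup_{\|u\|=1}\sum_{k\in B}|\scal{a_k}{u}|^2\lesssim\gamma^2 m$ hold with probability $1-C_1\exp(-C_2m)$ (a Chernoff bound for $\#B$; for the sum, a Bernstein bound for $\sum_k\mathbf 1_{\{|\scal{a_k}{x_0}|\leq\gamma\}}|\scal{a_k}{u}|^2$ uniformized over a net, using that $\scal{a_k}{x_0}$ is independent of the component of $\scal{a_k}{\cdot}$ orthogonal to $x_0$, plus the rarity of indices with $\scal{a_k}{h}$ unusually large). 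Hence $\big\|(A^*A)^{-1}A^*(\tilde y_k-\scal{a_k}{x_0})_{k\in B}\big\|\lesssim\gamma^2\|h\|$, and altogether $\inf_\phi\|e^{i\phi}x_0-z^+\|\leq\big(\tfrac12+O(1/\sqrt M)+O(\gamma^2)+O(\rho/\gamma)\big)\|h\|$. Choosing $\gamma$ small, then $\rho$ small relative to $\gamma$, then $M$ large makes the bracket an absolute constant $\delta<1$, and intersecting the finitely many exponentially unlikely failure events gives the stated probability bound.

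The main obstacle is the non-smoothness of the phase projection at the hyperplanes $\scal{a_k}{\cdot}=0$: it rules out a clean global linearization, so the few ``bad'' measurements with $|\scal{a_k}{x_0}|$ small must be isolated and handled by the crude ``same-circle'' bound, yet without letting the radius $\rho$ of the attraction ball shrink with $n$. This is exactly what forces the decoupling estimates for $\#B$ and $\sup_{\|u\|=1}\sum_{k\in B}|\scal{a_k}{u}|^2$ and the interlocked choice of the constants $\gamma$, $\rho$, $M$. A secondary difficulty is that the contraction constant must be pushed strictly below $1$ \emph{uniformly} over the whole radius-$\rho$ ball, which is why both the linear part and the quadratic remainder require net-based control with failure probability $\exp(-\Omega(m))$, rather than merely pointwise-in-$h$ bounds.
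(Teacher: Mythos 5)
Your proof is correct and its outer structure (one-step local contraction theorem combined with the truncated-spectral initialization of Cand\`es--Li--Soltanolkotabi) is the same as the paper's, but the argument for the contraction step is genuinely different. The paper never Taylor-expands the phase: writing $Ax=\lambda_x Ax_0+\mu_x v^x$ in the image domain, it invokes the one-sided Lipschitz-type bound $|\phase(z_0+z)-\phase(z_0)|\leq 2\cdot 1_{|z|\geq|z_0|/6}+\tfrac65|\Im(z/z_0)|$ (Lemma~\ref{lem:diff_phase}) and then reduces the estimate to two stand-alone facts about Gaussian matrices — a ``small bad set'' estimate (Lemma~\ref{lem:first_term}) and the random-operator bound $\|\Im(v\odot\overline{\phase(Ax_0)})\|\leq\tfrac45\|v\|$ on $\Range(A)\cap\{Ax_0\}^\perp$ (Lemma~\ref{lem:second_term}). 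You instead work in signal space with $z=x_0+h$, split indices, and push the phase expansion to second order, extracting the closed-form linear map $h\mapsto\tfrac12 h-\tfrac12(A^*A)^{-1}M_1\bar h$ whose expectation $\tfrac12(\Id-x_0x_0^*)$ reveals the limiting rate $\approx\tfrac12$ (under the closest-phase normalization $\Im\scal{x_0}{h}=0$); the quadratic remainder is then controlled by keeping the radius $\rho$ small relative to the phase threshold $\gamma$. What each route buys: the paper's avoids any second-order analysis and any coupling between the ball radius and the threshold, at the price of a loose contraction constant $12\eta+\tfrac{24}{25}\approx0.96$ (it also discards the fact that part of $b\odot\phase(Ax)-\phase(\lambda_x)Ax_0$ is annihilated by $AA^\dag$); your route captures the true asymptotic rate $\tfrac12$ and makes the mechanism of convergence transparent, at the price of controlling three interlocking quantities ($\gamma$, $\rho$, $M$) and the quadratic term uniformly over a ball via nets. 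One slip to fix: the union bound over index sets of size $O(\gamma^2 m)$ forces a $\log(1/\gamma)$-type loss, so $\#B$ and $\sup_{\|u\|=1}\sum_{k\in B}|\scal{a_k}{u}|^2$ should be bounded by $O(\gamma^2\log(1/\gamma)\,m)$ rather than $O(\gamma^2 m)$ — compare the paper's Lemma~\ref{lem:S_leq_bm}, which carries exactly the factor $10\sqrt{\beta\log(1/\beta)}$. This is harmless (still $o(1)$ as $\gamma\to0$) but the bound as stated is not attainable by the argument you sketch.
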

\noindent
Alternating projections, introduced by \citet*{gerchberg}, is the most ancient algorithm for phase retrieval. It is an intuitive method, whose implementation is extremely simple, and with no parameter to choose or tune; it is thus widely used. In terms of complexity, it is slower, for general measurements, than the best non-convex methods by only a logarithmic factor in the precision. For more ``structured'' measurements (as in all applications that we know of), it is as fast (see Paragraph \ref{ss:complexity}).


We believe that the second question, about the necessity of the initialization step, is also important. In addition to being a natural theoretical question, it has practical consequences: the initialization procedure depends on the probability distribution of the sensing vectors, and, for some families of sensing vectors appearing in applications, we do not (yet) have a valid initialization procedure. We partially answer it in the case where the sensing vectors are independent and Gaussian, and reconstruction is done with alternating projections. We propose a description of when this method globally converges to the true solution, depending on the number of measurements and the initialization procedure. This description is summarized in Figure \ref{fig:global_image}.
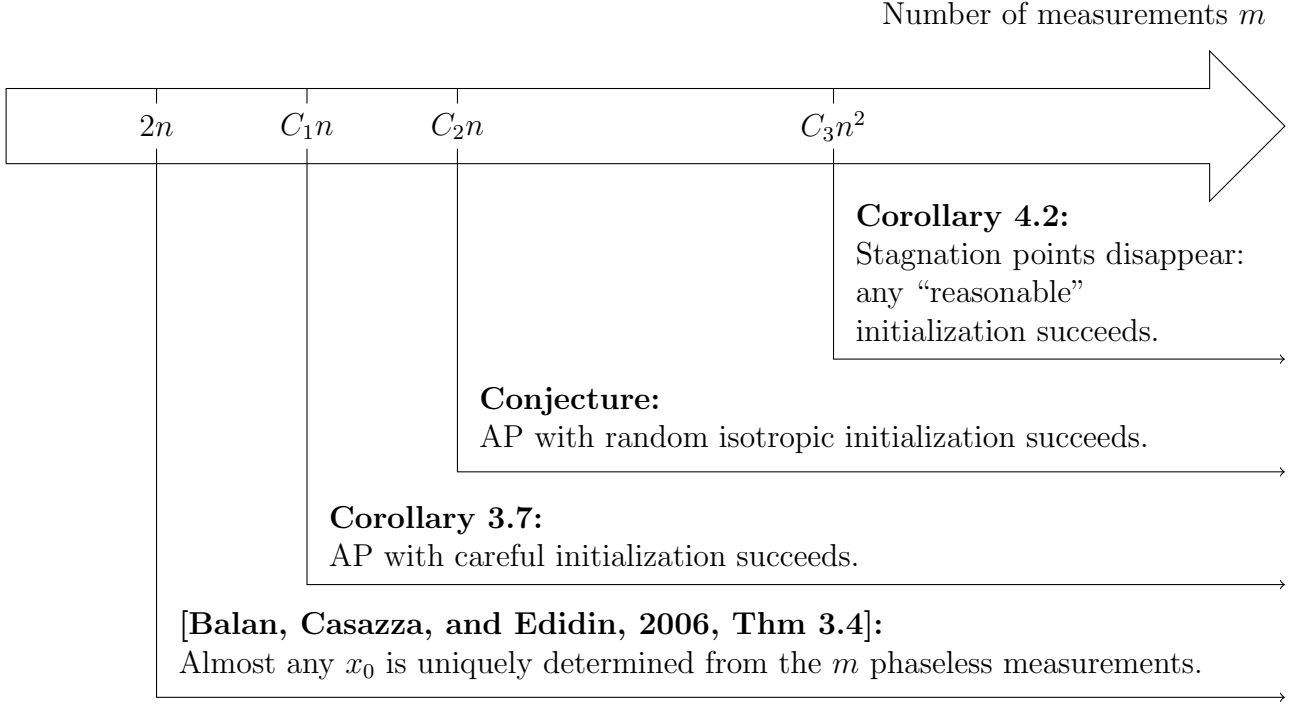
\begin{figure}[h]
\begin{tikzpicture}[scale=1]
  \centering
  \draw (11.5,2) node[right] {Number of measurements $m$};

  \draw (0,0) -- (16,0) -- (16,-0.5) -- (17,0.5) -- (16,1.5)
  -- (16,1) -- (0,1) -- (0,0);
  \draw (2,0.5) node {$2n$};
  \draw (2,1) -- (2,0.8);
  \draw (2.15,-7) node[above right, align=left] {\textbf{\citep*[Thm 3.4]{balan}:}\\
    Almost any $x_0$ is uniquely determined from the $m$ phaseless measurements.};
  \draw[->] (2,0.2) -- (2,-7.1) -- (17,-7.1);

  \draw (4,0.5) node {$C_1n$};
  \draw (4,1) -- (4,0.8);
  \draw (4.15,-5.5) node[above right, align=left] {\textbf{Corollary \ref{cor:global_convergence}:}\\
    AP with careful initialization succeeds.};
  \draw[->] (4,0.2) -- (4,-5.6) -- (17,-5.6);

  \draw (6,0.5) node {$C_2n$};
  \draw (6,1) -- (6,0.8);
  \draw (6.15,-4) node[above right, align=left] {\textbf{Conjecture:}\\
    AP with random isotropic initialization succeeds.};
  \draw[->] (6,0.2) -- (6,-4.1) -- (17,-4.1);

  \draw (11,0.5) node {$C_3n^2$};
  \draw (11,1) -- (11,0.8);
  \draw[->] (11,0.2) -- (11,-2.6) -- (17,-2.6);
  \draw (11.15,-2.5) node[above right, align=left] {\textbf{Corollary \ref{cor:global_convergence_without}:}\\
    Stagnation points disappear:\\ any ``reasonable''\\ initialization succeeds.};
\end{tikzpicture}
\caption{Schematic representation of the behavior of the alternating projections (AP) algorithm, as a function of the number of measurements $m$. All events happen only ``with high probability''.\label{fig:global_image}}
\end{figure}


As shown in the figure, there is a regime in which the stagnation points of the alternating projections routine disappear (except possibly on a ``small'' set that we define), and, with high probability, alternating projections converge starting from any initialization outside the small set. This regime is $m=O(n^2)$. Our numerical experiments clearly indicate that, below this regime, there are stagnation points. It is however possible that the attraction basin of the stagnation points is small: even in the regime $m=O(n)$, we numerically see that alternating projections, starting from a random isotropic initialization\footnote{By ``isotropic'', we mean that the law of the initial vector is invariant under linear unitary transformations.}, succeed with probability close to $1$ despite the presence of stagnation points. We leave this assertion as a conjecture.

\begin{thm*}[Informal, see Corollary \ref{cor:global_convergence_without}]
There exist $C_1,C_2,\gamma,M>0$, $\delta\in]0;1[$ such that, if $m\geq Mn^2$ and the sensing vectors are independently chosen according to complex normal distributions, with probability at least
\begin{equation*}
1-C_1\exp(-C_2 n),
\end{equation*}
the sequence of iterates $(z_t)_{t\in \N}$ produced by the alternating projections method satisfies
\begin{equation*}
\forall t\geq \gamma\log n,\quad\quad \inf_{\phi\in\R}||e^{i\phi}x_0-z_t||\leq \delta^{t-\gamma\log n}||x_0||,
\end{equation*}
starting from any initial point that does not belong to a small ``bad set''.
\end{thm*}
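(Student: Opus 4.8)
The plan is to show that, in the regime $m\geq Mn^2$, the alternating projections map has no stagnation point outside a small, explicitly defined ``bad set'', and, more quantitatively, that it makes definite progress towards $x_0$ from every point outside that set; combining this with the local linear-convergence statement (the Theorem quoted as Corollary~\ref{cor:global_convergence}) then yields global convergence after an initial burn-in of $O(\log n)$ iterations.

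\emph{Reformulating one step.} Writing $A$ for the $m\times n$ matrix with rows $a_k^*$, one step of alternating projections sends $z$ to $F(z)=A^\dagger\big(b\odot\phase(Az)\big)$ (project onto the torus of prescribed moduli, then back onto $\Range(A)$); an elementary computation shows that $z$ is a fixed point of $F$ if and only if the vector field $V(z):=\tfrac1m\sum_{k=1}^m\big(b_k\,\phase(\scal{a_k}{z})-\scal{a_k}{z}\big)a_k$ vanishes, and in general $z_{t+1}-z_t=(A^*A/m)^{-1}V(z_t)$. So $V$ is a preconditioned descent direction, and controlling it controls the dynamics.

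\emph{Concentration and the expected field.} For a fixed unit vector $z$, $V(z)$ is an average of $m$ i.i.d.\ terms, each a function of the pair $(\scal{a_k}{z},\scal{a_k}{x_0})$ --- a two-dimensional complex Gaussian whose law depends only on $\scal{z}{x_0}$. Hence $\E V(z)=:g(z)$ is an explicit element of $\Vect{z,x_0}$ whose coefficients are Gaussian integrals (moments of $|h_1|\phase(h_2)$ for correlated complex Gaussians $h_1,h_2$). Because $m\gtrsim n^2$, a Bernstein bound together with a union bound over an $\varepsilon$-net of the unit sphere (cardinality $e^{O(n)}$) yields $\sup_{\|z\|=1}\|V(z)-g(z)\|\leq\eta$ with probability at least $1-C_1\exp(-C_2n)$, for $\eta$ as small as an inverse polynomial in $n$. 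This is exactly where $m=O(n^2)$, rather than $O(n)$, enters: the precision $\eta$ needed below is not a constant, because the expected field $g$ itself becomes small near the bad set.

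\emph{Deterministic analysis of $g$, and conclusion.} Working inside the (at most) two-complex-dimensional space $\Vect{z,x_0}$, one checks that $g(z)=0$ forces either $z$ parallel to $x_0$ (the genuine solution) or $z$ in a small set $\mathcal B$ --- essentially the vectors almost orthogonal to $x_0$, together with at most a lower-dimensional sphere of admissible radii. Outside an $\varepsilon$-neighbourhood of $\mathcal B$, one shows that $\Re\scal{g(z)}{u^\star x_0-z}$, where $u^\star\in\C$, $|u^\star|=1$ realizes $\inf_\phi\|e^{i\phi}x_0-z\|$, is bounded below by a positive quantity comparable to that infimum, with a factor degrading only polynomially as $z\to\mathcal B$. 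This step --- explicit estimation of the Gaussian integrals defining $g$ together with a uniform case analysis on the $2$-plane --- is the main obstacle. Granting it, on the event of the previous paragraph and for $\eta$ small relative to this lower bound, every iterate $z_t\notin\mathcal B$ lying outside the local basin $\{\inf_\phi\|e^{i\phi}x_0-z\|\le\rho\|x_0\|\}$ has its correlation ratio $|\scal{z_t}{x_0}|/(\|z_t\|\|x_0\|)$ multiplied by a fixed factor $>1$ at the next step (and $\|z_t\|$ stays controlled), so the iterates, starting from any ``reasonable'' point --- one with correlation ratio at least $1/\mathrm{poly}(n)$ --- stay out of $\mathcal B$ and enter the local basin after some $t_0\leq\gamma\log n$ steps. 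Applying Corollary~\ref{cor:global_convergence} from time $t_0$ on gives $\inf_\phi\|e^{i\phi}x_0-z_t\|\leq\delta^{t-t_0}\|x_0\|$ for $t\geq t_0$, and the only failure probability incurred is the $C_1\exp(-C_2n)$ of the concentration step.
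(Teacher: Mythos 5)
Your overall architecture is right and matches the paper's: show that each AP step makes definite progress on the correlation ratio $|\scal{x_0}{z}|/(\|x_0\|\,\|z\|)$ as long as $z$ is neither too close to $x_0$ nor almost orthogonal to it, deduce a burn-in of $O(\log n)$ steps, and then hand over to the local linear-rate result (Theorem~\ref{thm:local_convergence}) — this is precisely Corollary~\ref{cor:global_convergence_without}. Your reformulation via the vector field $V(z)=\tfrac1m A^*(b\odot\phase(Az)-Az)$ is a legitimate alternative to the paper's direct analysis of $\scal{Ax_0}{b\odot\phase(Ax)}$, and the observation that $g=\E V$ lives in $\Vect{z,x_0}$ is the same dimensionality reduction that underlies Lemma~\ref{lem:F} and Lemma~\ref{lem:min_f}.

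Where your sketch has a genuine gap is the concentration step. You claim $\sup_{\|z\|=1}\|V(z)-g(z)\|\leq\eta$ follows from Bernstein plus a union bound over an $\varepsilon$-net of cardinality $e^{O(n)}$. This does not work as stated, for two linked reasons. First, $V$ is discontinuous (through $\phase$), so a bound on a coarse net does not extend to the sphere by a Lipschitz argument; the paper instead proves a much weaker modulus-of-continuity estimate (Lemma~\ref{lem:inside_net}: nearby points within $cm^{-7/2}$ give values differing by at most $C_3 n m^{1/4}\|x_0\|^2$), which forces a net of spacing $m^{-O(1)}$, hence cardinality $e^{O(n\log m)}$, not $e^{O(n)}$. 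Second, the individual summands $|a_j^*x_0|^2 Z_j$ are unbounded (subexponential, not subgaussian, and with a variance that itself depends on $|a_j^*x_0|$), so Bennett/Bernstein must be adapted after conditioning on $Ax_0$ and controlling the empirical distribution of $|a_j^*x_0|$ (Lemma~\ref{lem:dist_Ajx0}). A blunt union bound over the fine net then does not reach probability $1-e^{-C_2 n}$ at only $m=O(n^2)$; the paper explicitly remarks that a direct union bound ``is not sufficient'' and deploys a chaining argument over a hierarchy of nets (Lemma~\ref{lem:ecart_net}) that exploits the correlation between nearby net points. This chaining argument, together with the treatment of the discontinuity of $\phase$ and the heavy tails of $|a_j^*x_0|$, is where the bulk of the paper's effort goes — not, as you suggest, in the deterministic analysis of $g$, which (Lemma~\ref{lem:min_f}, Proposition~\ref{prop:controle_G}) is involved but comparatively routine. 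To salvage your plan you would need to replace the single net with a multiscale chaining scheme and carry out the conditional moment-generating-function bounds, at which point you are essentially reproducing the paper's proof of Theorem~\ref{thm:global_convergence}.
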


\begin{conj*}[See Conjecture \ref{conj:convergence_random}]
Let any $\epsilon>0$ be fixed. When $m\geq Cn$, for $C>0$ large enough, alternating projections, starting from a random isotropic initialization, converge to the true solution with probability at least $1-\epsilon$.
\end{conj*}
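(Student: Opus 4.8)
The plan is to run the two‑phase analysis that has become standard for random‑initialized nonconvex iterations (as developed for gradient descent on the Wirtinger loss), adapting it to the non‑smooth alternating projections (AP) map. By the unitary invariance of the complex normal distribution and of an isotropic initialization, I may assume $x_0=\|x_0\|e_1$. Writing the AP iteration in the signal domain as $z_{t+1}=A^{+}\bigl(b\odot\phase(Az_t)\bigr)$, where $A$ has rows $a_1^{*},\dots,a_m^{*}$ and $\bigl(b\odot\phase(Az_t)\bigr)_k=b_k\,\phase(\langle a_k,z_t\rangle)$, I would decompose $z_t=\alpha_t e_1+\beta_t u_t$ with $u_t\perp e_1$, $\|u_t\|=1$, and track the correlation $\rho_t=|\alpha_t|/\|z_t\|$. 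Phase~1 is to show that, with probability at least $1-\epsilon$, after $T_0=O(\log n)$ steps $\rho_{T_0}$ has grown from its initial value of order $n^{-1/2}$ to a constant, so that $z_{T_0}$ lies in the local attraction basin of the solution set $\{e^{i\phi}x_0:\phi\in\R\}$ (a polynomial bound on $T_0$ would already suffice, since the conjecture only asserts eventual convergence). Phase~2 is then to invoke the local linear‑convergence statement underlying Corollary~\ref{cor:global_convergence} — whose hypothesis is precisely membership in such a basin — to obtain $\inf_{\phi\in\R}\|e^{i\phi}x_0-z_t\|\le\delta^{t-T_0}\|x_0\|$ for all $t\ge T_0$.

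For Phase~1 I would first isolate the population ($m\to\infty$) dynamics. Since $A^{*}A\approx mI$, one has $z_{t+1}\approx\frac1m\sum_k b_k\,\phase(\langle a_k,z_t\rangle)\,a_k$ up to a random‑matrix correction of operator norm $\tilde O(\sqrt{n/m})$; replacing this sum by its expectation over a single complex Gaussian $a$ yields an explicit deterministic map on $(|\alpha_t|,|\beta_t|)$ which, after normalization, is a one‑dimensional rational recursion $\rho_{t+1}=f(\rho_t)$. The algebraic core is to check that $f(0)=0$ with $f'(0)>1$, and that $\rho=1$ is the only other fixed point in $[0,1]$ and is attracting, so the population iteration escapes a neighborhood of $0$ geometrically and converges globally. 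The ``head start'' comes from the initialization: for an isotropic $z_0$, both $\|z_0\|\asymp\|x_0\|$ and $|\langle e_1,z_0\rangle|/\|z_0\|\ge c(\epsilon)\,n^{-1/2}$ hold with probability at least $1-\epsilon/2$.

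The technical core, and the step I expect to be the main obstacle, is showing that the true random trajectory stays uniformly close to this deterministic one over all $T_0$ steps of Phase~1. Two features make this delicate. First, $z_t$ depends on every $a_k$ through $A^{+}$, so $\langle a_k,z_t\rangle$ is not independent of $a_k$ and the averages $\frac1m\sum_k$ appearing in the iteration do not concentrate by a direct matrix‑Bernstein argument; the remedy is a leave‑one‑out construction — for each $k$ run a phantom iteration $z_t^{(k)}$ that omits the $k$‑th measurement, prove inductively that $\|z_t-z_t^{(k)}\|$ remains $\tilde O(m^{-1/2})\|x_0\|$ through all $T_0$ steps (this requires controlling how much the AP map can amplify such perturbations, which is the most delicate estimate), and then exploit the independence of $z_t^{(k)}$ from $a_k$. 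Second, $\phase$ is discontinuous at $0$ and behaves like $|\langle a_k,z\rangle|^{-1}$ nearby, so an index with an anomalously small $|\langle a_k,z_t\rangle|$ can destabilize the step; one handles this by an anti‑concentration bound (again via the $a_k$‑independent proxy $z_t^{(k)}$, which has a density) showing that at each step only an $O(\eta)$ fraction of indices satisfy $|\langle a_k,z_t\rangle|\le\eta\|z_t\|$ and that their aggregate contribution is negligible. Finally, because stagnation points of the finite‑$m$ iteration genuinely exist — this is exactly why the statement remains a conjecture — one cannot hope for a failure probability below a constant: the residual $\epsilon$ must absorb the event that $z_0$ falls into the (small, positive‑measure) attraction basin of a stagnation point, and what one would actually prove is that the measure of that basin tends to $0$ as $C=m/n\to\infty$.
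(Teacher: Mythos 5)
This statement is a \emph{conjecture} in the paper, explicitly left open; the paper says that proving it ``a priori requires to evaluate in some way the size of the attraction basin of stagnation points, which seems difficult,'' and offers only numerical evidence. There is, therefore, no proof of the statement to compare your proposal against.

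Your plan is a reasonable template (a two-phase analysis with a leave-one-out argument to track the finite-$m$ trajectory against the population dynamics, then Theorem~\ref{thm:local_convergence} for the local phase), and you correctly observe that the failure probability can at best be pushed down to a small constant because stagnation points genuinely exist when $m=O(n)$. But that observation is precisely where the proposal leaves a genuine gap: nothing in the sketch actually bounds the measure of the attraction basins of those stagnation points, which is the one quantity the conjecture is really about. The leave-one-out machinery, even granting the ``most delicate estimate'' you flag (controlling how much the discontinuous AP map amplifies $\tilde O(m^{-1/2})$ perturbations over $O(\log n)$ steps), only establishes that the true trajectory stays close to a deterministic proxy; it does not explain why that closeness implies escape from a basin whose size is unknown. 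In the $m=Cn$ regime the deviation of the finite-$m$ map from its population limit is $\Theta(\sqrt{n/m})=\Theta(C^{-1/2})$, a constant, not a vanishing quantity, so one cannot simply absorb the drift into the proxy trajectory. In short: you have restated the problem rather than solved it, and the argument stops at the same obstruction the paper identifies. The proposal is a plausible research program, not a proof.
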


These theorem and conjecture are the parallels for alternating projections of the results and numerical observations obtained by \citet*{sun_qu_wright} for gradient descent over the cost function \eqref{eq:cost_L1}. The ``no stagnation point'' regime is much less favorable in the case of alternating projections than in the case of gradient descent: $m=O(n^2)\gg O(n\log^3 n)$. It could be due to the discontinuity of the alternating projections operator, but we have no evidence to support this fact.


\vskip 0.7cm

On the side of proof techniques, there has been a lot of work on the convergence of alternating projections in non-convex settings. Transversality arguments can be shown to prove, in certain cases, local convergence guarantees (``if the initial point is sufficiently close to the correct solution, alternating projections converge to this solution''). See for example \citep*{lewis,drusvyatskiy}. These arguments can be used in phase retrieval, and yield local convergence results for relatively general families of sensing vectors (not necessarily random) \citep*{noll,chen_fannjiang}. Unfortunately, they give no control on the convergence radius of the algorithm, so the obtained results have a mainly theoretical interest.


Bounding the convergence radius requires using the statistical properties of the sensing vectors. This was first attempted in \citep*{netrapalli}, where the authors proved the global convergence of a resampled version of the alternating projections algorithm. For a non resampled version, a preliminary result was given in \citep*{soltanolkotabi}. However, the bound on the convergence radius that underlies this result is small. As a consequence, global convergence is only proven for a suboptimal number of measurements ($m=O(n\log^2n)$), and with a complex initialization procedure.

A difficulty that we encounter is the fact that the alternating projections operator is not continuous. This difficulty also appears in the two recent articles \citep*{zhang,wang}, where the authors consider a gradient descent over a function whose gradient is not continuous. The proof that we give for our Theorem \ref{thm:global_convergence} follows a different path as theirs (it does not use a regularity condition); the statistical tools are however similar.


\vskip 0.7cm

The article is organized as follows. Section \ref{s:setup} precisely defines phase retrieval problems and the alternating projections algorithm. Section \ref{s:with_init} states and proves the first main result: the global convergence of alternating projections, with proper initialization, for $m=O(n)$ independent Gaussian measurements. Section \ref{s:without_init} proves the second main result: stagnation points disappear in the regime $m=O(n^2)$, making the initialization step useless. Finally, Section \ref{s:numerical} presents numerical results, and conjectures that the alternating projections algorithm can succeed without special initialization in the regime $m=O(n)$, despite the presence of stagnation points. All technical lemmas are deferred to the appendices.

\subsection{Notations}

For any $z\in\C$, $|z|$ is the modulus of $z$. We extend this notation to vectors: if $z\in\C^k$ for some $k\in\N^*$, then $|z|$ is the vector of $(\R^+)^k$ such that
\begin{equation*}
|z|_i=|z_i|,\quad\quad\forall i=1,\dots,k.
\end{equation*}
For any $z\in\C$, we set $E_{\phase}(z)$ to be the following subset of $\C$:
\begin{equation*}
\begin{array}{rll}
E_{\phase}(z)&=\left\{\frac{z}{|z|}\right\}&\mbox{ if }z\in\C-\{0\};\\
&=\{e^{i\phi},\phi\in\R\}&\mbox{ if }z=0.
\end{array}
\end{equation*}
We extend this definition to vectors $z\in\C^k$:
\begin{equation*}
E_{\phase}(z)=\prod_{i=1}^k E_{\phase}(z_i).
\end{equation*}
For any $z\in\C$, we define $\phase(z)$ by
\begin{equation*}
\begin{array}{rll}
\phase(z)&=\frac{z}{|z|}&\mbox{ if }z\in\C-\{0\};\\
&=1&\mbox{ if }z=0,
\end{array}
\end{equation*}
and extend this definition to vectors $z\in\C^k$, as for the modulus.

We denote by $\odot$ the pointwise product of vectors: for all $a,b\in\C^k$, $(a\odot b)$ is the vector of $\C^k$ such that
\begin{equation*}
(a\odot b)_i=a_ib_i,\quad\quad\forall i=1,\dots,k.
\end{equation*}
We define the operator norm of any matrix $A\in\C^{n_1\times n_2}$ by
\begin{equation*}
|||A||| = \sup_{v\in \C^{n_2}, ||v||=1}||Av||.
\end{equation*}
We denote by $A^\dag$ its Moore-Penrose pseudo-inverse. We note that $AA^\dag$ is the orthogonal projection onto $\Range(A)$. 

\section{Problem setup\label{s:setup}}

\subsection{Phase retrieval problem}

Les $n,m$ be positive integers. The goal of a phase retrieval problem is to reconstruct an unknown vector $x_0\in \C^n$ from $m$ measurements with a specific form.

We assume $a_1,\dots,a_m\in\C^n$ are given; they are called the \textit{sensing vectors}. We define a matrix $A\in\C^{m\times n}$ by
\begin{equation*}
A=\begin{pmatrix}a_1^*\\\vdots\\a_m^*\end{pmatrix}.
\end{equation*}
This matrix is called the \textit{measurement matrix}. The associated \textit{phase retrieval} problem is:
\begin{equation}\label{eq:problem_statement}
\mbox{reconstruct }x_0\mbox{ from }b\overset{def}{=}|Ax_0|.
\end{equation}
As the modulus is invariant to multiplication by unitary complex numbers, we can never hope to reconstruct $x_0$ better than \textit{up to multiplication by a global phase}. So, instead of exactly reconstructing $x_0$, we want to reconstruct $x_1$ such that
\begin{equation*}
x_1 = e^{i\phi}x_0,\quad\quad \mbox{for some }\phi\in\R.
\end{equation*}

In all this article, we assume the sensing vectors to be independent realizations of centered Gaussian variables with identity covariance:
\begin{equation}\label{eq:def_A}
(a_{i})_j\sim\mathcal{N}\left(0,\frac{1}{2}\right)
+\mathcal{N}\left(0,\frac{1}{2}\right)i,\quad\quad
\forall 1\leq i\leq m,1\leq j\leq n.
\end{equation}
The measurement matrix is in particular independent from $x_0$.

\citet*{balan} and \citet*{conca} have proved that, for \textit{generic} measurement matrices $A$, Problem \eqref{eq:problem_statement} always has a unique solution, up to a global phase, provided that $m\geq 4n-4$. In particular, with our measurement model \eqref{eq:def_A}, the reconstruction is guaranteed to be unique, with probability $1$, when $m\geq 4n-4$.

\subsection{Alternating projections}

The alternating projections method has been introduced for phase retrieval problems by \citet*{gerchberg}. It focuses on the reconstruction of $Ax_0$; if $A$ is injective, this then allows to recover $x_0$.

To reconstruct $Ax_0$, it is enough to find $z\in\C^m$ in the intersection of the following two sets.
\begin{enumerate}[label={(\arabic*)}]
\item $z\in \{z'\in\C^m,|z'|=b\}$;
\item $z\in\Range(A)$.
\end{enumerate}
Indeed, when the solution to Problem \eqref{eq:problem_statement} is unique, $Ax_0$ is the only element of $\C^m$ that simultaneously satisfies these two conditions (up to a global phase).

A natural heuristic to find such a $z$ is to pick any initial guess $z_0$, then to alternatively project it on the two constraint sets. In this context, we call \textit{projection} on a closed set $E\subset\C^m$ a function $P:\C^m\to E$ such that, for any $x\in\C^m$,
\begin{equation*}
||x-P(x)||=\inf_{e\in E}||x-e||.
\end{equation*}
The two sets defining constraints (1) and (2) admit projections with simple analytical expressions, which leads to the following formulas:
\begin{subequations}\label{eq:gs_image}
\begin{align}
y'_k&= b \odot \phase(y_k);& \mbox{(Projection onto set (1))}\\
y_{k+1}&= (AA^\dag) y'_k.& \mbox{(Projection onto set (2))}
\end{align}
\end{subequations}
If we define $z_k$ as the unique vector such that $y_k=Az_k$, an equivalent form of these equations is:
\begin{equation*}
z_{k+1} = A^\dag(b\odot\phase(Az_k)).
\end{equation*}

The hope is that the sequence $(y_k)_{k\in\N}$ converges towards $Ax_0$. Unfortunately, it can get stuck in \textit{stagnation points}. The following proposition (proven in Appendix \ref{s:stagnation_points}) characterizes these stagnation points.
\begin{prop}\label{prop:stagnation_points}
For any $y_0$, the sequence $(y_k)_{k\in\N}$ is bounded. Any accumulation point $y_\infty$ of $(y_k)_{k\in\N}$ satisfies the following property:
\begin{equation*}
\exists u\in E_{\phase}(y_\infty),\quad\quad
(AA^\dag)(b\odot u)=y_\infty.
\end{equation*}
In particular, if $y_\infty$ has no zero entry,
\begin{equation*}
(AA^\dag)(b\odot \phase(y_\infty))=y_\infty.
\end{equation*}
\end{prop}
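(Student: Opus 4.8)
The plan is to run the classical ``energy'' argument for alternating projections onto a convex (here affine) set and a closed set, the only subtlety being the discontinuity of $\phase$ at the origin.

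\emph{Boundedness.} By construction $|y'_k|=b$, hence $\|y'_k\|=\|b\|$ for every $k$. Since $AA^\dag$ is the orthogonal projection onto $\Range(A)$, it has operator norm at most $1$, so $\|y_{k+1}\|=\|AA^\dag y'_k\|\le\|b\|$. Thus $(y_k)_{k\geq 1}$ lies in the closed ball of radius $\|b\|$, the whole sequence is bounded, and accumulation points exist.

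\emph{Summability of the increments.} Write $S_1=\{z'\in\C^m:|z'|=b\}$ and $S_2=\Range(A)$, so that $y'_k$ is a nearest point of $S_1$ to $y_k$ and $y_{k+1}=AA^\dag y'_k$ is the orthogonal nearest point of $S_2$ to $y'_k$. Set $d_k=\mathrm{dist}(y_k,S_1)=\||y_k|-b\|$. For $k\geq 1$ one has $y_k\in S_2$, and since $y'_k-y_{k+1}$ is orthogonal to $S_2$ while $y_{k+1}-y_k\in S_2$, Pythagoras gives $\|y_{k+1}-y_k\|^2+\|y'_k-y_{k+1}\|^2=\|y'_k-y_k\|^2=d_k^2$. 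As $y'_k\in S_1$, we also have $\|y'_k-y_{k+1}\|\geq\mathrm{dist}(y_{k+1},S_1)=d_{k+1}$, whence $\|y_{k+1}-y_k\|^2\leq d_k^2-d_{k+1}^2$. In particular $(d_k)_{k\geq 1}$ is non-increasing, and summing the telescoping bound yields $\sum_{k\geq 1}\|y_{k+1}-y_k\|^2\leq d_1^2<\infty$, so $\|y_{k+1}-y_k\|\to 0$.

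\emph{Passing to the limit.} Let $y_{k_j}\to y_\infty$; by the previous step $y_{k_j+1}\to y_\infty$ as well. Each $\phase(y_{k_j})$ lies in the compact set $\{u\in\C^m:|u_i|=1\ \forall i\}$, so after extracting a further subsequence we may assume $\phase(y_{k_j})\to u$ with $|u_i|=1$ for all $i$. For every coordinate $i$ with $(y_\infty)_i\neq 0$, $\phase$ is continuous at $(y_\infty)_i$, so $u_i=(y_\infty)_i/|(y_\infty)_i|$; together with $|u_i|=1$ on the remaining coordinates, this says exactly that $u\in E_{\phase}(y_\infty)$. Since $AA^\dag$ is linear, hence continuous, $y_\infty=\lim_j y_{k_j+1}=\lim_j AA^\dag\bigl(b\odot\phase(y_{k_j})\bigr)=AA^\dag(b\odot u)$, which is the asserted property; and if $y_\infty$ has no zero entry then $E_{\phase}(y_\infty)=\{\phase(y_\infty)\}$, forcing $u=\phase(y_\infty)$ and giving the final display. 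The only genuine point requiring care is checking that the limiting phase vector $u$ is a legitimate element of $E_{\phase}(y_\infty)$ when $y_\infty$ has zero coordinates — this is precisely why $E_{\phase}(0)$ is defined to be the whole unit circle, and the compactness extraction is tailored to it; everything else is routine.
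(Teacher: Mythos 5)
Your proof is correct and complete, but the middle step (getting $y_{k_j+1}\to y_\infty$) uses a genuinely different argument from the paper's. You run the classical Fejér--descent computation: since $\Range(A)$ is a linear subspace and $y_k\in\Range(A)$ for $k\geq 1$, Pythagoras gives $\|y_{k+1}-y_k\|^2 = d_k^2 - \|y'_k-y_{k+1}\|^2 \leq d_k^2 - d_{k+1}^2$, and telescoping yields $\sum_k\|y_{k+1}-y_k\|^2<\infty$, hence $\|y_{k+1}-y_k\|\to 0$ along the whole sequence. The paper instead shows that the two interleaved distance sequences $\bigl(d(y_k,E_b)\bigr)$ and $\bigl(d(y'_k,\Range(A))\bigr)$ are sandwiched and monotone, so they converge to a common limit $\delta$; passing to the limit along a subsequence where $y_{\phi(n)}$, $y'_{\phi(n)}$, $y_{\phi(n)+1}$ all converge, it deduces $\|y_\infty-y'_\infty\|=\|y'_\infty-y_\infty^{+1}\|=\delta=d\bigl(y'_\infty,\Range(A)\bigr)$ and then invokes \emph{uniqueness of the nearest point} on the convex set $\Range(A)$ to force $y_\infty^{+1}=y_\infty$. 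Both routes are standard and both work here; yours is a bit more quantitative (it gives square-summability of the increments, hence $\|y_{k+1}-y_k\|\to 0$ on the full sequence, not merely along a subsequence), while the paper's is slightly more agnostic about the structure of $S_1$ and foregrounds the convexity of $\Range(A)$ as the decisive property. The boundedness argument and the final passage to the limit (extracting a convergent subsequence of the unit-modulus phase vectors, using continuity of $\phase$ away from $0$ at the nonzero coordinates and the convention $E_{\phase}(0)=\{e^{i\phi}\}$ at the zero ones) are essentially identical to the paper's.
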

Despite the relative simplicity of this characteristic property, it is extremely difficult to exactly compute the stagnation points, determine their attraction basin or avoid them when the algorithm happens to run into them.

The goal of this article is to show that, in certain settings, there are no stagnation points, or they can be avoided with a careful initialization procedure of the alternating projection routine.


\section{Alternating projections with good initialization\label{s:with_init}}

In this section, we prove the first of our two main results: in the regime $m=O(n)$, the method of alternating projections converges to the correct solution with high probability, if it is carefully initialized.

\subsection{Local convergence of alternating projections}

This paragraph proves the key result that we will need to establish our statement. This result is a local contraction property of the alternating projections operator $x\to A^\dag(b\odot\phase(Ax))$.

\begin{thm}\label{thm:local_convergence}
There exist $\epsilon,C_1,C_2,M>0$, and $\delta\in]0;1[$ such that, if $m\geq Mn$, then, with probability at least
\begin{equation*}
1-C_1\exp(-C_2m),
\end{equation*}
the following property holds: for any $x\in\C^n$ such that
\begin{equation*}\label{eq:hyp_x}
\inf_{\phi\in\R}||e^{i\phi}x_0-x||\leq \epsilon ||x_0||,
\end{equation*}
we have
\begin{equation}\label{eq:progres_lineaire}
\inf_{\phi\in\R}||e^{i\phi}x_0-A^\dag(b\odot\phase(Ax))||\leq \delta ||x_0-x||.
\end{equation}
\end{thm}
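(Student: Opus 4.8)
The plan is to linearize the operator $T:x\mapsto A^\dag(b\odot\phase(Ax))$ at $x_0$, to show that its linear part contracts by a factor close to $1/2$ in the directions orthogonal to $x_0$ (and more strongly in the $\R x_0$ and phase directions), and to check that the nonlinear remainder is genuinely of second order in $h:=x-x_0$; all three facts should hold uniformly over the ball, with the claimed probability.

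\textit{Reduction and decomposition.} Replacing $x_0$ by $e^{i\phi}x_0$ affects neither $b=|Ax_0|$, nor the moduli $|(Ax_0)_k|$, nor the singular values of $A$, nor the operator norms of the centered random matrices appearing below; so I may assume that $\phi\mapsto\|e^{i\phi}x_0-x\|$ is minimized at $\phi=0$, which in addition forces $\scal{x_0}{h}\in\R$ with $h:=x-x_0$, and I may rescale so that $\|x_0\|=1$. Since $m\ge Mn>n$, $A$ is a.s. injective, so $A^\dag A=\Id$; as $Ax_0,Ah\in\Range(A)$ and $AA^\dag$ is the orthogonal projection onto $\Range(A)$,
\[
T(x)-x_0=A^\dag\big(b\odot\phase(Ax)-Ax\big)+A^\dag Ah=A^\dag\big(b\odot\phase(Ax)-Ax\big)+h.
\]
Write $u=\phase(Ax_0)$ (so $Ax_0=b\odot u$) and $D=\mathrm{diag}(u_1^2,\dots,u_m^2)$, and let $\mathcal{R}_u(v)_k:=\Re(\bar u_k v_k)\,u_k=\tfrac{1}{2} v_k+\tfrac{1}{2} u_k^2\bar v_k$ be the radial part of $v$ relative to $Ax_0$. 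A one-dimensional Taylor expansion of $t\mapsto|(Ax_0)_k+t(Ah)_k|$ gives, coordinatewise,
\[
b_k\,\phase\big((Ax)_k\big)-(Ax)_k=-\mathcal{R}_u(Ah)_k+e_k,\qquad
|e_k|\le C\min\!\Big(|(Ah)_k|,\ \frac{|(Ah)_k|^2}{|(Ax_0)_k|}\Big),
\]
the quadratic bound holding when $|(Ah)_k|\le\tfrac{1}{2}|(Ax_0)_k|$ and the crude bound $|e_k|\le C|(Ah)_k|$ in general. Since $\mathcal{R}_u(Ah)=\tfrac{1}{2} Ah+\tfrac{1}{2} D\overline{Ah}$ and $A^\dag A=\Id$, this yields $T(x)-x_0=\tfrac{1}{2} h-\tfrac{1}{2} A^\dag D\,\overline{Ah}+A^\dag e$.

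\textit{The linear part.} One has $A^\dag D\overline{Ah}=(A^*A)^{-1}\big(\sum_k u_k^2\,\overline{\scal{a_k}{h}}\,a_k\big)$, and a direct computation using the unitary invariance of the law of the $a_k$ gives $\E[u_k^2\,\overline{\scal{a_k}{h}}\,a_k]=\overline{\scal{x_0}{h}}\,x_0$ for every $h$ (equivalently $\E[u_k^2 a_k a_k^{T}]=x_0 x_0^{T}$; for $x_0=e_1$ this is just $\E|g|^2=1$ together with the phase $g/|g|$ of a standard complex Gaussian $g$ being uniform), while $\tfrac{1}{m}\E[A^*A]=\Id$. Concentration of empirical second-moment matrices and of sums of rank-one sub-exponential matrices (matrix Bernstein, or a covering argument) then gives: for every $\eta>0$ there is $M(\eta)$ so that, if $m\ge M(\eta)n$, then with probability $\ge 1-C_1e^{-C_2 m}$ one has $|||\tfrac{1}{m} A^*A-\Id|||\le\eta$ and $|||\tfrac{1}{m}\sum_k u_k^2 a_k a_k^{T}-x_0 x_0^{T}|||\le\eta$, hence $\|A^\dag D\overline{Ah}-\overline{\scal{x_0}{h}}\,x_0\|\le C\eta\|h\|$ for all $h$ at once. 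Using $\scal{x_0}{h}\in\R$ we get $\tfrac{1}{2} h-\tfrac{1}{2}\overline{\scal{x_0}{h}}\,x_0=\tfrac{1}{2} h_\perp$ with $h_\perp:=h-\scal{x_0}{h}x_0\perp x_0$, so on this event
\[
\inf_{\psi\in\R}\|e^{i\psi}x_0-T(x)\|\ \le\ \|x_0-T(x)\|\ \le\ \tfrac{1}{2}\|h_\perp\|+C\eta\|h\|+\|A^\dag e\|\ \le\ \big(\tfrac{1}{2}+C\eta\big)\|h\|+\|A^\dag e\|.
\]

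\textit{The remainder — the main obstacle.} It remains to show that $\|A^\dag e\|\le\tfrac{1}{4}\|h\|$, uniformly over $\|h\|\le\epsilon$, with high probability. Since $\sigma_{\min}(A)\ge\tfrac{1}{2}\sqrt m$ w.h.p. (standard for Gaussian $A$ with $m\ge Mn$), it is enough to prove
\[
\|e\|^2\ \le\ C\sum_{k=1}^m|(Ah)_k|^2\min\!\Big(1,\ \frac{|(Ah)_k|^2}{|(Ax_0)_k|^2}\Big)\ \le\ \zeta(\epsilon)\,m\|h\|^2,\qquad \zeta(\epsilon)\xrightarrow[\epsilon\to0]{}0.
\]
The difficulty — and the reason $T$ is discontinuous — is concentrated on the coordinates with $|(Ax_0)_k|$ small, where only the crude bound $|e_k|\le C|(Ah)_k|$ is available. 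Writing $|(Ah)_k|\le|\scal{x_0}{h}|\,|(Ax_0)_k|+|(Ah_\perp)_k|$ and exploiting that $(Ah_\perp)_k$ is \emph{independent} of $(Ax_0)_k$, the sum above reduces to a few explicit coordinate sums, which I would split according to the sizes of $|(Ax_0)_k|$ and $|(Ah_\perp)_k|$: on the ``typical'' coordinates ($|(Ax_0)_k|$ of order $1$, $|(Ah_\perp)_k|$ of order $\|h_\perp\|$) the quadratic bound gives a contribution $\lesssim m\|h\|^4\le\epsilon^2 m\|h\|^2$; the ``small-modulus'' coordinates, of which there are $O(c_0^2 m)$ since $\mathbb{P}(|(Ax_0)_k|\le c_0)=O(c_0^2)$, contribute $\lesssim (c_0^2+M^{-1})\,m\|h\|^2$ via the crude bound (bounding $\sum_{k}\mathbf{1}_{|(Ax_0)_k|\le c_0}|(Ah_\perp)_k|^2$ by the operator norm of the corresponding Gaussian block of $A$ restricted to $x_0^\perp$); and the ``large-entry'' coordinates ($|(Ah_\perp)_k|\gtrsim L\|h_\perp\|$) contribute $\lesssim (e^{-L^2}+\E[|g|^2\mathbf{1}_{|g|>L}])\,m\|h\|^2$ by Gaussian tail bounds. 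Choosing $c_0$ a small absolute constant and $L=L(\epsilon)\to\infty$ slowly gives $\zeta(\epsilon)\to0$. The step that genuinely requires work is making these bounds \emph{uniform in $h$} over the ball — which is indispensable, since $x$ ranges over a neighborhood of $x_0$ — through a covering/$\epsilon$-net argument together with Lipschitz and tail estimates for the relevant empirical processes (in particular for the $h$-dependent large-entry set), and carefully tracking the interplay between ``$|(Ax_0)_k|$ small'' and ``$|(Ah_\perp)_k|$ not small''; this is the content of the technical lemmas deferred to the appendices. Assembling the three estimates, $\inf_{\psi\in\R}\|e^{i\psi}x_0-T(x)\|\le\big(\tfrac{1}{2}+C\eta+2\sqrt{\zeta(\epsilon)}\big)\|h\|\le\delta\|h\|$ for some $\delta\in(\tfrac{1}{2},1)$, provided $\eta,\epsilon$ are small and $M$ (hence the required ratio $m/n$) large.
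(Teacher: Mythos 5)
Your route is genuinely different from the paper's. You Taylor-expand the operator $T(x)=A^\dag(b\odot\phase(Ax))$ around $x_0$, isolate the exact real-linear part $\tfrac12 h-\tfrac12 A^\dag D\overline{Ah}$ (with $D=\mathrm{diag}(\phase(Ax_0)^2)$), show by the identity $\E[\phase(a_k^*x_0)^2\,a_k a_k^{T}]=x_0x_0^{T}$ and matrix concentration that this linear part contracts by essentially $\tfrac12$ on $x_0^\perp$, and treat what remains as a quadratic error. The paper instead never linearizes: it works directly in the image domain with the decomposition $Ax=\lambda_x Ax_0+\mu_x v^x$ ($v^x\perp Ax_0$ in $\Range A$) and the pointwise bound of Lemma~\ref{lem:diff_phase}, $|\phase(z_0+z)-\phase(z_0)|\le 2\cdot 1_{|z|\ge|z_0|/6}+\tfrac65|\Im(z/z_0)|$, and then establishes two uniform-in-$v$ estimates, Lemma~\ref{lem:first_term} (the indicator term) and Lemma~\ref{lem:second_term} (the $\Im$-term is a $\tfrac45$-contraction on $\Range(A)\cap\{Ax_0\}^\perp$). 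Your version buys a contraction factor with much more slack ($\tfrac12$ vs.\ the paper's $\tfrac{24}{25}$), at the price of having to show in addition that the quadratic remainder is genuinely negligible; the paper's version is tighter in the sense that it never has to estimate a second-order error, but it loses a $\tfrac65\cdot\tfrac45$ factor and so has less room. Both routes rely on the same underlying probabilistic facts (cardinality and restricted-operator-norm control for the ``small $|(Ax_0)_k|$'' set, and Gaussian operator-norm concentration), and I checked your algebra on the linear part — the expectation identity, $\mathcal{R}_u(Ah)=\tfrac12 Ah+\tfrac12 D\overline{Ah}$, and the cancellation down to $\tfrac12 h_\perp$ when $\scal{x_0}{h}\in\R$ — it is all correct.

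There \emph{is} a real gap, and it is exactly where you flag it: the uniformity over $h$ of the remainder bound $\|e\|^2\le\zeta(\epsilon)m\|h\|^2$. You say ``covering / $\epsilon$-net argument together with Lipschitz and tail estimates,'' but the map $h\mapsto b\odot\phase(A(x_0+h))$ is \emph{not} Lipschitz — it is discontinuous through each hyperplane $\{(Ax)_k=0\}$ — so a naive net-plus-Lipschitz argument does not close. Moreover your third case (the ``large-entry set'' $\{k:|(Ah_\perp)_k|\ge L\|h_\perp\|\}$) is itself $h$-dependent, which makes a union bound over a net genuinely delicate. Note that the paper does \emph{not} net over $h$ in Theorem~\ref{thm:local_convergence}: it reformulates the bad set as $\{k: 6|v|_k\ge|Ax_0|_k\}$ with $v\in\Range(A)$, and Lemma~\ref{lem:first_term} shows directly — via a chaining-free argument combining a cardinality bound (Lemma~\ref{lem:S_geq_bm}) with a uniform bound on $\|A_S\|$ over all small index sets $S$ (Lemma~\ref{lem:S_leq_bm}) — that $\||Ax_0|\odot 1_{|v|\ge|Ax_0|}\|\le\eta\|v\|$ \emph{for every} $v\in\Range(A)$ with $\|v\|<\gamma\|Ax_0\|$, on a single high-probability event. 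That is the missing ingredient in your sketch, and it is the actual content of the proof; until it is supplied, the argument is an outline rather than a proof. If you do supply it, you would be better off replacing your three-way split by the single condition $2|(Ah)_k|\ge|(Ax_0)_k|$, which is both where your quadratic bound fails and the paper's bad-set condition, and which puts you back on the ground that Lemma~\ref{lem:first_term} covers.
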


\begin{proof}
For any $x\in\C^n$, we can write $Ax$ as
\begin{equation}\label{eq:Ax_orth}
Ax = \lambda_x (Ax_0) + \mu_x v^x,
\end{equation}
where $\lambda_x\in\C,\mu_x\in\R^+$, and $v^x\in\Range(A)$ is a unitary vector orthogonal to $Ax_0$.

The following lemma is proven in Paragraph \ref{ss:diff_phase}.
\begin{lem}\label{lem:diff_phase}
For any $z_0,z\in\C$,
\begin{equation*}
|\phase(z_0+z)-\phase(z_0)| \leq 2. 1_{|z|\geq |z_0|/6} + \frac{6}{5}\left|\Im\left(\frac{z}{z_0}\right)\right|.
\end{equation*}
\end{lem}
So, for any $x\in\C^n$,
\begin{align*}
|\phase(\lambda_x)(Ax_0)_i&-(b\odot\phase(Ax))_i|\\
&=\left|\phase(\lambda_x)(Ax_0)_i-|Ax_0|_i\phase((Ax)_i)\right|\\
&=\left|\phase(\lambda_x)(Ax_0)_i-|Ax_0|_i\phase(\lambda_x(Ax_0)_i+\mu_x (v^x)_i)\right|\\
&= |Ax_0|_i\left|\phase(Ax_0)_i-\phase\left((Ax_0)_i+\frac{\mu_x}{\lambda_x} (v^x)_i\right) \right|\\
&\leq 2.|Ax_0|_i1_{|\mu_x/\lambda_x||v^x|_i\geq |Ax_0|_i/6} + \frac{6}{5}\left|\Im \left(\frac{\frac{\mu_x}{\lambda_x}v^x_i}{\phase((Ax_0)_i)}\right)\right|.
\end{align*}
As a consequence,
\begin{align}
||\phase(\lambda_x)(Ax_0)&-b\odot\phase(Ax)||\nonumber\\
&\leq \left|\left|
2.|Ax_0|\odot 1_{|\mu_x/\lambda_x||v^x|\geq |Ax_0|/6} + \frac{6}{5}\left|\Im \left(\left(\frac{\mu_x}{\lambda_x}v^x\right)\odot\overline{\phase(Ax_0)}\right)\right|\,
\right|\right|\nonumber\\
&\leq 2\left|\left|
|Ax_0|\odot 1_{6|\mu_x/\lambda_x||v^x|\geq |Ax_0|}\right|\right| + \frac{6}{5}\left|\left|\Im \left(\left(\frac{\mu_x}{\lambda_x}v^x\right)\odot\overline{\phase(Ax_0)}\right)\right|
\right|.\label{eq:error_sum}
\end{align}
Two technical lemmas allow us to upper bound the terms of this sum. The first one is proved in Paragraph \ref{ss:first_term}, the second one in Paragraph \ref{ss:second_term}.
\begin{lem}\label{lem:first_term}
For any $\eta>0$, there exists $C_1,C_2,M,\gamma>0$ such that the inequality
\begin{equation*}
||\,|Ax_0|\odot 1_{|v|\geq |Ax_0|}||\leq \eta ||v||
\end{equation*}
holds for any $v\in\Range(A)$ such that $||v||<\gamma ||Ax_0||$, with probability at least
\begin{equation*}
1-C_1\exp(-C_2m),
\end{equation*}
when $m\geq Mn$.
\end{lem}
\begin{lem}\label{lem:second_term}
For $M,C_1>0$ large enough, and $C_2>0$ small enough, when $m\geq M n$, the property
\begin{equation*}
||\Im(v\odot\overline{\phase(Ax_0)})||\leq \frac{4}{5}||v||
\end{equation*}
holds for any $v\in\Range(A)\cap \{Ax_0\}^\perp$, with probability at least
\begin{equation*}
1-C_1\exp(-C_2 m).
\end{equation*}
\end{lem}
Let us choose $\eta>0$ such that
\begin{equation*}
12\eta + \frac{24}{25}<1.
\end{equation*}
We define $\gamma>0$ as in Lemma \ref{lem:first_term}. The events described in Lemmas \ref{lem:first_term} and \ref{lem:second_term} hold with probability at least
\begin{equation*}
1-2C_1\exp(-C_2m).
\end{equation*}
When this happens, for all $x$ such that
\begin{equation*}
\left|\frac{\mu_x}{\lambda_x}\right|< \frac{\gamma}{6} \,||Ax_0||,
\end{equation*}
the terms in Equation \eqref{eq:error_sum} can be bounded as in the lemmas, because
\begin{equation*}
\left|\left| 6\frac{\mu_x}{\lambda_x}v^x
\right|\right|=6\left|\frac{\mu_x}{\lambda_x}\right|<\gamma||Ax_0||,
\end{equation*}
and $\frac{\mu_x}{\lambda_x}v^x\in \Range(A)\cap\{Ax_0\}^\perp$. So the following inequality holds:
\begin{align}\label{eq:consequence_lemmas}
||\phase(\lambda_x)(Ax_0)&-b\odot\phase(Ax)||
\leq \left(12 \eta 
+\frac{24}{25} \right)\left|\frac{\mu_x}{\lambda_x}\right|.
\end{align}
For any $x$ such that $\inf_{\phi\in\R}||e^{i\phi}x_0-x||\leq\epsilon||x_0||$, if we set $\epsilon^x=\inf_{\phi\in\R}\frac{||e^{i\phi}x_0-x||}{||x_0||}\leq\epsilon$,
\begin{equation*}
\inf_{\phi\in\R}||e^{i\phi}Ax_0-Ax||\leq \epsilon^x|||A|||\,||x_0||,
\end{equation*}
so, using Equation \eqref{eq:Ax_orth},
\begin{equation*}
\inf_{\phi\in\R}|e^{i\phi}-\lambda_x|^2||Ax_0||^2+|\mu_x|^2\leq (\epsilon^{x})^2|||A|||^2||x_0||^2,
\end{equation*}
which implies
\begin{gather*}
|\mu_x|\leq \epsilon^x|||A|||\,||x_0||;\\
|\lambda_x| \geq 1-\epsilon^x\frac{|||A|||\,||x_0||}{||Ax_0||}.
\end{gather*}
We can thus deduce from Equation \eqref{eq:consequence_lemmas} that, on an event of probability at least $1-2C_1\exp(-C_2m)$, as soon as $\inf_{\phi\in\R}||e^{i\phi}x_0-x||\leq \epsilon||x_0||$,
\begin{equation}\label{eq:maj_image}
||\phase(\lambda_x)(Ax_0)-b\odot\phase(Ax)||
\leq \left(12 \eta 
+\frac{24}{25} \right) \frac{\epsilon^x}{1-\epsilon^x\frac{|||A|||\,||x_0||}{||Ax_0||}}|||A|||\,||x_0||
\end{equation}
if
\begin{equation}\label{eq:condition_gamma}
\frac{\epsilon^x}{1-\epsilon^x\frac{|||A|||\,||x_0||}{||Ax_0||}}\frac{|||A|||\,||x_0||}{||Ax_0||}
< \frac{\gamma}{6}.
\end{equation}
Equation \eqref{eq:maj_image} implies in particular that, if Condition \eqref{eq:condition_gamma} holds,
\begin{equation}\label{eq:maj_signal}
||\phase(\lambda_x)x_0-A^\dag(b\odot\phase(Ax))||
\leq \left(12 \eta 
+\frac{24}{25} \right) \frac{\epsilon^x}{1-\epsilon^x\frac{|||A|||\,||x_0||}{||Ax_0||}}|||A^\dag|||\, |||A|||\,||x_0||.
\end{equation}

To conclude, it is enough to control the norms of $A$ and $A^\dag$ with the following classical result.
\begin{prop}[\citet*{davidson}, Thm II.13]\label{prop:davidson}
If $A$ is chosen according to Equation \eqref{eq:def_A}, then, for any $t$, with probability at least
\begin{equation*}
1-2\exp\left(-mt^2\right),
\end{equation*}
we have, for any $x\in\C^n$,
\begin{equation*}
\sqrt{m}\left(1-\sqrt{\frac{n}{m}}-t\right)||x||
\leq ||Ax||
\leq \sqrt{m}\left(1+\sqrt{\frac{n}{m}}+t\right)||x||.
\end{equation*}
\end{prop}
From this proposition, if we choose $\delta,M,t$ such that
\begin{gather*}
12\eta+\frac{24}{25} < \delta<1;\\
\epsilon<\min\left(\frac{1}{4},\frac{\gamma}{24},\frac{1}{2\delta}\left(\delta-12\eta-\frac{24}{25}\right)\right);\\
\frac{1+\sqrt{\frac{1}{M}}+t}{1-\sqrt{\frac{1}{M}}-t}\leq \min\left(2,\frac{(1-2\epsilon)\delta}{12\eta+\frac{24}{25}}\right).
\end{gather*}
we have, for $m\geq Mn$, with probability at least $1-2e^{-mt^2}$, as soon as $\epsilon^x\leq\epsilon$,
\begin{align*}
\frac{\epsilon^x}{1-\epsilon^x\frac{|||A|||\,||x_0||}{||Ax_0||}}
\frac{|||A|||\,||x_0||}{||Ax_0||}
&\leq \frac{\epsilon}{1-\epsilon\frac{|||A|||\,||x_0||}{||Ax_0||}}
\frac{1+\sqrt{\frac{1}{M}}+t}{1-\sqrt{\frac{1}{M}}-t}\\
&\leq \frac{2\epsilon}{1-\frac{1}{4}\frac{|||A|||\,||x_0||}{||Ax_0||}}
\\
&\leq \frac{2\epsilon}{1-\frac{1}{4}\frac{1+\sqrt{\frac{1}{M}}+t}{1-\sqrt{\frac{1}{M}}-t}}\\
&\leq 4\epsilon\\
&<\frac{\gamma}{6},
\end{align*}
and
\begin{align*}
\left(12 \eta 
+\frac{24}{25} \right) &\frac{\epsilon^x}{1-\epsilon^x\frac{|||A|||\,||x_0||}{||Ax_0||}}|||A^\dag|||\, |||A|||\,||x_0||\\
&\leq \left(12 \eta 
+\frac{24}{25} \right) \frac{\epsilon^x}{1-2\epsilon}|||A^\dag|||\, |||A|||\,||x_0||\\
&\leq \left(12 \eta 
+\frac{24}{25} \right) \frac{\epsilon^x}{1-2\epsilon}
\frac{1+\sqrt{\frac{1}{M}}+t}{1-\sqrt{\frac{1}{M}}-t}||x_0||\\
&\leq \delta \epsilon^x||x_0||.
\end{align*}

We now combine this with Equation \eqref{eq:maj_signal}: with probability at least
\begin{equation*}
1-2C_1\exp(-C_2m)-2\exp\left(-mt^2\right),
\end{equation*}
we have, for all $x$ such that $\inf_{\phi\in\R}||e^{i\phi}x_0-x||\leq\epsilon||x_0||$,
\begin{equation*}
||\phase(\lambda_x)x_0-A^\dag(b\odot \phase(Ax))||
\leq \delta\epsilon^x||x_0||=\delta \inf_{\phi\in\R}||e^{i\phi}x_0-x||.
\end{equation*}

\end{proof}

\subsection{Global convergence}

In the last paragraph, we have seen that the alternating projections operator is contractive, with high probability, in an $\epsilon ||x_0||$-neighborhood of the solution $x_0$. This implies that, if the starting point of alternating projections is at distance at most $\epsilon||x_0||$ from $x_0$, alternating projections converge to $x_0$. So if we have a way to find such an initial point, we obtain a globally convergent algorithm.

Several initialization methods have been proposed that achieve the precision we need with an optimal number of measurements, that is $m=O(n)$. Let us mention the truncated spectral initialization by \citet*{candes_wirtinger2} (improving upon the slightly suboptimal spectral initializations introduced by \citet*{netrapalli} and \citet*{candes_wirtinger}), the null initialization by \citet*{chen_fannjiang} and the method described by \citet*{gao_xu}. All these methods consist in computing the largest or smallest eigenvector of
\begin{equation*}
\sum_{i=1}^m\alpha_i a_ia_i^*,
\end{equation*}
where the $\alpha_1,\dots,\alpha_m$ are carefully chosen coefficients, that depend only on $b$.

The method of \citep*{candes_wirtinger2}, for example, has the following guarantees.
\begin{thm}[Proposition 3 of \citep*{candes_wirtinger2}]\label{thm:guarantee_init}
Let $\epsilon>0$ be fixed.

We define $z$ as the main eigenvector of
\begin{equation}
\frac{1}{m}\sum_{i=1}^m|a_i^*x_0|^2 a_ia_i^* 1_{|a_i^*x_0|^2\leq \frac{9}{m}\sum_{j=1}^m|a^*_ix_0|^2}.\label{eq:init_matrix}
\end{equation}
There exist $C_1,C_2,M>0$ such that, with probability at least
\begin{equation*}
1-C_1\exp(-C_2 m),
\end{equation*}
the vector $z$ obeys
\begin{equation*}
\inf_{\phi\in\R,\lambda\in\R^*_+}||e^{i\phi}x_0-\lambda z||\leq \epsilon ||x_0||,
\end{equation*}
provided that $m\geq M n$.
\end{thm}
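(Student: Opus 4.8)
The plan is to show that the matrix in \eqref{eq:init_matrix} is, with overwhelming probability, a small operator-norm perturbation of a deterministic matrix of the form $\alpha\Id+\beta x_0x_0^*$ with a strictly positive spectral gap $\beta$, and then to invoke a $\sin\Theta$-type perturbation bound for the leading eigenvector.

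\textbf{Step 1 (normalization and target matrix).} Since the conclusion is invariant under $x_0\mapsto cx_0$, $c>0$ (the truncation level in \eqref{eq:init_matrix} scales accordingly, and $\lambda$ absorbs the rest), I would assume $\|x_0\|=1$. Write $Y$ for the matrix in \eqref{eq:init_matrix} and $T\overset{def}{=}\frac9m\sum_{j=1}^m|a_j^*x_0|^2$ for its random truncation level. Replacing $T$ by the deterministic value $9$, the unitary invariance of the Gaussian law lets me compute
\begin{equation*}
\E\left[|a_i^*x_0|^2 a_ia_i^*\,1_{|a_i^*x_0|^2\le 9}\right]=\alpha\Id+\beta x_0x_0^*,
\end{equation*}
where, since $|a_i^*x_0|^2$ is $\mathrm{Exp}(1)$-distributed, $\alpha=\int_0^{9}te^{-t}\,dt$ and $\beta=\int_0^{9}t^2e^{-t}\,dt-\alpha$; an elementary estimate gives $\beta\ge 1/2>0$. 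Hence $D\overset{def}{=}\alpha\Id+\beta x_0x_0^*$ has $x_0$ as its unique leading eigenvector, with eigenvalue $\alpha+\beta$ and spectral gap $\beta$.

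\textbf{Step 2 (the data-dependent threshold).} The summands of $Y$ are not independent, because $T$ involves all the $a_j$. I would first use the standard deviation bound for a sum of $m$ i.i.d. $\mathrm{Exp}(1)$ variables to get $|T-9|\le 9\rho$ (with $\rho>0$ as small as we like) on an event of probability $\ge 1-2\exp(-c\rho^2m)$. On this event, since $Y$ is nondecreasing in the threshold in the PSD order (raising the threshold only adds nonnegative rank-one terms), one has $Y^{-}\preceq Y\preceq Y^{+}$, where $Y^{\pm}$ are defined exactly like $Y$ but with the \emph{deterministic} thresholds $9(1\pm\rho)$. The matrices $Y^{\pm}$ are now sums of independent rank-one terms, and $\|\E Y^{+}-\E Y^{-}\|=O(\rho)$ by continuity of the integrals in Step 1. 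So it suffices to concentrate $Y^{\pm}$ around their means.

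\textbf{Step 3 (uniform concentration).} Fix a $\frac14$-net $\mathcal N$ of the complex unit sphere of $\C^n$, of cardinality $e^{O(n)}$. For a fixed $u\in\mathcal N$, the scalars $\xi_i\overset{def}{=}|a_i^*x_0|^2|a_i^*u|^2\,1_{|a_i^*x_0|^2\le 9(1+\rho)}$ are i.i.d., nonnegative, and sub-exponential (truncation bounds the first factor by $O(1)$, and the second is a chi-square-type variable), so Bernstein's inequality gives $\left|\frac1m\sum_i\xi_i-\E\xi_1\right|$ small with probability $\ge 1-2\exp(-c't^2m)$. A union bound over $\mathcal N$ costs a factor $e^{O(n)}$, absorbed once $m\ge Mn$ with $M$ large, and a routine net-to-sphere argument upgrades this to $\|Y^{+}-\E Y^{+}\|\le\rho$ uniformly (similarly for $Y^{-}$). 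Combining Steps 2--3, on an event of probability $\ge 1-C_1\exp(-C_2m)$ we obtain $\|Y-D\|\le C\rho$ for an absolute constant $C$.

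\textbf{Step 4 (eigenvector perturbation) and main obstacle.} Since $D$ has leading eigenvector $x_0$ with gap $\beta\ge 1/2$, the Davis--Kahan $\sin\Theta$ theorem applied to $Y$ and $D$ yields that the leading unit eigenvector $z$ of $Y$ satisfies $\inf_{\phi\in\R}\|e^{i\phi}x_0-z\|\le 2\sqrt2\,C\rho/\beta$; choosing $\rho$ (hence $M$) so that this is $\le\epsilon$ finishes the proof, the parameter $\lambda$ being irrelevant because $\|z\|=\|x_0\|=1$. The routine parts are the Gaussian integral of Step 1 and the Davis--Kahan step; the delicate point is Steps 2--3 taken together, where one must simultaneously decouple the data-dependent truncation level from the summands without losing the exponential probability, and obtain a \emph{uniform} concentration of a sum of merely sub-exponential (not bounded) rank-one matrices with only $m=O(n)$ samples --- which is exactly where the truncation at level $9\|x_0\|^2$ is essential and where the $e^{O(n)}$ net bound is tight.
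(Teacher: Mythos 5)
The paper does not prove Theorem~\ref{thm:guarantee_init}: it is imported verbatim as Proposition~3 of \cite{candes_wirtinger2}, so there is no internal proof to compare against. Judged on its own, your sketch is sound and, modulo polishing, establishes the stated result. The key pieces all work: the Gaussian decomposition $a_i=(a_i^*x_0)x_0+w_i$ with $w_i\perp x_0$ independent of $a_i^*x_0$ gives $\E Y=\alpha\Id+\beta x_0x_0^*$ with $\alpha=\int_0^9te^{-t}dt=1-10e^{-9}$ and $\beta=\int_0^9 t^2e^{-t}dt-\alpha=1-91e^{-9}\approx 0.99$, so the spectral gap is indeed bounded below; the PSD sandwich $Y^-\preceq Y\preceq Y^+$ between fixed-threshold versions is correct because raising the threshold only adds nonnegative rank-one summands, and it cleanly decouples the data-dependent truncation level (which concentrates to relative precision $\rho$ with probability $1-2e^{-c\rho^2m}$ by Bernstein for sums of $\mathrm{Exp}(1)$ variables); and, after truncation, the quadratic forms $u^*Y^\pm u$ are averages of i.i.d.\ variables dominated by $O(1)\cdot|a_i^*u|^2$, hence uniformly sub-exponential, so Bernstein plus a $\frac14$-net and the standard norm-from-net bound $\|A\|\le 2\sup_{u\in\mathcal N}|u^*Au|$ give $\|Y^\pm-\E Y^\pm\|\le\rho$ once $m\ge Mn$ with $M=M(\rho)$. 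The remaining $\|\E Y^\pm-D\|=O(\rho)$ is a crude integral estimate, and Davis--Kahan (which requires $\|Y-D\|<\beta/2$, guaranteed for $\rho$ small) closes the argument; the explicit identity $\inf_\phi\|e^{i\phi}x_0-z\|^2=2(1-|\langle x_0,z\rangle|)$ converts the $\sin\Theta$ bound to the stated distance. You correctly identify the two load-bearing facts: the truncation is what makes the summands sub-exponential at all (without it $|a_i^*x_0|^2|a_i^*u|^2$ has heavier tails and the net argument fails for $m=O(n)$), and the $e^{O(n)}$ net cardinality is absorbed precisely because $m\ge Mn$. I see no genuine gap; the constants $C_1,C_2,M$ are permitted to depend on $\epsilon$ (equivalently $\rho$), which your construction respects.
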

Combining this initialization procedure with alternating projections, we get Algorithm \ref{alg:algo_complet}. As shown by the following corollary, it converges towards the correct solution, at a linear rate, with high probability, for $m=O(n)$.
\begin{algorithm}
\SetKwInOut{Input}{Input}
\SetKwInOut{Output}{Output}
\Input{$A\in\C^{m\times n},b=|Ax_0|\in\R^m,T\in\N^*$.}
\BlankLine
\textbf{Initialization:} set $z_0$ to be the main eigenvector of the matrix in Equation \eqref{eq:init_matrix}.

\For{$t=1$ \KwTo $T$}{
Set $z_{t}\leftarrow A^\dag(b\odot\phase(Az_{t-1}))$.}
\BlankLine
\Output{$z_T$.}
\BlankLine
\caption{Alternating projections with truncated spectral initialization\label{alg:algo_complet}}
\end{algorithm}
\begin{cor}\label{cor:global_convergence}
There exist $C_1,C_2,M>0,\delta\in]0;1[$ such that, with probability at least
\begin{equation*}
1-C_1\exp(-C_2m),
\end{equation*}
Algorithm \ref{alg:algo_complet} satisfies
\begin{equation}\label{eq:global_convergence}
\forall t\in\N^*,\quad\quad
\inf_{\phi\in\R}||e^{i\phi}x_0-z_t||\leq \delta^t ||x_0||,
\end{equation}
provided that $m\geq Mn$.
\end{cor}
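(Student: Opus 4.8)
The plan is to chain the local contraction of Theorem~\ref{thm:local_convergence} with the accuracy of the truncated spectral initialization given by Theorem~\ref{thm:guarantee_init}; the only genuinely delicate point is reconciling the scale-invariance in the initialization guarantee with the alternating projections iteration. Write $T(x)=A^\dag(b\odot\phase(Ax))$ for the alternating projections operator and $d(x)=\inf_{\phi\in\R}||e^{i\phi}x_0-x||$ for the distance to the solution set. I will use three elementary facts: $d$ is invariant under a global phase and its infimum is attained (continuity on the compact circle); $T$ is equivariant under a global phase, $T(e^{i\phi}x)=e^{i\phi}T(x)$; and $T$ is invariant under positive rescaling, $T(\lambda x)=T(x)$ for $\lambda>0$. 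The last two hold because $\phase(\lambda e^{i\phi}\,\cdot\,)=e^{i\phi}\phase(\,\cdot\,)$ when $\lambda>0$.

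First I would fix $\epsilon$ to be the constant furnished by Theorem~\ref{thm:local_convergence} (shrinking it if needed so that $\epsilon\le 1$), and apply Theorem~\ref{thm:guarantee_init} with this $\epsilon$: for $m\ge M_1 n$, on an event $\mathcal{E}_1$ of probability at least $1-C_1\exp(-C_2 m)$, there exist $\phi_0\in\R$ and $\lambda_0>0$ with $||e^{i\phi_0}x_0-\lambda_0 z_0||\le\epsilon||x_0||$, i.e.\ $d(\lambda_0 z_0)\le\epsilon||x_0||$. Since $z_1=T(z_0)=T(\lambda_0 z_0)$ by positive-rescaling invariance of $T$, the first iterate is the image under $T$ of a point in the $\epsilon||x_0||$-ball around the solution, even though $z_0$ itself need not be.

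Next I would invoke Theorem~\ref{thm:local_convergence}: for $m\ge M_2 n$, on an event $\mathcal{E}_2$ of probability at least $1-C_1'\exp(-C_2' m)$ (enlarging $M_2$ if necessary so that Proposition~\ref{prop:davidson} also makes $A$ injective, hence the iteration well defined), one has $d(T(x))\le\delta\, d(x)$ for every $x$ with $d(x)\le\epsilon||x_0||$. This phase-invariant form is exactly what is obtained in the course of the proof of Theorem~\ref{thm:local_convergence}; alternatively it follows from the stated inequality applied to $e^{-i\phi^\star}x$, where $\phi^\star$ realizes $d(x)$, together with the equivariance of $T$. Granting it, work on $\mathcal{E}_1\cap\mathcal{E}_2$ and argue by induction on $t\ge 1$ that $d(z_t)\le\delta^t\epsilon||x_0||$: for $t=1$, $d(z_1)=d(T(\lambda_0 z_0))\le\delta\,d(\lambda_0 z_0)\le\delta\epsilon||x_0||$; and if $d(z_t)\le\delta^t\epsilon||x_0||\le\epsilon||x_0||$ then $d(z_{t+1})=d(T(z_t))\le\delta\,d(z_t)\le\delta^{t+1}\epsilon||x_0||$. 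Since $\epsilon\le 1$ this gives $d(z_t)\le\delta^t||x_0||$ for all $t\in\N^*$, which is \eqref{eq:global_convergence}. Finally take $M=\max(M_1,M_2)$ and merge the two exponentially small probabilities into a single $C_1\exp(-C_2 m)$ by a union bound, after relabeling constants.

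I do not expect a real obstacle here: the statement is essentially an assembly of the two preceding theorems. The one subtlety to handle with care is the step above where the initialization feeds into the iteration — the spectral initializer is controlled only up to an unknown positive factor $\lambda_0$, so one cannot bound $||x_0-z_0||$ directly and must instead exploit the scale-invariance of $T$ — together with the observation that the contraction must be applied in its phase-invariant form, so that the minimizing phase is allowed to change from iterate to iterate without breaking the induction.
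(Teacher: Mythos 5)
Your proof is correct and follows essentially the same route as the paper's: fix the $\epsilon$ of Theorem~\ref{thm:local_convergence}, invoke Theorem~\ref{thm:guarantee_init} at that $\epsilon$, use the positive-homogeneity of the map $x\mapsto A^\dag(b\odot\phase(Ax))$ to absorb the unknown rescaling $\lambda_0>0$ into the first iteration, then induct with the contraction. You are in fact a bit more explicit than the paper on one point the paper silently elides — that Theorem~\ref{thm:local_convergence} is stated with $\delta\,||x_0-x||$ on the right but must be used in the phase-invariant form $\delta\inf_\phi||e^{i\phi}x_0-x||$ — and your two justifications of that upgrade (reading it off the last line of the theorem's proof, or applying the stated bound to $e^{-i\phi^\star}x$ and using equivariance of the operator) are both valid.
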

\begin{proof}
Let us fix $\epsilon,\delta\in]0;1[$ as in Theorem \ref{thm:local_convergence}. Let us assume that the properties described in Theorems \ref{thm:local_convergence} and \ref{thm:guarantee_init} hold; it happens on an event of probability at least
\begin{equation*}
1-C_1\exp(-C_2m),
\end{equation*}
provided that $m\geq Mn$, for some constants $C_1,C_2,M>0$.

Let us prove that, on this event, Equation \eqref{eq:global_convergence} also holds.

We proceed by recursion. From Theorem \ref{thm:guarantee_init}, there exist $\phi\in\R,\lambda\in\R^*_+$ such that
\begin{equation*}
||e^{i\phi}x_0-\lambda z_0||\leq\epsilon||x_0||.
\end{equation*}
So, from Theorem \ref{thm:local_convergence}, applied to $x=\lambda z_0$,
\begin{align*}
\inf_{\phi\in\R}||e^{i\phi}x_0-z_1||&
=\inf_{\phi\in\R}||e^{i\phi}x_0-A^\dag(b\odot \phase(z_0))||\\
=\inf_{\phi\in\R}||e^{i\phi}x_0-A^\dag(b\odot \phase(\lambda z_0))||\\
&\leq \delta\inf_{\phi\in\R} ||e^{i\phi}x_0-\lambda z_0||\\
&\leq \epsilon \delta ||x_0||.
\end{align*}
This proves Equation \eqref{eq:global_convergence} for $t=1$.

The same reasoning can be reapplied to also prove the equation for $t=2,3,\dots$.
\end{proof}

\subsection{Complexity\label{ss:complexity}}

Let $\eta>0$ be the relative precision that we want to achieve:
\begin{equation*}
\inf_{\phi\in\R}||e^{i\phi}x_0-z_T||\leq \eta||x_0||.
\end{equation*}
Let us compute the number of operations that Algorithm \ref{alg:algo_complet} requires to reach this precision.

 The main eigenvector of the matrix defined in Equation \eqref{eq:init_matrix} can be computed - up to precision $\eta$ - in approximately $O(\log(1/\eta)+\log(n))$ power iterations. Each power iteration is essentially a matrix-vector multiplication, and thus requires $O(mn)$ operations.\footnote{These matrix-vector multiplications can be computed without forming the whole matrix (which would require $O(mn^2)$ operations), because this matrix factorizes as
\begin{equation*}
\frac{1}{m}A^* \mathrm{Diag}(|Ax_0|^2\odot I) A,
\end{equation*}
where $I\in\R^m$ is such that $\forall i\leq m,I_i=1_{|A_ix_0|^2\leq\frac{9}{m}\sum_{j=1}^m|A_ix_0|^2}$.}
As a consequence, the complexity of the initialization is
\begin{equation*}
O(mn\left(\log(1/\eta)+\log(n)\right)).
\end{equation*}
Then, at each step of the \texttt{for} loop, the most costly operation is the multiplication by $A^\dag$. When performed with the conjugate gradient method, it requires $O(mn\log(1/\eta))$ operations. To reach a precision equal to $\eta$, we need to perform $O(\log(1/\eta))$ iterations of the loop. So the total complexity of Algorithm \ref{alg:algo_complet} is
\begin{equation*}
O(mn\left(\log^2(1/\eta)+\log(n)\right)).
\end{equation*}

Let us mention that, when $A$ has a special structure, there may exist fast algorithms for the multiplication by $A$ and the orthogonal projection onto $\Range(A)$. In the case of masked Fourier measurements considered in \citep*{candes_li2}, for example, assuming that our convergence theorem still holds, despite the non-Gaussianity of the measurements, the complexity of each of these operations reduces to $O(m\log n)$, yielding a global complexity of
\begin{equation*}
O(m\log(n)(\log(1/\eta)+\log(n))).
\end{equation*}
The complexity is then almost linear in the number of measurements.

\begin{figure}
\centering
\begin{tabular}{|c|c|c|}
\hline
&Alternating projections&Truncated Wirtinger flow\\\hline
Unstructured case&$O\left(mn\left(\log^2(1/\eta)+\log(n)\right)\right)$
&$O\left(mn\left(\log(1/\eta)+\log(n)\right)\right)$\\\hline
Fourier masks&$O\left(m\log(n)\left(\log(1/\eta)+\log(n)\right)\right)$
&$O\left(m\log(n)\left(\log(1/\eta)+\log(n)\right)\right)$\\\hline
\end{tabular}
\caption{Complexity of alternating projections with initialization, and truncated Wirtinger flow.\label{fig:complexity}}
\end{figure}

As a comparison, Truncated Wirtinger flow, which is currently the most efficient known method for phase retrieval from Gaussian measurements, has an identical complexity, up to a $\log(1/\eta)$ factor in the unstructured case (see Figure \ref{fig:complexity}).

\section{Alternating projections without good initialization\label{s:without_init}}

\subsection{Main result}

In this section, we assume that the number of measurements is quadratic in $n$ instead of linear (that is $m\geq Mn^2$, for $M$ large enough). In this setting, we show that any initialization vector $x$, unless it is almost orthogonal to the ground truth $x_0$, yields perfect recovery when provided to the alternating projection routine. This in particular proves that, in this regime, there is no stagnation point (unless possibly among the vectors almost orthogonal to $x_0$).

The convergence rate is almost as good as in the case where a good initialization is provided: after $O(\log n)$ iterations, it becomes linear.

We say that a vector $x\in\C^n$ is \textit{not almost orthogonal} to $x_0$ if
\begin{equation*}
\mu \frac{||x_0||\,||x||}{\sqrt{n}}\leq |\scal{x_0}{x}|,
\end{equation*}
for some fixed constant $\mu>0$. In what follows, we assume $\mu=1$, but it is only to simplify the notations; the same result would hold for any value of $\mu$.

We remark that, in the unit sphere, the proportion (in terms of volume) of vectors that are almost orthogonal to $x_0$ goes to a constant depending on $\mu$ when $n$ goes to $+\infty$. This constant can be arbitrarily small if $\mu$ is small. As a consequence, if we choose $x\in\C^n$ according to an isotropic probability law, the probability that it is almost orthogonal to $x_0$ can be arbitrarily small.

To prove global convergence, we first need to understand what happens when we apply one iteration of the alternating projections routine to some vector $x$. We only consider vectors $x$ that are not almost orthogonal to $x_0$. We also do not consider vectors that are very close to $x_0$: these vectors are already taken care of by Theorem \ref{thm:local_convergence}.

\begin{thm}\label{thm:global_convergence}
For any $\epsilon>0$, there exist $C_1,C_2,M,\delta>0$ such that, if $m\geq Mn^2$, then, with probability at least
\begin{equation*}
1-C_1\exp(-C_2m^{1/8}),
\end{equation*}
the following property holds: for any $x\in\C^n$ such that
\begin{equation}\label{eq:global_cond}
\frac{||x_0||\,||x||}{\sqrt{n}}\leq
|\scal{x_0}{x}|\leq (1-\epsilon)||x_0||\,||x||,
\end{equation}
we have
\begin{equation}\label{eq:global_prop}
\frac{|\scal{x_0}{A^\dag(b\odot\phase(Ax))}|}{||x_0||\,||A^\dag(b\odot\phase(Ax))||} \geq
(1+\delta)\frac{|\scal{x_0}{x}|}{||x_0||\,||x||}.
\end{equation}
\end{thm}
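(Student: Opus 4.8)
The plan is to reduce the statement to a computation about how the "correlation ratio"
\(\rho(x) \overset{def}{=} |\scal{x_0}{x}|/(||x_0||\,||x||)\) transforms under one alternating projection step, and then to control the relevant quantities via concentration, exploiting the fact that \(m\geq Mn^2\) makes empirical averages over the sensing vectors concentrate around their expectations with accuracy \(O(n/\sqrt m)=O(1/\sqrt m)\,\cdot\sqrt n\), which is \(o(1)\). First I would normalize: by scale and phase invariance of \eqref{eq:global_prop} we may assume \(||x_0||=||x||=1\) and decompose \(x = \alpha x_0 + \beta w\) with \(w\perp x_0\), \(||w||=1\), \(\alpha=\scal{x_0}{x}\), where by \eqref{eq:global_cond} we have \(1/\sqrt n\leq|\alpha|\leq 1-\epsilon\). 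Then \(Ax = \alpha(Ax_0) + \beta(Aw)\), and since \(A\) acts (after conditioning on the relevant directions) roughly like \(\sqrt m\) times an isometry, \(Ax_0\) and \(Aw\) behave like independent complex Gaussian vectors in \(\C^m\) of norm \(\approx\sqrt m\). The next iterate is \(z=A^\dag(b\odot\phase(Ax))\), and the quantity to understand is \(\scal{x_0}{z} = \scal{Ax_0}{AA^\dag(b\odot\phase(Ax))}/\|\cdot\| = \scal{Ax_0}{b\odot\phase(Ax)}\) together with \(\|z\|^2 = \|AA^\dag(b\odot\phase(Ax))\|^2\), because \(AA^\dag\) is the orthogonal projection onto \(\Range(A)\) and \(Ax_0\in\Range(A)\).

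Concretely, writing \((Ax_0)_i = r_i e^{i\theta_i}\) (so \(b_i=r_i\)) and \((Aw)_i = s_i e^{i\psi_i}\), one computes pointwise
\[
(b\odot\phase(Ax))_i = r_i\,\phase\bigl(\alpha r_i e^{i\theta_i} + \beta s_i e^{i\psi_i}\bigr),
\]
so that \(\overline{(Ax_0)_i}\,(b\odot\phase(Ax))_i = r_i^2\,\phase(\alpha + \beta (s_i/r_i) e^{i(\psi_i-\theta_i)})\). Because \(Ax_0\) and \(Aw\) are (conditionally) independent isotropic complex Gaussians, the phase \(e^{i(\psi_i-\theta_i)}\) is uniform and independent of the moduli, so
\[
\E\bigl[\tfrac1m\scal{Ax_0}{b\odot\phase(Ax)}\bigr]
= \E\Bigl[ r^2\,\phase\bigl(\alpha + \beta \tfrac{s}{r}U\bigr)\Bigr]
= \phase(\alpha)\cdot g\!\left(\tfrac{|\beta|}{|\alpha|}\right),
\]
where \(U\) is uniform on the unit circle and \(g\) is an explicit decreasing function with \(g(0)=\E[r^2]=1\) and \(g\) strictly less than the identity-preserving value for \(t>0\); the point is \(g(t)>0\) on the relevant range and, crucially, \(g(t)/\big(\text{something}\big)\) beats \(|\alpha|\). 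A similar computation (using \(AA^\dag\) is a projection and splitting into the \(Ax_0\)-component and its orthogonal complement) gives \(\E[\tfrac1m\|z\|^2 ] = g(|\beta|/|\alpha|)^2 + h(|\beta|/|\alpha|)\) for an explicit \(h\ge 0\) controlling the energy of \(b\odot\phase(Ax)\) orthogonal to \(Ax_0\) inside \(\Range(A)\). Plugging in, the \emph{population} version of the new correlation ratio is
\[
\frac{|\scal{x_0}{z}|}{\|z\|} \approx \frac{g(t)}{\sqrt{g(t)^2 + h(t)}}, \qquad t=\frac{|\beta|}{|\alpha|}=\frac{\sqrt{1-|\alpha|^2}}{|\alpha|},
\]
and the heart of the argument is the deterministic inequality: there is \(\delta>0\) (depending on \(\epsilon\)) such that \(g(t)/\sqrt{g(t)^2+h(t)} \geq (1+\delta)\,|\alpha| = (1+\delta)/\sqrt{1+t^2}\) for all \(t\) with \(|\alpha|\le 1-\epsilon\), i.e. \(t\) bounded below. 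Equivalently \((1+t^2)g(t)^2 \geq (1+\delta)^2(g(t)^2+h(t))\); I would verify this by computing \(g,h\) in closed form (integrals against the Rayleigh/exponential law of \(r^2,s^2\)) and checking the inequality is strict at \(t=0\) with room to spare, uniform on compact \(t\)-ranges, and that as \(t\to\infty\) both sides behave comparably but with the left dominating (this reflects the "contraction towards \(x_0\)" mechanism). The lower bound \(|\alpha|\ge 1/\sqrt n\) is what guarantees \(t\le\sqrt n\), keeping us away from the degenerate \(t=\infty\).

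The remaining work is turning the population identities into high-probability statements with the stated failure probability \(C_1\exp(-C_2 m^{1/8})\), \emph{uniformly} over all admissible \(x\). For a fixed \(x\), the quantities \(\tfrac1m\scal{Ax_0}{b\odot\phase(Ax)}\) and \(\tfrac1m\|z\|^2\) are averages of \(m\) independent (given the two-dimensional conditioning) bounded-ish terms, so they concentrate around their means with Gaussian/subexponential tails at scale \(1/\sqrt m\); the perturbation from treating \(AA^\dag\) on the \(2\)-dimensional span \(\mathrm{Vect}\{Ax_0, Aw\}\) as exact, and from \(\|Ax_0\|,\|Aw\|\) fluctuating, is \(O(\sqrt{n/m})=O(1/\sqrt m)\) by Proposition \ref{prop:davidson} and standard Gaussian tail bounds. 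To upgrade to a uniform statement I would use a net argument over the parameters \((\alpha,w)\): the dependence on \(x\) enters only through \(\alpha\) and the direction \(w\), and the map \(x\mapsto z\) is Lipschitz off a bad set, so an \(\exp(-c m^{1/8})\)-net in \(\C^n\) (size \(\exp(O(n\log m))\), absorbed since \(m\ge Mn^2\)) combined with a union bound suffices — this is exactly where the somewhat weak exponent \(m^{1/8}\) rather than \(m\) comes from, as one must budget for the net cardinality and for the discontinuity of \(\phase\). The main obstacle, and the step I expect to require the most care, is precisely this uniformization in the presence of the discontinuous operator \(\phase\): one cannot naively Lipschitz-bound \(x\mapsto b\odot\phase(Ax)\), so I would instead control \(\phase\)-perturbations using Lemma \ref{lem:diff_phase}-type estimates (bounding the measure of indices where \(Ax\) is "small", à la Lemma \ref{lem:first_term}) to show that on the good event the discontinuity contributes only \(o(1)\) to the relevant inner products across the net — after which the clean deterministic inequality \((1+t^2)g(t)^2\ge(1+\delta)^2(g(t)^2+h(t))\) closes the argument.
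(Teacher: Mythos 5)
Your plan matches the paper's proof at the skeleton level: reduce \eqref{eq:global_prop} to the image-domain quantity $\scal{Ax_0}{b\odot\phase(Ax)}$ using $\Range(A)$-projection and singular-value control of $A^\dag$ (the paper's Lemma \ref{lem:global_intro}); compute $\E\bigl[\tfrac{1}{m}\scal{Ax_0}{b\odot\phase(Ax)}\bigr]$ as an explicit function of $|\beta|/|\alpha|$ and prove a clean deterministic inequality (the paper's Lemmas \ref{lem:F} and \ref{lem:min_f}); then uniformize over $x$ with a net argument while managing the discontinuity of $\phase$ (the paper's Lemmas \ref{lem:net} and \ref{lem:inside_net}). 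There are two substantive concerns, however.

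First, you do not need to compute $\E[\tfrac1m\|z\|^2]$ and introduce $h(t)$. Since $\|b\odot\phase(Ax)\|=\|b\|=\|Ax_0\|$ holds \emph{identically} (all phases are unimodular), the denominator is controlled deterministically by $\|z\|\leq\lambda_n(A)^{-1}\|Ax_0\|\leq\tfrac{\lambda_1(A)}{\lambda_n(A)}\|x_0\|$, which with $m\geq Mn^2$ is $(1+o(1))\|x_0\|$ by Proposition \ref{prop:davidson}; no expectation or concentration is needed for $\|z\|$. Your proposed inequality $(1+t^2)g(t)^2\geq(1+\delta)^2(g(t)^2+h(t))$ would then collapse, via $g(t)^2+h(t)\leq 1$, to the paper's $f(t)\geq(1+\delta)/\sqrt{1+t^2}$ (Lemma \ref{lem:min_f}), so this is a workable but unnecessary detour rather than a gap.

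Second, and this is the genuine gap: your uniformization step would not close. You propose a single $\exp(-cm^{1/8})$-net in $\C^n$ with a union bound and claim the net has size $\exp(O(n\log m))$; but a $\gamma$-net of the unit sphere in $\C^n$ has cardinality $\exp(\Theta(n\log(1/\gamma)))$, so with $\gamma=\exp(-cm^{1/8})$ the size is $\exp(\Theta(nm^{1/8}))=\exp(\Theta(m^{5/8}))$ when $m\asymp n^2$, which swamps any available per-point tail. Even after correcting to a polynomial-spacing net of size $\exp(O(n\log m))=\exp(O(\sqrt m\log m))$, a direct union bound still fails: the summands $|a_j^*x_0|^2\,\bigl(\phase((Ax)_j)\cdots\bigr)$ are not uniformly bounded (conditionally on $Ax_0$, the envelope of the $j$-th term scales like $|a_j^*x_0|^2$, which can reach order $\sqrt m$), so the best per-point concentration achievable, even after the careful Bennett-type analysis of the $|a_j^*x_0|$-tail in Lemmas \ref{lem:dist_Ajx0}, \ref{lem:esp_exp_bound} and \ref{lem:eval_lambda_small}, is of order $\exp(-cm^{1/2})$, which is dominated by $\exp(O(\sqrt m\log m))$ as $m\to\infty$. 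The paper circumvents this with a multiscale chaining argument (Lemma \ref{lem:ecart_net}): it links successively finer nets $\mathcal{M}_n^k$ and bounds the \emph{increments} $\scal{Ax_0}{b\odot\phase(Ax)}-\scal{Ax_0}{b\odot\phase(Ay)}$ between neighbouring $x\in\mathcal{M}_n^k$, $y\in\mathcal{M}_n^{k+1}$; each increment has variance $\lesssim 2^{-2k}\log(2^k)$ (Lemma \ref{lem:var_bound}), yielding per-point tails $\exp(-c\gamma^{2k}m^{1/2})$ whose geometric gain in $k$ absorbs the growing net cardinality $\exp(O(nk))$. A union bound over a single net, which throws away the correlation between nearby net points, cannot replicate this. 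You do correctly anticipate the remaining piece — passing from the fine net to arbitrary $x$ by bounding the number of indices where $(Ax)_j$ is small (Lemma \ref{lem:Card_Ix}, used inside Lemma \ref{lem:inside_net}) — but that step presupposes the chaining bound on the net.
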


Before proving this theorem, let us establish its main consequence : the global convergence of alternating projections starting from any initial point that is not almost orthogonal to $x_0$. The algorithm is summarized in Algorithm \ref{alg:algo_without_init} and global convergence is proven in Corollary \ref{cor:global_convergence_without}.
\begin{algorithm}
\SetKwInOut{Input}{Input}
\SetKwInOut{Output}{Output}
\Input{$A\in\C^{m\times n},b=|Ax_0|\in\R^m,T\in\N^*$, any $x\in\C^n$ not almost orthogonal to $x_0$.}
\BlankLine
\textbf{Initialization:} set $z_0=x$.

\For{$t=1$ \KwTo $T$}{
Set $z_{t}\leftarrow A^\dag(b\odot\phase(Az_{t-1}))$.}
\BlankLine
\Output{$z_T$.}
\BlankLine
\caption{Alternating projections without good initialization\label{alg:algo_without_init}}
\end{algorithm}

\begin{cor}\label{cor:global_convergence_without}
There exist $C_1,C_2,\gamma,M>0,\Delta\in]0;1[$ such that, with probability at least
\begin{equation*}
1-C_1\exp(-C_2n),
\end{equation*}
Algorithm \ref{alg:algo_without_init} satisfies:
\begin{equation}\label{eq:conv_rate}
\forall t\geq \gamma\log n,\quad\quad
\inf_{\phi\in\R}||e^{i\phi}x_0-z_t|| \leq \Delta^{t-\gamma\log n} ||x_0||,
\end{equation}
provided that $m\geq Mn^2$.
\end{cor}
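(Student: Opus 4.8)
The plan is to combine the two per-iteration results already established — Theorem \ref{thm:global_convergence}, which says that one step of alternating projections increases the (normalized) correlation with $x_0$ by a multiplicative factor $(1+\delta)$ as long as the correlation is between $1/\sqrt{n}$ and $1-\epsilon$, and Theorem \ref{thm:local_convergence}, which gives a linear contraction once the iterate is within distance $\epsilon\|x_0\|$ of $x_0$ — and to check that the bad events have total probability at most $C_1\exp(-C_2 n)$ when $m\geq Mn^2$. First I would fix $\epsilon$ to be the radius appearing in Theorem \ref{thm:local_convergence}, but rescaled: note that Theorem \ref{thm:global_convergence} controls the normalized correlation $\rho_t \overset{def}{=} |\scal{x_0}{z_t}|/(\|x_0\|\,\|z_t\|)$, whereas Theorem \ref{thm:local_convergence} requires $\inf_\phi\|e^{i\phi}x_0-z_t\|\leq\epsilon\|x_0\|$. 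A correlation $\rho_t\geq 1-\epsilon'$ does not by itself bound this distance, because the norm $\|z_t\|$ is unconstrained; so before invoking Theorem \ref{thm:local_convergence} I would first argue that a single further application of the alternating projections operator also essentially fixes the scaling. Concretely, $z_{t+1}=A^\dag(b\odot\phase(Az_t))$ depends on $z_t$ only through $\phase(Az_t)$, hence only through the \emph{direction} of $z_t$; and one can show (using Proposition \ref{prop:davidson} to control $|||A|||$ and $|||A^\dag|||$, plus the decomposition $Az = \lambda(Ax_0)+\mu v$ used in the proof of Theorem \ref{thm:local_convergence}) that if $\rho_t$ is close enough to $1$ then $\inf_\phi\|e^{i\phi}x_0 - z_{t+1}\|\leq\epsilon\|x_0\|$. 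So I would choose the threshold $1-\epsilon_0$ in Theorem \ref{thm:global_convergence} small enough that this implication holds.

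Then the argument runs in three phases. Work on the intersection of the good events of Theorems \ref{thm:local_convergence} and \ref{thm:global_convergence}; by a union bound this has probability at least $1 - C_1\exp(-C_2 m^{1/8}) - C_1'\exp(-C_2'm)$, and since $m\geq Mn^2$ we have $m^{1/8}\geq M^{1/8}n^{1/4}\geq (\text{const})\,n$... — actually $m^{1/8}\gtrsim n^{1/4}$ only, which is weaker than $n$, so here I must be slightly more careful: $m\geq Mn^2$ gives $m^{1/8}\geq M^{1/8}\sqrt{n}/n^{1/4}\cdot n^{1/4}$; cleanly, $m^{1/8}\geq M^{1/8} n^{1/4}$, and since we only need a bound of the form $C_1\exp(-C_2 n^{1/4})$ or better, and the corollary states $C_1\exp(-C_2 n)$, I would instead observe that $n^{1/4}$ suffices for the \emph{count} of initial points we must handle: there is a single deterministic initial point $x$, so no net/union over initializations is needed and the probability bound $1-C_1\exp(-C_2 n)$ is in fact loose — we get the stronger $1-C_1\exp(-C_2 n^{1/4})$, which certainly implies the stated bound for $n$ large (absorbing small $n$ into the constants). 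Phase 1 (correlation growth): as long as $z_t$ satisfies \eqref{eq:global_cond} with the chosen threshold, $\rho_{t+1}\geq(1+\delta)\rho_t$. Starting from $\rho_0\geq 1/\sqrt{n}$ (the hypothesis that $x$ is not almost orthogonal, with $\mu=1$), after at most $\gamma\log n$ steps — with $\gamma = 1/(2\log(1+\delta))$ say — we reach $\rho_t\geq 1-\epsilon_0$, \emph{unless} some intermediate iterate already had $\rho_t > 1-\epsilon_0$, in which case we jump ahead. One subtlety: we must rule out overshooting the upper constraint without ever landing in $[1/\sqrt n, 1-\epsilon_0]$; but since $\rho$ lives in $[0,1]$ and each step multiplies by a bounded factor $(1+\delta)$ (and for $\rho$ already $\geq 1-\epsilon_0$ we are done), this is not an issue. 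Phase 2 (one clean-up step): by the implication established above, $z_{t_0+1}$ lies within $\epsilon\|x_0\|$ of $x_0$. Phase 3 (linear convergence): apply Theorem \ref{thm:local_convergence} repeatedly from $z_{t_0+1}$ onward, exactly as in the proof of Corollary \ref{cor:global_convergence}, to get $\inf_\phi\|e^{i\phi}x_0-z_t\|\leq\delta^{\,t-t_0-1}\|x_0\|$ for all $t\geq t_0+1$. Since $t_0+1\leq\gamma\log n + 1\leq 2\gamma\log n$ (for $n\geq 2$), setting $\Delta=\delta$ and enlarging $\gamma$ slightly gives \eqref{eq:conv_rate}.

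The main obstacle I anticipate is Phase 2 — the passage from "high normalized correlation" to "small distance in the metric $\inf_\phi\|e^{i\phi}x_0-\cdot\|$". Theorem \ref{thm:global_convergence} is deliberately scale-invariant (it only sees $\phase(Ax)$), so at the moment we exit Phase 1 the vector $z_{t_0}$ may have a norm far from $\|x_0\|$, and $\rho_{t_0}\approx 1$ alone is not enough to conclude. The fix is to exploit that the \emph{next} iterate $z_{t_0+1}=A^\dag(b\odot\phase(Az_{t_0}))$ has a norm automatically controlled: $\|b\odot\phase(Az_{t_0})\|=\|b\|=\|Ax_0\|$, and $b\odot\phase(Az_{t_0})$ is close to $\phase(\lambda)(Ax_0)$ by the same Lemmas \ref{lem:diff_phase}–\ref{lem:second_term} estimates used inside the proof of Theorem \ref{thm:local_convergence} (valid since $\mu_{z_{t_0}}/\lambda_{z_{t_0}}$ is small when $\rho_{t_0}$ is close to $1$), so after applying $A^\dag$ and using Proposition \ref{prop:davidson} one gets $\inf_\phi\|e^{i\phi}x_0-z_{t_0+1}\|\leq\epsilon\|x_0\|$. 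In other words, Phase 2 is really just one more invocation of the machinery already built for Theorem \ref{thm:local_convergence}, applied at a point whose correlation — rather than whose distance — is controlled; I would state this as a short lemma and prove it by adapting the chain of inequalities \eqref{eq:Ax_orth}–\eqref{eq:maj_signal}. Everything else is bookkeeping: tracking constants, the union bound over the two good events, and the elementary recursion $\rho_{t+1}\geq(1+\delta)\rho_t$.
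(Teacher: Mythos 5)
Your three-phase outline --- correlation growth via Theorem \ref{thm:global_convergence}, a hand-off to the local regime, then linear contraction via Theorem \ref{thm:local_convergence} --- is exactly the structure of the paper's proof, and the recursion $\rho_{t+1}\geq(1+\delta)\rho_t$, the $\mathcal{T}\lesssim\log n$ bound on the growth phase, and the union bound over the two high-probability events all match.

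Your Phase~2 is heavier than it needs to be. Having correctly observed that $z_{t+1}$ depends on $z_t$ only through $\phase(Az_t)$, i.e.\ through the direction of $z_t$, you then propose a new lemma re-deriving the internal estimates of Theorem~\ref{thm:local_convergence}. The paper gets the same conclusion with no new lemma: set $z_t'=\frac{\|x_0\|}{\|z_t\|}z_t$, so that $\|z_t'\|=\|x_0\|$ and
\begin{equation*}
\inf_{\phi\in\R}\|e^{i\phi}x_0-z_t'\|^2 = 2\|x_0\|^2\Bigl(1-\tfrac{|\scal{x_0}{z_t}|}{\|x_0\|\,\|z_t\|}\Bigr);
\end{equation*}
taking $\epsilon=(\epsilon^{(1)})^2/2$ in Theorem~\ref{thm:global_convergence}, the event $\rho_t>1-\epsilon$ gives $\inf_\phi\|e^{i\phi}x_0-z_t'\|<\epsilon^{(1)}\|x_0\|$, and since $\phase(Az_t)=\phase(Az_t')$, Theorem~\ref{thm:local_convergence} applied to $z_t'$ immediately controls $z_{t+1}$. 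The scale-invariance you noticed is the whole fix, not a warm-up to a harder lemma.

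There is one genuine error in your probability accounting. You correctly note that $m\geq Mn^2$ only yields $m^{1/8}\geq M^{1/8}n^{1/4}$, but you then claim the resulting $1-C_1\exp(-C_2 n^{1/4})$ is ``stronger'' than the stated $1-C_1\exp(-C_2 n)$ and ``implies the stated bound for $n$ large.'' This is reversed: $\exp(-C_2 n^{1/4})\geq\exp(-C_2 n)$ for all $n\geq 1$, so $1-C_1\exp(-C_2 n^{1/4})$ is the \emph{weaker} guarantee, and no choice of constants will make $\exp(-Cn^{1/4})\leq\exp(-C'n)$ eventually. (The paper's own final lines make the same silent leap from $\exp(-C_2 m^{1/8})$ to $\exp(-C_2 n)$, so the corollary as stated is arguably optimistic; but the remedy is to accept the honest $\exp(-C_2 n^{1/4})$ rate, not to assert the wrong inequality.)
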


\begin{proof}
From Theorem \ref{thm:local_convergence}, there exist $C_1^{(1)},C_2^{(1)},\epsilon^{(1)},M^{(1)}>0$ such that, if $m\geq M^{(1)}n$, then, with probability at least
\begin{equation*}
1-C_1^{(1)}\exp(-C_2^{(1)}m),
\end{equation*}
the following property holds: any $z\in\C^n$ such that $\inf_{\phi\in\R}||e^{i\phi}x_0-z||\leq\epsilon^{(1)}||x_0||$ satisfies
\begin{equation}
\inf_{\phi\in\R}||e^{i\phi}x_0-A^\dag(b\odot\phase(Az))||\leq\delta^{(1)}\inf_{\phi\in\R}||e^{i\phi}x_0-z||,\label{eq:local_interm}
\end{equation}
for some absolute constant $\delta^{(1)}\in]0;1[$. In the following, we assume that this event is realized.

We now use Theorem \ref{thm:global_convergence}, for $\epsilon={\epsilon^{(1)}}^2/2$. Let $C_1,C_2,M,\delta>0$ be defined as in this theorem. We assume that the event described in the theorem is realized, which happens with probability at least $1-C_1\exp(-C_2n)$.

We consider the sequence $(z_t)_{t\geq 0}$ defined in Algorithm \ref{alg:algo_without_init}, and distinguish two cases.

First, if the initial point $z_0=x$ is such that
\begin{equation*}
|\scal{x_0}{x}|>(1-\epsilon)||x_0||\,||x||,
\end{equation*}
then, setting $x'=\frac{||x_0||}{||x||}x$,
\begin{align*}
\inf_{\phi\in\R}||e^{i\phi}x_0-x'||&=\sqrt{||x_0||^2+||x'||^2-2|\scal{x_0}{x'}|}\\
&<||x_0||\sqrt{2\epsilon}\\
&=\epsilon^{(1)}||x_0||.
\end{align*}
We can thus proceed by recursion, as in the proof of Corollary \ref{cor:global_convergence}, to show that:
\begin{equation}\label{eq:local_combined_interm}
\forall t\in\N^*,\quad\quad
\inf_{\phi\in\R}||e^{i\phi}x_0-z_t||\leq (\delta^{(1)})^t\epsilon^{(1)}||x_0||.
\end{equation}
So Equation \eqref{eq:conv_rate} is satisfied, provided that we have chosen $\Delta\geq \delta^{(1)}$.

Second, we consider the case where the initial point $z_0=x$ is such that
\begin{equation*}
|\scal{x_0}{x}|\leq (1-\epsilon)||x_0||\,||x||.
\end{equation*}
Let then $\mathcal{T}$ be the smallest index $t$ such that the following inequality is not satisfied:
\begin{equation}
\frac{||x_0||\,||z_t||}{\sqrt{n}}\leq |\scal{x_0}{z_t}|\leq
(1-\epsilon) ||x_0||\,||z_t||.\label{eq:def_mathcal_T}
\end{equation}
As $z_0=x$ is not almost orthogonal to $x_0$, we must have $\mathcal{T}\geq 1$. For any $t=0,\dots,\mathcal{T}-1$, Equation \eqref{eq:global_prop} of Theorem \ref{thm:global_convergence} ensures that
\begin{equation}\label{eq:scal_growth}
\frac{|\scal{x_0}{z_{t+1}}|}{||x_0||\,||z_{t+1}||}
\geq (1+\delta) \frac{|\scal{x_0}{z_{t}}|}{||x_0||\,||z_{t}||}.
\end{equation}
In particular,
\begin{equation*}
\frac{|\scal{x_0}{z_{\mathcal{T}}}|}{||x_0||\,||z_{\mathcal{T}}||}
\geq \frac{|\scal{x_0}{z_{0}}|}{||x_0||\,||z_{0}||}\geq \frac{1}{\sqrt{n}}.
\end{equation*}
As Equation \eqref{eq:def_mathcal_T} is not satisfied, it means that
\begin{equation*}
|\scal{x_0}{z_{\mathcal{T}}}|>(1-\epsilon)||x_0||\,||z_{\mathcal{T}}||.
\end{equation*}
We can now apply the same reasoning as the one that led to Equation \eqref{eq:local_combined_interm}, and get
\begin{equation*}
\forall t\geq \mathcal{T}+1,\quad\quad
\inf_{\phi\in\R}||e^{i\phi}x_0-z_t||\leq (\delta^{(1)})^{t-\mathcal{T}}\epsilon^{(1)}||x_0||.
\end{equation*}
This implies Equation \eqref{eq:conv_rate}, provided that $\mathcal{T}\leq \gamma\log n$ for some absolute constant $\gamma$. From Equation \eqref{eq:scal_growth} and the fact that $z_0$ is not almost orthogonal to $x_0$,
\begin{equation*}
\frac{|\scal{x_0}{z_{\mathcal{T}-1}}|}{||x_0||\,||z_{\mathcal{T}-1}||}
\geq \frac{(1+\delta)^{\mathcal{T}-1}}{\sqrt{n}}.
\end{equation*}
As $\mathcal{T}-1$ satisfies Equation \eqref{eq:def_mathcal_T}, we must have
\begin{gather*}
\frac{(1+\delta)^{\mathcal{T}-1}}{\sqrt{n}}\leq 1-\epsilon\leq 1;\\
\Rightarrow\quad\quad
\mathcal{T}\leq 1+\frac{\log n }{2 \log(1+\delta)}.
\end{gather*}
And this expression can be bounded by $\gamma \log n$, for some $\gamma>0$ independent from $n$.

So we have shown that Equation \eqref{eq:conv_rate} holds when the events described in Theorems \ref{thm:local_convergence} and \ref{thm:global_convergence} happen. When $m\geq \max(M,M^{(1)})n^2$, this occurs with probability at least
\begin{equation*}
1-C_1^{(1)}\exp(-C_2^{(1)}m)-C_1\exp(-C_2n)
\geq 1 - (C_1+C_1^{(1)})\exp(-\min(C_2^{(1)},C_2)n).
\end{equation*}

\end{proof}

\subsection{Proof of Theorem \ref{thm:global_convergence}}

\begin{proof}[Proof of Theorem \ref{thm:global_convergence}]
We will actually consider a variant of Equation \eqref{eq:global_prop}, in the ``image domain'', that is in $\C^m$ instead of $\C^n$. This variant is easier to analyze and, according to the following lemma (proven in Paragraph \ref{ss:global_intro}), it implies Equation \eqref{eq:global_prop}.
\begin{lem}\label{lem:global_intro}
To prove Theorem \ref{thm:global_convergence}, it is enough to prove that there exist $C_1,C_2,M,\delta>0$ such that, if $m\geq Mn^2$, then, with probability at least $1-C_1\exp(-C_2m^{1/8})$, the property
\begin{equation}
|\scal{Ax_0}{b\odot\phase(Ax)}| \geq (1+\delta) m \frac{||x_0||}{||x||}|\scal{x_0}{x}|
\label{eq:scal_augmente}
\end{equation}
holds for any $x\in\C^n$ verifying Condition \eqref{eq:global_cond}.
\end{lem}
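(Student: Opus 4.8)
The proof of this lemma is a deduction: we assume the image-domain bound \eqref{eq:scal_augmente} (which holds on an event of probability at least $1-C_1\exp(-C_2m^{1/8})$) and derive \eqref{eq:global_prop}, paying only the distortion that comes from $A$ being an approximate scaled isometry rather than an exact one. Throughout put $z:=A^\dag(b\odot\phase(Ax))$ — it is well defined and nonzero, since $A$ is a.s.\ injective and \eqref{eq:scal_augmente} forces $Az\neq 0$ — so that \eqref{eq:global_prop} reads $\frac{|\scal{x_0}{z}|}{||x_0||\,||z||}\geq(1+\delta)\frac{|\scal{x_0}{x}|}{||x_0||\,||x||}$. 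We also condition on the conclusion of Proposition \ref{prop:davidson} for a parameter $t$ fixed below, writing $s:=\sqrt{n/m}+t$; this costs an extra failure probability at most $2\exp(-mt^2)$.

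Step 1 (pass to the image domain). Since $AA^\dag$ is the orthogonal projection onto $\Range(A)$ and $Ax_0\in\Range(A)$, we have $\scal{Ax_0}{Az}=\scal{Ax_0}{b\odot\phase(Ax)}$, and since $|\phase|\equiv 1$ entrywise, $||Az||\leq||b\odot\phase(Ax)||=||b||=||Ax_0||$. Combining this with \eqref{eq:scal_augmente} and the upper bound $||Ax_0||\leq\sqrt m(1+s)||x_0||$ of Proposition \ref{prop:davidson} gives
\begin{equation*}
\frac{|\scal{Ax_0}{Az}|}{||Ax_0||\,||Az||}\;\geq\;\frac{(1+\delta)\,m\,\frac{||x_0||}{||x||}\,|\scal{x_0}{x}|}{||Ax_0||^2}\;\geq\;\frac{1+\delta}{(1+s)^2}\cdot\frac{|\scal{x_0}{x}|}{||x_0||\,||x||}.
\end{equation*}
Step 2 (pass back to the signal domain). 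From $\scal{Ax_0}{Az}=m\scal{x_0}{z}+\scal{x_0}{(A^*A-mI_n)z}$, the estimate $|||A^*A-mI_n|||\leq 3ms$ (again a consequence of Proposition \ref{prop:davidson}), and $||x_0||\,||z||\leq\frac{||Ax_0||\,||Az||}{m(1-s)^2}$, one obtains $\frac{|\scal{x_0}{z}|}{||x_0||\,||z||}\geq(1-s)^2\frac{|\scal{Ax_0}{Az}|}{||Ax_0||\,||Az||}-3s$, and chaining with Step 1,
\begin{equation*}
\frac{|\scal{x_0}{z}|}{||x_0||\,||z||}\;\geq\;\frac{|\scal{x_0}{x}|}{||x_0||\,||x||}\left(\frac{(1-s)^2}{(1+s)^2}\,(1+\delta)-\frac{3s\,||x_0||\,||x||}{|\scal{x_0}{x}|}\right).
\end{equation*}

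Step 3 (close the inequality). This is the only delicate point, and the one place where the hypotheses of the theorem really enter: the distortion appears \emph{additively} in the coherence, so the correction term must be absorbed into the multiplicative gain $(1+\delta)$. The left-hand inequality of \eqref{eq:global_cond} — $x$ is ``not almost orthogonal'' to $x_0$ — gives $|\scal{x_0}{x}|\geq ||x_0||\,||x||/\sqrt n$, so the correction is at most $3s\sqrt n$. Taking $t=c/\sqrt n$ for a small absolute constant $c$ and using $m\geq Mn^2$, one has $s\leq 1/\sqrt M+c$ and $s\sqrt n\leq 1/\sqrt M+c$, both of which can be made as small as we wish by enlarging $M$ and shrinking $c$ (depending only on $\delta$); the parenthesis above is then at least $1+\delta/2$, which is exactly \eqref{eq:global_prop} with $\delta/2$ in place of $\delta$. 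Had $x$ been allowed to be almost orthogonal to $x_0$ this absorption would break down — which is precisely why the theorem excludes that case — and it is also a point where $m=O(n^2)$, rather than $m=O(n)$, is convenient, since we need $n/\sqrt m$ to be a small constant. Finally I would total the probabilities: with $t=c/\sqrt n$, Proposition \ref{prop:davidson} fails with probability at most $2\exp(-c^2 m/n)\leq 2\exp(-c^2 m^{1/8})$ (because $m^{7/8}\geq (Mn^2)^{7/8}\geq n$), which is of the same order as $\exp(-C_2 m^{1/8})$, so the combined failure probability keeps the form $C_1'\exp(-C_2'm^{1/8})$. Everything outside Step 3 is routine bookkeeping with Proposition \ref{prop:davidson}.
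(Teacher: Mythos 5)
Your proof is correct and follows essentially the same path as the paper's: both proofs invoke Proposition~\ref{prop:davidson} with $t\sim 1/\sqrt n$ to pin the singular values of $A$ to within $O(1/\sqrt n)$ of $\sqrt m$, decompose the image-domain inner product as a signal-domain inner product plus a spectral-spread correction, and then use the left-hand inequality of \eqref{eq:global_cond} to absorb the additive $O(s\sqrt n)$ error into the multiplicative gain $1+\delta$. The paper writes the same argument in terms of $\lambda_1(A),\lambda_n(A)$ and centers the decomposition at $\lambda_n^2(A)\Id$ rather than $m\Id$, but these are notational differences only.
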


The proof of Equation \eqref{eq:scal_augmente} is in two parts. We first prove (Lemma \ref{lem:net}) that this equation holds (with high probability) for all $x$ belonging to a net with very small spacing. This part is the most technical: a direct union bound, that does not take advantage of the correlation between the vectors of the net, is not sufficient. We use a chaining argument instead. The detailed proof is in Paragraph \ref{ss:net}.

In a second part (Lemma \ref{lem:inside_net}), we prove that, with high probability, for any $x$ and $y$ very close, $|\scal{Ax_0}{b\odot\phase(Ax)}-\scal{Ax_0}{b\odot\phase(y)}|$ is small. This allows us to extend the inequality proven for vectors of the net to all vectors. This result is a consequence of two facts: first, the phase is a Lipschitz function outside any neighborhood of zero. Second, with high probability, for any $x$ and $y$, the vectors $Ax$ and $Ay$ have few entries that are close to zero. The detailed proof is in Paragraph \ref{ss:inside_net}.

\begin{lem}\label{lem:net}
For any $n\in\N^*$, we set
\begin{equation*}
\mathcal{E}_n=\left\{x\in\C^n, ||x||=1\mbox{ and }\frac{||x_0||\,||x||}{\sqrt{n}}\leq
|\scal{x_0}{x}|\leq (1-\epsilon)||x_0||\,||x|| \right\}.
\end{equation*}
Let $\alpha$ be any positive number.

There exist $c,C_1,C_2,M,\delta>0$ and, for any $n\in\N^*$, a $c m^{-\alpha}$-net $\mathcal{N}_n$ of $\mathcal{E}_n$ such that, when $m\geq Mn^2$, with probability at least
\begin{equation*}
1-C_1\exp(-C_2m^{1/2}),
\end{equation*}
the following property holds: for any $x\in\mathcal{N}_n$,
\begin{equation*}
|\scal{Ax_0}{b\odot\phase(Ax)}| \geq (1+\delta) m\frac{||x_0||}{||x||}|\scal{x_0}{x}|.
\end{equation*}
\end{lem}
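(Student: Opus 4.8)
The plan is to fix a single unit vector $x\in\mathcal E_n$, understand the concentration of the scalar product $\scal{Ax_0}{b\odot\phase(Ax)}$ around its expectation, and then promote the pointwise estimate to a uniform estimate over a net by a chaining argument rather than a crude union bound. First I would reduce to a real-valued quantity: writing $Ax=\lambda_x(Ax_0)+\mu_x v^x$ as in Equation \eqref{eq:Ax_orth}, and using that $b=|Ax_0|$, the inner product $\scal{Ax_0}{b\odot\phase(Ax)}=\sum_k |(Ax_0)_k|\,\overline{(Ax_0)_k}\,\phase((Ax)_k)$ only depends on the pair of complex Gaussian entries $((Ax_0)_k,(v^x)_k)$, which after a rotation can be taken to be two independent standard complex Gaussians scaled appropriately. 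So for each $k$ the summand is an i.i.d.\ copy of a fixed bounded-in-expectation random variable depending only on the single real parameter $\rho=|\mu_x/\lambda_x|$ (equivalently on $|\scal{x_0}{x}|/(\|x_0\|\|x\|)$). The heuristic computation of $\E\big[\,|g|\,\bar g\,\phase(g+\rho h)\,\big]$ for independent standard complex $g,h$ gives a positive multiple of $\|x_0\|\|x\|/(\ldots)$ that is strictly larger than the ``trivial'' value $m\frac{\|x_0\|}{\|x\|}|\scal{x_0}{x}|$ by a factor $1+\delta_0(\rho)$, uniformly in $\rho$ over the admissible range dictated by Condition \eqref{eq:global_cond}; this is where the hypothesis $|\scal{x_0}{x}|\le(1-\epsilon)\|x_0\|\|x\|$ is used (it keeps $\rho$ bounded away from $0$) and where the lower bound $|\scal{x_0}{x}|\ge \|x_0\|\|x\|/\sqrt n$ is used (it keeps $\rho$ bounded, so $\lambda_x$ does not vanish).

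Next I would establish concentration of $\frac1m\scal{Ax_0}{b\odot\phase(Ax)}$ around this mean. The summands are not bounded (because of the $|(Ax_0)_k|$ weight), but they are sub-exponential, so a Bernstein inequality gives deviation probability $\exp(-c m)$ at any fixed scale; since we only need to beat an $\exp(-c m^{1/2})$ error in the end, there is ample room, and in fact the sub-exponential tail is what forces the slightly weaker $m^{1/8}$ in Theorem \ref{thm:global_convergence} once the inside-the-net step is added. For the uniform statement, I would build the net $\mathcal N_n$ of $\mathcal E_n$ at scale $cm^{-\alpha}$ — note $\mathcal E_n$ is a subset of the unit sphere of $\C^n$, so $|\mathcal N_n|\le (C m^{\alpha})^{2n}$, hence $\log|\mathcal N_n|=O(n\alpha\log m)$. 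A naive union bound would need the per-point failure probability to beat $(Cm^\alpha)^{-2n}$, i.e.\ we would need deviation bound $\exp(-c m)$ with $m\gg n\log m$, which is already true in the regime $m\ge Mn^2$; so in fact here a union bound would suffice for the net. The chaining is needed for the sharper control that underlies the $m^{1/2}$ exponent and the uniformity over all $x$ simultaneously with the right constants — concretely, I would introduce a sequence of nets at dyadic scales, bound the increments $\scal{Ax_0}{b\odot\phase(Ax)}-\scal{Ax_0}{b\odot\phase(Ax')}$ for nearby $x,x'$ using that $z\mapsto\phase(z)$ is Lipschitz away from $0$ together with the anti-concentration fact that, with high probability, $Ax$ and $Ax'$ have $O(m/\sqrt m)$-few entries near zero (the same ingredient that Lemma \ref{lem:inside_net} will use), and sum the chaining series.

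The main obstacle is the chaining step: the map $x\mapsto\scal{Ax_0}{b\odot\phase(Ax)}$ is \emph{not} Lipschitz on all of $\mathcal E_n$, because $\phase$ is discontinuous at $0$ and entries of $Ax$ can be arbitrarily small, so one cannot simply invoke Dudley's inequality for a sub-Gaussian process. The resolution I would pursue is to work on the high-probability event where, for every $x$ in a sufficiently fine net, $Ax$ has few small-modulus entries, and to absorb the contribution of those bad entries into the $2\cdot\mathbf 1_{|z|\ge|z_0|/6}$-type term already appearing in Lemma \ref{lem:diff_phase}; on the complement of the bad entries the increment is genuinely Lipschitz with a modulus that can be made summable along the dyadic scales by choosing the finest scale polynomially small in $m$ (this is exactly why the net spacing is taken to be $cm^{-\alpha}$ for arbitrary $\alpha$). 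Carrying the bookkeeping of these two regimes through the chaining sum, while keeping the failure probability at $\exp(-C_2 m^{1/2})$ and the final constant $\delta>0$ uniform, is the technical heart of the argument; everything else is the (routine but lengthy) Gaussian moment computation establishing the strict gain $1+\delta_0(\rho)$ in the mean.
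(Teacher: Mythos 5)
Your plan has the same skeleton as the paper (compute the expectation $F(x)=\E\scal{Ax_0}{b\odot\phase(Ax)}$ and show it gains a $(1+\delta)$ factor over the ``trivial'' value, then concentrate around the mean and propagate over a net), but it contains a central factual error in the concentration step: you claim that for a fixed $x\in\mathcal{E}_n$ the deviation from the mean is $\exp(-cm)$, and therefore that a crude union bound over the $cm^{-\alpha}$-net already works. This is false. The sum $\sum_j |a_j^*x_0|^2\,\overline{\phase(a_j^*x_0)}\phase(a_j^*x)$ has variance $\Theta(m)$, and the tolerance you must meet is $\eta\,m\,\|x_0\|^2/\sqrt{n}$ (coming from $|\scal{x_0}{x}|\geq\|x_0\|\|x\|/\sqrt n$, the worst case in $\mathcal{E}_n$). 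Bernstein for sub-exponential sums then gives per-point failure probability $\exp(-c\,\eta^2 m/n)$, not $\exp(-cm)$: the Gaussian (small-deviation) regime governs because $\eta m/\sqrt n\ll m$. In the regime that actually matters for the lemma, $m\asymp Mn^2$, this is $\exp(-c'\sqrt m)$, while a $cm^{-\alpha}$-net of a subset of the complex unit sphere of $\C^n$ has $\log$-cardinality of order $n\log m\asymp \sqrt m\,\log m$. The union bound therefore diverges, because $\log m$ eventually beats any fixed constant; this is exactly why the paper states (just before Lemma~\ref{lem:net}) that ``a direct union bound, that does not take advantage of the correlation between the vectors of the net, is not sufficient.'' The chaining is not cosmetic sharpening: it is what makes the argument close. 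Concretely, the gain comes from the fact that the \emph{increments} between nearby net points have conditional variance $\lesssim\|x-y\|^2\log(1/\|x-y\|)$ (Lemma~\ref{lem:var_bound}), so at dyadic scale $2^{-k}$ the increment concentrates at rate $\exp(-c\,\gamma^{2k} m^{1/2}/(k+1)^4)$, which dominates the $\exp(O(n(k+1)))$ growth of the number of pairs once $m\gtrsim n^2$. A crude union bound over the finest net never sees this geometric decay of variance along the chain and cannot get below $\exp(O(n\log m))$.

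Two smaller inaccuracies in the plan, for your awareness. First, in the expectation computation the lower bound $|\scal{x_0}{x}|\geq\|x_0\|\|x\|/\sqrt n$ is not what makes $f(\rho)\geq(1+\delta)/\sqrt{1+\rho^2}$ hold: Lemma~\ref{lem:min_f} shows the $(1+\delta)$ gain holds uniformly for all $\rho\geq\gamma$, even as $\rho\to\infty$, so $\rho$ need not be bounded above. The lower bound on $|\scal{x_0}{x}|$ enters only through the tolerance (it sets how small the ``gap to beat'' can be, namely $\sim m/\sqrt n$), which is precisely where your union bound gets in trouble. Second, you write that the sub-exponential tail ``is what forces the slightly weaker $m^{1/8}$''; actually the sub-exponential conditioning (Lemma~\ref{lem:dist_Ajx0}) is what caps the probability at $1-C_1\exp(-C_2 m^{1/2})$ \emph{already in Lemma~\ref{lem:net}}, while the $m^{1/8}$ appears only in the separate inside-the-net step Lemma~\ref{lem:inside_net} (via the anti-concentration count $\Card I_x < n m^{1/8}$ of near-vanishing entries of $Ax$). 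So the two exponents have distinct origins.
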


\begin{lem}\label{lem:inside_net}
For any $c>0$, there exist $C_1,C_2,C_3>0$ such that, with probability at least
\begin{equation*}
1-C_1\exp(-C_2 m^{1/8}),
\end{equation*}
the following property holds for any unit-normed $x,y\in\C^n$, when $m\geq 2n^2$:
\begin{equation*}
|\scal{Ax_0}{b\odot\phase(Ax)}-\scal{Ax_0}{b\odot\phase(Ay)}|
\leq C_3||x_0||^2 n m^{1/4}
\quad\mbox{if }||x-y||\leq cm^{-7/2}.
\end{equation*}
\end{lem}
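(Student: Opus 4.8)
The quantity to control is $\scal{Ax_0}{b\odot\phase(Ax)}-\scal{Ax_0}{b\odot\phase(Ay)} = \scal{Ax_0}{b\odot(\phase(Ax)-\phase(Ay))}$, and since $b=|Ax_0|$ this is $\sum_{i=1}^m (Ax_0)_i \overline{|Ax_0|_i}\,\overline{(\phase(Ax)_i-\phase(Ay)_i)}$, so by Cauchy--Schwarz its modulus is at most $\||Ax_0|^{\odot 2}\|\cdot\|\phase(Ax)-\phase(Ay)\|$, or more crudely $\sum_i |Ax_0|_i^2\,|\phase(Ax)_i-\phase(Ay)_i|$. The plan is to split the index set according to whether $|(Ax)_i|$ is small. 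Fix a threshold $\tau>0$ to be chosen as a small power of $m$, say $\tau = m^{-\beta}$. On the set $S^c$ where $|(Ax)_i|\geq\tau$ and $|(Ay)_i|\geq\tau$, the phase map is Lipschitz with constant $O(1/\tau)$ (using, e.g., the elementary bound $|\phase(z)-\phase(w)|\le 2|z-w|/\max(|z|,|w|)$ when both are nonzero), so $|\phase(Ax)_i-\phase(Ay)_i|\le C|(Ax)_i-(Ay)_i|/\tau \le C|||A|||\,\|x-y\|/\tau$; summing against $|Ax_0|_i^2$ contributes $O(\||Ax_0|^{\odot2}\|_1\, |||A||| \|x-y\|/\tau)$. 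On the exceptional set $S$ where some entry is below $\tau$, we use the trivial bound $|\phase(Ax)_i-\phase(Ay)_i|\le 2$ and control $\sum_{i\in S}|Ax_0|_i^2$.

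The technical core is therefore a uniform-in-$x$ bound on the number of small entries of $Ax$ and on the weighted sum $\sum_{i\,:\,|(Ax)_i|<\tau}|Ax_0|_i^2$. For a fixed unit $x$, each $(Ax)_i$ is standard complex Gaussian, so $\Pr(|(Ax)_i|<\tau)=1-e^{-\tau^2}\le\tau^2$; a Bernstein/Chernoff bound gives $|S|\lesssim m\tau^2$ with probability $1-e^{-c m\tau^2}$, and since $|Ax_0|_i^2$ has bounded expectation and light tails, $\sum_{i\in S}|Ax_0|_i^2 \lesssim \||x_0\||^2\, m\tau^2$ on the same kind of event (here one must handle the dependence between $|(Ax)_i|$ and $|(Ax_0)_i|$, but these are jointly Gaussian and the conditional law of $(Ax_0)_i$ given $(Ax)_i$ is again Gaussian with controlled variance, so a truncation argument works). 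The passage from a fixed $x$ to all $x$ is done by a net argument over the unit sphere of $\C^n$ at scale $c m^{-7/2}$: such a net has cardinality $(Cm^{7/2})^{2n} = \exp(O(n\log m))$, and for the union bound to beat this we need the per-point failure probability $e^{-cm\tau^2}$ to dominate, i.e. $m\tau^2 \gg n\log m$; with $m\ge 2n^2$ and $\tau=m^{-\beta}$ this holds as long as $\beta<1/2$ roughly — but we must also track how the final bound scales, and the statement's $m^{1/8}$ in the exponent and $m^{1/4}$ in the conclusion suggest the intended choice is something like $\tau\sim m^{-3/8}$, giving exceptional mass $\sum_{i\in S}|Ax_0|_i^2\lesssim \|x_0\|^2 m^{1/4}$ and failure probability $\exp(-c m^{1/4})$, which one then downgrades to $\exp(-c m^{1/8})$ to absorb the $\exp(O(n\log m))$ net cost (since $m\ge 2n^2$ forces $n\log m \ll m^{1/8}$... actually $n\le\sqrt{m/2}$ so $n\log m \lesssim \sqrt m\log m$, which is \emph{not} $\ll m^{1/8}$, so in fact the net size must be controlled more carefully — see below).

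The main obstacle I anticipate is precisely this tension in the net argument: a naive $\epsilon$-net of the full sphere at scale $m^{-7/2}$ has $\log$-cardinality $\asymp n\log m \asymp \sqrt m \log m$, which is far larger than the $m^{1/8}$ appearing in the target probability, so a crude union bound cannot work. The resolution must be that the Lipschitz/small-entry estimate needed here is far weaker than an exact bound — the conclusion only asks for $|\cdots|\le C_3\|x_0\|^2 n m^{1/4}$, which is enormous (much larger than the main term $m\|x_0\|\,\|x\|$ that Lemma \ref{lem:net} produces, by a factor $\sim n/m^{3/4}$, which is $\le m^{-1/4}$ under $m\ge n^2$, so it \emph{is} genuinely negligible) — so one only needs the small-entry count to be bounded by, say, $m^{3/4}$ \emph{uniformly}, and for that one can use a much coarser mechanism: bound $\#\{i: |(Ax)_i|<\tau\}$ for all $x$ simultaneously via the fact that $A$ restricted to any small subset of rows is well-conditioned (a consequence of Proposition \ref{prop:davidson} applied to submatrices, union-bounded over the $\binom{m}{k}$ choices of $k\approx m^{3/4}$ rows, which costs $\exp(O(m^{3/4}\log m))$ — still too big). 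More plausibly the right route is: for the Lipschitz part one does \emph{not} need a net at all, since $|\phase(Ax)_i - \phase(Ay)_i|\le 2\min(1, |(Ax)_i-(Ay)_i|/|(Ax)_i|)$ deterministically, so $\sum_i |Ax_0|_i^2|\phase(Ax)_i-\phase(Ay)_i| \le 2\sum_i |Ax_0|_i^2 \mathbf 1_{|(Ax)_i|<\tau} + (2/\tau)\,|||A|||\,\|x-y\|\,\||Ax_0|^{\odot 2}\|_1$, and the first term is then bounded \emph{uniformly over $x$} by a separate lemma of the same flavor as Lemma \ref{lem:first_term} (uniform control of weighted small-entry sums over all of $\Range(A)$), whose proof uses the covering argument once, at the level of that lemma, with the generous slack the weak conclusion permits. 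I would structure the write-up as: (i) the deterministic two-term split just described; (ii) cite/prove the uniform small-entry weighted-sum bound $\sup_x \sum_i |Ax_0|_i^2\mathbf 1_{|(Ax)_i|<\tau}\lesssim \|x_0\|^2 m\tau^2$ w.h.p.; (iii) plug in $\tau = m^{-3/8}$ and $\|x-y\|\le cm^{-7/2}$, using $|||A|||\lesssim\sqrt m$ and $\||Ax_0|^{\odot2}\|_1\lesssim m\|x_0\|^2$, to get both terms $\lesssim \|x_0\|^2 m^{1/4}\lesssim \|x_0\|^2 n m^{1/4}$; (iv) collect the probability, which is $1-\exp(-cm^{1/4})\ge 1-C_1\exp(-C_2 m^{1/8})$.
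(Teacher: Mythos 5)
Your opening decomposition is exactly the one the paper uses: bound the difference by
$\sum_i |(Ax_0)_i|^2\,|\phase((Ax)_i)-\phase((Ay)_i)|$, apply the deterministic inequality $|\phase(z)-\phase(w)|\le 2\min(1,|z-w|/|z|)$, and split at a threshold on $|(Ax)_i|$. The Lipschitz half, with $|||A|||\lesssim\sqrt m$ and $||x-y||\le cm^{-7/2}$, is negligible for any reasonable threshold, as you say. Where the plan breaks is step (ii).

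The uniform weighted small-entry bound you propose,
$\sup_{||x||=1}\sum_i |(Ax_0)_i|^2\mathbf 1_{|(Ax)_i|<\tau}\lesssim ||x_0||^2\,m\tau^2$,
cannot hold with $\tau=m^{-3/8}$. For any fixed set $I\subset\{1,\dots,m\}$ of size $n-1$, the space $\{x:A_I x=0\}$ is nonzero, so there is a unit $x$ with $(Ax)_i=0$ for all $i\in I$. Hence the small-entry count is $\ge n-1$ for \emph{every} threshold $\tau>0$ and the adversarial weighted sum is typically of order $(n-1)||x_0||^2$. When $m$ is close to $2n^2$ one has $n\approx\sqrt{m/2}\gg m^{1/4}=m\tau^2$, so the bound you want in step (ii) is false precisely in the regime the lemma must cover. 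This is also why the target in the lemma contains the factor $n$: the intrinsic uniform size of the bad set scales like $n$, not like $m\tau^2$.

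You also reject the net-plus-union-bound route too quickly. The comparison you make — $n\log m$ against $m^{1/8}$ — is not the relevant one. The paper uses the threshold $\tau=1/m^2$, so that for a fixed unit $x$ the expected number of indices with $|(Ax)_i|\le 2/m^2$ is only about $4/m^3$. It then asks for the much larger count $nm^{1/8}$ to be exceeded; by Bennett/Chernoff the per-point failure probability is on the order of $\exp\bigl(-nm^{1/8}\log m\bigr)$, which beats both the net cardinality $\exp(O(n\log m))$ (at scale $m^{-5/2}$, using a separate control $||a_i||\le\sqrt m$ to transfer from net points to arbitrary $x$) and the desired $\exp(-m^{1/8})$ target by a comfortable margin. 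That is Lemma \ref{lem:Card_Ix}. It remains to turn a cardinality bound $\Card S<nm^{1/8}$ into a bound on $\sum_{i\in S}|(Ax_0)_i|^2$; since $S$ depends on $x$, the paper union-bounds over all subsets $I$ of that cardinality and uses Bernstein's inequality (Lemma \ref{lem:A_I}), getting $\sum_{i\in I}|(Ax_0)_i|^2\lesssim ||x_0||^2 nm^{1/4}$. Net effect: same deterministic split as yours, coarser threshold, and a two-lemma structure (net bound on the cardinality, then Bernstein over index sets) in place of the single uniform weighted-sum estimate you wanted — which, as argued above, is unavailable.
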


To conclude, we apply Lemma \ref{lem:net} with $\alpha=7/2$. We define $c,C_1,C_2,M,\delta>0$, the set $\mathcal{E}_n$ and the $cm^{-7/2}$-net $\mathcal{N}_n$ as in the statement of this lemma. With probability at least
\begin{equation*}
1-C_1\exp(-C_2m^{1/2})-C_1\exp(-C_2m^{1/8}),
\end{equation*}
the events described in both Lemmas \ref{lem:net} and \ref{lem:inside_net} happen. In this case, for any $x\in\C^n$ verifying Condition \eqref{eq:global_cond}, the normalized vector $x'=x/||x||$ belongs to $\mathcal{E}_n$. As $\mathcal{N}_n$ is a $cm^{-7/2}$-net of $\mathcal{E}_n$, there exists $y\in\mathcal{N}_n$ such that
\begin{equation*}
||x'-y|| \leq cm^{-7/2}.
\end{equation*}
By triangular inequality, and using Lemmas \ref{lem:net} and \ref{lem:inside_net},
\begin{align*}
\left|\scal{Ax_0}{b\odot\phase(Ax')}\right|
&\geq \left|\scal{Ax_0}{b\odot\phase(Ay)}\right|\\
&\hskip 1cm
- \left|\scal{Ax_0}{b\odot\phase(Ay)}-\scal{Ax_0}{b\odot\phase(Ax')}\right|\\
&\geq (1+\delta)m \frac{||x_0||}{||y||}|\scal{x_0}{y}|
- C_3 ||x_0||^2 nm^{1/4}\\
&= (1+\delta)m ||x_0|| \,|\scal{x_0}{y}|
- C_3 ||x_0||^2 n m^{1/4}\\
&\geq (1+\delta)m ||x_0|| \,|\scal{x_0}{x'}|
- (1+\delta)m ||x_0||^2 ||x'-y||
- C_3 ||x_0||^2 n m^{1/4}\\
&\geq (1+\delta)m ||x_0|| \,|\scal{x_0}{x'}|
- ||x_0||^2\left((1+\delta)c m^{-5/2} 
+ C_3  n m^{1/4}\right).
\end{align*}
As $x'$ belongs to $\mathcal{E}_n$, if $m\geq Mn^2$ and $m$ is large enough,
\begin{align*}
||x_0||\left((1+\delta)cm^{-5/2}+C_3n m^{1/4}\right)
&\leq ||x_0||\frac{\delta m n^{-1/2}}{2}\\
&\leq \frac{\delta m}{2}|\scal{x_0}{x'}|.
\end{align*}
So we deduce from this and the inequality immediately before:
\begin{align*}
\left|\scal{Ax_0}{b\odot\phase(Ax')}\right|
&\geq \left(1+\frac{\delta}{2}\right)m ||x_0||\,|\scal{x_0}{x'}|;\\
\Rightarrow\quad\quad
\left|\scal{Ax_0}{b\odot\phase(Ax)}\right|
&\geq \left(1+\frac{\delta}{2}\right)m \frac{||x_0||}{||x||}\,|\scal{x_0}{x}|.
\end{align*}
By Lemma \ref{lem:global_intro}, this is what we had to prove.

\end{proof}

\section{Numerical experiments\label{s:numerical}}

In this section, we numerically validate the results obtained in Corollaries \ref{cor:global_convergence} and \ref{cor:global_convergence_without}. We formulate a conjecture about the convergence of alternating projections with random initialization, in the regime $m=O(n)$.

The code used to generate Figures \ref{fig:with_init}, \ref{fig:tt_conv_tab} and \ref{fig:without_init} is available at \url{http://www-math.mit.edu/~waldspur/code/alternating_projections_code.zip}.

\subsection{Alternating projections with initialization}

Our first experiment consists in a numerical validation of Corollary \ref{cor:global_convergence}: alternating projections succeed with high probability, when they start from a good initial point, in the regime where the number of measurements is linear in the problem dimension ($m=O(n)$).

We use the initialization method described in \citep*{candes_wirtinger2}, as presented in Algorithm \ref{alg:algo_complet}. We run the algorithm for various choices of $n$ and $m$, $3000$ times for each choice. This allows us to compute an empirical probability of success, for each value of $(n,m)$.

The results are presented in Figure \ref{fig:with_init}. They confirm that, when $m=Cn$, for a sufficiently large constant $C>0$, the success probability can be arbitrarily close to $1$.

\begin{figure}
  \centering
  \newlength\figureheight 
  \newlength\figurewidth 
  \setlength\figureheight{6cm} 
  \setlength\figurewidth{8cm}
%
%
\begin{tikzpicture}

\begin{axis}[%
width=0.662\figurewidth,
height=\figureheight,
at={(0\figurewidth,0\figureheight)},
scale only axis,
point meta min=0,
point meta max=1,
axis on top,
separate axis lines,
every outer x axis line/.append style={black},
every x tick label/.append style={font=\color{black}},
xmin=0.5,
xmax=15.5,
xtick={1,3,5,7,9,11,13,15},
xticklabels={{2},{6},{10},{14},{18},{22},{26},{30}},
xlabel={Signal size $n$},
every outer y axis line/.append style={black},
every y tick label/.append style={font=\color{black}},
y dir=reverse,
ymin=0.5,
ymax=17.5,
ytick={2,4,6,8,10,12,14,16},
yticklabels={{92},{80},{68},{56},{44},{32},{20},{8}},
ylabel={Number of measurements $m$},
axis background/.style={fill=white}
]
\addplot [forget plot] graphics [xmin=0.5,xmax=15.5,ymin=0.5,ymax=17.5] {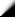};
\addplot [color=red,solid,line width=2.0pt,forget plot]
  table[row sep=crcr]{%
0.5	16.8333333333333\\
15.5	1.83333333333333\\
};
\end{axis}
\end{tikzpicture}%
  \caption{Probability of success for Algorithm \ref{alg:algo_complet}, as a function of $n$ and $m$. Black points indicate a probability equal to $0$, and white points a probability equal to $1$. The red line serves as a reference: it is the line $m=3n$.
    \label{fig:with_init}}
\end{figure}

\subsection{Alternating projections without good initialization}

\subsubsection{Disappearing of stagnation points\label{sss:disappearing}}

Next, we investigate Corollary \ref{cor:global_convergence_without}: if $m\geq Cn^2$, for $C>0$ large enough, the method of alternating projections succeeds, with high probability, starting from any initialization (that is not almost orthogonal to the true solution). In particular, there is no stagnation point, unless possibly among vectors that are \textit{almost orthogonal} to the true solution.

To numerically validate this result, we have generated vectors $x_0$ of size $n$ and measurements matrices $A$ of size $m\times n$ for various choices of $n$ and $m$. For each $(x_0,A)$, we have randomly chosen $10000$ initializations that were not almost orthogonal to $x_0$, and we have recorded whether alternating projections, starting from these initializations, always succeeded in reconstructing $x_0$ from $|Ax_0|$. When at least one of these initializations failed, it proved that there was at least one stagnation point. Otherwise, we have considered it as a sign of absence of stagnation points.

We could thus compute, for each choice of $(n,m)$, the probability of absence of stagnation point. The result is displayed on Figure \ref{fig:tt_conv_tab}. As foreseen by Corollary \ref{cor:global_convergence_without}, the probability becomes arbitrarily close to $1$ when $m\geq Cn^2$ for $C>0$ large enough.

\begin{figure}
	\centering
	\setlength\figureheight{6cm} 
	\setlength\figurewidth{8cm}
%
%
\begin{tikzpicture}

\begin{axis}[%
width=0.577\figurewidth,
height=\figureheight,
at={(0\figurewidth,0\figureheight)},
scale only axis,
point meta min=0,
point meta max=1,
axis on top,
separate axis lines,
every outer x axis line/.append style={black},
every x tick label/.append style={font=\color{black}},
xmin=0.5,
xmax=10.5,
xtick={1,3,5,7,9},
xticklabels={{2},{4},{6},{8},{10}},
xlabel={Signal size $n$},
every outer y axis line/.append style={black},
every y tick label/.append style={font=\color{black}},
y dir=reverse,
ymin=0.5,
ymax=13.5,
ytick={2,4,6,8,10,12},
yticklabels={{112},{92},{72},{52},{32},{12}},
ylabel={Number of measurements $m$},
axis background/.style={fill=white}
]
\addplot [forget plot] graphics [xmin=0.5,xmax=10.5,ymin=0.5,ymax=13.5] {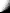};
\addplot [color=red,solid,line width=2.0pt,forget plot]
  table[row sep=crcr]{%
0.5	13.0875\\
1	13\\
1.5	12.8875\\
2	12.75\\
2.5	12.5875\\
3	12.4\\
3.5	12.1875\\
4	11.95\\
4.5	11.6875\\
5	11.4\\
5.5	11.0875\\
6	10.75\\
6.5	10.3875\\
7	10\\
7.5	9.5875\\
8	9.15\\
8.5	8.6875\\
9	8.2\\
9.5	7.6875\\
10	7.15\\
10.5	6.5875\\
};
\end{axis}
\end{tikzpicture}%
	\caption{Probability that there is no stagnation point, as a function of $n$ and $m$. Black points indicate a probability equal to $0$, and white points a probability equal to $1$. The red line serves as a reference: it is the line $m=\frac{1}{2}n^2$.
\label{fig:tt_conv_tab}}
\end{figure}

The same results are presented in Figure \ref{fig:no_stagnation} under a different form. The graph on the left hand side shows, for each $n$, the number $M_n$ of measurements above which the probability that there is at least one stagnation point drops under $0.5$. The curve has a clear quadratic shape.

The plot on the right hand side represents $M_n/n^2$ as a function of $n$. It is clearly upper bounded by a constant. It also seems to be lower bounded by a positive constant (or possibly by a very slowly decaying function, like $(\log\log)^{-1}$), which indicates that the number of measurements $m=O(n^2)$ that appears in Corollary \ref{cor:global_convergence_without} is probably optimal: when $m\ll n^2$, the probability that there are no stagnation points is small.

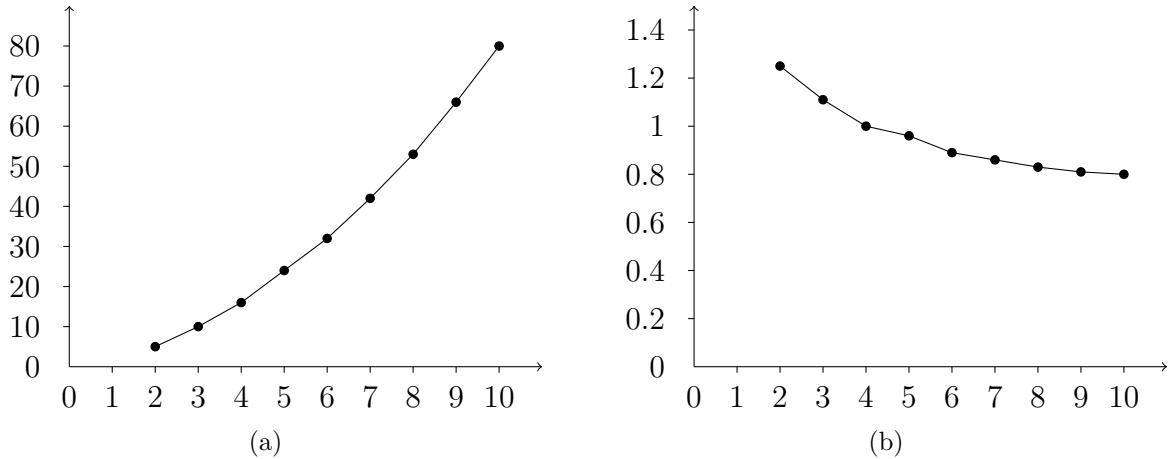
\begin{figure}
\subfloat[][]{
\begin{tikzpicture}[scale=0.8]
  \centering
  \draw[->] (0,0) -- (0,6);
  \draw[->] (0,0) -- (11/1.4,0);
  \draw plot[mark=*] coordinates {(2/1.4,5/15) (3/1.4,10/15) (4/1.4,16/15)
    (5/1.4,24/15) (6/1.4,32/15) (7/1.4,42/15) (8/1.4,53/15)
    (9/1.4,66/15) (10/1.4,80/15)};
  \foreach \y in {0,10,20,...,80} \draw(-0.3,\y/15)node[left]{\y};
  \foreach \y in {10,20,...,80} \draw(0,\y/15) -- (-0.1,\y/15);
  \foreach \x in {0,1,...,10} \draw(\x/1.4,-0.15)node[below]{\x};
  \foreach \x in {1,2,...,10} \draw(\x/1.4,0) -- (\x/1.4,-0.1);
\end{tikzpicture}}
\quad\quad
\subfloat[][]{
\begin{tikzpicture}[scale=0.8]
  \centering
  \draw[->] (0,0) -- (0,6);
  \draw[->] (0,0) -- (11/1.4,0);
  \draw plot[mark=*] coordinates {(2/1.4,1.25*4) (3/1.4,1.11*4) (4/1.4,1*4)
    (5/1.4,0.96*4) (6/1.4,0.89*4) (7/1.4,0.86*4) (8/1.4,0.83*4)
    (9/1.4,0.81*4) (10/1.4,0.8*4)};
  \foreach \y in {0,0.2,0.4,0.6,0.8,1,1.2,1.4} \draw(-0.3,\y*4)node[left]{\y};
  \foreach \y in {0.2,0.4,...,1.4} \draw(0,\y*4) -- (-0.1,\y*4);
  \foreach \x in {0,1,...,10} \draw(\x/1.4,-0.15)node[below]{\x};
  \foreach \x in {1,2,...,10} \draw(\x/1.4,0) -- (\x/1.4,-0.1);
\end{tikzpicture}}
  \caption{(a) For each signal size $n$, the smallest number of measurements $m$ for which the probability that there exist stagnation points is under $0.5$. (b) The same curve, renormalized by division by $n^2$.
    \label{fig:no_stagnation}}
\end{figure}


\subsubsection{Random initialization}

Our last experiment consists in measuring the probability that alternating projections succeed, when started from a random initial point (sampled from the unit sphere with uniform probability).

The results are presented in Figure \ref{fig:without_init}. They lead to the following conjecture.
\begin{conj}\label{conj:convergence_random}
Let any $\epsilon>0$ be fixed. When $m\geq Cn$, for $C>0$ large enough, alternating projections with a random isotropic initialization succeed with probability at least $1-\epsilon$.
\end{conj}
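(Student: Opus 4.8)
The plan is to carry out, for alternating projections, the two‑phase analysis that has become standard for randomly‑initialized non‑convex methods --- the analogue of what \citet*{sun_qu_wright} and subsequent works established for Wirtinger flow --- while replacing the net/union‑bound control of the iterates by a \emph{leave‑one‑out} decoupling argument, which is what should bring the sample complexity down from $m=O(n\log^3 n)$ (or the $m=O(n^2)$ of Corollary \ref{cor:global_convergence_without}) to $m=O(n)$, with a constant depending on $\epsilon$. The randomness is over $A$ and over the initial point $z_0$, which are independent of each other and of $x_0$. First I would dispose of the initialization: by isotropy one may take $x_0=\|x_0\|e_1$, and for $z_0$ uniform on the sphere the squared correlation $\rho_0^2=|\scal{x_0}{z_0}|^2/(\|x_0\|^2\|z_0\|^2)$ is $\mathrm{Beta}(1,n-1)$‑distributed, so there is $\mu=\mu(\epsilon)>0$ with $\Pr[\rho_0\geq\mu/\sqrt n]\geq 1-\epsilon/2$; with probability $\geq 1-\epsilon/2$ the component of $z_0$ orthogonal to $x_0$ is moreover ``incoherent'' (small $\ell^\infty$ norm, nearly uniform on its hyperplane). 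Let $\mathcal G$ be the set of such good $z_0$; it depends only on $x_0$ and $\epsilon$ and has probability $\geq 1-\epsilon$, so it suffices to prove that, for every fixed $z_0\in\mathcal G$, alternating projections from $z_0$ converge with probability $\geq 1-C_1e^{-C_2 n}$ over $A$.

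The core is Phase I, the escape from the near‑orthogonal region. Fix $z_0\in\mathcal G$ and, for each $l$, let $(z_t^{(l)})_t$ be the alternating‑projections trajectory obtained by resampling the $l$‑th row of $A$ (so $z_t^{(l)}$ is independent of $a_l$, and $z_0^{(l)}=z_0$). I would prove by simultaneous induction on $t$ up to $T_1=O(\delta^{-1}\log n)$: (i) a perturbation bound $\max_l\|z_t-z_t^{(l)}\|\lesssim \sqrt{n/m}\,|\scal{x_0}{z_t}|/\|x_0\|$ --- measured relative to the signal $\scal{x_0}{z_t}$, which itself grows geometrically in this phase; (ii) an anti‑concentration bound: for every $l$, only an $o(1)$ fraction of the entries of $Az_t$ (resp.\ $Az_t^{(l)}$) have modulus below some small $\tau\|x_0\|$; and (iii) the growth estimate $\rho_{t+1}\geq(1+\delta)\rho_t$, valid as long as $\rho_t\leq c_1$ for a suitable constant. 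Step (iii) follows by writing $z_{t+1}=A^\dag(b\odot\phase(Az_t))$, which by Proposition \ref{prop:davidson} equals $\tfrac1m A^*(b\odot\phase(Az_t))$ up to relative error $O(\sqrt{n/m})$, replacing $z_t$ by $z_t^{(l)}$ in the $l$‑th summand using (i)--(ii) and Lemma \ref{lem:diff_phase}, and evaluating the resulting sum of independent terms: the conditional expectation $\E_{a_l}\!\big[\overline{a_l^*x_0}\,|a_l^*x_0|\,\phase(a_l^*z_t^{(l)})\big]$ is, to leading order, a positive multiple of $\scal{x_0}{z_t^{(l)}}$ whose coefficient exceeds the cost $\|z_{t+1}\|/\|z_t\|$ of renormalizing --- this is exactly the complex‑Gaussian mechanism already quantified in Lemmas \ref{lem:global_intro}--\ref{lem:net}, now run along the trajectory rather than uniformly over a net, the error budget working out precisely when $m\gtrsim n/(\delta\mu)^2$. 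Since $\rho_0\geq\mu/\sqrt n$ and $\rho_t$ is multiplied by $1+\delta$ at each step, after $T_1$ iterations $\rho_{T_1}\geq 1-\epsilon_0$ for any prescribed small constant $\epsilon_0$, while the norms $\|z_t\|$ stay comparable to $\|x_0\|$ for $t\geq1$ (the projection $AA^\dag(b\odot\phase(Az_t))$ retains a large component along $Az_t$).

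Phase II is then routine: $\rho_{T_1}\geq 1-\epsilon_0$ with $\|z_{T_1}\|\asymp\|x_0\|$ gives $\inf_{\phi\in\R}\|e^{i\phi}x_0-z_{T_1}\|\leq\epsilon\|x_0\|$ for the $\epsilon$ of Theorem \ref{thm:local_convergence} (choose $\epsilon_0$ small enough), whence the local contraction property of that theorem and the recursion in the proof of Corollary \ref{cor:global_convergence} give $\inf_{\phi\in\R}\|e^{i\phi}x_0-z_t\|\leq\delta^{\,t-T_1}\|x_0\|$ for all $t\geq T_1$. Adding the failure probabilities --- $\epsilon$ for $z_0\notin\mathcal G$, and $C_1e^{-C_2n}$ for the concentration events of Phases I and II, uniformly over $z_0\in\mathcal G$ --- yields success with probability $\geq 1-\epsilon-C_1e^{-C_2n}\geq 1-2\epsilon$, and one relabels $\epsilon$.

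The main obstacle is the Phase I induction, and within it the conflict between the discontinuity of $\phase$ and the leave‑one‑out scheme. Bound (i) controls where $\phase$ is evaluated, but a single alternating‑projections step transmits $\|z_t-z_t^{(l)}\|$ through exactly the coordinates where $Az_t$ and $Az_t^{(l)}$ land on opposite sides of $0$, so (i) cannot be closed without the anti‑concentration bound (ii); yet (ii) for $z_t$ --- which is not independent of $A$ --- has to be extracted from (ii) for the decoupled iterate $z_t^{(l)}$ together with (i) and the uniform operator‑norm bounds of Proposition \ref{prop:davidson}. Breaking this circularity with all error terms kept small enough to survive the $\Theta(\log n)$ steps of Phase I \emph{and} $m=O(n)$ is the crux: it is the trajectory analogue of the difficulty the paper already meets in Lemma \ref{lem:inside_net}, and I expect it to require the sharpest form of the anti‑concentration estimates, likely supplemented by a mild truncation of the iterates enforcing incoherence throughout.
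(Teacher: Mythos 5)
This statement is a \emph{conjecture} in the paper, not a theorem: the paper offers no proof of it and explicitly says, right after stating Conjecture \ref{conj:convergence_random}, that proving it ``a priori requires to evaluate in some way the size of the attraction basin of stagnation points, which seems difficult.'' There is therefore no argument in the paper to compare yours against; to be accepted, your argument would have to stand entirely on its own as a complete proof, which it does not.

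What you give is a plausible research roadmap, and as a roadmap it is coherent and aligned with how randomly‑initialized non‑convex iterations have been handled elsewhere: a leave‑one‑out decoupling to carry the correlation $\rho_t = |\scal{x_0}{z_t}|/(\|x_0\|\,\|z_t\|)$ through an $O(\log n)$‑step escape phase, followed by the established local contraction of Theorem \ref{thm:local_convergence} and the recursion of Corollary \ref{cor:global_convergence}. The initialization step and Phase II are fine. But the whole content of the conjecture sits in Phase I, and there your argument is not closed: you assert the perturbation bound (i), the anti‑concentration bound (ii), and the growth bound (iii) as a simultaneous induction and then acknowledge yourself that you do not see how to break the circularity among them, which is created precisely by the discontinuity of $\phase$ at $0$. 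In particular, (i) in the form $\|z_t-z_t^{(l)}\|\lesssim\sqrt{n/m}\,|\scal{x_0}{z_t}|/\|x_0\|$ is asserted but not derived, and the usual derivation of such a bound in leave‑one‑out arguments relies on a Lipschitz (or at least one‑sided Lipschitz) property of the update map, which $z\mapsto A^\dag(b\odot\phase(Az))$ does not enjoy near the sets $\{z : (Az)_i=0\}$. Absent a concrete mechanism ensuring the trajectory stays uniformly clear of those sets over all $\Theta(\log n)$ steps with an error budget compatible with $m=O(n)$ — exactly the ``crux'' you identify and leave open — this is a conjecture with a plan attached, not a proof.
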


As we have seen in Paragraph \ref{sss:disappearing}, in the regime $m=O(n)$, there are (attractive) stagnation points, so there are initializations for which alternating projections fail. However, it seems that these bad initializations occupy a very small volume in the space of all possible initial points. Therefore, a random initialization leads to success with high probability.

Unfortunately, proving this conjecture a priori requires to evaluate in some way the size of the attraction basin of stagnation points, which seems difficult.

\begin{figure}
	\centering
	\setlength\figureheight{6cm} 
	\setlength\figurewidth{8cm}
%
%
\begin{tikzpicture}

\begin{axis}[%
width=0.662\figurewidth,
height=\figureheight,
at={(0\figurewidth,0\figureheight)},
scale only axis,
point meta min=0,
point meta max=1,
axis on top,
separate axis lines,
every outer x axis line/.append style={black},
every x tick label/.append style={font=\color{black}},
xmin=0.5,
xmax=15.5,
xtick={1,3,5,7,9,11,13,15},
xticklabels={{2},{6},{10},{14},{18},{22},{26},{30}},
xlabel={Signal size $n$},
every outer y axis line/.append style={black},
every y tick label/.append style={font=\color{black}},
y dir=reverse,
ymin=0.5,
ymax=17.5,
ytick={2,4,6,8,10,12,14,16},
yticklabels={{152},{132},{112},{92},{72},{52},{32},{12}},
ylabel={Number of measurements $m$},
axis background/.style={fill=white}
]
\addplot [forget plot] graphics [xmin=0.5,xmax=15.5,ymin=0.5,ymax=17.5] {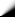};
\addplot [color=red,solid,line width=2.0pt,forget plot]
  table[row sep=crcr]{%
0.5	16.9\\
15.5	7.9\\
};
\end{axis}
\end{tikzpicture}%
	\caption{Probability of success for alternating projections with a random Gaussian initialization, as a function of $n$ and $m$. Black points indicate a probability equal to $0$, and white points a probability equal to $1$. The red line serves as a reference: it is the line $m=3n$.\label{fig:without_init}}
\end{figure}

\appendix

\section{Proposition \ref{prop:stagnation_points}\label{s:stagnation_points}}

\begin{prop*}[Proposition \ref{prop:stagnation_points}]
For any $y_0$, the sequence $(y_k)_{k\in\N}$ is bounded. Any accumulation point $y_\infty$ of $(y_k)_{k\in\N}$ satisfies the following property:
\begin{equation*}
\exists u\in E_{\phase}(y_\infty),\quad\quad
(AA^\dag)(b\odot u)=y_\infty.
\end{equation*}
In particular, if $y_\infty$ has no zero entry,
\begin{equation*}
(AA^\dag)(b\odot \phase(y_\infty))=y_\infty.
\end{equation*}
\end{prop*}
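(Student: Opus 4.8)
The plan is to view the iteration \eqref{eq:gs_image} as alternating metric projections between the (non-convex) ``measurement sphere'' $S_1=\{z\in\C^m:|z|=b\}$ and the subspace $S_2=\Range(A)$, and to combine a Fej\'er-type monotonicity estimate with a compactness argument that absorbs the discontinuity of $\phase$. Boundedness is immediate: every point of $S_1$ has norm $\|b\|$, and since $(b\odot\phase(y_k))_i$ has modulus $b_i$ whether or not $(y_k)_i$ vanishes, $\|y'_k\|=\|b\|$ for all $k$; as $AA^\dag$ is an orthogonal projection, $\|y_{k+1}\|=\|(AA^\dag)y'_k\|\le\|y'_k\|=\|b\|$, so $(y_k)_{k\ge 1}$ stays in the ball of radius $\|b\|$.

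Next I would track the error $a_k=\|y_k-y'_k\|$ and show it is non-increasing. For $k\ge1$ we have $y_k\in S_2$, so Pythagoras applied to the orthogonal projection $y_{k+1}=(AA^\dag)y'_k$ gives $a_k^2=\|y'_k-y_{k+1}\|^2+\|y_{k+1}-y_k\|^2$, hence $\|y'_k-y_{k+1}\|\le a_k$. On the other hand $y'_{k+1}$ is a nearest point of $S_1$ to $y_{k+1}$ while $y'_k\in S_1$, so $a_{k+1}=\operatorname{dist}(y_{k+1},S_1)\le\|y_{k+1}-y'_k\|=\|y'_k-y_{k+1}\|$. Chaining these, $(a_k)_{k\ge1}$ is non-increasing and converges to some $\ell\ge0$; a squeeze gives $\|y'_k-y_{k+1}\|\to\ell$ as well, and then the Pythagorean identity forces $\|y_{k+1}-y_k\|^2=a_k^2-\|y'_k-y_{k+1}\|^2\to0$.

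Finally I would identify the accumulation points. Let $y_{k_j}\to y_\infty$. The vectors $\phase(y_{k_j})$ lie on the compact torus $\{z\in\C^m:|z_i|=1\ \forall i\}$, so along a further subsequence $\phase(y_{k_j})\to u$ with $|u_i|=1$ for all $i$. On coordinates where $(y_\infty)_i\ne0$, continuity of $\phase$ there forces $u_i=(y_\infty)_i/|(y_\infty)_i|$; on coordinates where $(y_\infty)_i=0$, $u_i$ is merely some unit-modulus number; hence $u\in E_{\phase}(y_\infty)$. Since $z\mapsto(AA^\dag)(b\odot z)$ is linear and therefore continuous, $y_{k_j+1}=(AA^\dag)(b\odot\phase(y_{k_j}))\to(AA^\dag)(b\odot u)$, while $\|y_{k_j+1}-y_{k_j}\|\to0$ gives $y_{k_j+1}\to y_\infty$; thus $(AA^\dag)(b\odot u)=y_\infty$. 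When $y_\infty$ has no zero entry, $\phase$ is continuous at $y_\infty$, no subsequence extraction is needed, and $u=\phase(y_\infty)$, which is the stated special case.

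The only genuine difficulty is the discontinuity of $\phase$ at vectors with vanishing entries --- precisely why the statement is phrased with the set $E_{\phase}(y_\infty)$ rather than $\phase(y_\infty)$ --- and it is dissolved by the compactness extraction in the last step. Everything else is routine alternating-projection bookkeeping, which survives the non-convexity of $S_1$ because we only ever use the one-sided bound $\operatorname{dist}(x,S_1)\le\|x-z\|$ for $z\in S_1$, valid for an arbitrary closed set.
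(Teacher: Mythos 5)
Your argument is correct. Both proofs share the same backbone (boundedness from $\|y'_k\|=\|b\|$, then a Fej\'er-type monotonicity of the alternating distances, then compactness to neutralize the discontinuity of $\phase$ at zero), but you handle the final step differently from the paper. The paper extracts subsequential limits $y'_{\phi(n)}\to y'_\infty$ and $y_{\phi(n)+1}\to y_\infty^{+1}$ in $\C^m$, observes that $y_\infty$ and $y_\infty^{+1}$ are both nearest points of the convex set $\Range(A)$ to $y'_\infty$ at the same distance $\delta$, and concludes $y_\infty=y_\infty^{+1}$ by \emph{uniqueness of projection onto a convex set}; the coordinate-wise definition of $u$ is then read off from $y'_\infty$. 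You instead exploit orthogonality more directly: because $y_{k+1}$ is the orthogonal projection of $y'_k$ onto a subspace containing $y_k$, the Pythagorean identity $a_k^2=\|y'_k-y_{k+1}\|^2+\|y_{k+1}-y_k\|^2$ together with the squeeze gives $\|y_{k+1}-y_k\|\to 0$ \emph{along the whole sequence}, after which a single further extraction of $\phase(y_{k_j})$ on the compact torus produces $u$ and closes the identity via continuity of the linear map $z\mapsto(AA^\dag)(b\odot z)$. Your route yields the slightly stronger intermediate fact that consecutive iterates converge to one another (not merely along a subsequence) and avoids the appeal to uniqueness of metric projection, at the modest cost of invoking orthogonality of the projection rather than mere convexity of $\Range(A)$.
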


\begin{proof}[Proof of Proposition \ref{prop:stagnation_points}]
The boundedness of $(y_k)_{k\in\N}$ is a consequence of the fact that $||y'_k||=||b||$ for all $k$, so $||y_{k+1}||\leq |||AA^\dag|||\,||b||$.

Let us show the second part of the statement. Let $y_\infty$ be an accumulation point of $(y_k)_{k\in\N}$, and let $\phi:\N\to\N$ be an extraction such that
\begin{equation*}
y_{\phi(n)}\to y_\infty\quad\mbox{when}\quad n\to+\infty.
\end{equation*}
By compacity, as $(y'_{\phi(n)})_{n\in\N}$ and $(y_{\phi(n)+1})_{n\in\N}$ are bounded sequences, we can assume, even if we have to consider replace $\phi$ by a subextraction, that they also converge. We denote by $y'_\infty$ and $y_\infty^{+1}$ their limits:
\begin{equation*}
y'_{\phi(n)}\to y'_{\infty}\quad\mbox{and}\quad y_{\phi(n)+1}\to y_\infty^{+1}\quad\mbox{when }n\to+\infty.
\end{equation*}

Let us define
\begin{equation*}
E_b=\{y'\in\C^m,|y'|=b\}.
\end{equation*}
We observe that, for any $k$,
\begin{equation*}
d(y'_{k-1},\Range(A))\geq d(y_{k},E_b)\geq d(y'_k,\Range(A)).
\end{equation*}
Indeed, because the operators $y\to b\odot\phase(y)$ and $y\to(AA^\dag)y$ are projections,
\begin{equation*}
\begin{array}{rcl}
d(y'_{k-1},\Range(A))=&d(y'_{k-1},y_k)&\geq d(y_k,E_b);\\
d(y_k,E_b)=&d(y_k,y'_k)&\geq d(y'_k,\Range(A)).
\end{array}
\end{equation*}
So the sequences $(d(y_k,E_b))_{k\in\N}$ and $(d(y'_k,\Range(A)))_{k\in\N}$ converge to the same non-negative limit, that we denote by $\delta$. In particular,
\begin{align*}
d(y_\infty,E_b)=\delta=d(y'_\infty,\Range(A)).
\end{align*}
If we pass to the limit the equalities $d(y_{\phi(n)},E_b)=||y_{\phi(n)}-y'_{\phi(n)}||$ and $d(y'_{\phi(n)},\Range(A))=||y'_{\phi(n)}-y_{\phi(n)+1}||$, we get
\begin{equation*}
||y_\infty-y'_\infty||=||y'_\infty-y_\infty^{+1}||=\delta=d(y'_\infty,\Range(A)).
\end{equation*}
As $\Range(A)$ is convex, the projection of $y'_\infty$ onto it is uniquely defined. This implies
\begin{equation*}
y_\infty=y_\infty^{+1},
\end{equation*}
and, because $\forall n,y_{\phi(n)+1}=(AA^\dag) y'_{\phi(n)}$,
\begin{equation*}
y_\infty=y_\infty^{+1}=(AA^\dag)y'_\infty.
\end{equation*}
To conclude, we now have to show that $y'_\infty=b\odot u$ for some $u\in E_{\phase}(y_\infty)$. We use the fact that, for all $n$, $y'_{\phi(n)}=b\odot \phase(y_{\phi(n)})$.

For any $i\in\{1,\dots,m\}$, if $(y_\infty)_i\ne 0$, $\phase$ is continuous around $(y_\infty)_i$, so $(y'_{\infty})_i= b_i\phase((y_\infty)_i)$. We then set $u_i=\phase((y_\infty)_i)$, and we have $(y'_\infty)_i=b_iu_i$.

If $(y_\infty)_i=0$, we set $u_i=\phase((y'_\infty)_i)\in E_{\phase}(0)=E_{\phase}((y_\infty)_i)$. We then have $y'_\infty=|y'_\infty|u_i=b_iu_i$.

With this definition of $u$, we have, as claimed, $y'_\infty=b\odot u$ and $u\in E_{\phase}(y_\infty)$.

\end{proof}

\section{Technical lemmas for Section \ref{s:with_init}}

\subsection{Proof of Lemma \ref{lem:diff_phase}\label{ss:diff_phase}}

\begin{lem*}[Lemma \ref{lem:diff_phase}]
For any $z_0,z\in\C$,
\begin{equation*}
|\phase(z_0+z)-\phase(z_0)| \leq 2. 1_{|z|\geq |z_0|/6} + \frac{6}{5}\left|\Im\left(\frac{z}{z_0}\right)\right|.
\end{equation*}
\end{lem*}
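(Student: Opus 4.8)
The statement to prove is Lemma \ref{lem:diff_phase}: for any $z_0, z \in \C$,
\[
|\phase(z_0+z)-\phase(z_0)| \leq 2\cdot 1_{|z|\geq |z_0|/6} + \frac{6}{5}\left|\Im\left(\frac{z}{z_0}\right)\right|.
\]

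\textbf{Plan of proof.}

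The plan is to split into two regimes according to whether $z$ is large or small relative to $z_0$. If $|z| \geq |z_0|/6$, then the indicator term equals $1$, and since $\phase$ always takes values on the unit circle, the left-hand side is at most $2$, so the inequality holds trivially (the second term only helps). So the whole content is in the case $|z| < |z_0|/6$, where the indicator vanishes and we must show $|\phase(z_0+z)-\phase(z_0)| \leq \frac{6}{5}|\Im(z/z_0)|$. Here I would first reduce to the case $z_0 = |z_0| > 0$ by rotational invariance: writing $w = z/z_0$, we have $\phase(z_0+z) = \phase(z_0)\phase(1+w)$ and $\phase(z_0) = \phase(z_0)\cdot 1$, so $|\phase(z_0+z)-\phase(z_0)| = |\phase(1+w) - 1|$, while $\Im(z/z_0) = \Im(w)$. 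Note $z_0 \neq 0$ is automatic in this regime unless $z = 0$ too, in which case both sides are zero.

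Thus it remains to bound $|\phase(1+w) - 1|$ in terms of $|\Im(w)|$ when $|w| < 1/6$. Write $1 + w = \rho e^{i\theta}$ with $\rho = |1+w| > 0$ and $\theta$ the argument; then $\phase(1+w) = e^{i\theta}$ and $|\phase(1+w)-1| = |e^{i\theta}-1| = 2|\sin(\theta/2)| \leq |\theta|$. The geometric picture: $\Im(w) = \Im(1+w) = \rho\sin\theta$, so $|\Im(w)| = \rho|\sin\theta|$. Since $|w| < 1/6$, we have $\rho = |1+w| \geq 1 - |w| > 5/6$, and also $|\theta|$ is small (bounded away from $\pm\pi/2$), so I can compare $|\theta|$ with $|\sin\theta|$ via a Taylor/monotonicity estimate: on the relevant range $|\sin\theta| \geq c|\theta|$ for an explicit $c$ close to $1$. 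Putting these together, $|\phase(1+w)-1| \leq |\theta| \leq \frac{1}{c}|\sin\theta| = \frac{1}{c\rho}|\Im(w)| \leq \frac{1}{c\cdot(5/6)}|\Im(w)|$, and the task is to check that the constant $\frac{6}{5c}$ comes out at most $\frac{6}{5}$, i.e. that $c$ can be taken to be $1$ on this range — which it cannot exactly, so more care is needed.

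\textbf{Main obstacle.} The delicate point is getting the constant exactly $6/5$ rather than something slightly larger. The naive chain above loses a factor from $|\sin(\theta/2)| \le |\theta|/2$ being compared against $\rho|\sin\theta|$, and a factor from $\rho \ge 5/6$. I expect the cleaner route is to bound $|e^{i\theta}-1|$ directly against $|\sin\theta|$ rather than going through $|\theta|$: one has $|e^{i\theta}-1| = 2|\sin(\theta/2)|$ and $|\sin\theta| = 2|\sin(\theta/2)||\cos(\theta/2)|$, so $|e^{i\theta}-1| = |\sin\theta|/|\cos(\theta/2)|$. Hence $|\phase(1+w)-1| = \frac{1}{\rho|\cos(\theta/2)|}|\Im(w)|$, and I just need $\rho|\cos(\theta/2)| \geq 5/6$ whenever $|w| < 1/6$. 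Since $\rho|\cos\theta| = \Re(1+w) = 1 + \Re(w) \geq 1 - |w| > 5/6$ and $|\cos(\theta/2)| \geq |\cos\theta|$ for $|\theta| \leq \pi/2$ (which holds here because $\Re(1+w) > 0$), we get $\rho|\cos(\theta/2)| \geq \rho|\cos\theta| > 5/6$, closing the argument cleanly with the exact constant. I would present it in this second form to avoid any slack, handling the trivial cases ($z = 0$, or $|z| \geq |z_0|/6$) first.
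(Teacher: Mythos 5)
Your proof is correct and follows essentially the same route as the paper: handle the large-$|z|$ case trivially, reduce to $z_0 = 1$, write $\phase(1+z) = e^{i\theta}$, and bound $|e^{i\theta}-1|$ using a trigonometric identity that ultimately compares it to $|\Im(z)|/(1-|z|) \leq \frac{6}{5}|\Im(z)|$. The only cosmetic difference is that the paper uses the chain $2|\sin(\theta/2)| \leq |\tan\theta| = |\Im(1+z)|/|\Re(1+z)|$, whereas you start from the exact identity $2|\sin(\theta/2)| = |\sin\theta|/|\cos(\theta/2)|$ and then lower-bound $\rho|\cos(\theta/2)| \geq \Re(1+z)$; these are two presentations of the same estimate.
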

\begin{proof}
The inequality holds if $z_0=0$, so we can assume $z_0\ne 0$. We remark that, in this case,
\begin{equation*}
|\phase(z_0+z)-\phase(z_0)| = |\phase(1+z/z_0)-1|.
\end{equation*}
It is thus enough to prove the lemma for $z_0=1$, so we make this assumption.

When $|z|\geq 1/6$, the inequality is valid. Let us now assume that $|z|<1/6$. Let $\theta\in\left]-\frac{\pi}{2};\frac{\pi}{2}\right[$ be such that
\begin{equation*}
e^{i\theta}=\phase(1+z).
\end{equation*}
Then
\begin{align*}
|\phase(1+z)-1| &=|e^{i\theta}-1|\\
&=2|\sin(\theta/2)|\\
&\leq |\tan\theta|\\
&=\frac{|\Im(1+z)|}{|\Re(1+z)|}\\
&\leq \frac{|\Im(z)|}{1-|z|}\\
&\leq \frac{6}{5}|\Im(z)|.
\end{align*}
So the inequality is also valid.
\end{proof}

\subsection{Proof of Lemma \ref{lem:first_term}\label{ss:first_term}}

\begin{lem*}[Lemma \ref{lem:first_term}]
For any $\eta>0$, there exists $C_1,C_2,M,\gamma>0$ such that the inequality
\begin{equation*}
||\,|Ax_0|\odot 1_{|v|\geq |Ax_0|}||\leq \eta ||v||
\end{equation*}
holds for any $v\in\Range(A)$ such that $||v||<\gamma ||Ax_0||$, with probability at least
\begin{equation*}
1-C_1\exp(-C_2m),
\end{equation*}
when $m\geq Mn$.
\end{lem*}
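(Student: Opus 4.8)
The plan is to show that the "bad" set of indices $i$ — those for which $|v|_i \geq |Ax_0|_i$, i.e. for which $|v_i|$ is comparable to or larger than $|(Ax_0)_i|$ — is rare, and that on this rare set the weight $|Ax_0|_i^2$ is small in total. The key tension is that $v$ ranges over an entire $n$-dimensional subspace $\Range(A) = \Range(AA^\dag)$, so one cannot fix $v$ and apply a scalar concentration bound; one needs a uniform statement over the subspace, which is where a net/covering argument on the unit sphere of $\Range(A)$ and a union bound will enter. Throughout, I would condition on the event from Proposition \ref{prop:davidson} (with some fixed small $t$), so that $\|Av\| \asymp \sqrt m \|v\|$ for all $v \in \C^n$; this controls both the geometry of $\Range(A)$ and, via $b = |Ax_0|$, the quantity $\|Ax_0\| \asymp \sqrt m \|x_0\|$.

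First I would reduce the statement to a pointwise-in-$v$ bound with a safety margin. Fix $v \in \Range(A)$. Since $1_{|v_i| \geq |(Ax_0)_i|}$ forces $|(Ax_0)_i|^2 \leq |(Ax_0)_i| \, |v_i|$, one has the crude but useful bound $\||Ax_0| \odot 1_{|v| \geq |Ax_0|}\|^2 \leq \sum_i |(Ax_0)_i|\,|v_i| \, 1_{|v_i| \geq |(Ax_0)_i|}$. More efficiently, I would introduce a threshold parameter: write $v = \gamma' \|Ax_0\| w$ with $\|w\| = 1/\sqrt m \cdot (\text{something of order }1)$ — more precisely normalize so that the typical entry of $v$ is much smaller than the typical entry $b_i = |(Ax_0)_i|$. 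For an index to be bad, either $|(Ax_0)_i|$ is atypically small (of order $\gamma'\sqrt m \|x_0\|$ instead of order $\sqrt{\|x_0\|^2}$... rather, $|(Ax_0)_i|$ has magnitude $\asymp \|x_0\|$ typically) or $|v_i|$ is atypically large. Both are controlled by tail bounds: $(Ax_0)_i$ is a complex Gaussian of variance $\|x_0\|^2$, so $\P(|(Ax_0)_i| \leq s\|x_0\|) \lesssim s^2$; and for the $v$-side I would use that $v_i = e_i^* (AA^\dag) y$ for some $y$, combined with incoherence of $\Range(A)$ with the coordinate axes (no single coordinate carries much mass of a unit vector in $\Range(A)$ — a standard consequence of Gaussianity of $A$ when $m \gg n$).

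The technical core, and the step I expect to be the main obstacle, is making the $v$-side uniform over all $v \in \Range(A)$ with $\|v\| < \gamma\|Ax_0\|$. Here I would proceed as follows: (i) by homogeneity reduce to $v$ on the unit sphere $S$ of $\Range(A)$; (ii) build a $\rho$-net $\mathcal N$ of $S$ with $|\mathcal N| \leq (C/\rho)^{2n}$; (iii) for each fixed net point $w$, control $\sum_i |(Ax_0)_i|^2 1_{|(Ax_0)_i| \leq \gamma |w_i|/(\text{scale})}$ — a sum of independent bounded contributions whose expectation is $O(\gamma^2 \cdot m\|x_0\|^2)$ (each term contributes with probability $\lesssim \gamma^2 |w_i|^2/\|w\|^2 \cdot m$ roughly, summing to $O(\gamma^2)$ times the total) — and apply a Bernstein/Chernoff bound to get deviation probability $\exp(-cm\eta^2)$, which beats the net cardinality $\exp(Cn\log(1/\rho))$ once $m \geq Mn$ with $M$ large; (iv) pass from the net to all of $S$ by a Lipschitz/continuity argument — the map $v \mapsto \||Ax_0| \odot 1_{|v| \geq |Ax_0|}\|$ is not continuous because of the indicator, so instead I would sandwich the indicator $1_{|v_i| \geq |(Ax_0)_i|}$ between smooth thresholds $1_{|w_i| \geq (1-\kappa)|(Ax_0)_i|}$ and $1_{|w_i| \geq (1+\kappa)|(Ax_0)_i|}$ for the nearest net point $w$ (valid since $|v_i - w_i| \leq \|v - w\| \leq \rho$ and one controls how many $(Ax_0)_i$ fall in a thin annulus of width $\kappa|(Ax_0)_i|$, again using $\P(|(Ax_0)_i| \in \text{annulus}) \lesssim \kappa$). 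Choosing $\gamma$ (hence the ratio of scales), then $\kappa$, then $\rho$, then $M$, in that order, all as functions of the target $\eta$, closes the argument with failure probability $C_1 \exp(-C_2 m)$.
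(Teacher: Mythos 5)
Your plan is in the right spirit but follows a genuinely different route from the paper's, and the step you yourself flag as "the main obstacle" — passing the indicator from a net point to an arbitrary $v$ — is where the sketch does not close. You propose to sandwich $1_{|v_i|\geq|(Ax_0)_i|}$ between $1_{|w_i|\geq(1\pm\kappa)|(Ax_0)_i|}$ for a nearby net point $w$. But a net only gives an \emph{additive} bound $|v_i-w_i|\leq\rho$, so turning that into a \emph{multiplicative} margin $\kappa|(Ax_0)_i|$ requires $\rho\leq\kappa|(Ax_0)_i|$, which fails precisely on the coordinates where $|(Ax_0)_i|$ is small — and those are also the coordinates most likely to be in the bad set. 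One can try to split off the small-$|(Ax_0)_i|$ indices and argue their total contribution is negligible, but that is exactly the bookkeeping the sketch leaves implicit, and it is not automatic (those indices contribute $|(Ax_0)_i|^2\leq|v_i|^2$, so you would still need to show $\|v\odot 1_S\|$ is small for $S$ the set of such indices, uniformly over $v\in\Range(A)$ — which is essentially the hardest ingredient either way). There is also a smaller imprecision: a "$\rho$-net of the unit sphere of $\Range(A)$" is not a fixed set, since $\Range(A)$ is random; you have to net $\C^n$ and push forward through $A$, which you do gesture at via Proposition~\ref{prop:davidson} but do not use explicitly in the net construction.

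The paper avoids the discontinuity entirely by a different decomposition. It first shows (Lemma~\ref{lem:S_geq_bm}) that on every index set $S$ of cardinality $\geq\beta m$ the vector $|Ax_0|$ carries a fixed fraction of its mass, $\||Ax_0|\odot 1_S\|\geq\beta^{3/2}e^{-1/2}\|Ax_0\|$. Since on the bad set $S_v=\{i:|v_i|\geq|(Ax_0)_i|\}$ one has $|(Ax_0)_i|\leq|v_i|$, the hypothesis $\|v\|<\gamma\|Ax_0\|$ (with $\gamma=\beta^{3/2}e^{-1/2}$) forces $\Card S_v<\beta m$. Second (Lemma~\ref{lem:S_leq_bm}), it shows a uniform submatrix operator-norm bound: for every $S$ with $\Card S<\beta m$ and every $y\in\Range(A)$, $\|y\odot 1_S\|\leq 10\sqrt{\beta\log(1/\beta)}\,\|y\|$. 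Applying this to $y=v$ and $S=S_v$, and using again $|(Ax_0)_i|\leq|v_i|$ on $S_v$, yields $\||Ax_0|\odot 1_{S_v}\|\leq\|v\odot 1_{S_v}\|\leq\eta\|v\|$. The probabilistic union is over subsets of $\{1,\dots,m\}$ of size $\lceil\beta m\rceil$ (cardinality $\exp(O(\beta\log(1/\beta)\,m))$), rather than over a net in $\C^n$; since the per-$S$ failure probability is $\exp(-cm)$, the union closes once $\beta$ is small, with no need for any continuity of the thresholding map. This is the cleaner path: it replaces the delicate net-and-sandwich step by a deterministic cardinality bound on $S_v$ plus a Restricted-Isometry-type estimate for small submatrices of $A$.
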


\begin{proof}
For any $S\subset\{1,\dots,m\}$, we denote by $1_S$ the vector of $\C^m$ such that
\begin{align*}
(1_S)_j&=1\mbox{ if }j\in S\\
&=0\mbox{ if }j\notin S.
\end{align*}
We use the following two lemmas, proven in Paragraphs \ref{sss:S_geq_bm} and \ref{sss:S_leq_bm}.

\begin{lem}\label{lem:S_geq_bm}
Let $\beta\in]0;1/2[$ be fixed. There exist $C_1>0$ such that, with probability at least
\begin{equation*}
1-C_1\exp(-\beta^3m/e),
\end{equation*}
the following property holds: for any $S\subset\{1,\dots,m\}$ such that $\Card(S)\geq\beta m$,
\begin{equation}\label{eq:Ax01S}
||\,|Ax_0|\odot 1_S|| \geq \beta^{3/2}e^{-1/2}||Ax_0||.
\end{equation}
\end{lem}

\begin{lem}\label{lem:S_leq_bm}
Let $\beta\in\left]0;\frac{1}{100}\right]$ be fixed. There exist $M,C_1,C_2>0$ such that, if $m\geq M n$, then, with probability at least
\begin{equation*}
1-C_1\exp(-C_2 m),
\end{equation*}
the following property holds: for any $S\subset\{1,\dots,m\}$ such that $\Card (S)<\beta m$ and for any $y\in\Range(A)$,
\begin{equation}\label{eq:y1S}
||y\odot 1_S||\leq 10\sqrt{\beta\log(1/\beta)}||y||.
\end{equation}
\end{lem}

Let $\beta>0$ be such that $10\sqrt{\beta\log(1/\beta)}\leq \eta$. Let $M$ be as in Lemma \ref{lem:S_leq_bm}. We set
\begin{equation*}
\gamma = \beta^{3/2}e^{-1/2}.
\end{equation*}
We assume that Equations \eqref{eq:Ax01S} and \eqref{eq:y1S} hold; from the lemmas, this occurs with probability at least
\begin{equation*}
1-C_1'\exp(-C_2'm),
\end{equation*}
for some constants $C_1',C_2'>0$, provided that $m\geq Mn$.

On this event, for any $v\in\Range(A)$ such that $||v||<\gamma ||Ax_0||$, if we set $S_v=\{i\mbox{ s.t. }|v_i|\geq|Ax_0|_i\}$, we have that
\begin{equation*}
\Card S_v < \beta m.
\end{equation*}
Indeed, if it was not the case, we would have, by Equation \eqref{eq:Ax01S},
\begin{align*}
||v||&\geq ||v\odot 1_{S_v}||\\
&\geq ||\,|Ax_0|\odot 1_{S_v}||\\
&\geq \beta^{3/2}e^{-1/2}||Ax_0||\\
&=\gamma ||Ax_0||,
\end{align*}
which is in contradiction with the way we have chosen $v$.

So we can apply Equation \eqref{eq:y1S}, and we get
\begin{align*}
||\,|Ax_0|\odot 1_{|v|\geq |Ax_0|}||
&\leq ||v\odot 1_S||\\
&\leq 10\sqrt{\beta\log(1/\beta)}||v||\\
&\leq \eta ||v||.
\end{align*}

\end{proof}

\subsubsection{Proof of Lemma \ref{lem:S_geq_bm}\label{sss:S_geq_bm}}

\begin{proof}[Proof of Lemma \ref{lem:S_geq_bm}]
If we choose $C_1$ large enough, it is enough to show the property for $m$ larger than some fixed constant.

We first assume $S$ fixed, with cardinality $\Card S\geq\beta m$. We use the following lemma.
\begin{lem}[\citet*{dasgupta}, Lemma 2.2]\label{lem:dasgupta}
Let $k_1<k_2$ be natural numbers. Let $X\in\C^{k_2}$ be a random vector whose coordinates are independent, Gaussian, of variance $1$. Let $Y$ be the projection of $X$ onto its $k_1$ first coordinates. Then, for any $t>0$,
\begin{align*}
\mbox{\rm Proba}\left(\frac{||Y||}{||X||}\leq \sqrt{\frac{t k_1}{k_2}}\right)
&\leq \exp\left(k_1(1-t+\log t)\right)&\mbox{if }t<1;\\
\mbox{\rm Proba}\left(\frac{||Y||}{||X||}\geq \sqrt{\frac{t k_1}{k_2}}\right)
&\leq \exp\left(k_1(1-t+\log t)\right)&\mbox{if }t>1.
\end{align*}
\end{lem}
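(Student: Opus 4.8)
The plan is a direct Chernoff bound on a linear combination of two independent gamma variables. Decompose $\|X\|^2$ into the independent blocks $U=\|Y\|^2=\sum_{j=1}^{k_1}|X_j|^2$ and $V=\sum_{j=k_1+1}^{k_2}|X_j|^2$, so that $\|X\|^2=U+V$. Under the normalization used here (each $|X_j|^2$ has mean $1$), $U$ and $V$ are independent gamma variables of shape $k_1$ and $k_2-k_1$, hence $\E[e^{sU}]=(1-s)^{-k_1}$ and $\E[e^{sV}]=(1-s)^{-(k_2-k_1)}$ for every $s<1$. The event $\{\|Y\|/\|X\|\le\sqrt{tk_1/k_2}\}$ is exactly $\{k_2U\le tk_1(U+V)\}$, i.e.\ $\{(k_2-tk_1)U-tk_1V\le 0\}$, and when $t<1$ (hence $t<k_2/k_1$) the coefficient $k_2-tk_1$ is positive.

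For $t<1$ I would apply the exponential Markov inequality with a parameter $\lambda>0$ satisfying $\lambda tk_1<1$, which gives
\begin{equation*}
\mathrm{Proba}\big(\|Y\|/\|X\|\le\sqrt{tk_1/k_2}\big)
\le\E\big[e^{-\lambda(k_2-tk_1)U}\big]\,\E\big[e^{\lambda tk_1V}\big]
=\big(1+\lambda(k_2-tk_1)\big)^{-k_1}\big(1-\lambda tk_1\big)^{-(k_2-k_1)}.
\end{equation*}
Differentiating the logarithm of the right-hand side shows the optimum is $\lambda^\star=\frac{1-t}{t(k_2-tk_1)}$, which lies in the admissible range precisely because $k_1<k_2$ (this forces $(1-t)k_1<k_2-tk_1$). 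At $\lambda^\star$ one has the clean identities $1+\lambda^\star(k_2-tk_1)=1/t$ and $1-\lambda^\star tk_1=\frac{k_2-k_1}{k_2-tk_1}$, so the bound collapses to
\begin{equation*}
t^{k_1}\left(\frac{k_2-tk_1}{k_2-k_1}\right)^{k_2-k_1}
=t^{k_1}\left(1+\frac{(1-t)k_1}{k_2-k_1}\right)^{k_2-k_1}
\le t^{k_1}e^{(1-t)k_1}
=\exp\big(k_1(1-t+\log t)\big),
\end{equation*}
using $1+x\le e^{x}$; this is the asserted inequality for $t<1$.

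The case $t>1$ is symmetric. If $t\ge k_2/k_1$ the event is empty since $\|Y\|\le\|X\|$, so I would assume $1<t<k_2/k_1$, which keeps $k_2-tk_1>0$; then $\{\|Y\|/\|X\|\ge\sqrt{tk_1/k_2}\}=\{(k_2-tk_1)U-tk_1V\ge 0\}$, and the same Markov step with multiplier $e^{\lambda((k_2-tk_1)U-tk_1V)}$, optimized at $\lambda^\star=\frac{t-1}{t(k_2-tk_1)}$, produces after identical algebra the bound $t^{k_1}\big(1-\frac{(t-1)k_1}{k_2-k_1}\big)^{k_2-k_1}\le\exp(k_1(1-t+\log t))$, now via $1-x\le e^{-x}$. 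There is no genuine obstacle here — it is the textbook Chernoff computation for gamma tails. The only points needing a line of care are checking that the optimal $\lambda^\star$ keeps the moment generating functions finite (true exactly because $k_1<k_2$), using the correct gamma shape $k_1$ for $U$ under the complex-Gaussian convention adopted in the statement (which is why the exponent is $k_1$ and not $k_1/2$), and disposing of the degenerate range $t\ge k_2/k_1$; alternatively one could simply invoke the classical real-valued statement of \citet*{dasgupta} applied to the $2k_1$ real and imaginary coordinates among $2k_2$.
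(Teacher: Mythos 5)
Your proof is correct: the decomposition $\|X\|^2=U+V$ with independent mean-one gamma blocks of shapes $k_1$ and $k_2-k_1$, the Chernoff step, the optimizer $\lambda^\star=\frac{1-t}{t(k_2-tk_1)}$ (resp.\ $\frac{t-1}{t(k_2-tk_1)}$), and the final use of $1\pm x\le e^{\pm x}$ all check out, and you correctly verify that $k_1<k_2$ keeps $\lambda^\star$ in the range where both moment generating functions are finite. Note, however, that the paper does not prove this statement at all: it is quoted as Lemma 2.2 of \citet*{dasgupta}, transcribed to complex coordinates, so there is no internal proof to compare against. Your argument is essentially the Dasgupta--Gupta computation itself, redone with gamma shape $k_1$ (exponential summands, since ``variance $1$'' here means $\E|X_j|^2=1$) in place of the real $\chi^2_{k_1}$, which is exactly why the exponent is $k_1$ rather than $k_1/2$; the shortcut you mention at the end --- applying the real-valued lemma to the $2k_1$ real coordinates among $2k_2$ --- is precisely how the paper's complex statement follows from the citation. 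The only microscopic imprecision is the boundary case $t=k_2/k_1$, where the event $\{\|Y\|/\|X\|\ge 1\}$ is not empty but has probability zero (it forces $X_{k_1+1}=\dots=X_{k_2}=0$), so the bound still holds trivially.
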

From this lemma, for any $t\in]0;1[$, because $Ax_0$ has independent Gaussian coordinates,
\begin{align*}
P\left(\frac{||\,|Ax_0|\odot 1_S||}{||Ax_0||}\leq\sqrt{t\beta} \right)
\leq \exp\left(-\beta m(t-1-\ln t)\right).
\end{align*}
In particular, for $t=\frac{\beta^2}{e}$,
\begin{equation}
P\left(\frac{||\,|Ax_0|\odot 1_S||}{||Ax_0||}\leq \beta^{3/2}e^{-1/2} \right)
\leq \exp\left(-\beta m\left(\frac{\beta^2}{e}-2\ln \beta\right)\right).
\label{eq:P_Ax01S}
\end{equation}
As soon as $m$ is large enough, the number of subsets $S$ of $\{1,\dots,m\}$ with cardinality $\lceil \beta m\rceil$ satisfies
\begin{align}
\binom{m}{\lceil\beta m\rceil}
&\leq\left(\frac{em}{\lceil \beta m\rceil}\right)^{\lceil \beta m\rceil} \nonumber\\
&\leq\exp\left(2m\beta\log\frac{1}{\beta}\right).\label{eq:maj_binom}
\end{align}
(The first inequality is a classical result regarding binomial coefficients.)

We combine Equations \eqref{eq:P_Ax01S} and \eqref{eq:maj_binom}: Property \eqref{eq:Ax01S} is satisfied for any $S$ of cardinality $\lceil\beta m\rceil$ with probability at least
\begin{equation*}
1-\exp\left(-\frac{\beta^3}{e}m\right),
\end{equation*}
provided that $m$ is larger that some constant which depends on $\beta$.

If it is satisfied for any $S$ of cardinality $\lceil\beta m\rceil$, then it is satisfied for any $S$ of cardinality larger than $\beta m$, which implies the result.
\end{proof}

\subsubsection{Proof of Lemma \ref{lem:S_leq_bm}\label{sss:S_leq_bm}}

\begin{proof}[Proof of Lemma \ref{lem:S_leq_bm}]
We first assume $S$ to be fixed, of cardinality exactly $\lceil\beta m\rceil$.

Any vector $y\in\Range(A)$ is of the form $y=Av$, for some $v\in\C^n$. Inequality \eqref{eq:y1S} can then be rewritten as:
\begin{equation}\label{eq:S_leq_bm_rewritten}
||A_S v||=||\mathrm{Diag}(1_S)Av||\leq 10\sqrt{\beta\log(1/\beta)}||Av||,
\end{equation}
where $A_S$, by definition, is the submatrix obtained from $A$ by extracting the rows whose indexes are in $S$.

We apply Proposition \ref{prop:davidson} to $A$ and $A_S$, respectively for $t=\frac{1}{2}$ and $t=3\sqrt{\log(1/\beta)}$. It guarantees that the following properties hold:
\begin{gather*}
\inf_{v\in\C^n}\frac{||Av||}{||v||}\geq \sqrt{m}\left(\frac{1}{2}-\sqrt{\frac{n}{m}}\right);\\
\sup_{v\in\C^n}\frac{||A_Sv||}{||v||}\leq \sqrt{\Card S}\left(1+\sqrt\frac{n}{\Card S}+3\sqrt{\log(1/\beta)}\right),
\end{gather*}
with respective probabilities at least
\begin{gather*}
1-2\exp\left(-\frac{m}{4}\right);\\
\mbox{and }1-2\exp\left(-9(\Card S)\log(1/\beta)\right)\geq 1-2\exp\left(-9\beta\log(1/\beta)m\right).
\end{gather*}
Assuming $m\geq Mn$ for some $M>0$, we deduce from these inequalities that
\begin{align}
\forall v\in\C^n,\quad\quad
||A_Sv||&\leq \sqrt\frac{\Card S}{m}\left(\frac{1+\sqrt\frac{n}{\Card S}+3\sqrt{\log (1/\beta)}}{\frac{1}{2}-\sqrt{\frac{n}{m}}}\right)||Av||\nonumber\\
&\leq \sqrt{\beta+\frac{1}{m}}\left(\frac{1+\sqrt{\frac{1}{\beta M}}+3\sqrt{\log(1/\beta)}}{\frac{1}{2}-\sqrt{\frac{1}{M}}}\right)||Av||,\label{eq:tmp}
\end{align}
with probability at least
\begin{equation*}
1-2\exp\left(-9\beta\log(1/\beta)m\right)-2\exp\left(-\frac{m}{4}\right).
\end{equation*}
If we choose $M$ large enough, we can upper bound Equation \eqref{eq:tmp} by $(\epsilon+2\sqrt{\beta}(1+3\sqrt{\log(1/\beta)}))||Av||\leq (\epsilon+8\sqrt{\beta}\sqrt{\log(1/\beta)})$ for any fixed $\epsilon>0$. So this inequality implies Equation \eqref{eq:S_leq_bm_rewritten}.

As in the proof of Lemma \ref{lem:S_geq_bm}, there are at most
\begin{equation*}
\exp\left(2m\beta\log\frac{1}{\beta}\right)
\end{equation*}
subsets of $\{1,\dots,m\}$ with cardinality $\lceil\beta m\rceil$, as soon as $m$ is large enough. As a consequence, Equation \eqref{eq:S_leq_bm_rewritten} holds for any $v\in\C^n$ and $S$ of cardinality $\lceil\beta m\rceil$ with probability at least
\begin{equation*}
1-2\exp\left(-7\beta\log(1/\beta)m\right)-2\exp\left(-\left(\frac{1}{4}-2\beta\log\frac{1}{\beta}\right)m\right).
\end{equation*}
When $\beta\leq \frac{1}{100}$, we have
\begin{equation*}
\frac{1}{4}-2\beta\log\frac{1}{\beta}> 0,
\end{equation*}
so the resulting probability is larger than
\begin{equation*}
1-C_1\exp(-C_2 m),
\end{equation*}
for some well-chosen constants $C_1,C_2>0$.

This ends the proof. Indeed, if Equation \eqref{eq:S_leq_bm_rewritten} holds for any set of cardinality $\lceil\beta m\rceil$, it also holds for any set of cardinality $\Card S<\beta m$, because $||A_{S'} v||\leq ||A_S v||$ whenever $S'\subset S$. This implies Equation \eqref{eq:y1S}.
\end{proof}

\subsection{Proof of Lemma \ref{lem:second_term}\label{ss:second_term}}

\begin{lem*}[Lemma \ref{lem:second_term}]
For $M,C_1>0$ large enough, and $C_2>0$ small enough, when $m\geq M n$, the property
\begin{equation}\label{eq:second_term}
||\Im(v\odot\overline{\phase(Ax_0)})||\leq \frac{4}{5}||v||
\end{equation}
holds for any $v\in\Range(A)\cap \{Ax_0\}^\perp$, with probability at least
\begin{equation*}
1-C_1\exp(-C_2 m).
\end{equation*}
\end{lem*}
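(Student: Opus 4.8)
The plan is to reduce the statement to a uniform concentration estimate over the low-dimensional subspace $\Range(A)\cap\{Ax_0\}^\perp$, and then prove that estimate by a net argument together with a pointwise concentration bound. First, fix a unit vector $v\in\Range(A)\cap\{Ax_0\}^\perp$. Write $u_j=\overline{\phase((Ax_0)_j)}$, so that the quantity to control is $\sum_{j=1}^m |\Im(v_j u_j)|^2$. The heart of the matter is that, conditionally on $Ax_0$ (equivalently, on the phases $u_j$), the subspace $\Range(A)$ is still ``generic'': concretely, one can condition on $Ax_0$ and observe that $\Range(A)=\Range(A)$ decomposes as $\C\cdot Ax_0 \oplus W$, where $W=\Range(A)\cap\{Ax_0\}^\perp$ is, conditionally, a uniformly random $(n-1)$-dimensional subspace of $\{Ax_0\}^\perp\subset\C^m$, independent of the phases $u_j$ (this uses the unitary invariance of the complex Gaussian law of the columns of $A$). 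Hence it is enough to show: for a fixed collection of unit complex numbers $u_1,\dots,u_m$ and a Haar-random unit vector $v$ in an $(n-1)$-dimensional subspace, $\sum_j|\Im(v_ju_j)|^2$ concentrates around its mean, which is $\tfrac12\|v\|^2=\tfrac12$ (since $\E|\Im(gu_j)|^2=\tfrac12\E|g|^2$ for an isotropic complex Gaussian coordinate $g$), and $\tfrac12<(\tfrac45)^2=\tfrac{16}{25}$ leaves room for fluctuations.

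Next I would make the pointwise bound precise. For a single unit vector $v$ in the random subspace, realize $v$ as $g/\|g\|$ with $g$ a standard complex Gaussian vector in that subspace; then $\Im(v\odot\bar u)=\Im(g\odot\bar u)/\|g\|$, and $\Im(g\odot\bar u)$ is a real Gaussian vector in $\R^m$ whose coordinates have variance $\tfrac12$ and which lives in a real subspace of dimension at most $2(n-1)$. Standard Gaussian concentration (e.g.\ the Hanson–Wright inequality, or simply a $\chi^2$ tail bound combined with the fact that $\|g\|^2$ concentrates around $2(n-1)$) gives
\begin{equation*}
\mathrm{Proba}\left(\|\Im(v\odot\bar u)\|^2 \geq \tfrac{1}{2}+s\right)\leq \exp(-c\, s^2\, n)
\end{equation*}
for a fixed $v$, with an absolute constant $c>0$; here it is crucial that the ambient ``effective dimension'' is $O(n)$, which is why $m\geq Mn$ suffices. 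Then I would take an $\epsilon_0$-net of the unit sphere of the $(2n-1)$-real-dimensional set $\Range(A)\cap\{Ax_0\}^\perp$; such a net has cardinality at most $(3/\epsilon_0)^{2n}$. A union bound over the net, combined with the Lipschitz property $v\mapsto\|\Im(v\odot\bar u)\|$ (it is $1$-Lipschitz) to pass from the net to all of the sphere, upgrades the pointwise bound to the uniform bound: with probability at least $1-C_1\exp(-C_2m)$, $\|\Im(v\odot\bar u)\|\leq \tfrac45$ for \emph{every} unit $v\in\Range(A)\cap\{Ax_0\}^\perp$, provided $M$ is large enough that the net entropy $2n\log(3/\epsilon_0)\leq C_2 m/2$ is absorbed and $\tfrac12 + (\text{fluctuation}) + \epsilon_0 \leq \tfrac{16}{25}$. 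Homogeneity then removes the unit-norm restriction.

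The main obstacle I anticipate is the conditioning step: one must argue carefully that, after conditioning on $Ax_0$, the orthogonal complement $\Range(A)\cap\{Ax_0\}^\perp$ really is a uniformly random subspace independent of the phases $u_j$, and that the Gaussian-concentration argument can be carried out conditionally. One clean way is to avoid explicit conditioning altogether: parametrize $v = AA^\dagger w - \tfrac{\langle Ax_0, AA^\dagger w\rangle}{\|Ax_0\|^2}Ax_0$ for $w$ ranging over $\C^m$, but this reintroduces the coupling between $A$ and $x_0$ only through $b=|Ax_0|$, which is what we want. The secondary technical point is to ensure the fluctuation term in the pointwise bound is strictly smaller than $\tfrac{16}{25}-\tfrac12-\epsilon_0=\tfrac{7}{50}-\epsilon_0$ on the relevant event — this is where we pay for needing $m$ genuinely larger than $n$ by a fixed constant factor $M$, rather than just $m\geq n$. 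Everything else (net cardinality bounds, Lipschitz estimates, $\chi^2$ tails) is routine.
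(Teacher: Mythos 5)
The crucial structural insight you identify — condition on $Ax_0$, observe that the rest of $\Range(A)$ is rotationally symmetric and independent of the phases $u_j=\overline{\phase((Ax_0)_j)}$ — is exactly the right one, and it is what the paper uses as well (after fixing $x_0=e_1$ by unitary invariance). However, the way you implement the concentration and net steps has two gaps that as written would make the argument fail.

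First, the description of the Gaussian vector $g$ is incorrect. If $g$ is a standard complex Gaussian in the random $(n-1)$-dimensional subspace $W\subset\C^m$, then the coordinates $g_i$ do \emph{not} have variance $1/2$: writing $g=\sum_j\tilde g_j B_j$ with $\tilde g_j$ i.i.d.\ complex Gaussians and $B_j$ an orthonormal basis of $W$, one has $\mathrm{Var}(g_i)=\sum_j|(B_j)_i|^2$, which varies with $i$ and with $W$ (e.g.\ if $W=\C^{n-1}\times\{0\}$ it is $1$ or $0$, never $1/2$). Consequently neither the stated expectation $\E\|\Im(v\odot\bar u)\|^2=\tfrac12$ nor the tail bound $\exp(-cs^2n)$ follows. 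Moreover, even if a tail rate $\exp(-cs^2n)$ were correct, it could not absorb a net of cardinality $(3/\epsilon_0)^{2n}$: the union bound forces $s\gtrsim\sqrt{\log(1/\epsilon_0)}$, which is far larger than the slack $\tfrac{16}{25}-\tfrac12-\epsilon_0$ you have available. The correct exponential rate is in $m$, not $n$ — which is then comfortably larger than the net entropy $O(n)$ once $m\geq Mn$, and is what produces the failure probability $C_1\exp(-C_2 m)$ in the statement.

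Second, taking an $\epsilon_0$-net of the unit sphere of $W$ itself is not directly usable: $W$ is random, so the net is random, and you cannot take a union bound over a random collection. The standard fix (and what the paper does) is to parametrize the random subspace by a \emph{deterministic} set: with $x_0=e_1$, write every $v\in\Range(A)\cap\{Ax_0\}^\perp$ as the projection onto $\{A_1\}^\perp$ of $A_{2:n}z$ for $z\in\C^{n-1}$, take the net in $\C^{n-1}$, and let the random matrix $A_{2:n}$ carry the probability. For a fixed $z$, $A_{2:n}z$ has genuinely i.i.d.\ complex Gaussian coordinates of variance $\|z\|^2$, so $\Im((A_{2:n}z)\odot\bar u)$ has i.i.d.\ real Gaussian coordinates of variance $\|z\|^2/2$, and the ratio $\|\Im((A_{2:n}z)\odot\bar u)\|/\|A_{2:n}z\|$ concentrates near $1/\sqrt 2$ with $\exp(-cm)$ tails. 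Equivalently, observe that $C=\mathrm{Diag}(\bar u)A_{2:n}$ has the same law as $A_{2:n}$ (unitary invariance of the Gaussian), control the operator norm of the $m\times 2(n-1)$ real Gaussian matrix $\left(\Im C\ \ \Re C\right)$ and the smallest singular value of $A_{2:n}$ by standard estimates, and then account for the projection onto $\{A_1\}^\perp$ by showing that the projection of $A_1$ onto $\Range(A_{2:n})$ is small. That is the paper's route; your conditioning idea is its conceptual core, but the net over a random subspace and the $n$-scale tail need to be replaced by this deterministic parametrization and $m$-scale tails for the proof to close.
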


\begin{proof}
If we multiply $x_0$ by a positive real number, we can assume $||x_0||=1$. Moreover, as the law of $A$ is invariant under right multiplication by a unitary matrix, we can assume that
\begin{equation*}
x_0=\left(\begin{smallmatrix}1\\0\\\vdots\\0\end{smallmatrix}\right).
\end{equation*}
Then, if we write $A_1$ the first column of $A$, and $A_{2:n}$ the submatrix of $A$ obtained by removing this first column,
\begin{equation}\label{eq:range_perp}
\Range(A)\cap\{Ax_0\}^\perp
=\left\{w-\frac{\scal{w}{A_1}}{||A_1||^2}A_1,
w\in\Range(A_{2:n})
\right\}.
\end{equation}
We first observe that
\begin{equation*}
\sup_{w\in\Range(A_{2:n})-\{0\}}\frac{|\scal{w}{A_1}|}{||w||}
\end{equation*}
is the norm of the orthogonal projection of $A_1$ onto $\Range(A_{2:n})$. The $(n-1)$-dimensional subspace $\Range(A_{2:n})$ has an isotropic distribution in $\C^m$, and is independent of $A_1$. Thus, from Lemma \ref{lem:dasgupta} coming from \citep*{dasgupta}, for any $t>1$,
\begin{equation*}
\sup_{w\in\Range(A_{2:n})-\{0\}}\frac{|\scal{w}{A_1}|}{||w||\,||A_1||}< \sqrt{\frac{t(n-1)}{m}},
\end{equation*}
with probability at least
\begin{equation*}
1-\exp\left(-(n-1)(t-1-\ln t)\right).
\end{equation*}
We take $t=\frac{m}{n-1}(0.04)^2$ (which is larger than $1$ when $m\geq Mn$ with $M>0$ large enough), and it implies that
\begin{equation}\label{eq:second_005}
\sup_{w\in\Range(A_{2:n})-\{0\}}\frac{|\scal{w}{A_1}|}{||w||\,||A_1||}< 0.04
\end{equation}
with probability at least
\begin{equation*}
1-\exp(-c_2m)
\end{equation*}
for some constant $c_2>0$, provided that $m\geq Mn$ with $M$ large enough.

Second, as $A_{2:n}$ is a random matrix of size $m\times(n-1)$, whose entries are independent and distributed according to the law $\mathcal{N}(0,1/2)+\mathcal{N}(0,1/2)i$, we deduce from Proposition \ref{prop:davidson} applied with $t=0.01$ that, with probability at least
\begin{equation*}
1-2\exp\left(-10^{-4}m\right),
\end{equation*}
we have, for any $x\in\C^{n-1}$,
\begin{equation}\label{eq:norm_C}
||A_{2:n}x|| \geq \sqrt{m}\left(1-\sqrt{\frac{(n-1)}{m}}-0.01\right)||x||\geq 0.98\sqrt{m}||x||,
\end{equation}
provided that $m\geq 10000n$.

We now set
\begin{equation*}
C = \mathrm{Diag}(\overline{\phase(A_1)})A_{2:n}.
\end{equation*}
The matrix $\left(\begin{matrix}\Im C&\Re C\end{matrix}\right)$ has size $m\times(2(n-1))$; its entries are independent and distributed according to the law $\mathcal{N}(0,1/2)$. So by \citep*[Thm II.13]{davidson} (applied with $t=0.01$), with probability at least
\begin{equation*}
1-\exp(-5.10^{-5}m),
\end{equation*}
we have, for any $x\in\R^{2(n-1)}$,
\begin{equation}\label{eq:norm_ImReC}
\left|\left|\left(\begin{matrix}\Im C&\Re C\end{matrix}\right)x
\right|\right|\leq \sqrt{\frac{m}{2}}\left(1+\sqrt\frac{2(n-1)}{m}+0.01\right)||x||
\leq 1.02\sqrt\frac{m}{2}||x||,
\end{equation}
provided that $m\geq 20000n$.

When Equations \eqref{eq:norm_C} and \eqref{eq:norm_ImReC} are simultaneously valid, any $w=A_{2:n}w'$ belonging to $\Range(A_{2:n})$ satisfies:
\begin{align}
\left|\left|\Im(w\odot\overline{\phase(Ax_0)})\right|\right|
&=\left|\left|\Im(Cw')\right|\right|\nonumber\\
&=\left|\left|\begin{pmatrix}\Im C&\Re C\end{pmatrix}\begin{pmatrix}
\Re w'\\\Im w'\end{pmatrix}
 \right|\right|\nonumber\\
&\leq 1.02\sqrt\frac{m}{2}\left|\left|\begin{pmatrix}
\Re w'\\\Im w'\end{pmatrix}
 \right|\right|\nonumber\\
&=1.02\sqrt\frac{m}{2}||w'||\nonumber\\
&\leq \frac{1.02}{0.98\sqrt{2}}||A_{2:n}w'||\nonumber\\
&= \frac{1.02}{0.98\sqrt{2}}||w||\nonumber\\
&\leq 0.75 ||w||.\label{eq:norms_combined}
\end{align}

We now conclude. Equations \eqref{eq:second_005}, \eqref{eq:norm_C} and \eqref{eq:norm_ImReC} hold simultaneously with probability at least
\begin{equation*}
1-C_1\exp(-C_2 m)
\end{equation*}
for any $C_1$ large enough and $C_2$ small enough, provided that $m\geq Mn$ with $M$ large enough. Let us show that, on this event, Equation \eqref{eq:second_term} also holds. Any $v\in\Range(A)\cap\{Ax_0\}^\perp$, from Equality \eqref{eq:range_perp}, can be written as
\begin{equation*}
v=w-\frac{\scal{w}{A_1}}{||A_1||^2}A_1,
\end{equation*}
for some $w\in\Range(A_{2:n})$. Using Equation \eqref{eq:second_005}, then Equation \eqref{eq:norms_combined}, we get:
\begin{align*}
\left|\left|\Im(v\odot\overline{\phase(Ax_0)})\right|\right|
&\leq\left|\left|\Im(w\odot\overline{\phase(Ax_0)})\right|\right|
+\left|\left|\frac{\scal{w}{A_1}}{||A_1||^2}A_1\right|\right|\\
&\leq\left|\left|\Im(w\odot\overline{\phase(Ax_0)})\right|\right|
+0.04 ||w||\\
&\leq 0.79 ||w||.
\end{align*}
But then, by Equation \eqref{eq:second_005} again,
\begin{equation*}
||v||^2=||w||^2-\frac{\scal{w}{A_1}^2}{||A_1||^2}\geq (1-(0.04)^2)||w||^2.
\end{equation*}
So
\begin{align*}
\left|\left|\Im(v\odot\overline{\phase(Ax_0)})\right|\right|
&\leq 0.79 ||w||\\
&\leq \frac{0.79}{\sqrt{1-(0.04)^2}}||v||\\
&\leq \frac{4}{5}||v||.
\end{align*}

\end{proof}

\section{Technical lemmas for Section \ref{s:without_init}}

\subsection{Proof of Lemma \ref{lem:global_intro}\label{ss:global_intro}}

\begin{lem*}[Lemma \ref{lem:global_intro}]
To prove Theorem \ref{thm:global_convergence}, it is enough to prove that there exist $C_1,C_2,M,\delta>0$ such that, if $m\geq Mn^2$, then, with probability at least $1-C_1\exp(-C_2m^{1/8})$, the property
\begin{equation*}
|\scal{Ax_0}{b\odot\phase(Ax)}| \geq (1+\delta) m \frac{||x_0||}{||x||}|\scal{x_0}{x}|
\tag{\ref{eq:scal_augmente}}
\end{equation*}
holds for any $x\in\C^n$ verifying Condition \eqref{eq:global_cond}.
\end{lem*}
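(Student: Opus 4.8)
The plan is to deduce the ``signal-domain'' inequality \eqref{eq:global_prop} of Theorem \ref{thm:global_convergence} from the ``image-domain'' inequality \eqref{eq:scal_augmente}, on a high-probability event on which the spectrum of $A$ is well controlled. Write $z=b\odot\phase(Ax)\in\C^m$ and $w=A^\dag z\in\C^n$, so that the left-hand side of \eqref{eq:global_prop} is exactly $\frac{|\scal{x_0}{w}|}{||x_0||\,||w||}$. The bridge between the two domains is the identity
\[
\scal{Ax_0}{z}\;=\;\scal{Ax_0}{AA^\dag z}\;=\;\scal{Ax_0}{Aw}\;=\;\scal{x_0}{A^*Aw},
\]
where the first equality uses that $Ax_0\in\Range(A)$ and that $AA^\dag$ is the orthogonal projection onto $\Range(A)$. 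Hence the whole reduction rests on the fact that, in the regime $m\geq Mn^2$, the matrix $A^*A$ is extremely close to $mI_n$.

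Concretely, I would first invoke Proposition \ref{prop:davidson} with a parameter $t$ tending slowly to $0$ (for instance $t=m^{-1/4}$): with probability at least $1-2\exp(-m^{1/2})\geq 1-2\exp(-m^{1/8})$ the matrix $\tfrac{1}{\sqrt{m}}A$ has all its singular values in $[1-\eta,1+\eta]$ with $\eta=\sqrt{n/m}+t$; in particular $A$ is injective, so $w$ is well defined, $AA^\dag$ is indeed the orthogonal projection onto $\Range(A)$, and $|||\tfrac{1}{m}A^*A-I_n|||\leq 3\eta$. On the intersection of this event with the event on which \eqref{eq:scal_augmente} holds (the hypothesis of the lemma), the identity above and \eqref{eq:scal_augmente} give, for any $x$ satisfying Condition \eqref{eq:global_cond},
\[
(1+\delta)\,m\,\tfrac{||x_0||}{||x||}\,|\scal{x_0}{x}|\;\leq\;|\scal{x_0}{A^*Aw}|\;\leq\;m\,|\scal{x_0}{w}|+3\eta\,m\,||x_0||\,||w||.
\]
Since $Aw=AA^\dag z$ is the orthogonal projection of $z$ onto $\Range(A)$ and $|z|=b=|Ax_0|$, we have $||Aw||\leq ||z||=||b||=||Ax_0||$, so combining with the singular-value bounds yields $||w||\leq\tfrac{1+\eta}{1-\eta}||x_0||$; moreover $w\neq 0$, as $w=0$ would force $\scal{Ax_0}{z}=0$, contradicting \eqref{eq:scal_augmente}. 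Dividing the last display by $m\,||x_0||\,||w||$ and inserting the bound on $||w||$ then gives
\[
\frac{|\scal{x_0}{w}|}{||x_0||\,||w||}\;\geq\;(1+\delta)\,\frac{1-\eta}{1+\eta}\,\frac{|\scal{x_0}{x}|}{||x_0||\,||x||}\;-\;3\eta.
\]

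It remains to absorb the additive error $3\eta$, and this is where the left inequality of \eqref{eq:global_cond} enters: it gives $\frac{|\scal{x_0}{x}|}{||x_0||\,||x||}\geq n^{-1/2}$, hence $3\eta\leq 3\eta\sqrt{n}\cdot\frac{|\scal{x_0}{x}|}{||x_0||\,||x||}$, and therefore
\[
\frac{|\scal{x_0}{w}|}{||x_0||\,||w||}\;\geq\;\Big((1+\delta)\tfrac{1-\eta}{1+\eta}-3\eta\sqrt{n}\Big)\frac{|\scal{x_0}{x}|}{||x_0||\,||x||}.
\]
Since $m\geq Mn^2$ one has $\eta\sqrt{n}=\sqrt{n^2/m}+t\sqrt{n}\leq M^{-1/2}+M^{-1/4}$, which together with $\eta\to 0$ can be made arbitrarily small by enlarging $M$; so for $M$ large enough the prefactor is at least $1+\delta/2$, which is exactly \eqref{eq:global_prop} with $\delta/2$ in place of $\delta$. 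Passing from the constants $(C_1,C_2,M,\delta)$ of \eqref{eq:scal_augmente} to those of Theorem \ref{thm:global_convergence} is then routine bookkeeping (enlarge $M$, halve $\delta$, adjust $C_1,C_2$ for the intersection of events). I do not anticipate a genuine obstacle here, since this is a reduction lemma; the only structural remark is that one needs the spectral error of $\tfrac1m A^*A$ to be small \emph{relative to} $n^{-1/2}$ --- the smallest value the correlation $\frac{|\scal{x_0}{x}|}{||x_0||\,||x||}$ can take under \eqref{eq:global_cond} --- which is precisely what pins the sample complexity of this step at $m=O(n^2)$ rather than $m=O(n)$.
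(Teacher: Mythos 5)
Your proof is correct and follows essentially the same route as the paper's: both use Proposition \ref{prop:davidson} to control the spectrum of $A$, the identity $\scal{Ax_0}{b\odot\phase(Ax)} = \scal{x_0}{A^*A\,A^\dag(b\odot\phase(Ax))}$, the bound $||A^\dag(b\odot\phase(Ax))||\lesssim ||x_0||$ coming from $||b||=||Ax_0||$, and the lower bound $\frac{|\scal{x_0}{x}|}{||x_0||\,||x||}\geq n^{-1/2}$ from Condition \eqref{eq:global_cond} to absorb the spectral error term. The only cosmetic difference is that you normalize around $mI_n$ with a uniform spectral deviation $\eta$ (taking $t=m^{-1/4}$), whereas the paper works directly with the extreme singular values $\lambda_1(A),\lambda_n(A)$ and chooses $t=\delta'/\sqrt{n}$; the two bookkeepings are equivalent.
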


\begin{proof}[Proof of Lemma \ref{lem:global_intro}]
Let us define $\lambda_1(A)\geq\dots\geq\lambda_n(A)$ to be the $n$ singular values of $A$. From Proposition \ref{prop:davidson}, setting $t=\delta'/\sqrt{n}$ for $\delta'$ small enough, if $M$ is high enough, we have with probability larger than $1-C_1\exp(-C_2m/n)\geq 1-C_1\exp(-C_2m^{1/2})$,
\begin{gather*}
\frac{\lambda_1^2(A)}{\lambda_n^2(A)}-1\leq \frac{\delta}{3}\frac{1}{\sqrt{n}},\\
\mbox{and }\lambda_1(A)\lambda_n(A)\leq \left(\frac{1+\delta}{1+2\delta/3}\right)m,
\end{gather*}
when $m\geq Mn^2$.

In this case, we have in particular, for any $x$ satisfying Equation \eqref{eq:global_cond},
\begin{equation*}
\frac{\lambda_1^2(A)}{\lambda_n^2(A)}-1
\leq \frac{\delta}{3}\frac{|\scal{x_0}{x}|}{||x_0||\,||x||}.
\end{equation*}
For any $x$,
\begin{align*}
|\scal{Ax_0}{b\odot\phase(Ax)}|
&=|\scal{Ax_0}{(AA^\dag)(b\odot\phase(Ax))}|\\
&=|\scal{(A^*A)x_0}{A^\dag(b\odot\phase(Ax))}|\\
&\leq \lambda_n^2(A)|\scal{x_0}{A^\dag(b\odot\phase(Ax))}|\\
&\hskip 2cm+ |\scal{(A^*A-\lambda_n^2(A)\Id)x_0}{A^\dag(b\odot\phase(Ax))}|\\
&\leq \lambda_n^2(A)|\scal{x_0}{A^\dag(b\odot\phase(Ax))}|\\
&\quad\quad+(\lambda_1^2(A)-\lambda_n^2(A))||x_0||\,||A^\dag(b\odot\phase(Ax))||.
\end{align*}
So when $x$ satisfies Equations \eqref{eq:global_cond} and \eqref{eq:scal_augmente},
\begin{align*}
\frac{|\scal{x_0}{A^\dag(b\odot\phase(Ax))}|}{||x_0||\,||A^\dag(b\odot\phase(Ax))||}
&\geq\frac{1}{\lambda_n^2(A)}\frac{|\scal{Ax_0}{b\odot\phase(Ax)}|}{||x_0||\,||A^\dag(b\odot\phase(Ax))||}
- \left(\frac{\lambda_1^2(A)}{\lambda_n^2(A)}-1\right)\\
&\geq\frac{1}{\lambda_n^2(A)}\frac{|\scal{Ax_0}{b\odot\phase(Ax)}|}{||x_0||\,||A^\dag(b\odot\phase(Ax))||}
- \frac{\delta}{3}\frac{|\scal{x_0}{x}|}{||x_0||\,||x||}\\
&\geq(1+\delta)\frac{m}{\lambda_n^2(A)}\frac{|\scal{x_0}{x}|}{||x||\,||A^\dag(b\odot\phase(Ax))||}
- \frac{\delta}{3}\frac{|\scal{x_0}{x}|}{||x_0||\,||x||}\\
&\geq(1+\delta)\frac{m}{\lambda_n(A)}\frac{|\scal{x_0}{x}|}{||x||\,||b||}
- \frac{\delta}{3}\frac{|\scal{x_0}{x}|}{||x_0||\,||x||}\\
&=(1+\delta)\frac{m}{\lambda_n(A)}\frac{|\scal{x_0}{x}|}{||x||\,||Ax_0||}
- \frac{\delta}{3}\frac{|\scal{x_0}{x}|}{||x_0||\,||x||}\\
&\geq (1+\delta)\frac{m}{\lambda_1(A)\lambda_n(A)}\frac{|\scal{x_0}{x}|}{||x||\,||x_0||}
- \frac{\delta}{3}\frac{|\scal{x_0}{x}|}{||x_0||\,||x||}\\
&\geq \left(1+\frac{\delta}{3}\right)\frac{|\scal{x_0}{x}|}{||x_0||\,||x||}.
\end{align*}
So Equation \eqref{eq:global_prop} is also satisfied (although for a smaller value of $\delta$).
\end{proof}

\subsection{Proof of Lemma \ref{lem:net}\label{ss:net}}

\begin{lem*}[Lemma \ref{lem:net}]
For any $n\in\N^*$, we set
\begin{equation*}
\mathcal{E}_n=\left\{x\in\C^n, ||x||=1\mbox{ and }\frac{||x_0||\,||x||}{\sqrt{n}}\leq
|\scal{x_0}{x}|\leq (1-\epsilon)||x_0||\,||x|| \right\}.
\end{equation*}
Let $\alpha$ be any positive number.

There exist $c,C_1,C_2,M,\delta>0$ and, for any $n\in\N^*$, a $c m^{-\alpha}$-net $\mathcal{N}_n$ of $\mathcal{E}_n$ such that, when $m\geq Mn^2$, with probability at least
\begin{equation*}
1-C_1\exp(-C_2m^{1/2}),
\end{equation*}
the following property holds: for any $x\in\mathcal{N}_n$,
\begin{equation*}
|\scal{Ax_0}{b\odot\phase(Ax)}| \geq (1+\delta) m\frac{||x_0||}{||x||}|\scal{x_0}{x}|.
\end{equation*}
\end{lem*}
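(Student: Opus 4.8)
The plan is to obtain the inequality first \emph{in expectation} for a single $x\in\mathcal{E}_n$, with a fixed multiplicative margin, and then to make it uniform over the net by a fluctuation bound. \emph{Step 1 (expectation).} Fix $x\in\mathcal{E}_n$ and set $t=|\scal{x_0}{x}|/||x_0||$, so that $t\in[n^{-1/2},1-\epsilon]$ since $||x||=1$. Decompose $(Ax)_i=c\,(Ax_0)_i+v_i$ with $c=\scal{x_0}{x}/||x_0||^2$ and $v$ a complex Gaussian vector, independent of $Ax_0$, with i.i.d.\ entries of variance $1-t^2$. Because $b_i=|(Ax_0)_i|$, the $i$-th summand of $\scal{Ax_0}{b\odot\phase(Ax)}$ equals $|(Ax_0)_i|^2\,\overline{\phase((Ax_0)_i)}\,\phase\big(c(Ax_0)_i+v_i\big)$. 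Conditioning on $Ax_0$ and using the elementary identity $\E\,\phase(z+\sigma g)=\phase(z)\,h(|z|/\sigma)$, valid for deterministic $z$ and $g$ a unit-variance complex Gaussian, where $h(r)=\E[\Re\,\phase(r+g)]$ is an explicit nondecreasing function with $h(0)=0$ and $h(+\infty)=1$, one finds
\begin{equation*}
\E\,\scal{Ax_0}{b\odot\phase(Ax)}=m\,\phase(c)\,||x_0||^2\,\psi(t),\qquad \psi(t):=\E_{g}\Big[|g|^2\,h\Big(\tfrac{t\,|g|}{\sqrt{1-t^2}}\Big)\Big],
\end{equation*}
so that $\big|\E\,\scal{Ax_0}{b\odot\phase(Ax)}\big|=m\,||x_0||\,|\scal{x_0}{x}|\cdot(\psi(t)/t)$.

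\emph{Step 2 (deterministic gap and reduction).} A one-variable calculus lemma shows that $\psi(t)/t$ stays bounded away from $1$ on $(0,1-\epsilon]$: from $h(r)\sim\frac{\sqrt{\pi}}{2}r$ as $r\to0$ one gets $\psi(t)/t\to\frac{\sqrt{\pi}}{2}\E|g|^3=\frac{3\pi}{8}>1$, whereas $\psi(t)/t>1$ for every $t\in(0,1)$ and tends to $1$ only as $t\to1$ --- which is exactly why the upper constraint $|\scal{x_0}{x}|\le(1-\epsilon)||x_0||$ is imposed. Fix $\delta>0$ with $\psi(t)\ge(1+2\delta)t$ for all $t\in(0,1-\epsilon]$, and take $\mathcal{N}_n$ to be any $c m^{-\alpha}$-net of $\mathcal{E}_n$ inside the unit sphere of $\C^n$, of cardinality at most $(Cm^{\alpha})^{2n}$. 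By Steps 1--2 and $t\ge n^{-1/2}$, it suffices to prove that, with probability $1-C_1\exp(-C_2m^{1/2})$,
\begin{equation*}
\sup_{x\in\mathcal{E}_n}\big|\scal{Ax_0}{b\odot\phase(Ax)}-\E\,\scal{Ax_0}{b\odot\phase(Ax)}\big|\ \le\ \delta\,m\,||x_0||^2\,n^{-1/2}.
\end{equation*}

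\emph{Step 3 (chaining).} For a fixed $x$ the deviation is controlled by Bernstein's inequality applied to $\sum_i|(Ax_0)_i|^2w_i$ with $|w_i|=1$, but only at probability $\exp(-c\,mt^2)$, i.e.\ $\exp(-c\,m/n)$ in the worst case $t=n^{-1/2}$; this fails against a union bound over $\mathcal{N}_n$, whose log-cardinality is of order $n\log m$. So the plan is to run a generic chaining / Dudley argument for the process $x\mapsto\scal{Ax_0}{b\odot\phase(Ax)}$ down to the net scale $c m^{-\alpha}$, using that by Lemma \ref{lem:diff_phase}
\begin{equation*}
\big|\scal{Ax_0}{b\odot\phase(Ax)}-\scal{Ax_0}{b\odot\phase(Ay)}\big|\le 2\sum_i|(Ax_0)_i|^2\,1_{6|(A(x-y))_i|\ge|(Ay)_i|}+\tfrac65\sum_i|(Ax_0)_i|^2\frac{|(A(x-y))_i|}{|(Ay)_i|}.
\end{equation*}
The first (``bad coordinate'') sum is handled by observing that, uniformly in $x,y$, at most $O(m||x-y||^2)$ indices satisfy $6|(A(x-y))_i|\ge|(Ay)_i|$, and then bounding $\sum_{i\in S}|(Ax_0)_i|^2$ for such small sets $S$ exactly as in Lemmas \ref{lem:S_geq_bm}--\ref{lem:S_leq_bm} (applied to $y=Ax_0\in\Range(A)$); the second (``good coordinate'') sum has sub-exponential, essentially Lipschitz increments of scale $\lesssim\sqrt{m}\,||x_0||^2\,||x-y||$. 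Since $\mathcal{E}_n\subset\R^{2n}$ has metric entropy $\log N(\rho)\asymp n\log(1/\rho)$, Dudley's bound yields $\E\sup_x|\scal{Ax_0}{b\odot\phase(Ax)}-\E(\cdot)|\lesssim\sqrt{mn}\,||x_0||^2$ --- the gain over the naive bound being a factor $\sqrt{\log m}$ --- and the deviation tail at the required level $\delta\,m\,||x_0||^2\,n^{-1/2}$ is $\exp(-c\,\delta^2 m/n)$, which is $\le C_1\exp(-C_2 m^{1/2})$ because $m\ge Mn^2$ forces $m/n\ge\sqrt{Mm}$.

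The delicate point is entirely in Step 3: $x\mapsto\phase(Ax)$ is discontinuous, so the chained process is genuinely not Lipschitz, and the whole difficulty is to prove that the coordinates where $(Ax)_i$ is abnormally small are, uniformly in $x$ and at every scale of the chaining, few enough and carry so little of the mass $\sum_i|(Ax_0)_i|^2$ that they do not degrade the entropy estimate. Steps 1 and 2, by contrast, are a short Gaussian computation and a scalar inequality.
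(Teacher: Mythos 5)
Your overall decomposition matches the paper's: the paper also proves the lemma by (i) computing the expectation $F(x)=\E\scal{Ax_0}{b\odot\phase(Ax)}$ and showing it carries a fixed multiplicative margin over $m\frac{||x_0||}{||x||}|\scal{x_0}{x}|$ (this is Lemma \ref{lem:F}, proved via a power-series expansion of the Gaussian integral, whereas you give a cleaner integral-representation argument with the scalar function $\psi(t)/t$; both yield the same $\delta$ in the end), and (ii) running a chaining argument through a sequence of $2^{-k}$-nets of $\mathcal{E}_n$ to show that $\scal{Ax_0}{b\odot\phase(Ax)}$ is within $\eta m n^{-1/2}||x_0||^2$ of its expectation uniformly over the finest net (Lemma \ref{lem:ecart_net}). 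So the plan is right.

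The problem is that your Step 3, which as you yourself say contains ``the whole difficulty,'' is asserted rather than proved, and several of its intermediate claims are either unsupported or too optimistic. Concretely: (a) The phrase ``sub-exponential, essentially Lipschitz increments of scale $\lesssim\sqrt{m}||x_0||^2||x-y||$'' hides the fact that the good-coordinate term involves $1/|(Ay)_i|$, and a single ratio $|a_i^*(x-y)|/|a_i^*y|$ is not even in $L^2$ (since $\E\,|a_i^*y|^{-2}=\infty$); it is only after aggregating and exploiting rarity of small $|a_i^*y|$ that one obtains finite second moments, and they then carry an unavoidable $\log(||x-y||^{-1})$ factor, which is precisely the content of Lemma \ref{lem:var_bound}. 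A clean sub-Gaussian/sub-exponential Dudley bound does not directly apply; one must instead condition on $Ax_0$, control the tail of $|a_j^*x_0|$ as in Lemma \ref{lem:dist_Ajx0}, and run an adapted Bennett argument (Lemmas \ref{lem:esp_exp_bound}, \ref{lem:eval_lambda_small}, \ref{lem:esp_exp_bound2}). (b) The claim that ``uniformly in $x,y$, at most $O(m||x-y||^2)$ indices satisfy $6|(A(x-y))_i|\ge|(Ay)_i|$'' is correct in expectation for fixed $(x,y)$, but the union bound needed to make it uniform at the chaining scales $||x-y||=2^{-k}$ fails once $2^{-2k}\lesssim n\log m/m$, exactly the regime where the hard work happens; the paper circumvents this not by counting bad indices uniformly but by bounding conditional variances and moment-generating functions level by level. (c) The final tail claim $\exp(-c\delta^2 m/n)$ is stated as if it followed from a Dudley expectation bound; in fact the paper must carry out a genuine chaining at the probability level, with a per-level failure probability $\exp(-C_2 m^{1/2})$ and a summable per-level increment $\frac{\eta}{(k+1)^2}\frac{m}{\sqrt{n}}||x_0||^2$. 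In short: your Steps 1–2 are correct and essentially coincide with Lemma \ref{lem:F}; your Step 3 identifies the right obstruction but is an outline of what must be proved, not a proof, and the specific estimates you would need (uniform small-set control at every scale and an increment bound without the $\log$ factor) are not available as stated. The paper's Appendix (Lemmas \ref{lem:ecart_net}–\ref{lem:esp_exp_bound2}) is what fills exactly this gap.
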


\begin{proof}
For any $n\in\N^*$, $k\in\N$, let $\mathcal{M}_n^k$ be a $2^{-k}$-net of $\mathcal{E}_n$. As $\mathcal{E}_n$ is a closed subset of the complex unit sphere of dimension $n$, we can construct $\mathcal{M}_n^k$ as
\begin{equation*}
\mathcal{M}_n^k=\{P_{\mathcal{E}_n}(y),y\in\mathcal{V}_n^k\},
\end{equation*}
where $\mathcal{V}_n^k$ is a $2^{-(k+1)}$-net of the unit sphere, and, for any $y$, $P_{\mathcal{E}_n}(y)$ is a point in $\mathcal{E}_n$ whose distance to $y$ is minimal. From \citep[Lemma 5.2]{vershynin}, this implies that we can choose $\mathcal{M}_n^k$ such that
\begin{equation}
\Card\mathcal{M}_n^k \leq  \left(1+\frac{2}{2^{-(k+1)}}\right)^{2n}
\leq 2^{2n(k+3)}.\label{eq:Card_Mnk}
\end{equation}

For any $x\in\C^n$, we set
\begin{equation*}
F(x)=\E\left(\scal{Ax_0}{b\odot\phase(Ax)}\right)
\end{equation*}
(where the expectation denotes the expectation over $A$ with $x_0$ and $x$ fixed).

The main difficulty consists in showing that $\scal{Ax_0}{b\odot\phase(Ax)}$ is  close to its expectation for all $x\in\mathcal{M}_n^K$, with $K\in\N^*$ relatively large. This is what the following lemma does; it is proved in Paragraph \ref{sss:ecart_net}.
\begin{lem}\label{lem:ecart_net}
For any $\eta,\mathcal{A}>0$, there exist $c,C_1,C_2,M>0$ such that, when $m\geq Mn^2$, for any $k\in\N$ such that $k\leq \mathcal{A} \log m-c$, with probability at least
\begin{equation*}
1-C_1\exp(-C_2 m^{1/2}),
\end{equation*}
the following property holds: for any $x\in\mathcal{M}_n^k,y\in\mathcal{M}_n^{k+1}$ such that $||x-y||\leq 2^{-(k-1)}$,
\begin{equation*}
|\left(\scal{Ax_0}{b\odot\phase(Ax)}-F(x)\right)-
\left(\scal{Ax_0}{b\odot\phase(Ay)}-F(y)\right)| \leq \frac{\eta}{(k+1)^2}\frac{m}{\sqrt{n}} ||x_0||^2 .
\end{equation*}
In the case $k=0$, we additionally have, with the same probability: for all $x\in\mathcal{M}_n^0$,
\begin{equation*}
|\left(\scal{Ax_0}{b\odot\phase(Ax)}-F(x)\right)| \leq \eta\frac{m}{\sqrt{n}}
||x_0||^2.
\end{equation*}
\end{lem}

Let $\eta,\mathcal{A}>0$ be temporarily fixed. We set $K=\lceil \mathcal{A}\log m-c\rceil$. The event described in the previous lemma holds for all $k\leq K-1$ with probability at least $1-K C_1\exp(-C_2 m^{1/2})$.

For any $x\in\mathcal{M}_n^K$, there exists a sequence $(y_0,y_1,\dots,y_{K-1},y_K)$ such that
\begin{gather*}
y_K=x;\\
\forall k\leq K, y_k\in \mathcal{M}_n^k;\\
\forall k\leq K-1,||y_k-y_{k+1}||\leq 2^{-k}.
\end{gather*}
So when the event of Lemma \ref{lem:ecart_net} holds, we have, for any $x\in\mathcal{M}_n^K$,
\begin{align*}
&\left|\scal{Ax_0}{b\odot\phase(Ax)}-F(x)\right|\\
&\quad\leq \left|\scal{Ax_0}{b\odot\phase(Ay_0)}-F(y_0)\right|\\
&\quad\quad + \sum_{k=0}^{K-1}\left|\left(\scal{Ax_0}{b\odot\phase(Ay_k)}-F(y_k)\right)
-\left(\scal{Ax_0}{b\odot\phase(Ay_{k+1})}-F(y_{k+1})\right)
\right|\\
&\quad\leq \eta \frac{m}{\sqrt{n}}\left(1+\sum_{k=0}^{K-1}\frac{1}{(k+1)^2}\right)||x_0||^2\\
&\quad \leq\eta\left(1+\frac{\pi^2}{6}\right)\frac{m}{\sqrt{n}}||x_0||^2.
\end{align*}

To conclude, we only have to evaluate $F$. This is done by the following lemma, proven in Paragraph \ref{sss:F}.
\begin{lem}\label{lem:F}
There exist $\delta>0$ such that, for any $x\in\mathcal{E}_n$,
\begin{equation*}
|F(x)|\geq (1+\delta)m\frac{||x_0||}{||x||}|\scal{x_0}{x}|.
\end{equation*}
\end{lem}
We combine this lemma and the equation before the lemma: with probability at least $1-KC_1\exp(-C_2m^{1/2})$, for any $x\in\mathcal{M}_n^K$,
\begin{align*}
|\scal{Ax_0}{b\odot\phase(Ax)}|
&\geq |F(x)| - \eta\left(1+\frac{\pi^2}{6}\right)\frac{m}{\sqrt{n}}||x_0||^2\\
&\geq (1+\delta)m\frac{||x_0||}{||x||}|\scal{x_0}{x}| - \eta\left(1+\frac{\pi^2}{6}\right)\frac{m}{\sqrt{n}}||x_0||^2\\
&\geq \left(1+\delta-\eta\left(1+\frac{\pi^2}{6}\right)\right)m\frac{||x_0||}{||x||}|\scal{x_0}{x}|.
\end{align*}
For the last inequality, we have used the fact that $x\in\mathcal{E}_n$, so $|\scal{x_0}{x}|\geq ||x_0||\,||x||/\sqrt{n}$.

We can choose $\eta>0$ sufficiently small so that $1+\delta-\eta\left(1+\frac{\pi^2}{6}\right)>1+\frac{\delta}{2}$. We fix $\mathcal{A}$ to be any real number larger than $\alpha/\log 2$. Then, from the definition of $K$,
\begin{equation*}
2^{-K}\leq 2^{-\mathcal{A}\log m+c}= 2^c m^{-\mathcal{A}\log 2}\leq 2^cm^{-\alpha}.
\end{equation*}
As $K\leq \mathcal{A}\log m-c+1$, we can upper bound $1-KC_1\exp(-C_2m^{1/2})$ by $1-C_1'\exp(-C_2'm^{1/2})$, for $C'_1,C'_2>0$ well-chosen. If we summarize, we get that, with probability at least $1-C_1'\exp(-C_2'm^{1/2})$,
\begin{align*}
\forall x\in\mathcal{M}_n^K,\quad\quad
|\scal{Ax_0}{b\odot\phase(Ax)}|
\geq \left(1+\frac{\delta}{2}\right)m\frac{||x_0||}{||x||}|\scal{x_0}{x}|,
\end{align*}
and $\mathcal{M}_n^K$ is a $2^cm^{-\alpha}$-net of $\mathcal{E}_n$. The lemma is proved.
\end{proof}

\subsubsection{Proof of Lemma \ref{lem:ecart_net}\label{sss:ecart_net}}

\begin{lem*}[Lemma \ref{lem:ecart_net}]
For any $\eta,\mathcal{A}>0$, there exist $c,C_1,C_2,M>0$ such that, when $m\geq Mn^2$, for any $k\in\N$ such that $k\leq \mathcal{A} \log m-c$, with probability at least
\begin{equation*}
1-C_1\exp(-C_2 m^{1/2}),
\end{equation*}
the following property holds: for any $x\in\mathcal{M}_n^k,y\in\mathcal{M}_n^{k+1}$ such that $||x-y||\leq 2^{-(k-1)}$,
\begin{equation*}
|\left(\scal{Ax_0}{b\odot\phase(Ax)}-F(x)\right)-
\left(\scal{Ax_0}{b\odot\phase(Ay)}-F(y)\right)| \leq \frac{\eta}{(k+1)^2}\frac{m}{\sqrt{n}}||x_0||^2 .
\end{equation*}
In the case $k=0$, we additionally have, with the same probability: for all $x\in\mathcal{M}_n^0$,
\begin{equation*}
|\left(\scal{Ax_0}{b\odot\phase(Ax)}-F(x)\right)| \leq \eta\frac{m}{\sqrt{n}}||x_0||^2.
\end{equation*}
\end{lem*}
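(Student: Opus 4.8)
The plan is to follow the reduction already used in the proof of Lemma \ref{lem:net}: it suffices to prove the statement for one fixed scale $k$, which leaves enough room for a union bound over the nets $\mathcal M_n^k,\mathcal M_n^{k+1}$ precisely because $m\ge Mn^2$. Writing $b_i=|(Ax_0)_i|$ and using $\overline{(Ax_0)_i}\,b_i=b_i^2\,\overline{\phase((Ax_0)_i)}$, one has
\[
\scal{Ax_0}{b\odot\phase(Ax)}=\sum_{i=1}^m h_i(x),\qquad
h_i(x):=b_i^2\,\overline{\phase((Ax_0)_i)}\,\phase((Ax)_i),
\]
and the summands $h_i$ are independent, each depending only on the $i$-th row of $A$. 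For fixed $x\in\mathcal M_n^k$, $y\in\mathcal M_n^{k+1}$ with $\|x-y\|\le 2^{-(k-1)}$, the quantity to control is exactly $\Phi(x,y):=\sum_{i=1}^m\xi_i$ with $\xi_i:=(h_i(x)-h_i(y))-\E(h_i(x)-h_i(y))$, a sum of independent centered variables, whose tail I would bound by a Bernstein inequality and then union-bound over the $\le 2^{2n(k+3)}\cdot 2^{2n(k+4)}=2^{2n(2k+7)}$ pairs. The additional $k=0$ statement is the analogous, but simpler, bound for the non-telescoped sum $\sum_{i=1}^m(h_i(x)-\E h_i(x))$ over the net $\mathcal M_n^0$, whose cardinality is only $2^{O(n)}$.

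The technical core is a variance estimate for one summand. Lemma \ref{lem:diff_phase} with $z_0=(Ax)_i$, $z=(A(y-x))_i$, together with the trivial bound $|h_i(x)-h_i(y)|\le 2b_i^2$, gives
\[
|h_i(x)-h_i(y)|\ \le\ \min\!\Bigl(2b_i^2,\ \ 2b_i^2\,1_{\{6|(A(y-x))_i|\ge|(Ax)_i|\}}+\tfrac65\,b_i^2\,\tfrac{|(A(y-x))_i|}{|(Ax)_i|}\Bigr).
\]
To estimate $\E|h_i(x)-h_i(y)|^2$ I would write $y-x=\beta x+v$ and $x_0=\gamma x+x_0^\perp$ with $v,x_0^\perp\perp x$, so that $(Av)_i$ and $(Ax_0^\perp)_i$ become independent of $(Ax)_i$ while $|\beta|,\|v\|\le\|y-x\|$ and $\|x_0^\perp\|\le\|x_0\|$; then I would split according to whether $|(Ax)_i|$ lies below a threshold $\rho\sim\|x-y\|$ and carry out the resulting Gaussian integral, the only delicate point being the logarithmic divergence of $\E[1/|(Ax)_i|^2]$ near $0$, which the threshold cuts off. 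This should give
\[
\E|h_i(x)-h_i(y)|^2\ \lesssim\ \|x_0\|^4\,\|x-y\|^2\,\log\!\bigl(2/\|x-y\|\bigr),
\]
whence $V:=\sum_{i=1}^m\Var(\xi_i)\le\sum_i\E|h_i(x)-h_i(y)|^2\lesssim m\,\|x_0\|^4\,k\,4^{-k}$; for the $k=0$ statement one uses instead $\Var(h_i(x))\le 4\,\E[b_i^4]\lesssim\|x_0\|^4$, i.e.\ $V\lesssim m\|x_0\|^4$.

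To convert the variance into a tail bound I would first truncate: on a high-probability event depending only on $b$, $\max_i b_i^2\le C\|x_0\|^2\log m$, and restricting each $h_i$ to $\{b_i^2\le C\|x_0\|^2\log m\}$ changes the sums by at most a tiny fraction of the target, with probability $1-C_1\exp(-C_2 m^{1/2})$, by a Chernoff bound on $\sum_i b_i^2\,1_{\{b_i^2>C\|x_0\|^2\log m\}}$. For the truncated summands $|\xi_i|\lesssim\|x_0\|^2\log m$, so Bernstein's inequality applied to the real and imaginary parts gives, with $t:=\tfrac{\eta}{(k+1)^2}\tfrac{m}{\sqrt n}\|x_0\|^2$,
\[
P\bigl(|\Phi(x,y)|>t\bigr)\ \le\ 4\exp\!\Bigl(-c\min\!\Bigl(\tfrac{t^2}{V},\ \tfrac{t}{\|x_0\|^2\log m}\Bigr)\Bigr).
\]
Using $m\ge Mn^2$ in the forms $m/n\ge Mn$ and $n\le\sqrt{m/M}$ one gets $\tfrac{t^2}{V}\gtrsim\eta^2 M\,\tfrac{4^k}{(k+1)^4 k}\,n$ and $\tfrac{t}{\|x_0\|^2\log m}\gtrsim\eta M^{1/4}m^{3/4}/(\log m)^3$, and a short case analysis in $k$ (the factor $4^k$ outgrows the polynomial in $k$, while the second bound handles large $k$) shows the Bernstein exponent exceeds $c'(nk+m^{1/2})$ throughout $0\le k\le\mathcal A\log m-c$, once $M$ is large enough depending on $\eta$. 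The union bound over the $\le 2^{2n(2k+7)}$ pairs then leaves probability $\ge 1-C_1\exp(-C_2m^{1/2})$, which is the claim; the $k=0$ case is handled identically, the entropy of $\mathcal M_n^0$ being only $O(n)$. The main obstacle is the variance estimate of the second paragraph: one must show $\E|h_i(x)-h_i(y)|^2$ really scales like $\|x-y\|^2$ up to a logarithm, despite $\phase$ being non-Lipschitz near $0$ and despite the correlations among $(Ax)_i$, $(Ay)_i$, $(Ax_0)_i$; the concentration step and the net bookkeeping are then routine.
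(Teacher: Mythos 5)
Your route is genuinely different from the paper's. The paper handles the unboundedness of $|a_j^*x_0|^2$ by \emph{conditioning} on $Ax_0$: it controls the empirical tail of $(|a_j^*x_0|)_j$ via a separate lemma (Lemma~\ref{lem:dist_Ajx0}), then pushes a Bennett-type MGF through a delicate integral estimate (Lemma~\ref{lem:eval_lambda_small}), before tackling the residual sum $\sum_j |a_j^*x_0|^2\E(Z_j|Ax_0)-\E(|a_j^*x_0|^2 Z_j)$ by yet another MGF bound (Lemma~\ref{lem:esp_exp_bound2}). You instead truncate at $b_i^2\le C\|x_0\|^2\log m$ and apply Bernstein to the truncated and re-centered variables, controlling the truncation error by a Chernoff bound on $\sum_i b_i^2\,1_{\{b_i^2>C\|x_0\|^2\log m\}}$. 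That is more elementary, and it does close: your variance estimate $\E|h_i(x)-h_i(y)|^2\lesssim\|x_0\|^4\|x-y\|^2\log(2/\|x-y\|)$ is precisely what the paper's Lemma~\ref{lem:var_bound} yields after integrating out $Ax_0$ (it gives $\E(|Z_j|^2\mid Ax_0)\lesssim (1+|a_j^*x_0|^2/\|x_0\|^2)\,\|x-y\|^2\log(4/\|x-y\|)$, and $\E(|a_j^*x_0|^4(1+|a_j^*x_0|^2/\|x_0\|^2))\lesssim\|x_0\|^4$), and the tail of the truncation error does satisfy $P\bigl(\sum_i b_i^2 1_{\{b_i^2>C\|x_0\|^2\log m\}}>\epsilon m^{3/4}\|x_0\|^2\bigr)\le e^{-c m^{3/4}}$ by an exponential-moment computation with $\lambda=1/(2\|x_0\|^2)$. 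One small imprecision: the event $\{\max_i b_i^2\le C\|x_0\|^2\log m\}$ does \emph{not} have probability $1-C_1\exp(-C_2m^{1/2})$ (it fails with probability roughly $m^{1-C}$); it is really the Chernoff control of the contribution of the large $b_i^2$'s that does the work, as your parenthetical suggests.

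There is, however, a real slip in the concentration step. After Bernstein, the failure probability for a fixed pair is $\exp\bigl(-c\min(t^2/V,\,t/b)\bigr)$ with $t=\tfrac{\eta}{(k+1)^2}\tfrac{m}{\sqrt n}\|x_0\|^2$. You lower-bound $t^2/V\gtrsim\eta^2 M\,n\,\tfrac{4^k}{(k+1)^4 k}$ using $m/n\ge Mn$, and then assert that the exponent exceeds $c'(nk+m^{1/2})$. That does not follow: for $n$ fixed (say $n=2$) and $m\to\infty$ with $k$ small, this quantity is a constant depending on $M$ but not on $m$, so it cannot beat $m^{1/2}$, and the union-bound failure probability then does not decay to zero, let alone like $\exp(-C_2 m^{1/2})$. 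The fix is to use $m\ge Mn^2$ as $\sqrt n\le (m/M)^{1/4}$ already in $t$, i.e.\ $t\ge \eta M^{1/4}m^{3/4}\|x_0\|^2/(k+1)^2$; then
\[
\frac{t^2}{V}\ \gtrsim\ \frac{\eta^2 M^{1/2}\,m^{1/2}\,4^k}{(k+1)^5}\ \gtrsim\ m^{1/2},
\]
uniformly in $k\ge 0$ (since $\inf_{k\ge 0}4^k/(k+1)^5>0$), while $m^{1/2}\ge M^{1/2}n$ absorbs the $2n(2k+7)\log 2$ entropy term for $M$ large. The ``case analysis in $k$'' you invoke is then unnecessary for $t^2/V$ (which is the binding term at small $k$); the $t/b\gtrsim\eta M^{1/4}m^{3/4}/(\log m)^3$ bound takes over only at scales $k\sim\log m$ where it still exceeds $m^{1/2}$. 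Once this estimate is corrected, the argument does establish the lemma (including the simpler $k=0$ statement), though with worse constants than the paper's finer conditioning approach.
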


\begin{proof}[Proof of Lemma \ref{lem:ecart_net}]
We only prove the first part of the lemma. The proof of the second one follows the same principle.

As our expressions are all homogeneous in $x_0$, we can assume that $||x_0||=1$.

For any $j=1,\dots,m$, let us denote by $a_j^*$ the $j$-th line of $A$. We have
\begin{equation*}
\scal{Ax_0}{b\odot\phase(Ax)}=
\sum_{j=1}^m|a_j^*x_0|^2\phase(a_j^*x)\phase(\overline{a_j^*x_0}).
\end{equation*}
As all the $a_j^*$ are identically distributed,
\begin{equation*}
\forall j,\quad\quad
\E\left(|a_j^*x_0|^2\phase(a_j^*x)\phase(\overline{a_j^*x_0})\right)
=\frac{1}{m}\E\scal{Ax_0}{b\odot\phase(Ax)}=\frac{1}{m}F(x).
\end{equation*}
So for any fixed $x,y$, we have
\begin{align}
\left(\scal{Ax_0}{b\odot\phase(Ax)}-F(x)\right)&-
\left(\scal{Ax_0}{b\odot\phase(Ay)}-F(y)\right)\nonumber\\
&=\sum_{j=1}^m \left(|a_j^*x_0|^2Z_j-\E(|a_j^*x_0|^2Z_j)\right),
\label{eq:sum_Ajx0_Z}
\end{align}
with
\begin{equation*}
Z_j=\phase(a_j^*x)\phase(\overline{a_j^*x_0})-\phase(a_j^*y)\phase(\overline{a_j^*x_0}).
\end{equation*}
Were there no terms ``$|a_j^*x_0|^2$'' in Equation \eqref{eq:sum_Ajx0_Z}, we could apply Bennett's concentration inequality: the random variables $Z_j$ are bounded by $2$ in modulus, and, as we are going to see, their variance is small if $x$ and $y$ are close. Bennett's inequality would then guarantee that the term in Equation \eqref{eq:sum_Ajx0_Z} is small with high probability. Unfortunately, the $|a_j^*x_0|^2$ are not almost surely bounded, so we cannot directly apply Bennett's inequality.

To overcome this problem, we first condition over $Ax_0$. When conditioned over $Ax_0$, the random variables $|a_j^*x_0|^2 Z_j$ are almost surely bounded; we will prove that they still have a small variance. We still cannot directly apply Bennett's inequality, because the bounds depend on $j$, but we can adapt its proof, and get a concentration inequality for the following sum:
\begin{equation*}
\sum_{j=1}^m|a_j^*x_0|^2Z_j - |a_j^*x_0|^2\E(Z_j|Ax_0).
\end{equation*}
After that, we will also need to derive a concentration inequality for
\begin{equation*}
\sum_{j=1}^m|a_j^*x_0|^2\E(Z_j|Ax_0)-\E(|a_j^*x_0|^2Z_j),
\end{equation*}
but it will be easier.

The first step is to control the distribution of the $|a_j^*x_0|$. The idea is that there are a few indexes $j$ for which $|a_j^*x_0|$ is large, but these are sufficiently rare so that the sum $\sum_j|a_j^*x_0|^2Z_j$, when conditioned over $Ax_0$, essentially behaves as if all random variables were bounded by the same constant.

The proof of the following lemma is in Paragraph \ref{sss:dist_Ajx0}.

\begin{lem}\label{lem:dist_Ajx0}
For some constants $C_1,C_2>0$, the following event happens with probability at least $1-C_1e^{-C_2 \sqrt{m}}$: for any $s\in\{1,\dots,\lfloor m^{1/4}\rfloor\}$,
\begin{align*}
\Card\left\{j\in\{1,\dots,m\},|a_j^*x_0|\geq s\right\}&\leq \frac{m}{s^2} \max(m^{-1/2},e^{-s^2/2})
\end{align*}
and
\begin{align*}
\Card\left\{j\in\{1,\dots,m\},|a_j^*x_0|> m^{1/4}\right\}&=0.
\end{align*}
\end{lem}
Let us denote by $\mathcal{E}_0$ the event described in the previous lemma:
\begin{align}
\mathcal{E}_0=\Big(
\forall s\in\{1,\dots,\lfloor m^{1/4}\rfloor\}, &\Card\left\{j\in\{1,\dots,m\},|a_j^*x_0|\geq s\right\} \leq \frac{m}{s^2} \max(m^{-1/2},e^{-s^2/2});\nonumber\\
\mbox{and }&\Card\left\{j\in\{1,\dots,m\},|a_j^*x_0|> m^{1/4}\right\}=0
\Big).\label{eq:def_E0}
\end{align}
The second step is to get an upper bound on the variance of the $Z_j$, conditioned by $Ax_0$. The proof of the following lemma is in Paragraph \ref{sss:var_bound}.
\begin{lem}\label{lem:var_bound}
There exists a constant $C>0$ depending only on $\epsilon$ such that, for any fixed unit-normed $x,y$ such that
\begin{equation}\label{eq:var_cond_xy}
|\scal{x_0}{x}|\leq (1-\epsilon)||x_0||\,||x||\quad\mbox{and}\quad
|\scal{x_0}{y}|\leq (1-\epsilon)||x_0||\,||y||,
\end{equation}
we have, for any $j$,
\begin{equation*}
\Var(Z_j|Ax_0)\leq C\left(1+\frac{|a_j^*x_0|^2}{||x_0||^2}\right)
||x-y||^2\log\left(4||x-y||^{-1}\right).
\end{equation*}
\end{lem}
From the previous lemma, we deduce that, if $x\in\mathcal{M}_n^k,y\in\mathcal{M}_n^{k+1}$ are fixed and satisfy $||x-y||\leq 2^{-(k-1)}$, we have
\begin{gather}
\Var(\Re Z_j|Ax_0)\leq \Var(Z_j|Ax_0)\leq C'\left(1+|a_j^*x_0|^2\right) \gamma^{-2k},\label{eq:var_bound}\\
\Var(\Im Z_j|Ax_0)\leq \Var(Z_j|Ax_0)\leq C'\left(1+|a_j^*x_0|^2\right) \gamma^{-2k}
\nonumber.
\end{gather}
where $\gamma$ can be any real number in $]1;2[$, and $C'>0$ is a large enough constant (depending on $\gamma$).

To follow the proof of Bennett's inequality, we now have to upper bound, for suitable values of $\lambda>0$,
\begin{align*}
&\E\left(\exp\left(\lambda \sum_{j=1}^m\Re\left(|a_j^*x_0|^2Z_j-|a_j^*x_0|^2\E(Z_j|Ax_0)\right)\right)\Bigg| Ax_0\right)\\
&\hskip 7cm
=\prod_{j=1}^m \E(e^{\lambda \Re (|a_j^*x_0|^2 Z_j-|a_j^*x_0|^2 \E(Z_j|Ax_0))}\Big| Ax_0).
\end{align*}
We use here the fact that, even when conditioned on $Ax_0$, the $Z_j$ are independent random variables.

The upper bound relies on the following lemma, proven in Paragraph \ref{sss:esp_exp_bound}.
\begin{lem}\label{lem:esp_exp_bound}
Let $Z$ be any real random variable such that $|Z|\leq 2$ with probability $1$. If we set $\sigma^2=\Var(Z)$, then, for any $\lambda\in\R^+$,
\begin{equation*}
\E\left(e^{\lambda (Z-\E(Z))}\right) \leq 1+\frac{\sigma^2}{16}\left(e^{4\lambda}-1-4\lambda\right).
\end{equation*}
\end{lem}

From Equation \eqref{eq:var_bound} and the previous lemma, for any $\lambda\geq 0$,
\begin{align}
\E&\Big(e^{\lambda |a_j^*x_0|^2\Re(Z_j-\E(Z_j|Ax_0))}\Big|Ax_0\Big)\nonumber\\
&\leq 1+ \frac{C'(1+|a_j^*x_0|^2)\gamma^{-2k}}{16}\left(e^{4|a_j^*x_0|^2\lambda}-1-4|a_j^*x_0|^2\lambda\right).\nonumber
\end{align}
So we can upper bound
\begin{align}
&\E\left(\exp\left(\lambda \sum_{j=1}^m\Re\left(|a_j^*x_0|^2Z_j-|a_j^*x_0|^2\E(Z_j|Ax_0)\right)\right)\Bigg| Ax_0\right)
\nonumber\\
&\hskip 1cm
\leq \exp\left(\sum_{j=1}^m\log\left(1+ \frac{C'(1+|a_j^*x_0|^2)\gamma^{-2k}}{16}\left(e^{4|a_j^*x_0|^2\lambda}-1-4|a_j^*x_0|^2\lambda\right)\right) \right).
\label{eq:exp_bound}
\end{align}
On the event $\mathcal{E}_0$ defined in Equation \eqref{eq:def_E0}, we can simplify the sum inside the exponential. Specifically, if we define the function
\begin{align*}
P:s\in\R^+\to \frac{m}{\max(s,1)^2} \max(m^{-1/2},e^{-s^2/2}),
\end{align*}
we have that, on the event $\mathcal{E}_0$, for any non-decreasing function $f:\R^+\to\R^+$,
\begin{align*}
\sum_{j=1}^m f(|a_j^*x_0|)
&\leq \sum_{s=1}^{+\infty} f(s)\Big(\Card\{j,|a_j^*x_0|\geq s-1\}-\Card\{j,|a_j^*x_0|\geq s\}\Big)\\
&=\sum_{s=0}^{+\infty}(f(s+1)-f(s))\Card\{j,|a_j^*x_0|\geq s\}+mf(0)\\
&\leq \sum_{s=1}^{\lfloor m^{1/4}\rfloor}(f(s+1)-f(s))P(s)+P(0)f(0)\\
&\leq \sum_{s=1}^{\lfloor m^{1/4}\rfloor} f(s)\left(P(s-1)-P(s)\right)+f(\lfloor m^{1/4}\rfloor+1)P(\lfloor m^{1/4}\rfloor)\\
&= \sum_{s= 1}^{\lfloor m^{1/4}\rfloor}f(s)\int_{s}^{s+1} (-P'(t-1))dt+f(\lfloor m^{1/4}\rfloor+1)P(\lfloor m^{1/4}\rfloor)\\
&\leq \int_1^{ m^{1/4}+1}f(t)(-P'(t-1))dt+f(\lfloor m^{1/4}\rfloor+1)P(\lfloor m^{1/4}\rfloor).
\end{align*}
By a direct computation, we see that, if $C>0$ is properly chosen, we can bound:
\begin{align*}
-P'(s-1)&\leq C\frac{m}{s^2}e^{-s^2/4}\mbox{ if }s\leq \sqrt{\log m}+1,\\
&\leq C\frac{m^{1/2}}{s^3}\mbox{ if }\sqrt{\log m}+1<s\leq m^{1/4}+1;\\
P(\lfloor m^{1/4}\rfloor)&\leq C.
\end{align*}
So
\begin{align*}
\frac{1}{m}\sum_{j=1}^m f(|a_j^*x_0|)
\leq C\left(\int_1^{\sqrt{\log m}+1}\frac{f(t)}{t^2}e^{-t^2/4}dt+
m^{-1/2}\int_{\sqrt{\log m}+1}^{m^{1/4}+1}\frac{f(t)}{t^3}dt\right)+\frac{C}{m}f(m^{1/4}+1).
\end{align*}
We plug this inequality into Equation \eqref{eq:exp_bound}. For any $\lambda\geq 0$, we set
\begin{equation*}
f_\lambda(x)=\log\left(1+\frac{C'(1+x^2)\gamma^{-2k}}{16}(e^{4\lambda x^2}-1-4\lambda x^2)\right),
\end{equation*}
and, on the event $\mathcal{E}_0$, we have:
\begin{align}
&\E\left(\exp\left(\lambda \sum_{j=1}^m\Re\left(|a_j^*x_0|^2Z_j-|a_j^*x_0|^2\E(Z_j|Ax_0)\right)\right)\Bigg| Ax_0\right)
\nonumber\\
&\hskip 1cm\leq \exp\left(Cm\left(\int_1^{\sqrt{\log m}+1}\frac{f_{\lambda}(t)}{t^2}e^{-t^2/4}dt+
m^{-1/2}\int_{\sqrt{\log m}+1}^{m^{1/4}+1}\frac{f_{\lambda}(t)}{t^3}dt
+\frac{1}{m}f(m^{1/4}+1)\right)\right).\label{eq:bound_exp_int}
\end{align}
We upper bound the sum of the integrals, using standard analysis techniques. The detailed proof is in Paragraph \ref{sss:eval_lambda_small}.
\begin{lem}\label{lem:eval_lambda_small}
There exists a constant $\tilde C>0$ depending only on $\gamma$ and $\epsilon>0$ such that, for any $\lambda\in]0;\frac{1}{40}[$,
\begin{equation*}
\int_1^{\sqrt{\log m}+1}\frac{f_{\lambda}(t)}{t^2}e^{-t^2/4}dt+
m^{-1/2}\int_{\sqrt{\log m}+1}^{m^{1/4}+1}\frac{f_{\lambda}(t)}{t^3}dt+
\frac{1}{m}f_{\lambda}(m^{1/4}+1)
\leq \tilde C \gamma^{-2k}\lambda^2,
\end{equation*}
provided that
\begin{subequations}
\begin{gather}
\Big(\log(\max(1,\gamma^k/\lambda))+1\Big)\left(\frac{\gamma^k}{\lambda}\right)^{4/3}\leq m^{1/2};\label{eq:eval_cond1}\\
\frac{m^{1/2}\lambda \gamma^{-2k}}{1+\log m} \geq 1.\label{eq:eval_cond2}
\end{gather}
\end{subequations}
\end{lem}
We apply this result with
\begin{equation*}
\lambda = \frac{\eta \gamma^{2k}}{8C\tilde C (k+1)^2m^{1/4}},
\end{equation*}
where $\eta>0$ is the fixed constant chosen in the statement of Lemma \ref{lem:ecart_net}, $C$ is the constant of Equation \eqref{eq:bound_exp_int} and $\tilde C$ is the one of Lemma \ref{lem:eval_lambda_small}. We consider only the values of $k\in\N$ such that
\begin{equation}\label{eq:cond_k_1}
\gamma^{2k}< \frac{C\tilde C}{5 \eta}m^{1/4},
\end{equation}
which in particular ensures that
\begin{equation*}
\lambda< \frac{1}{40}.
\end{equation*}
With this definition, Conditions \eqref{eq:eval_cond1} and \eqref{eq:eval_cond2} are satisfied. Indeed, as $\gamma>1$,
\begin{gather*}
\frac{\gamma^k}{\lambda}= \frac{8C\tilde C(k+1)^2m^{1/4}}{\eta \gamma^k}=O(m^{1/4});\\
\Rightarrow\quad\quad
\left(\log(\max(1,\gamma^k/\lambda))+1\right)\left(\frac{\gamma^k}{\lambda}\right)^{4/3} = O(m^{1/3}\log m) \leq m^{1/2},
\end{gather*}
if $m$ is large enough. For the second condition, because of Equation \eqref{eq:cond_k_1},
\begin{align*}
\frac{m^{1/2}\lambda\gamma^{-2k}}{1+\log m}
&=\frac{m^{1/2}}{1+\log m} \frac{\eta}{8C\tilde C(k+1)^2m^{1/4}}\\
&\geq \frac{m^{1/4}}{1+\log m}\frac{\eta}{8CC'\left(1+\log(C\tilde Cm^{1/4}/(5\eta))/(2\log(\gamma))\right)^2}\\
&\geq c\frac{m^{1/4}}{(1+\log m)^3}\\
&\geq 1,
\end{align*}
if $m$ is large enough. (In the second inequality, $c>0$ is a positive constant.)

As the two conditions are satisfied, we can combine Lemma \ref{lem:eval_lambda_small} and Equation \eqref{eq:bound_exp_int}. We get that, on the event $\mathcal{E}_0$,
\begin{align*}
\E\left(\exp\left(\lambda \sum_{j=1}^m\Re\left(|a_j^*x_0|^2Z_j-|a_j^*x_0|^2\E(Z_j|Ax_0)\right)\right)\Bigg| Ax_0\right)
&\leq
\exp\left(C\tilde C m \gamma^{-2k}\lambda^2\right)\\
&=\exp\left( \frac{\eta^2 \gamma^{2k} m^{1/2}}{64 C\tilde C(k+1)^4} \right).
\end{align*}
So, by Markov's inequality, on the event $\mathcal{E}_0$, if $m\geq n^2$,
\begin{align*}
P&\left(\sum_{j=1}^m\Re \left(|a_j^*x_0|^2Z_j-|a_j^*x_0|^2\E(Z_j|Ax_0)\right)\geq \frac{\eta m}{4(k+1)^2\sqrt{n}} \Bigg| Ax_0 \right)\\
&\leq P\left(\sum_{j=1}^m\Re \left(|a_j^*x_0|^2Z_j-|a_j^*x_0|^2\E(Z_j|Ax_0)\right)\geq \frac{\eta m^{3/4}}{4(k+1)^2} \Bigg| Ax_0 \right)\\
&\leq \exp\left( \frac{\eta^2 \gamma^{2k} m^{1/2}}{64C\tilde C(k+1)^4 } \right)\exp\left(-\frac{\lambda\eta m^{3/4}}{4(k+1)^2 }\right)\\
&= \exp\left(-\frac{\eta^2\gamma^{2k} m^{1/2}}{64C\tilde C(k+1)^4}\right).
\end{align*}
We integrate over $Ax_0$, and obtain
\begin{align*}
P&\left(\mathcal{E}_0\cap\left\{\sum_{j=1}^m\Re \left(|a_j^*x_0|^2Z_j-|a_j^*x_0|^2\E(Z_j|Ax_0)\right)\geq \frac{\eta m}{4(k+1)^2\sqrt{n}} \right\} \right)\\
&\leq \exp\left(-\frac{\eta^2\gamma^{2k} m^{1/2}}{16C\tilde C(k+1)^4}\right).
\end{align*}
We can apply the same reasoning to $-\Re(Z_j),\Im(Z_j)$ and $-\Im(Z_j)$. This yields:
\begin{align}
P&\left(\mathcal{E}_0\cap\left\{\left|\sum_{j=1}^m\left(|a_j^*x_0|^2Z_j-|a_j^*x_0|^2\E(Z_j|Ax_0)\right)\right|\geq \frac{\eta m}{2(k+1)^2\sqrt{n}} \right\} \right)\nonumber\\
&\leq 4\exp\left(-\frac{\eta^2\gamma^{2k} m^{1/2}}{16C\tilde C(k+1)^4}\right).
\label{eq:concentration_part1}
\end{align}
Now that we have a bound for $\sum_{j=1}^m\left(|a_j^*x_0|^2Z_j-|a_j^*x_0|^2\E(Z_j|Ax_0)\right)$, we remember that we also have to bound
\begin{equation*}
\sum_{j=1}^m\left(|a_j^*x_0|^2\E(Z_j|Ax_0)-\E(|a_j^*x_0|^2Z_j)\right).
\end{equation*}
We remark that, for all $j$, $\E(Z_j|Ax_0)=\E(Z_j|a_j^*x_0)$, so that the random variables $|a_j^*x_0|^2\E(Z_j|Ax_0)$, for $j=1,\dots,m$, are independent and identically distributed.

We begin with the following lemma, proven in Paragraph \ref{sss:maj_esp}.
\begin{lem}\label{lem:maj_esp}
There exist a constant $C>0$ depending only on $\epsilon$ such that, for any fixed unit-normed $x,y$ such that
\begin{equation*}
|\scal{x_0}{x}|\leq (1-\epsilon)||x_0||\,||x||\quad\mbox{and}\quad
|\scal{x_0}{y}|\leq (1-\epsilon)||x_0||\,||y||,
\end{equation*}
and any $j=1,\dots,m$,
\begin{equation*}
|\E(Z_j|a_j^*x_0)| \leq C \min\left(1,||x-y||\left(1+\frac{|a_j^*x_0|}{||x_0||}\right)\right).
\end{equation*}
\end{lem}
To simplify the expressions, we still assume that $||x_0||=1$. If $||x-y||\leq 2^{-(k-1)}$, the previous lemma guarantees that, for any $j$,
\begin{equation}\label{eq:maj_esp_gamma}
|\E(Z_j|a_j^* x_0)|\leq 2C \min\left(1,\gamma^{-k}(1+|a_j^*x_0|)\right),
\end{equation}
where $\gamma$ is still our real number in $]1;2[$.

This inequality allows us to upper bound $\E\left(e^{\lambda\left(|a_j^*x_0|^2\E(Z_j|Ax_0)-\E(|a_j^*x_0|^2Z_j)\right)}\right)$, for $\lambda$ small enough. The next lemma is proved in Paragraph \ref{sss:esp_exp_bound2}.
\begin{lem}\label{lem:esp_exp_bound2}
There exist constants $c,C'>0$, that depend only on $\gamma$ and $\epsilon$, such that, for any $\lambda\in[-c;c]$,
\begin{align*}
\log\left(\E\left(e^{\lambda\Re\left(|a_j^*x_0|^2\E(Z_j|Ax_0)-\E(|a_j^*x_0|^2Z_j)\right)}\right) \right)
&\leq C' \lambda^2 \gamma^{-2k},\\
\mbox{and }
\log\left(\E\left(e^{\lambda\Im\left(|a_j^*x_0|^2\E(Z_j|Ax_0)-\E(|a_j^*x_0|^2Z_j)\right)}\right) \right)
&\leq C' \lambda^2 \gamma^{-2k}.
\end{align*}
\end{lem}
So by Markov's inequality, taking
\begin{equation*}
\lambda=\frac{\eta\gamma^{2k}}{8C'(k+1)^2m^{1/4}},
\end{equation*}
for $k$ such that
\begin{equation}\label{eq:cond_k_2}
\gamma^{2k}\leq \frac{8cC'}{\eta}m^{1/4},
\end{equation}
we have, when $m\geq n^2$,
\begin{align*}
P&\left(\Re\left(\sum_{j=1}^m\left(|a_j^*x_0|^2\E(Z_j|Ax_0)-\E(|a_j^*x_0|^2Z_j)\right)\right)\geq \frac{\eta m}{4(k+1)^2\sqrt{n}} \right)\\
&\leq P\left(\Re\left(\sum_{j=1}^m\left(|a_j^*x_0|^2\E(Z_j|Ax_0)-\E(|a_j^*x_0|^2Z_j)\right)\right)\geq \frac{\eta m^{3/4}}{4(k+1)^2} \right)\\
&\leq \E\left(\exp\left(\lambda\Re\left(\sum_{j=1}^m\left(|a_j^*x_0|^2\E(Z_j|Ax_0)-\E(|a_j^*x_0|^2Z_j)\right)\right)\right)\right)
\exp\left(-\frac{\lambda\eta m^{3/4}}{4(k+1)^2}\right)\\
&=\exp\left(m \log\left(\E\left(e^{\lambda\Re\left(|a_j^*x_0|^2\E(Z_j|Ax_0)-\E(|a_j^*x_0|^2Z_j)\right)}\right) \right)- \frac{\lambda\eta m^{3/4}}{4(k+1)^2} \right)\\
&\leq\exp\left(m C'\lambda^2\gamma^{-2k}- \frac{\lambda\eta m^{3/4}}{4(k+1)^2} \right)\\
&=\exp\left(-\frac{m^{1/2}\eta^2\gamma^{2k}}{64 C'(k+1)^4}\right).
\end{align*}
The same inequality holds if we replace $\Re$ by $-\Re,\Im$ or $-\Im$, so we obtain:
\begin{align*}
P&\left(\left|\sum_{j=1}^m\left(|a_j^*x_0|^2\E(Z_j|Ax_0)-\E(|a_j^*x_0|^2Z_j)\right)\right|\geq \frac{\eta m}{2(k+1)^2\sqrt{n}} \right)
\leq 4\exp\left(-\frac{m^{1/2}\eta^2\gamma^{2k}}{64 C'(k+1)^4}\right).
\end{align*}
We are close to the end. The previous equation, combined with Equation \eqref{eq:concentration_part1} yields, by triangular inequality, that for any fixed $x\in\mathcal{M}_n^k,y\in\mathcal{M}_n^{k+1}$ such that $||x-y||\leq 2^{-(k-1)}$,
\begin{align*}
P&\left(\mathcal{E}_0\cap\left\{\left|\sum_{j=1}^m\left(|a_j^*x_0|^2Z_j-\E(|a_j^*x_0|^2Z_j)\right)\right|
\geq \frac{\eta m}{(k+1)^2\sqrt{n}} \right\} \right)\\
&\leq 8\exp\left(-\mathcal{C}\frac{\gamma^{2k}}{(k+1)^4}m^{1/2}\right),
\end{align*}
where $\mathcal{C}$ is a constant that depends only on $\eta,\epsilon$ and $\gamma$. We recall that $Z_j$ depends on $x$ and $y$, although it does not appear in the notation.

From Equation \eqref{eq:Card_Mnk},
\begin{equation*}
\Card\mathcal{M}_n^k\leq 2^{2n(k+3)}\quad\mbox{and}\quad
\Card\mathcal{M}_n^{k+1}\leq 2^{2n(k+4)}.
\end{equation*}
The number of possible pairs $(x,y)\in\mathcal{M}_n^k\times\mathcal{M}_n^{k+1}$ is then bounded by
\begin{equation*}
2^{2n(2k+7)}\leq e^{10n(k+1)},
\end{equation*}
and by union bound,
\begin{align*}
  P&\left(\mathcal{E}_0\cap\left\{\exists x,y\in\mathcal{M}_n^k\times\mathcal{M}_n^{k+1}, \left|\sum_{j=1}^m\left(|a_j^*x_0|^2Z_j-\E(|a_j^*x_0|^2Z_j)\right)\right|
\geq \frac{\eta m}{(k+1)^2\sqrt{n}} \right\} \right)\\
&\leq 8\exp\left(-\mathcal{C}\frac{\gamma^{2k}}{(k+1)^4}m^{1/2}+10 n(k+1)\right).
\end{align*}
From Lemma \ref{lem:dist_Ajx0}, the probability of $\mathcal{E}_0$ is at least $1-C_1 e^{-C_2m^{1/2}}$ for some constants $C_1,C_2>0$, so
\begin{align*}
P&\left(\forall x,y\in\mathcal{M}_n^k\times\mathcal{M}_n^{k+1}, \left|\sum_{j=1}^m\left(|a_j^*x_0|^2Z_j-\E(|a_j^*x_0|^2Z_j)\right)\right|
< \frac{\eta m}{(k+1)^2\sqrt{n}}  \right)\\
&\geq 1- 8\exp\left(-\mathcal{C}\frac{\gamma^{2k}}{(k+1)^4}m^{1/2}+10 n(k+1)\right)-C_1\exp(-C_2m^{1/2}).
\end{align*}
There exists a constant $\mathcal{C'}$ depending only on $\gamma$ such that $\gamma^{2k}\geq \mathcal{C}'(k+1)^5$ for any $k\in\N$. If we assume that $m\geq Mn^2$ for some $M>0$, we have
\begin{align*}
P&\left(\forall x,y\in\mathcal{M}_n^k\times\mathcal{M}_n^{k+1}, \left|\sum_{j=1}^m\left(|a_j^*x_0|^2Z_j-\E(|a_j^*x_0|^2Z_j)\right)\right|
< \frac{\eta m}{(k+1)^2\sqrt{n}}  \right)\\
&\geq 1- 8\exp\left(-m^{1/2}(k+1) (\mathcal{C}\mathcal{C}'-10 M^{-1/2})\right)-C_1\exp(-C_2m^{1/2})\\
&\geq 1- 8\exp\left(- (\mathcal{C}\mathcal{C}'-10 M^{-1/2})m^{1/2}\right)-C_1\exp(-C_2m^{1/2}).
\end{align*}
When $M>0$ is large enough, this can be lower bounded by $1-C_1\exp(-C_2m^{1/2})$, where the constants $C_1,C_2>0$ depend on $\eta,\epsilon$ and $\gamma$ but not on $k,m$ or $n$. 

We recall Equations \eqref{eq:cond_k_1} and \eqref{eq:cond_k_2}: the reasoning holds only for the values of $k$ such that
\begin{equation*}
\gamma^{2k}< \alpha m^{1/4},
\end{equation*}
where, again, $\alpha>0$ is a constant that depends only on $\eta,\epsilon$ and $\gamma$. This means that, if we have chosen $\gamma\in]1;2[$ sufficiently close to $1$, it holds for any $k$ satisfying
\begin{equation*}
k < \mathcal{A} \ln m - c,
\end{equation*}
where $c\in\R$ is a constant that does not depend on $n$ or $m$.

\end{proof}

\subsubsection{Proof of Lemma \ref{lem:F}\label{sss:F}}

\begin{lem*}[Lemma \ref{lem:F}]
There exist $\delta>0$ such that, for any $x\in\mathcal{E}_n$,
\begin{equation*}
|F(x)|\geq (1+\delta)m\frac{||x_0||}{||x||}|\scal{x_0}{x}|.
\end{equation*}
\end{lem*}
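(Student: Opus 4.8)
The plan is to compute $F$ essentially exactly, reducing it to a one‑parameter Gaussian integral, and then to compare that integral with $c:=|\scal{x_0}{x}|/(\|x_0\|\,\|x\|)$ by a compactness argument.

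First I would reduce to a single row and to two dimensions. Since the rows $a_j^*$ of $A$ are i.i.d., $F(x)=m\,\E\big(|a^*x_0|^2\,\overline{\phase(a^*x_0)}\,\phase(a^*x)\big)$ for one Gaussian vector $a$. Normalizing $\|x_0\|=1$ and writing $x=\alpha x_0+\beta v$ with $v\perp x_0$, $\|v\|=1$, $\alpha=\scal{x_0}{x}$, $\beta=\sqrt{\|x\|^2-|\alpha|^2}\ge 0$, the pair $(a^*x_0,a^*v)=(\xi,g)$ is made of two independent standard circular complex Gaussians, and $a^*x=\alpha\xi+\beta g$. Passing to polar coordinates $\xi=re^{i\psi}$, $g=se^{i\varphi}$ (so $r^2,s^2$ i.i.d.\ $\mathrm{Exp}(1)$, $\psi,\varphi$ uniform, all independent), the factor $\overline{\phase(\xi)}$ cancels the global phase $e^{i\psi}$ of $\alpha\xi+\beta g$, and after pulling out $\phase(\alpha)$ one obtains $F(x)=m\,\phase(\alpha)\,H(c)$ where
\begin{equation*}
H(c)=\E\Big(r^2\,\Re\phase\big(c\,r+\sqrt{1-c^2}\,s\,e^{i\theta}\big)\Big),\qquad\theta\ \text{uniform, independent of }r,s,
\end{equation*}
using $\|x\|^2=|\alpha|^2+\beta^2$. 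One checks $H(c)$ is real and nonnegative, so $|F(x)|=m\,H(c)$; equivalently $H(c)=\E\big(|\xi|^2\cos\Delta\big)$, $\Delta$ being the angle between $\xi$ and $c\xi+\sqrt{1-c^2}g$. For $x\in\mathcal E_n$ we have $c\in[1/\sqrt n;1-\epsilon]$, and the desired inequality becomes $H(c)\ge(1+\delta)\,c$; hence it is enough to show $\inf_{c\in]0;1-\epsilon]}H(c)/c>1$.

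Next I would evaluate $H$. Conditionally on $r$, the variable $c\,r+\sqrt{1-c^2}\,g$ is a non‑centered circular complex Gaussian, and its mean phase has the classical closed form (the mean resultant of a Rician phase) in terms of modified Bessel functions $I_0,I_1$, depending only on $c^2r^2/(1-c^2)$; integrating the remaining radial variable $r^2\sim\mathrm{Exp}(1)$ against it gives a closed‑form expression for $H(c)$ (or at least a sharp lower bound). Two facts about it suffice: $H(1)=\E(r^2)=1$, and a first‑order expansion at $c=0$ — using $\E(r^3)=\tfrac34\sqrt\pi$, $\E(1/s)=\sqrt\pi$, $\E(e^{2i\varphi})=0$ — gives
\begin{equation*}
H(c)=\frac{3\pi}{8}\,c+o(c)\qquad(c\to0^+),
\end{equation*}
so $H$ extends continuously to $[0;1[$ with $H(0)=0$. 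Setting $G(c)=H(c)/c$ on $]0;1[$ and $G(0)=3\pi/8$, the function $G$ is continuous on $[0;1-\epsilon]$ with $G(0)=3\pi/8>1$ and $G(1)=1$. The core inequality is then the strict pointwise bound $H(c)>c$ for all $c\in]0;1[$, i.e.\ $G>1$ on $]0;1-\epsilon]$; combined with $G(0)>1$ and continuity, compactness of $[0;1-\epsilon]$ yields $\inf_{[0;1-\epsilon]}G=1+\delta$ for some $\delta>0$, which is exactly the claim (since $[1/\sqrt n;1-\epsilon]\subset\,]0;1-\epsilon]$ for every $n$).

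The hard part will be this last step: evaluating the Bessel integral and verifying $H(c)>c$ on the whole interval $]0;1[$. Informally, re‑imposing the correct modulus $b=|Ax_0|$ on $\phase(Ax)$ ``pulls'' the measurement toward $Ax_0$, so $H(c)>c$ should hold; a naive linearization, $\phase(c\xi+\sqrt{1-c^2}g)\approx\phase(c\xi)$, which is accurate when $c$ is close to $1$, already gives $H(c)\approx1>(1-\epsilon)\ge c$, but it degenerates as $c\to0$, where the expansion above shows $H(c)/c\to\frac{3\pi}{8}$ instead. Getting a single constant $\delta$ uniformly in $c$ is what forces the explicit computation of $H$; an appealing alternative would be to prove that $c\mapsto H(c)/c$ is monotone nonincreasing, which would give $H(c)/c\ge H(1-\epsilon)/(1-\epsilon)>1$ on $]0;1-\epsilon]$ directly, but I do not see an easy proof of such monotonicity.
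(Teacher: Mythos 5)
Your reduction of $F$ to a single‑variable functional is correct and is in fact the same reduction the paper makes: with $c=|\scal{x_0}{x}|/(\|x_0\|\,\|x\|)$ you obtain $|F(x)|=m\,H(c)$, and the paper's $f(t)=\E(\overline{Z_1}|Z_1|\phase(Z_1+tZ_2))$ is exactly $H\bigl(1/\sqrt{1+t^2}\bigr)$ since $t=\sqrt{1-c^2}/c$, so the target inequality $H(c)\geq(1+\delta)c$ and the paper's $f(t)\geq(1+\delta)/\sqrt{1+t^2}$ on $t\geq\gamma$ are the same statement. Your boundary computations are also right: $H(1)=\E(r^2)=1$ matches the coefficient $C_0=3/8$ giving $f(t)\sim\frac{3\pi}{8t}$ as $t\to\infty$, i.e.\ $H(c)/c\to 3\pi/8$ as $c\to 0^+$.

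The problem is that you have only set the stage for the lemma, not proven it. The entire content of the statement is the uniform strict inequality $H(c)>c$ on $]0;1-\epsilon]$ (equivalently $f(t)\sqrt{1+t^2}>1$ on $[\gamma;\infty[$), and in your write‑up this appears as a step that you explicitly say you do not know how to carry out. The two routes you sketch do not close the gap as stated: (i) the monotonicity of $c\mapsto H(c)/c$ is unproven, and even if granted it would still require establishing $H(1-\epsilon)>1-\epsilon$ by some computation; (ii) conditioning on $r$ does give the Rician resultant $I_1\bigl(c^2r^2/(1-c^2)\bigr)/I_0\bigl(c^2r^2/(1-c^2)\bigr)$ for the mean resultant length, but the remaining radial average of $r^2$ times that ratio has no convenient closed form, and turning it into the sharp comparison with $c$ uniformly over $]0;1[$ is precisely the hard analytic work, not a routine Bessel identity. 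In contrast, the paper's proof of the corresponding Lemma (its ``$\,f(t)\geq(1+\delta)/\sqrt{1+t^2}$'' lemma) lays out a concrete finite procedure: a power‑series expansion of $f$ in $1/t$ with explicit coefficients $C_l$ and a tail bound proving $f(t)>1.05/\sqrt{1+t^2}$ for $t>2.5$; and for $t\in\,]0;2.5]$ a Taylor expansion of the auxiliary $g(t)=(1+t^2)f(t)$ around grid points together with a global Lipschitz bound $|g''(t)-g''(u)|\leq\frac{9}{2}\pi|t-u|$, checked in steps of size $0.1$. That machinery is what turns the ``should hold'' heuristic into a proof; your proposal stops right where it is needed, so it has a genuine gap.
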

\begin{proof}
We write
\begin{equation*}
x=\alpha x_0 + \beta x',
\end{equation*}
with $\alpha,\beta\in\C$ and $x'\in \C^n$ such that $\scal{x_0}{x'}=0$ and $||x'||=1$.

\begin{align*}
F(x)&=\E(\scal{Ax_0}{b\odot\phase(Ax)})\\
&=\sum_{j=1}^m\E\left(\overline{(Ax_0)_j}|(Ax_0)_j|\phase((Ax)_j)\right)\\
&=m \E\left(\overline{(Ax_0)_1}|(Ax_0)_1|\phase((Ax)_1)\right)\\
&=m \E\left(\overline{(Ax_0)_1}|(Ax_0)_1|\phase(\alpha (Ax_0)_1 + \beta (Ax')_1)\right)\\
&=m ||x_0||^2 \phase(\alpha) \E\left(\frac{\overline{(Ax_0)_1}}{||x_0||}\frac{|(Ax_0)_1|}{||x_0||}\phase\left(\frac{(Ax_0)_1}{||x_0||} + \frac{\beta}{\alpha||x_0||} (Ax')_1\right)\right)\\
&=m||x_0||^2\phase(\alpha)\E\left(\overline{Z_1}|Z_1|\phase\left(Z_1+ \frac{|\beta|}{|\alpha|\,||x_0||} Z_2 \right)\right).
\end{align*}
where $Z_1=\frac{(Ax_0)_1}{||x_0||}$ and $Z_2=\phase(\beta/\alpha)(Ax')_1$ are independent complex Gaussian variables with variance $1$.

The expectation cannot be analytically computed, but it can be lower bounded by a simple function. The following lemma is proven in Paragraph \ref{sss:min_f}.
\begin{lem}\label{lem:min_f}
For any $t\in\R^+$, we set
\begin{equation*}
f(t)=\E\left(\overline{Z_1}|Z_1|\phase\left(Z_1+t Z_2\right)\right).
\end{equation*}
The function $f$ is real-valued. For any $\gamma>0$, there exist $\delta>0$ such that
\begin{equation*}
\forall t\in[\gamma;+\infty[,\quad\quad
f(t)\geq \frac{1+\delta}{\sqrt{1+t^2}}.
\end{equation*}
\end{lem}

As $x$ belongs to $\mathcal{E}_n$, we have:
\begin{align*}
\frac{|\beta|}{|\alpha|\,||x_0||}
&=\frac{\sqrt{||x||^2-|\alpha|^2||x_0||^2}}{|\alpha|\,||x_0||}\\
&=\sqrt{\frac{1}{|\alpha|^2||x_0||^2}-1}\\
&=\sqrt{\frac{||x_0||^2}{|\scal{x_0}{x}|^2}-1}\\
&\geq \sqrt{\frac{1}{(1-\epsilon)^2}-1}.
\end{align*}
Consequently, we can apply the lemma with $\gamma = \sqrt{\frac{1}{(1-\epsilon)^2}-1}$. It implies that, for some $\delta>0$ that depends only on $\epsilon$,
\begin{align*}
|F(x)|&\geq m||x_0||^2(1+\delta)\frac{1}{\sqrt{1+\left(\frac{|\beta|}{|\alpha|\,||x_0||}\right)^2}}\\
&= m||x_0||^2(1+\delta)|\alpha|\,||x_0||\\
&=(1+\delta)m \frac{||x_0||}{||x||} |\scal{x_0}{x}|.
\end{align*}
\end{proof}

\subsubsection{Proof of Lemma \ref{lem:dist_Ajx0}\label{sss:dist_Ajx0}}

\begin{lem*}[Lemma \ref{lem:dist_Ajx0}]
For some constants $C_1,C_2>0$, the following event happens with probability at least $1-C_1e^{-C_2 \sqrt{m}}$: for any $s\in\{1,\dots,\lfloor m^{1/4}\rfloor\}$,
\begin{align*}
\Card\left\{j\in\{1,\dots,m\},|a_j^*x_0|\geq s\right\}&\leq \frac{m}{s^2} \max(m^{-1/2},e^{-s^2/2})
\end{align*}
and
\begin{align*}
\Card\left\{j\in\{1,\dots,m\},|a_j^*x_0|> m^{1/4}\right\}&=0.
\end{align*}
\end{lem*}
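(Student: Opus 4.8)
Assume $\|x_0\|=1$ (this is the normalization under which the lemma is applied in Paragraph~\ref{sss:ecart_net}). Then, for each $j$, the scalar $a_j^*x_0$ is a standard complex Gaussian, so $|a_j^*x_0|^2\sim\mathrm{Exp}(1)$ and $P(|a_j^*x_0|\geq s)=e^{-s^2}$ for every $s\geq 0$. Consequently, for a fixed $s$ the count $N_s:=\Card\{j:|a_j^*x_0|\geq s\}$ is $\mathrm{Binomial}(m,e^{-s^2})$ with mean $\mu_s=me^{-s^2}$, and the target inequality is $N_s\leq t_s:=\frac{m}{s^2}\max(m^{-1/2},e^{-s^2/2})$. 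One checks directly that $t_s/\mu_s>1$ for all $s$ in the relevant range: this ratio equals $e^{s^2/2}/s^2\geq e/2$ when $s^2\leq\log m$, and equals $e^{s^2}/(s^2m^{1/2})$, which exceeds $1$ once $m$ is large, when $s^2>\log m$ (recall $s\leq m^{1/4}$). Hence the classical Chernoff bound $P(N_s\geq t_s)\leq\exp\bigl(-t_s\log(t_s/\mu_s)+t_s-\mu_s\bigr)$ applies.

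The next step is to show that this exponent is at most $-cm^{1/2}$ for an absolute $c>0$, uniformly over $s\in\{1,\dots,\lfloor m^{1/4}\rfloor\}$, by splitting into three ranges. For $s$ below a small absolute constant $s_0$ (finitely many values), $\mu_s$ is of order $m$ while $t_s/\mu_s=e^{s^2/2}/s^2$ is a fixed number $>1$, so the exponent equals $-\mu_s h(t_s/\mu_s)$ with $h(r)=r\log r-r+1>0$, hence is $\leq -c'm$. For $s_0\leq s$ with $s^2\leq\log m$, use $h(r)\geq r(\log r-1)$ (valid since $t_s/\mu_s>e$ here) to get exponent $\leq-t_s(\log(t_s/\mu_s)-1)=-t_s\bigl(\tfrac12 s^2-\log s^2-1\bigr)\leq-\tfrac18 m\,e^{-s^2/2}\leq-\tfrac18 m^{1/2}$, the last step using $e^{-s^2/2}\geq m^{-1/2}$. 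For $s^2>\log m$ (so $t_s=m^{1/2}/s^2$), the same manipulation together with $\tfrac12\log m<\tfrac12 s^2$ and $\log s^2\leq\tfrac14 s^2$ (valid since $s>\sqrt{\log m}$ is large) gives exponent $\leq-t_s\cdot\tfrac18 s^2=-\tfrac18 m^{1/2}$.

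Finally, a union bound over the at most $m^{1/4}$ values of $s$ yields $P(\exists s:\ N_s>t_s)\leq m^{1/4}\exp(-\tfrac18 m^{1/2})$, which is absorbed into $C_1\exp(-C_2m^{1/2})$. The overflow statement is immediate from the same tail: $P(\max_j|a_j^*x_0|>m^{1/4})\leq mP(|a_1^*x_0|>m^{1/4})=me^{-m^{1/2}}$, again absorbed. Taking $C_1$ large makes the bound trivially valid for the finitely many small $m$ excluded by the ``$m$ large'' hypotheses above.

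I expect the only delicate point to be the bookkeeping in the middle step: the ratio $t_s/\mu_s$ runs from a constant barely above $1$ — where the gain must come entirely from $\mu_s$ being of order $m$, through a multiplicative-Chernoff argument — up to something exponentially large in $s^2$ — where $t_s$ itself is as small as $\Theta(1)$ and the gain comes from the logarithmic factor. A single inequality cannot cover the whole range, so the ranges of $s$ must be separated and each estimated carefully enough to preserve the uniform $m^{1/2}$ rate.
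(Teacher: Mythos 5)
Your proposal is correct and follows essentially the same route as the paper: a multiplicative Chernoff/Bennett bound for the binomial count $N_s$, a case split around $s^2\approx\log m$ matching the $\max$ in the threshold, and a union bound over $s$. The only cosmetic differences are that you split into three ranges (the paper folds the small-$s$ case into its first range, bounding $\frac{1}{s^2}(\tfrac12 s^2-2\log s-1+s^2e^{-s^2/2})$ below by a constant directly) and that you handle the overflow event by a direct union bound $m e^{-\sqrt m}$, whereas the paper derives it by extending the first bound to $s=\lfloor m^{1/4}+1\rfloor$ and observing that the threshold then drops below $1$.
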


\begin{proof}[Proof of Lemma \ref{lem:dist_Ajx0}]

We recall that $A_1x_0,\dots,A_mx_0$ are independent complex Gaussian random variables with variance $||x_0||^2=1$. In particular, for any $s\in\N$,
\begin{align*}
P(|a_j^*x_0|\geq s)&=e^{-s^2};\\
\E(1_{|a_j^*x_0|\geq s})&=e^{-s^2};\\
\Var(1_{|a_j^*x_0|\geq s})&\leq e^{-s^2}.
\end{align*}

We first consider the values of $s$ belonging to $\{1,\dots,\lfloor\sqrt{\log m}\rfloor\}$. For any of these $s$, by Bennett's inequality, if we denote by $h$ the function $h:x\in\R^+\to (1+x)\log(1+x)-x$,
\begin{align*}
&P\left(\Card\left\{j\in\{1,\dots,m\},|a_j^*x_0|\geq s\right\}\geq \frac{m}{s^2}e^{-s^2/2} \right)\\
=&P\left(\sum_{j=1}^m \left(1_{|a_j^*x_0|\geq s}-\E\left(1_{|a_j^*x_0|\geq s}\right)\right)
\geq m\left(\frac{e^{-s^2/2}}{s^2}-e^{-s^2}\right)  \right)\\
\leq&\exp\left(-m e^{-s^2}h\left(\frac{e^{s^2/2}}{s^2}-1\right)\right)\\
=&\exp\left(-m\frac{e^{-s^2/2}}{s^2}\left(\frac{s^2}{2}-2\log(s)-1+s^2 e^{-s^2/2}\right)\right)\\
\leq&\exp\left(-c_1m e^{-s^2/2}\right),
\end{align*}
for some absolute constant $c_1>0$. As $s\leq \sqrt{\log m}$, this yields:
\begin{align*}
P\left(\Card\left\{j\in\{1,\dots,m\},|a_j^*x_0|\geq s\right\}\geq \frac{m}{s^2}e^{-s^2/2} \right)\leq
\exp(-c_1m^{1/2}).
\end{align*}
Second, we consider the values of $s$ in $\{\left\lfloor\sqrt{\log m}\right\rfloor+1,\dots,\lfloor m^{1/4}+1\rfloor\}$.
\begin{align*}
&P\left(\Card\left\{j\in\{1,\dots,m\},|a_j^*x_0|\geq s\right\}\geq \frac{m^{1/2}}{s^2} \right)\\
=&P\left(\sum_{j=1}^m \left(1_{|a_j^*x_0|\geq s}-\E\left(1_{|a_j^*x_0|\geq s}\right)\right)
\geq m\left(\frac{m^{-1/2}}{s^2}-e^{-s^2}\right)  \right)\\
\leq&\exp\left(-me^{-s^2}h\left(\frac{m^{-1/2}}{s^2}e^{s^2}-1\right)\right)\\
=&\exp\left(- m^{1/2}\left(1-\frac{\log m}{2s^2}-\frac{2\log s}{s^2}-\frac{1}{s^2}+m^{1/2}e^{-s^2}
\right)\right)\\
\overset{(a)}{\leq}&\exp\left(-m^{1/2}\left(1-\frac{1}{2}-\frac{\log (\log m)}{\log m}-\frac{1}{\log m}
\right)\right)\\
\leq&\exp\left(-\frac{m^{1/2}}{4}\right).
\end{align*}
as soon as $m$ is large enough. For (a), we have used the inequality $s\geq\sqrt{\log m}$.

To conclude, we observe that, if 
\begin{equation*}
\Card\left\{j\in\{1,\dots,m\},|a_j^*x_0|\geq s\right\}\leq \frac{m^{1/2}}{s^2}
\end{equation*}
for $s=\lfloor m^{1/4}+1\rfloor>m^{1/4}$, we must have
\begin{equation*}
\Card\left\{j\in\{1,\dots,m\},|a_j^*x_0| > m^{1/4} \right\}=0.
\end{equation*}
So we see that the desired event holds, for $m$ large enough, with probability at least
\begin{align*}
1- \sqrt{\log m}e^{-c_1\sqrt{m}}-m^{1/4}e^{-\sqrt{m}/4},
\end{align*}
which can be bounded by $1-C_1e^{-C_2\sqrt{m}}$ for $C_1,C_2>0$ well-chosen.
\end{proof}

\subsubsection{Proof of Lemma \ref{lem:var_bound}\label{sss:var_bound}}

\begin{lem*}[Lemma \ref{lem:var_bound}]
There exists a constant $C>0$ depending only on $\epsilon$ such that, for any fixed unit-normed $x,y$ such that
\begin{equation}\label{eq:var_cond_xy}
|\scal{x_0}{x}|\leq (1-\epsilon)||x_0||\,||x||\quad\mbox{and}\quad
|\scal{x_0}{y}|\leq (1-\epsilon)||x_0||\,||y||,
\end{equation}
we have, for any $j$,
\begin{equation*}
\Var(Z_j|Ax_0)\leq C\left(1+\frac{|a_j^*x_0|^2}{||x_0||^2}\right)
||x-y||^2\log\left(4||x-y||^{-1}\right).
\end{equation*}
\end{lem*}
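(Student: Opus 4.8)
The plan is to first reduce, by homogeneity, to the case $||x_0||=1$, and to note that since $|\overline{\phase(a_j^*x_0)}|=1$ we have $\Var(Z_j\mid Ax_0)\le\E[|\phase(a_j^*x)-\phase(a_j^*y)|^2\mid a_j^*x_0]$ (the variable $Z_j$ depends on $A$ only through $a_j$, so conditioning on $Ax_0$ is conditioning on $a_j^*x_0$). Writing $\delta=||x-y||$, the regime $\delta\gtrsim\sqrt\epsilon$ is trivial, since $|Z_j|\le 2$ and $\delta^2\log(4\delta^{-1})$ is bounded below there by a constant depending on $\epsilon$; so I will assume $\delta$ small. The first genuine step is to apply Lemma~\ref{lem:diff_phase} with $z_0=a_j^*y$ and $z=a_j^*(x-y)$, combine it with the trivial bound $|Z_j|\le 2$, and use $|\Im(w/v)|\le|w|/|v|$ to obtain
\[
|Z_j|^2\ \le\ 8\cdot 1_{\{6|a_j^*(x-y)|\ge|a_j^*y|\}}\ +\ \frac{72}{25}\,\min\!\Big(1,\ \frac{|a_j^*(x-y)|^2}{|a_j^*y|^2}\Big).
\]
Thus the lemma reduces to estimating, conditionally on $a_j^*x_0$, the probability of a small denominator and the truncated second moment of the ratio $|a_j^*(x-y)|/|a_j^*y|$.

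Next I would carry out the conditioning on $\xi:=a_j^*x_0$ explicitly. Decomposing $a_j=\bar\xi\,x_0+(a_j)_\perp$ with $(a_j)_\perp$ a standard complex Gaussian on $x_0^\perp$ independent of $\xi$, one has $a_j^*y=\xi\langle x_0,y\rangle+\langle(a_j)_\perp,y_\perp\rangle$ and $a_j^*(x-y)=\xi\langle x_0,x-y\rangle+\langle(a_j)_\perp,(x-y)_\perp\rangle$, where $y_\perp,(x-y)_\perp$ are the projections onto $x_0^\perp$. Hypothesis~\eqref{eq:var_cond_xy} enters precisely here: it forces $||y_\perp||^2=1-|\langle x_0,y\rangle|^2\ge 2\epsilon-\epsilon^2\ge\epsilon$, so, conditionally on $\xi$, $a_j^*y$ is a complex Gaussian with mean $c_y:=\xi\langle x_0,y\rangle$ and variance $\sigma^2\in[\epsilon,1]$, while $||(x-y)_\perp||\le\delta$ and $|\langle x_0,x-y\rangle|\le\delta$. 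To decouple the numerator from the denominator I would split $(x-y)_\perp$ into its component along $y_\perp$ and a part $r\perp y_\perp$ with $||r||\le\delta$; this expresses $a_j^*(x-y)$ as an affine function of $a_j^*y$ plus an independent term $G'\sim\mathcal N_\C(0,||r||^2)$, and, using $|\langle(a_j)_\perp,y_\perp\rangle|\le|a_j^*y|+|\xi|$, yields the clean pointwise bound $|a_j^*(x-y)|\le \tfrac{2\delta}{\sqrt\epsilon}|\xi|+\tfrac{\delta}{\sqrt\epsilon}|a_j^*y|+|G'|$.

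Finally I would substitute this bound into the two terms above and, via $\min(1,p+q+s)\le\min(1,3p)+\min(1,3q)+\min(1,3s)$, reduce everything to a short list of one-dimensional integrals against the law of $|a_j^*y|$. Two facts suffice: the Rician density bound $f_{|a_j^*y|}(\rho)\le\frac{2\rho}{\sigma^2}e^{-(\rho-|c_y|)^2/\sigma^2}\le\frac{2\rho}{\sigma^2}$ (from $I_0(z)\le e^z$), and $|G'|^2/||r||^2\sim\mathrm{Exp}(1)$. Integrating out $G'$ first gives $\E[\min(1,|G'|^2/|a_j^*y|^2)\mid\xi,a_j^*y]\le ||r||^2/|a_j^*y|^2+e^{-|a_j^*y|^2/||r||^2}$, and the remaining integrals all have the shape $\int_0^\infty\min(1,K/\rho^2)\,\rho\,e^{-(\rho-|c_y|)^2/\sigma^2}\,d\rho$ with $K$ of order $\delta^2$ or $\delta^2|\xi|^2/\epsilon$; splitting at $\rho=\sqrt K$ and at $\rho=2|c_y|$ produces the factor $\log(4\delta^{-1})$ through the logarithmically divergent piece $\int_{\sqrt K}^{O(1)}d\rho/\rho$, while the cross terms are absorbed using $|c_y|\le|\xi|$ and $t^2\log(1/t)\le 1$ for $t\le1$. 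The small‑denominator probability is handled the same way, with $P(|a_j^*y|\le\rho\mid\xi)\le\rho^2/\sigma^2$ and the Gaussian tail of $|G'|$, giving $O_\epsilon((1+|\xi|^2)\delta^2)$. I expect the main obstacle to be exactly this last family of integral estimates: one has to keep track of several regimes according to the sizes of $|\xi|$ and $|\langle x_0,y\rangle|$ relative to $\delta$, and it is essential not to bound $\min(1,K/\rho^2)$ by $K/\rho^2$ too early — the truncation at level $1$, i.e. the $|Z_j|\le 2$ bound, is precisely what keeps every contribution at scale $\delta^2\log(4\delta^{-1})$ rather than merely $\delta$.
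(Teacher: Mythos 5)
Your argument is correct in strategy and produces the required bound, but it follows a genuinely different route from the paper's. The paper does not invoke Lemma~\ref{lem:diff_phase} here: it writes $y=\alpha_y x_0+\beta x'+y''$ with $x'$ the component of $x$ orthogonal to $x_0$ and $y''\perp x_0,x'$, conditions on $(Ax_0,a_j^*y'')$, and observes that $1-\Re\,\E\big[\phase(\overline{a_j^*x})\phase(a_j^*y)\,\big|\,Ax_0,a_j^*y''\big]$ reduces to an explicit Gaussian integral in the single remaining scalar $a_j^*x'$, encoded by a function $G(a,b)$; the analytic estimates $|\Re G(a,b)|\leq c_1|a-b|^2\max\left(1,\log|a-b|^{-1}\right)$ and $|\Im G(a,b)|\leq c_1|a-b|$ of Proposition~\ref{prop:controle_G} then do the work, after which the expectation over $a_j^*y''$ is taken. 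You instead apply the pointwise Lemma~\ref{lem:diff_phase} to $a_j^*x=a_j^*y+a_j^*(x-y)$, discard the imaginary-part structure via $|\Im(w/v)|\leq|w|/|v|$, decompose $x-y$ against $y_\perp$ (whereas the paper decomposes $y$ against $x'$), and estimate the resulting one-dimensional integrals by hand using a Rician density bound and the exponential law of $|G'|^2/||r||^2$. Both proofs extract the logarithm from the same source --- the range $\sqrt{K}\lesssim|a_j^*y|\lesssim 1$ --- with the $\min(1,\cdot)$ cap (equivalently the trivial $|Z_j|\leq 2$) keeping the small-denominator region at $O(||x-y||^2)$. The paper's route buys cleanliness: the delicate $\log$-extraction is packaged once into Proposition~\ref{prop:controle_G}, which hinges on the vanishing derivative of $z\mapsto\Re(1-\phase(1+z))$ at $z=0$; your route trades that self-contained analytic lemma for a family of one-dimensional estimates that must be checked across several regimes of $|\xi|$, $|c_y|$ and $K$, which --- as you correctly anticipate --- is where the residual effort lies, but I see no obstruction. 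In return, your route is more elementary and reuses Lemma~\ref{lem:diff_phase}, already proved for Section~\ref{s:with_init}, instead of introducing $G$ and a separate two-dimensional integral analysis.
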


\begin{proof}[Proof of Lemma \ref{lem:var_bound}]
By the definition of $Z_j$, it suffices to prove
\begin{equation}\label{eq:var_reformulation}
\Var(\phase(a_j^*x)-\phase(a_j^*y)|Ax_0)\leq C\left(1+\frac{|a_j^*x_0|^2}{||x_0||^2}\right) ||x-y||^2\log\left(4||x-y||^{-1}\right).
\end{equation}
We write
\begin{equation*}
x = \alpha_xx_0+x'\mbox{ and }y=\alpha_y x_0+\beta x'+y'',
\end{equation*}
where $\alpha_x,\alpha_y,\beta$ are complex numbers and $x',y''\in\C^n$ satisfy $\scal{x'}{x_0}=\scal{y''}{x_0}=\scal{x'}{y''}=0$. Because of Equation \eqref{eq:var_cond_xy}, and because $x,y$ are unit-normed,
\begin{subequations}
\begin{gather}
||x'||\geq \sqrt{\epsilon(2-\epsilon)}\geq \sqrt{\epsilon};\label{eq:var_x_prime}\\
|\beta-1|=\frac{|\scal{y-x}{x'}|}{||x'||^2}\leq \frac{1}{\sqrt{\epsilon}}||y-x||;\label{eq:var_beta}\\
||\alpha_xx_0-\beta\alpha_yx_0||=\frac{|\scal{x-y}{x_0}|}{||x_0||}\leq ||x-y||;\label{eq:var_alpha}\\
||y''||=\frac{|\scal{y-x}{y''}|}{||y''||}\leq ||x-y||.\label{eq:var_y_seconde}
\end{gather}
\end{subequations}
As $|Z_j|$ is bounded (by $2$), the desired inequality is true for $||x-y||\geq \sqrt{\epsilon}/2$, provided that $C$ is large enough, so we can assume $||x-y||<\sqrt{\epsilon}/2$, which in particular guarantees that $|\beta|>1/2$.

As
\begin{align*}
\Var(\phase(a_j^*x)-\phase(a_j^*y)|Ax_0)&\leq
\E\left(\left|\phase(a_j^*x)-\phase(a_j^*y)\right|^2\Big|Ax_0\right)\\
&=2\left(1-\Re\left(\E\left(\phase(\overline{a_j^*x})\phase(a_j^*y)|Ax_0\right)\right)\right),
\end{align*}
we only need, in order to prove Equation \eqref{eq:var_reformulation}, to show that, for some constant $C>0$,
\begin{align}\label{eq:var_reformulation2}
1-\Re\left(\E\left(\phase(\overline{a_j^*x})\phase(a_j^*y)|Ax_0\right)\right)
\leq C\left(1+\frac{|a_j^*x_0|^2}{||x_0||^2}\right)||x-y||^2\log(4||x-y||^{-1}).
\end{align}

We have
\begin{align*}
\phase(a_j^*x)&=\phase\left(\frac{a_j^*x'}{||x'||}+\frac{\alpha_x}{||x'||}a_j^*x_0\right);\\
\phase(a_j^*y)&=\phase\left(\frac{a_j^*x'}{||x'||}+\frac{\alpha_y}{\beta||x'||}a_j^*x_0+\frac{1}{\beta||x'||}a_j^*y'' \right)\phase(\beta),
\end{align*}
and $\frac{a_j^*x'}{||x'||}$ is a complex Gaussian random variable with variance $1$, independent from $Ax_0$ and $a_j^*y''$. So
\begin{align*}
&\quad 1-\Re\left(\E\left(\phase(\overline{a_j^*x})\phase(a_j^*y)|Ax_0,a_j^*y''\right)\right)\\
&=1-\frac{1}{\pi}\Re\left(\phase(\beta)\int_\C \phase\left(\overline{z+\frac{\alpha_x}{||x'||}a_j^*x_0}\right)\phase\left(z+\frac{\alpha_y}{\beta||x'||}a_j^*x_0+\frac{1}{\beta||x'||}a_j^*y'' \right)e^{-|z|^2}d^2z\right).
\end{align*}
We upper bound this quantity with the following proposition, proven in Paragraph \ref{sss:controle_G}.
\begin{prop}\label{prop:controle_G}
Let us define the function
\begin{equation*}
\begin{array}{rccc}
G:&\C^2&\to&\C\\
&(a,b)&\to&1-\frac{1}{\pi}\Re\int_\C\phase(\overline{z+a})\phase(z+b)e^{-|z|^2}d^2z.
\end{array}
\end{equation*}
For some constant $c_1>0$, the following inequalities are true:
\begin{align*}
\forall a,b\in\C,\quad\quad
|\Re G(a,b)|&\leq c_1 |a-b|^2\max\left(1,\log\left(|a-b|^{-1}\right)\right),\\
|\Im G(a,b)|&\leq c_1 |a-b|.
\end{align*}
\end{prop}
So
\begin{align*}
&\quad 1-\Re\left(\E\left(\phase(\overline{a_j^*x})\phase(a_j^*y)|Ax_0,a_j^*y''\right)\right)\\
&=1-\Re(\phase(\beta))+\Re(\phase(\beta))\Re G\left(\frac{\alpha_x}{||x'||}a_j^*x_0,\frac{\alpha_y}{\beta||x'||}a_j^*x_0+\frac{1}{\beta||x'||}a_j^*y''\right)\\
&\hskip 2cm-\Im(\phase(\beta))\Im G\left(\frac{\alpha_x}{||x'||}a_j^*x_0,\frac{\alpha_y}{\beta||x'||}a_j^*x_0+\frac{1}{\beta||x'||}a_j^*y''\right)\\
&\leq c_1\left|\frac{\alpha_y-\beta\alpha_x}{\beta||x'||}a_j^*x_0+\frac{1}{\beta||x'||}a_j^*y'' \right|^2\max\left(1,\log\left(\left|\frac{\alpha_y-\beta\alpha_x}{\beta||x'||}a_j^*x_0+\frac{1}{\beta||x'||}a_j^*y''\right|^{-1}\right)\right)\\
&\hskip 2cm+|1-\Re(\phase\beta)| + |\Im(\phase(\beta))|\left|\frac{\alpha_y-\beta\alpha_x}{\beta||x'||}a_j^*x_0+\frac{1}{\beta||x'||}a_j^*y'' \right|\\
&\leq c_1\left(\left|\frac{\alpha_y-\beta\alpha_x}{\beta||x'||}a_j^*x_0\right|+\left|\frac{1}{\beta||x'||}a_j^*y'' \right|\right)^2\max\left(1,\log\left(\left|\frac{\alpha_y-\beta\alpha_x}{\beta||x'||}a_j^*x_0\right|+\left|\frac{1}{\beta||x'||}a_j^*y''\right|\right)^{-1}\right)\\
&\hskip 2cm+2|1-\beta|^2 + |\beta-1|\left(\left|\frac{\alpha_y-\beta\alpha_x}{\beta||x'||}a_j^*x_0\right|+\left|\frac{1}{\beta||x'||}a_j^*y'' \right|\right)\\
&\leq 2c_1\left|\frac{\alpha_y-\beta\alpha_x}{\beta||x'||}a_j^*x_0\right|^2\max\left(1,\log\left(\left|\frac{\alpha_y-\beta\alpha_x}{\beta||x'||}a_j^*x_0\right|^{-1}\right)\right)\\
&\hskip 2cm + 2c_1\left|\frac{1}{\beta||x'||}a_j^*y'' \right|^2\max\left(1,\log\left(\left|\frac{1}{\beta||x'||}a_j^*y''\right|^{-1}\right)\right)\\
&\hskip 2cm+2|1-\beta|^2 + |\beta-1|\left(\left|\frac{\alpha_y-\beta\alpha_x}{\beta||x'||}a_j^*x_0\right|+\left|\frac{1}{\beta||x'||}a_j^*y'' \right|\right)\\
&\overset{(*)}{\leq} 2c_1\left|\frac{\alpha_y-\beta\alpha_x}{\beta||x'||}\right|^2\max(||x_0||,|a_j^*x_0|)^2 \max\left(1,\log\left(\left|\frac{\alpha_y-\beta\alpha_x}{\beta||x'||}.\max(||x_0||,|a_j^*x_0|)\right|^{-1}\right)\right)\\
&\hskip 2cm + 2c_1\left|\frac{1}{\beta||x'||}a_j^*y'' \right|^2\max\left(1,\log\left(\left|\frac{1}{\beta||x'||}a_j^*y''\right|^{-1}\right)\right)\\
&\hskip 2cm+2|1-\beta|^2 + |\beta-1|\left(\frac{\left|\alpha_y-\beta\alpha_x\right|\,||x_0||}{\beta||x'||}\frac{|a_j^*x_0|}{||x_0||}+\left|\frac{1}{\beta||x'||}a_j^*y'' \right|\right)\\
&\leq 2c_1\left|\frac{\alpha_y-\beta\alpha_x}{\beta||x'||}\right|^2\max(||x_0||,|a_j^*x_0|)^2 \max\left(1,\log\left(\left|\frac{\alpha_y-\beta\alpha_x}{\beta||x'||}||x_0||\right|^{-1}\right)\right)\\
&\hskip 2cm + 2c_1\left|\frac{1}{\beta||x'||}a_j^*y'' \right|^2\max\left(1,\log\left(\left|\frac{1}{\beta||x'||}a_j^*y''\right|^{-1}\right)\right)\\
&\hskip 2cm+2\frac{||x-y||^2}{\epsilon} + \frac{||x-y||}{\sqrt{\epsilon}}\left(2\frac{||x-y||}{\sqrt{\epsilon}}\frac{|a_j^*x_0|}{||x_0||}+\frac{2}{\sqrt{\epsilon}}\left|a_j^*y'' \right|\right)\\
&\leq c_2 ||x-y||^2\left(1+\frac{|a_j^*x_0|}{||x_0||}\right)^2\max(1,\log||x-y||^{-1})+ c_2|a_j^*y''|^2\max(1,\log|a_j^*y''|^{-1})\\
&\hskip 2cm + c_2||x-y||\,|a_j^*y''|.
\end{align*}
For $(*)$, we have used the fact that $t\to t^2\max(1,\log(1/t))$ is non-decreasing. For the last two lines, we have used this same fact and Equations \eqref{eq:var_x_prime}, \eqref{eq:var_beta} and \eqref{eq:var_alpha}.

The random variable $a_j^*y''$ is complex and Gaussian, has variance $||y''||^2$ and is independent from $Ax_0$, so, taking the expectation over $a_j^*y''$ then using Equation \eqref{eq:var_y_seconde}, we get:
\begin{align*}
&\quad 1-\Re\left(\E\left(\phase(\overline{a_j^*x})\phase(a_j^*y)|Ax_0\right)\right)\\
&\leq c_2 ||x-y||^2\left(1+\frac{|a_j^*x_0|}{||x_0||}\right)^2\max(1,\log||x-y||^{-1})+ c_3||y''||^2\max(1,\log||y''||^{-1})\\
&\hskip 2cm + c_3||y''||\,||x-y||\\
&\leq c_4 ||x-y||^2\left(1+\frac{|a_j^*x_0|}{||x_0||}\right)^2\max(1,\log||x-y||^{-1}).
\end{align*}
As $||x-y||\leq 2$ (because $x$ and $y$ are unit-normed), this implies Equation \eqref{eq:var_reformulation2} and concludes.
\end{proof}

\subsubsection{Proof of Lemma \ref{lem:esp_exp_bound}\label{sss:esp_exp_bound}}

\begin{lem*}[Lemma \ref{lem:esp_exp_bound}]
Let $Z$ be any real random variable such that $|Z|\leq 2$ with probability $1$. If we set $\sigma^2=\Var(Z)$, then, for any $\lambda\in\R^+$,
\begin{equation*}
\E\left(e^{\lambda (Z-\E(Z))}\right) \leq 1+\frac{\sigma^2}{16}\left(e^{4\lambda}-1-4\lambda\right).
\end{equation*}
\end{lem*}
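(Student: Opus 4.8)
The plan is to run the classical argument behind Bennett's concentration inequality. Its engine is the elementary fact that the function
\[
g(u) = \frac{e^u - 1 - u}{u^2}\quad(u\neq 0),\qquad g(0)=\tfrac{1}{2},
\]
is nondecreasing on $\R$; consequently, whenever $u\le c$ one has the pointwise bound $e^u\le 1+u+g(c)\,u^2$ (simply because $g(u)u^2=e^u-1-u$ and $u^2\ge 0$).

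First I would center the variable: set $Y=Z-\E(Z)$, so that $\E(Y)=0$ and $\E(Y^2)=\sigma^2$. Since $|Z|\le 2$ almost surely we have $\E(Z)\in[-2,2]$, hence $Y\le 2-\E(Z)\le 4$ almost surely; in particular, for $\lambda\ge 0$ the random variable $\lambda Y$ is pointwise bounded above by $4\lambda$. Applying the inequality from the previous paragraph with $u=\lambda Y$ and $c=4\lambda$ gives, pointwise,
\[
e^{\lambda Y}\le 1+\lambda Y+g(4\lambda)\,\lambda^2 Y^2.
\]
Taking expectations kills the linear term and yields $\E(e^{\lambda Y})\le 1+g(4\lambda)\,\lambda^2\,\sigma^2$. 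It then only remains to substitute $g(4\lambda)=(e^{4\lambda}-1-4\lambda)/(16\lambda^2)$, which cancels the factor $\lambda^2$ and produces exactly the claimed bound $1+\frac{\sigma^2}{16}(e^{4\lambda}-1-4\lambda)$, the case $\lambda=0$ being trivial.

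The only step that is not a one-line computation — though it is a well-known lemma — is the monotonicity of $g$, and this is where I expect the (modest) technical work to sit. I would establish it by writing $g'(x)=\phi(x)/x^3$ with $\phi(x)=(x-2)e^x+x+2$, and observing that $\phi(0)=\phi'(0)=0$ while $\phi''(x)=x e^x$ changes sign only at $0$; hence $\phi'\ge 0$ everywhere, so $\phi$ is nondecreasing with $\phi(0)=0$, i.e. $\phi(x)$ has the sign of $x$, which makes $g'(x)=\phi(x)/x^3\ge 0$ for all $x\neq 0$, and $g$ extends continuously and monotonically across $0$. Everything else is bookkeeping with the bound $Y\le 4$ and $\E(Y)=0$.
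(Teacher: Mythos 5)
Your proof is correct, though it takes a different route than the paper's. The paper proceeds by a direct power-series manipulation: expand $e^{\lambda Z'}$, use $\E(Z')=0$ to annihilate the linear term, then bound each monomial via $Z'^k\le Z'^2\,4^{k-2}$ (from $|Z'|\leq 4$), and resum to get $1+\frac{\sigma^2}{16}(e^{4\lambda}-1-4\lambda)$. This avoids any calculus. You instead invoke the classical Bennett lemma that $g(u)=(e^u-1-u)/u^2$ is nondecreasing on $\R$ (which you verify with the second-derivative analysis of $\phi(x)=(x-2)e^x+x+2$), so that $e^u\leq 1+u+g(c)u^2$ whenever $u\leq c$, and then apply this pointwise with $u=\lambda Y$, $c=4\lambda$. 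The two routes land in the same place with comparable effort; the monotonicity-of-$g$ argument requires establishing the monotonicity lemma via calculus, whereas the paper's term-by-term bound is a one-line algebraic estimate. One genuine advantage of your approach is that it only needs the one-sided bound $Y\leq 4$, whereas the paper's bound $Z'^k\leq Z'^2\,4^{k-2}$ relies on $|Z'|\leq 4$; in this lemma both hold, so the generality is not exploited, but it is worth noting.
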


\begin{proof}[Proof of Lemma \ref{lem:esp_exp_bound}]
Let us define $Z'=Z-\E(Z)$. We have $|Z'|\leq 4$ with probability $1$, $\E(Z')=0$ and $\E(Z'^2)=\sigma^2$. Then,
\begin{align*}
\E\left(e^{\lambda Z'}\right)
&=\E\left(1+\lambda Z' +\sum_{k\geq 2}\frac{\lambda^kZ'^k}{k!}\right)\\
&\leq 1 + \sum_{k\geq 2}\E\left(\frac{\lambda^k Z'^2 4^{k-2}}{k!}\right)\\
&=1+\frac{\sigma^2}{16}\left(e^{4\lambda}-1-4\lambda\right).
\end{align*}
\end{proof}

\subsubsection{Proof of Lemma \ref{lem:eval_lambda_small}\label{sss:eval_lambda_small}}

\begin{lem*}[Lemma \ref{lem:eval_lambda_small}]
There exists a constant $\tilde C>0$ depending only on $\gamma$ and $\epsilon>0$ such that, for any $\lambda\in]0;\frac{1}{40}[$,
\begin{equation*}
\int_1^{\sqrt{\log m}+1}\frac{f_{\lambda}(t)}{t^2}e^{-t^2/4}dt+
m^{-1/2}\int_{\sqrt{\log m}+1}^{m^{1/4}+1}\frac{f_{\lambda}(t)}{t^3}dt+
\frac{1}{m}f_{\lambda}(m^{1/4}+1)
\leq \tilde C \gamma^{-2k}\lambda^2,
\end{equation*}
provided that
\begin{subequations}
\begin{gather}
\Big(\log(\max(1,\gamma^k/\lambda))+1\Big)\left(\frac{\gamma^k}{\lambda}\right)^{4/3}\leq m^{1/2};\tag{\ref{eq:eval_cond1}}\\
\frac{m^{1/2}\lambda \gamma^{-2k}}{1+\log m} \geq 1.\tag{\ref{eq:eval_cond2}}
\end{gather}
\end{subequations}
\end{lem*}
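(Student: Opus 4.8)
The plan is to reduce the whole estimate to two elementary pointwise bounds on $f_\lambda$. Write $w(t)=\frac{C'(1+t^2)\gamma^{-2k}}{16}\bigl(e^{4\lambda t^2}-1-4\lambda t^2\bigr)$, so that $f_\lambda(t)=\log(1+w(t))$. Using $\log(1+x)\le x$, $\log(1+x)\le\log(2x)$ for $x\ge1$, $e^x-1-x\le x^2e^x$ for all $x\ge0$, and $\gamma^{-2k}\le1$, one gets for every $t\ge1$ the two complementary estimates
\[
\text{(a)}\quad f_\lambda(t)\le C'(1+t^2)\gamma^{-2k}\lambda^2 t^4 e^{4\lambda t^2},\qquad
\text{(b)}\quad f_\lambda(t)\le C_0+2\log t+4\lambda t^2,
\]
with $C_0$ a constant depending only on $C'$. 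Bound (a) is the sharp one for small $t$ (where $w(t)\le1$), bound (b) takes over once $w$ has passed $1$. I also fix the cut-off $t_\ast=(4\lambda)^{-1/2}$ at which $4\lambda t^2=1$; note that \eqref{eq:eval_cond2} already forces $t_\ast\lesssim m^{1/4}/\sqrt{\log m}$, in particular $t_\ast<m^{1/4}+1$, and that squaring \eqref{eq:eval_cond2} together with $\gamma^{-2k}\ge\gamma^{-4k}$ gives $\gamma^{-2k}\lambda^2\ge(1+\log m)^2/m$.

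\textbf{The easy pieces.} For the Gaussian integral $\int_1^{\sqrt{\log m}+1}\tfrac{f_\lambda(t)}{t^2}e^{-t^2/4}\,dt$, estimate (a) alone suffices: since $\lambda<\tfrac1{40}$ we have $e^{4\lambda t^2-t^2/4}\le e^{-3t^2/20}$, so the integrand is $\le C'\gamma^{-2k}\lambda^2(1+t^2)t^2e^{-3t^2/20}$, whose integral over $[1,\infty)$ is a finite constant; this already gives $O(\gamma^{-2k}\lambda^2)$. For the boundary term, (b) yields $\tfrac1m f_\lambda(m^{1/4}+1)\lesssim\tfrac{\log m}{m}+\tfrac{\lambda}{m^{1/2}}$; the second summand is $\le\tilde C\gamma^{-2k}\lambda^2$ directly from \eqref{eq:eval_cond2}, and the first from the bound $\gamma^{-2k}\lambda^2\ge(1+\log m)^2/m$ just recorded.

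\textbf{The middle integral and the main obstacle.} The work is in $m^{-1/2}\int_{\sqrt{\log m}+1}^{m^{1/4}+1}\tfrac{f_\lambda(t)}{t^3}\,dt$, where there is no damping and (a) is unusable for $t>t_\ast$. I split the range at $t_\ast$ and at the crossover point $t_\times\in[1,t_\ast]$ at which bounds (a) and (b) coincide (there $4\lambda t^2\le1$, so (a) reads $\le C'e(1+t^2)\gamma^{-2k}\lambda^2 t^4$ and (b) reads $C_0+2\log t+1$). On $[\sqrt{\log m}+1,\min(t_\times,t_\ast)]$ use (a): $\int\tfrac{f_\lambda}{t^3}\lesssim\gamma^{-2k}\lambda^2\int_0^{t_\times}(1+t^2)t\,dt\lesssim\gamma^{-2k}\lambda^2 t_\times^4$. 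On $(\min(t_\times,t_\ast),t_\ast]$ use (b) with $4\lambda t^2\le1$; the crossover relation $\gamma^{-2k}\lambda^2\asymp(C_0+2\log t_\times)/\bigl((1+t_\times^2)t_\times^4\bigr)$ converts $\int_{t_\times}^{\infty}\tfrac{C_0+2\log t}{t^3}\,dt$ into $\lesssim\gamma^{-2k}\lambda^2 t_\times^4$ as well. On $(t_\ast,m^{1/4}+1]$ the term $4\lambda t^2$ of (b) must be kept and integrates to $\int\tfrac{4\lambda}{t}\,dt\lesssim\lambda\log m$, while the $C_0+2\log t$ part contributes $\lesssim\lambda\log m$ too because $t_\ast^{-2}\lesssim\lambda$ and $\log t_\ast\lesssim\log m$. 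Hence the middle integral is $\lesssim m^{-1/2}\bigl(\gamma^{-2k}\lambda^2 t_\times^4+\lambda\log m\bigr)$: the term $m^{-1/2}\lambda\log m\le\tilde C\gamma^{-2k}\lambda^2$ is exactly \eqref{eq:eval_cond2}, and a short computation shows the crossover relation makes $t_\times^4\lesssim m^{1/2}$ precisely under \eqref{eq:eval_cond1}, the extra logarithmic factor $\log(\max(1,\gamma^k/\lambda))+1$ there supplying the $(\log t_\times)^{1/3}$ gap between $t_\times^4$ and $(\gamma^k/\lambda)^{4/3}$. The genuine obstacle — and the reason the two side conditions have exactly this form — is the degenerate case where (a) and (b) never cross inside $[1,t_\ast]$: then the naive bounds produce a term of size $m^{-1/2}\gamma^{-2k}$ (when (a) stays below (b)) or $m^{-1/2}(\log m)^{-1}\log\log m$ (when (a) stays above (b)), neither visibly $\le\tilde C\gamma^{-2k}\lambda^2$. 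The fix is that the defining inequality of the regime, inserted into \eqref{eq:eval_cond2}, forces respectively $\lambda\gtrsim m^{-1/4}$ (so $m^{-1/2}\lesssim\lambda^2$) or $\gamma^{-2k}\lambda^2\gtrsim m^{-1/5}(\log m)^{-3}$ (using $e^{4\lambda t^2}\le m^{1/5}$ on $t\le\sqrt{\log m}+1$), and for $m$ large each offending term is then controlled. Bookkeeping these degenerate cases and the several cut-offs is where essentially all the effort lies; the rest is the elementary calculus above.
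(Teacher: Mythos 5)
Your proposal is correct and rests on the same two pointwise bounds on $f_{\lambda}$ that drive the paper's argument (one from $\log(1+w)\le w$, one from $\log(1+w)\le\log 2+\log w$ valid once $w$ has passed $1$), so the two proofs are variants of a single strategy rather than genuinely different routes; what differs is the bookkeeping. The paper pivots at the point $X_0$ defined by $w(X_0)=1$ and case-splits according to whether $\tfrac{8}{C'}\lambda\gamma^{2k}\lessgtr 1$ (equivalently, whether $\lambda X_0^2$ is small or bounded away from zero), while you pivot at the crossover $t_{\times}$ of your bounds (a)/(b) together with $t_{\ast}=(4\lambda)^{-1/2}$. Both schemes funnel into the same two uses of the side conditions: \eqref{eq:eval_cond1} controls the $t^{4}_{\times}$ (resp.\ $X_0^{4}$) term, and \eqref{eq:eval_cond2} absorbs the $m^{-1/2}\lambda\log m$ tail. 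Two small remarks on your write-up. First, your ``degenerate case 2'' (bound (a) above bound (b) on all of $[1,t_{\ast}]$) only occurs when $\gamma^{-2k}\lambda^{2}$ is already bounded below by a positive constant (from comparing (a) and (b) at $t=1$), so the lower bound $\gamma^{-2k}\lambda^{2}\gtrsim m^{-1/5}(\log m)^{-3}$ you extract is far weaker than what the regime gives for free; the case closes, but the stated reason is off. Second, the paper avoids splitting at $t_{\ast}$ altogether: it runs bound (a) all the way to $X_0$ and evaluates $\int_{0}^{2\sqrt{\lambda}X_0}t^{-1}\bigl(e^{t^{2}}-1-t^{2}\bigr)\,dt$, integrating by parts when $\sqrt{\lambda}X_0$ is bounded away from zero, which delivers the estimate $m^{-1/2}\lambda$ directly and subsumes your ``degenerate case 1'' (where you need to multiply \eqref{eq:eval_cond2} by the regime's defining inequality to force $\lambda\gtrsim m^{-1/4}$). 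Your version works, but needs more case analysis; the paper's is slightly tighter on this point.
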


\begin{proof}[Proof of Lemma \ref{lem:eval_lambda_small}]
As we only consider the function $f_\lambda$ on $]1;+\infty[$, we can upper bound it by the slightly simpler expression
\begin{equation*}
\tilde f_\lambda(x)=\log\left(1+\frac{C'\gamma^{-2k}}{8}x^2(e^{4\lambda x^2}-1-4\lambda x^2)\right).
\end{equation*}
Let $X_0$ be the (unique) positive number such that
\begin{align}
\frac{C'\gamma^{-2k}}{8}X_0^2(e^{4\lambda X_0^2}-1-4\lambda X_0^2)&=1.\nonumber\\
\iff\hskip 1cm \lambda X_0^2(e^{4\lambda X_0^2}-1-4\lambda X_0^2)&=\frac{8}{C'}\lambda\gamma^{2k}.\label{eq:eval_def_x0}
\end{align}
The function $\tilde f_\lambda$ satisfies the following inequalities:
\begin{gather*}
\forall x\in\R^+,\quad\quad
\tilde f_\lambda(x)\leq \frac{C'\gamma^{-2k}}{8}x^2(e^{4\lambda x^2}-1-4\lambda x^2);
\end{gather*}
\begin{align*}
\forall x\geq X_0,\quad\quad
\tilde f_\lambda(x)&\leq \log\left(\frac{C'\gamma^{-2k}}{8}x^2(e^{4\lambda x^2}-1-4\lambda x^2)\right)+\log 2\\
&\leq \log\left(\frac{C'\gamma^{-2k}}{8}x^2e^{4\lambda x^2}\right)+\log 2\\
&\leq \log\left(\frac{C'}{4}\right)+2\log x + 4\lambda x^2.
\end{align*}
In particular, if $X_0\leq 2m^{1/4}$,
\begin{align*}
\frac{1}{m}f_{\lambda}(m^{1/4}+1)
&\leq \frac{1}{m}f_{\lambda}(2m^{1/4})\\
&\leq \frac{1}{m}\left(\log\left(\frac{C'}{4}\right)+2\log(2m^{1/4})+16\lambda m^{1/2}\right)\\
&\leq D\left(\frac{\log m+\lambda m^{1/2}}{m}\right)\\
&\overset{\eqref{eq:eval_cond2}}{\leq} \frac{2D\lambda}{m^{1/2}}\\
&\overset{\eqref{eq:eval_cond2}}{\leq} 2D\lambda^2 \gamma^{-2k},
\end{align*}
and if $X_0>2m^{1/4}$, from the definition of $X_0$, we see that
\begin{align*}
\frac{1}{m}f_{\lambda}(m^{1/4}+1)
&\leq \frac{1}{m}\log(1+\frac{C'\gamma^{-2k}}{8}X_0^2(e^{4\lambda X_0^2}-1-4\lambda X_0^2))\\
&\leq \frac{\log 2}{m}\\
&\leq (\log 2)\frac{\lambda^2\gamma^{-2k}}{(m^{1/2}\lambda\gamma^{-2k})^2}\\
&\overset{\eqref{eq:eval_cond2}}{\leq} (\log 2)\lambda^2 \gamma^{-2k}.
\end{align*}
So $\frac{1}{m}f_{\lambda}(m^{1/4}+1)$ is bounded by $\tilde C\gamma^{-2k}\lambda^2$ and we only have to show the same bound for the integral terms.

Using the inequalities we have established over $\tilde f_{\lambda}$,
\begin{align}
\int_1^{\sqrt{\log m}+1}&\frac{f_{\lambda}(t)}{t^2}e^{-t^2/4}dt+
m^{-1/2}\int_{\sqrt{\log m}+1}^{m^{1/4}+1}\frac{f_{\lambda}(t)}{t^3}dt
\nonumber\\
&\leq \frac{C'\gamma^{-2k}}{8}\int_0^{+\infty} (e^{4\lambda t^2}-1-4\lambda t^2)e^{-t^2/4} dt\label{eq:eval_term1}\\
&\quad +m^{-1/2}\frac{C'\gamma^{-2k}}{8}\int_{\sqrt{\log m}+1}^{\max(X_0,\sqrt{\log m}+1)}\frac{1}{t}(e^{4\lambda t^2}-1-4\lambda t^2)dt\label{eq:eval_term2}\\
&\quad +m^{-1/2}\int_{\min(m^{1/4}+1,\max(X_0,\sqrt{\log m}+1))}^{m^{1/4}+1}\frac{1}{t^3}\left(\log\left(\frac{C'}{4}\right)+ 2\log t + 4\lambda t^2\right)dt\label{eq:eval_term3}.
\end{align}
We separately study each of the three right-side terms.

For Term \eqref{eq:eval_term1}, we can do an exact computation, taking into account the fact that $\lambda\leq 1/40$:
\begin{align*}
\mbox{\eqref{eq:eval_term1}}
&=\frac{C'\sqrt{\pi}}{8}\gamma^{-2k} \left(\frac{1}{\sqrt{1-16\lambda}}-1-8\lambda\right)\\
&\leq \frac{C''\sqrt{\pi}}{8}\gamma^{-2k}\lambda^2.
\end{align*}
For Term \eqref{eq:eval_term2}, if $X_0<\sqrt{\log m}+1$, then it is zero. Otherwise, $X_0\geq \sqrt{\log m}+1$ and
\begin{align}
\mbox{\eqref{eq:eval_term2}}
&\leq m^{-1/2}\frac{C'\gamma^{-2k}}{8}\int_0^{X_0}\frac{1}{t}(e^{4\lambda t^2}-1-4\lambda t^2)dt\nonumber\\
&=m^{-1/2}\frac{C'\gamma^{-2k}}{8}\int_0^{2\sqrt{\lambda}X_0}\frac{1}{t}(e^{t^2}-1-t^2)dt.\label{eq:eval_term2_upper_bound}
\end{align}
When $\frac{8}{C'}\lambda\gamma^{2k}\leq 1$, we check from the definition of $X_0$ (Equation \eqref{eq:eval_def_x0}) that $\lambda X_0^2\leq 1$, so $2\sqrt{\lambda}X_0\leq 2$ and
\begin{align*}
\mbox{\eqref{eq:eval_term2}}
&\leq m^{-1/2}\frac{C''\gamma^{-2k}}{8}\int_0^{2\sqrt{\lambda}X_0}t^3 dt.\\
&= m^{-1/2} \frac{C''\gamma^{-2k}}{2} \lambda^2X_0^4.
\end{align*}
From Equation \eqref{eq:eval_def_x0} again, we see that, as $\lambda X_0^2\leq 1$,
\begin{align}
\frac{8}{C'}\lambda\gamma^{2k}&\geq C'''(\lambda X_0^2)^3;\nonumber\\
\Rightarrow\hskip 1cm
\left(\frac{8}{C'C'''}\right)^{2/3}\frac{\gamma^{4k/3}}{\lambda^{4/3}}&\geq  X_0^4.
\label{eq:eval_x0_small_equiv}
\end{align}
From Condition \eqref{eq:eval_cond1}, we know that $\left(\frac{\gamma^k}{\lambda}\right)^{4/3}\leq m^{1/2}$, so
\begin{align*}
\mbox{\eqref{eq:eval_term2}}
&\leq m^{-1/2}C''''\gamma^{-2k}\lambda^2\left(\frac{\gamma^{k}}{\lambda}\right)^{4/3} \leq C'''' \gamma^{-2k}\lambda^2.
\end{align*}
On the other hand, when $\frac{8}{C'}\lambda\gamma^{2k}> 1$, $2\sqrt{\lambda}X_0$ is bounded away from zero. We evaluate the integral in Equation \eqref{eq:eval_term2_upper_bound} by parts:
\begin{align*}
\mbox{\eqref{eq:eval_term2}}
&\leq m^{-1/2}\frac{C'\gamma^{-2k}}{8}\int_0^{2\sqrt{\lambda}X_0}\frac{1}{t}(e^{t^2}-1-t^2)dt\\
&\leq m^{-1/2} C''\gamma^{-2k}\frac{1}{\lambda X_0^2} e^{4\lambda X_0^2}.
\end{align*}
From Equation \eqref{eq:eval_def_x0}, we can compute that, when $2\sqrt{\lambda}X_0$ is bounded away from zero,
\begin{equation*}
\frac{1}{X_0^2} e^{4\lambda X_0^2} \leq C'''\frac{\lambda^2\gamma^{2k}}{\left(1+\log\left(8\lambda\gamma^{2k}/C'\right)\right)^2},
\end{equation*}
which yields, together with Condition \eqref{eq:eval_cond2}:
\begin{align*}
\mbox{\eqref{eq:eval_term2}}
&\leq \frac{m^{-1/2}C''C''' \lambda}{\left(1+\log\left(8\lambda\gamma^{2k}/C'\right)\right)^2}
\leq m^{-1/2}C''C'''\lambda
\leq C''C''' \lambda^2\gamma^{-2k}.
\end{align*}
Finally, we consider the last term. When $X_0\geq m^{1/4}+1$, it is zero, so we only have to consider the case where $X_0< m^{1/4}+1$.
\begin{align}
\mbox{\eqref{eq:eval_term3}}
&\leq m^{-1/2}C''\int_{\max(X_0,1)}^{m^{1/4}+1}\frac{1+\log t+\lambda t^2}{t^3}dt\nonumber\\
&= m^{-1/2}C''\left[-\frac{3+2\log t}{4t^2} +\lambda\log t \right]_{\max(X_0,1)}^{m^{1/4}+1}\nonumber\\
&\leq m^{-1/2}C''\left( \frac{1+\log(\max(1,X_0))}{X_0^2} + \lambda \log\left(\frac{m^{1/4}+1}{\max(X_0,1)}\right)\right)\nonumber\\
&\leq m^{-1/2}C''\left( \frac{1+\log(\max(1,X_0))}{X_0^2} + \lambda \log\left(m^{1/4}+1\right)\right).\label{eq:eval_term3_simple}
\end{align}
The second part of Equation \eqref{eq:eval_term3_simple} can be upper bounded as desired, thanks to Condition \eqref{eq:eval_cond2}:
\begin{align}
m^{-1/2}C'' \lambda \log(m^{1/4}+1)
&\leq m^{-1/2}C''\lambda(1+\log m)\nonumber\\
&\leq C'' \lambda^2\gamma^{-2k}.\label{eq:eval_term3_1}
\end{align}
For the first part, let us distinguish the cases $\frac{8}{C'}\lambda\gamma^{2k}\leq 1$ and $\frac{8}{C'}\lambda\gamma^{2k}>1$.

In the case where $\frac{8}{C'}\lambda\gamma^{2k}\leq 1$, we see (in a similar way as in Equation \eqref{eq:eval_x0_small_equiv}) that
\begin{equation*}
c_1 \frac{\gamma^{k/3}}{\lambda^{1/3}}\leq X_0\leq c_2 \frac{\gamma^{k/3}}{\lambda^{1/3}},
\end{equation*}
so
\begin{align}
m^{-1/2}\left(\frac{1+\log(\max(1,X_0))}{X_0^2}\right)
&\leq m^{-1/2}C'''\left(1+\log(\max(1,\gamma^k/\lambda))\right)\frac{\lambda^{2/3}}{\gamma^{2k/3}}\nonumber\\
&= m^{-1/2}C'''\left(1+\log(\max(1,\gamma^k/\lambda))\right)\lambda^2\gamma^{-2k} \left(\frac{\gamma^k}{\lambda}\right)^{4/3}\nonumber\\
&\leq C''' \lambda^2 \gamma^{-2k}.\label{eq:eval_term3_2}
\end{align}
For the last equality, we have used Condition \eqref{eq:eval_cond1}.

In the case where $\frac{8}{C'}\lambda\gamma^{2k}>1$, as we have already seen, $\sqrt{\lambda}X_0$ is bounded away from $0$, so, for some constant $C'''>0$,
\begin{equation*}
X_0\geq C''' \lambda^{-1/2},
\end{equation*}
which implies
\begin{align}
m^{-1/2}\left(\frac{1+\log(\max(1,X_0))}{X_0^2}\right)
&\leq C'''' m^{-1/2}\lambda \left(1+\log(\max(1,\lambda^{-1/2}))\right)\nonumber\\
&\leq m^{-1/2}C'''' \lambda \left(1+\log\left(\max\left(1,\frac{1}{\lambda\gamma^{-2k}}\right)\right)\right).\label{eq:eval_term3_final}
\end{align}
From Condition \eqref{eq:eval_cond2}, we know that
\begin{gather*}
\lambda \gamma^{-2k}\geq m^{-1/2}(1+\log m);\\
\Rightarrow\quad\quad
\frac{1+\log\left(\max\left(1,\frac{1}{\lambda\gamma^{-2k}}\right)\right)}{\lambda\gamma^{-2k}}\leq m^{1/2} \frac{1+\log\left(\frac{m^{1/2}}{1+\log m}\right)}{1+\log m}\leq m^{1/2}.
\end{gather*}
We plug this into Equation \eqref{eq:eval_term3_final} and get
\begin{equation}\label{eq:eval_term3_3}
m^{-1/2}\left(\frac{1+\log(\max(1,X_0))}{X_0^2}\right)
\leq C'''' \lambda^2\gamma^{-2k}.
\end{equation}
Finally, we combine Equations \eqref{eq:eval_term3_1}, \eqref{eq:eval_term3_2} and \eqref{eq:eval_term3_3}. With Equation \eqref{eq:eval_term3_simple}, they show that
\begin{equation*}
\mbox{\eqref{eq:eval_term3}}\leq \mathcal{C}\lambda^2\gamma^{-2k},
\end{equation*}
for some constant $\mathcal{C}>0$.
\end{proof}

\subsubsection{Proof of Lemma \ref{lem:maj_esp}\label{sss:maj_esp}}

\begin{lem*}[Lemma \ref{lem:maj_esp}]
There exist a constant $C>0$ depending only on $\epsilon$ such that, for any fixed unit-normed $x,y$ such that
\begin{equation*}
|\scal{x_0}{x}|\leq (1-\epsilon)||x_0||\,||x||\quad\mbox{and}\quad
|\scal{x_0}{y}|\leq (1-\epsilon)||x_0||\,||y||,
\end{equation*}
and any $j=1,\dots,m$,
\begin{equation*}
|\E(Z_j|a_j^*x_0)| \leq C \min\left(1,||x-y||\left(1+\frac{|a_j^*x_0|}{||x_0||}\right)\right).
\end{equation*}
\end{lem*}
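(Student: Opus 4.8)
The plan is to condition on $a_j^*x_0$, which turns each of the two phase expectations into the value of one fixed smooth function that depends neither on $x$ nor on $y$. By homogeneity I may assume $\|x_0\|=1$: both $Z_j$ and the right-hand side of the claimed inequality are unchanged when $x_0$ is multiplied by a positive real, since $\phase$ is invariant under such scalings. Fix a value $w$ of $a_j^*x_0$; almost surely $w\neq 0$, hence $|\phase(\overline w)|=1$ and
\[
\big|\E(Z_j\mid a_j^*x_0=w)\big|
=\Big|\E\big[\phase(a_j^*x)\mid a_j^*x_0=w\big]-\E\big[\phase(a_j^*y)\mid a_j^*x_0=w\big]\Big|.
\]
Decompose $x=\alpha_x x_0+x'$ and $y=\alpha_y x_0+\beta x'+y''$ with $x_0,x',y''$ pairwise orthogonal, exactly as in the proof of Lemma~\ref{lem:var_bound}. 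Because $a_j$ is a circularly symmetric complex Gaussian vector, the variables $a_j^*x_0$, $a_j^*x'$, $a_j^*y''$ are independent circular complex Gaussians of variances $1$, $\|x'\|^2$ and $\|y''\|^2$; thus, conditionally on $a_j^*x_0=w$, the variable $a_j^*x$ is a circular complex Gaussian of mean $\alpha_x w$ and variance $\|x'\|^2$, and $a_j^*y$ one of mean $\alpha_y w$ and variance $\sigma_y^2:=|\beta|^2\|x'\|^2+\|y''\|^2$. Setting $\psi(\nu):=\E[\phase(\nu+g)]$ for $g$ a standard complex Gaussian, scale invariance of $\phase$ and positivity of $\|x'\|,\sigma_y$ give
\[
\E\big[\phase(a_j^*x)\mid a_j^*x_0=w\big]=\psi\!\left(\frac{\alpha_x w}{\|x'\|}\right),\qquad
\E\big[\phase(a_j^*y)\mid a_j^*x_0=w\big]=\psi\!\left(\frac{\alpha_y w}{\sigma_y}\right).
\]

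The first key step is to show that $\psi$ is globally Lipschitz with an absolute constant $L$. Writing $\psi(\nu)=\frac1\pi\int_{\C}\phase(z)\,e^{-|z-\nu|^2}\,d^2z$ and differentiating under the integral sign --- which is legitimate since moving the derivative onto the Gaussian density produces a locally integrable dominating function --- one obtains $|\nabla\psi(\nu)|\le\frac2\pi\int_{\C}|z-\nu|\,e^{-|z-\nu|^2}\,d^2z=2\,\E|g|<\infty$. This is the only place where the discontinuity of $\phase$ at the origin intervenes, and it disappears precisely because the Gaussian convolution smooths it out.

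The second key step is the elementary estimate $\big|\alpha_x/\|x'\|-\alpha_y/\sigma_y\big|\le C_\epsilon\|x-y\|$ for a constant $C_\epsilon$ depending only on $\epsilon$. It relies on the perturbation bounds obtained from the orthogonal decompositions exactly as in the proof of Lemma~\ref{lem:var_bound}: $\|x'\|\ge\sqrt{\epsilon}$, $\sigma_y^2=1-|\scal{x_0}{y}|^2\ge\epsilon$, $|\alpha_x-\alpha_y|\le\|x-y\|$, $|\beta-1|\le\|x-y\|/\sqrt\epsilon$ and $\|y''\|\le\|x-y\|$; moreover $|\beta|\,\|x'\|\le\sigma_y\le|\beta|\,\|x'\|+\|y''\|$, so $\big|\sigma_y-\|x'\|\big|\le\|x-y\|+|\beta-1|$. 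Writing $\dfrac{\alpha_x}{\|x'\|}-\dfrac{\alpha_y}{\sigma_y}=\dfrac{\alpha_x-\alpha_y}{\|x'\|}+\alpha_y\,\dfrac{\sigma_y-\|x'\|}{\|x'\|\,\sigma_y}$ and using $|\alpha_y|\le1$ together with $\|x'\|,\sigma_y\ge\sqrt\epsilon$ bounds each term by a constant multiple of $\|x-y\|$.

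Combining the two steps, $\big|\E(Z_j\mid a_j^*x_0)\big|\le L\,C_\epsilon\,|a_j^*x_0|\,\|x-y\|\le C\|x-y\|(1+|a_j^*x_0|)$. On the other hand $|Z_j|\le2$ yields the trivial bound $\big|\E(Z_j\mid a_j^*x_0)\big|\le2$, and taking the minimum of the two bounds and enlarging $C$ gives $\big|\E(Z_j\mid a_j^*x_0)\big|\le C\min\big(1,\|x-y\|(1+|a_j^*x_0|)\big)$, which is the asserted inequality (recall the reduction to $\|x_0\|=1$, and that the right-hand side reads $1+|a_j^*x_0|/\|x_0\|$ in general). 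The only genuinely non-routine step is the Lipschitz bound on $\psi$; what makes it usable here is the observation that conditioning on $a_j^*x_0$ represents both conditional expectations through one fixed smooth function, after which the singularity of $\phase$ is harmless and everything reduces to elementary perturbation estimates.
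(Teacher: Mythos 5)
Your proof is correct, and it takes a genuinely more streamlined route than the paper's. Both arguments hinge on the Lipschitz continuity of the Gaussian-smoothed phase $\psi(\nu)=\E[\phase(\nu+g)]$ and on elementary perturbation estimates in the orthogonal decomposition. The difference lies in the conditioning. The paper conditions on \emph{both} $a_j^*x_0$ and $a_j^*y''$, writes $\phase(a_j^*y)=\phase(\beta)\,\phase(\cdot)$ so as to make the conditional distribution of the argument a standard Gaussian shifted by a deterministic quantity, applies the Lipschitz bound, integrates out $a_j^*y''$, and then separately corrects for the spurious phase factor $\phase(\beta)$ via the bound $|1-\beta|\lesssim\|x-y\|$, which in turn requires the standing assumption $\|x-y\|<\sqrt{\epsilon}/2$ to guarantee $|\beta|>1/2$. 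You instead condition only on $a_j^*x_0$ and observe that, conditionally, $\beta a_j^*x'+a_j^*y''$ is itself a circularly symmetric Gaussian of variance $\sigma_y^2=|\beta|^2\|x'\|^2+\|y''\|^2$; the rotational invariance of that distribution absorbs $\phase(\beta)$ for free, and the identity $\sigma_y^2=1-|\scal{x_0}{y}|^2\ge\epsilon$ replaces the assumption-dependent lower bound $|\beta|\|x'\|>\sqrt\epsilon/2$ by an unconditional one. The upshot is a single Lipschitz-comparison step instead of two, and the auxiliary case distinction on $\|x-y\|$ used by the paper to control $\beta$ becomes unnecessary. The rest of your argument (the bound $|\sigma_y-\|x'\||\le\|x-y\|+|\beta-1|$, the algebraic estimate of $\big|\alpha_x/\|x'\|-\alpha_y/\sigma_y\big|$, the trivial bound $|Z_j|\le 2$, and the reduction to $\|x_0\|=1$) checks out.
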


\begin{proof}[Proof of Lemma \ref{lem:maj_esp}]
As $Z_j=\phase(a_j^*x)\phase(\overline{a_j^*x_0})-\phase(a_j^*y)\phase(\overline{a_j^*x_0})$,
\begin{equation*}
|\E(Z_j|a_j^*x_0)| = |\E(\phase(a_j^* x)|a_j^*x_0)-\E(\phase(a_j^*y)|a_j^*x_0)|.
\end{equation*}
As in the proof of Lemma \ref{lem:var_bound}, we write
\begin{equation*}
x=\alpha_x x_0+x'\mbox{ and }y=\alpha_yx_0+\beta x'+y'',
\end{equation*}
where $\alpha_x,\alpha_y,\beta$ are complex numbers and $x',y''\in\C^n$ satisfy $\scal{x'}{x_0}=\scal{y''}{x_0}=\scal{x'}{y''}=0$. We recall Equations \eqref{eq:var_x_prime} to \eqref{eq:var_y_seconde}:
\begin{gather*}
||x'||\geq \sqrt{\epsilon(2-\epsilon)}\geq \sqrt{\epsilon};\tag{\ref{eq:var_x_prime}}\\
|\beta-1|=\frac{|\scal{y-x}{x'}|}{||x'||^2}\leq \frac{1}{\sqrt{\epsilon}}||y-x||;\tag{\ref{eq:var_beta}}
\\
||\alpha_xx_0-\alpha_yx_0||=\frac{|\scal{x-y}{x_0}|}{||x_0||}\leq ||x-y||;\tag{\ref{eq:var_alpha}}
\\
||y''||=\frac{|\scal{y-x}{y''}|}{||y''||}\leq ||x-y||.\tag{\ref{eq:var_y_seconde}}
\end{gather*}
The variable $Z_j$ is bounded in modulus by $2$, so the desired inequality holds for $||x-y||\geq\sqrt{\epsilon}/2$ if we choose $C\geq 4/\sqrt{\epsilon}$. In what follows, we assume that $||x-y||<\sqrt{\epsilon}/2$, which notably guarantees that $|\beta|>1/2$.

The random variables $a_j^*x_0,a_j^*x'$ and $a_j^*y''$ are independent complex Gaussians, with respective variances $||x_0||^2,||x'||^2,||y''||^2$. Thus,
\begin{align}
\E(\phase(a_j^*x)|a_j^*x_0)
&=\E\left(\phase\left(\frac{\alpha_x}{||x'||}a_j^*x_0 + \frac{a_j^* x'}{||x'||}\right)\Bigg| a_j^*x_0\right)\nonumber\\
&=\frac{1}{\pi}\int_\C \phase\left(\frac{\alpha_x}{||x'||}a_j^*x_0 + z \right)e^{-{|z|^2}}d^2z,\label{eq:int_phase_x}
\end{align}
and similarly,
\begin{align}
\E(\phase(a_j^*y)|a_j^*x_0,a_j^*y'')
&=\frac{\phase(\beta)}{\pi}\int_\C \phase\left(\frac{\alpha_y}{\beta ||x'||}a_j^*x_0 +\frac{a_j^* y''}{\beta ||x'||} + z \right)e^{-{|z|^2}}d^2z.
\label{eq:int_phase_y}
\end{align}
The function
\begin{equation*}
a\in\C\to
\frac{1}{\pi}\int_\C\phase(a+z)e^{-{|z|^2}}d^2z
=\frac{1}{\pi}\int_\C\phase(z)e^{-{|z-a|^2}}d^2z
\end{equation*}
is Lipschitz (as can be seen by derivation under the integral sign). If we denote by $D>0$ the Lipschitz constant, Equations \eqref{eq:int_phase_x} and \eqref{eq:int_phase_y} imply that
\begin{align*}
&\left|\E(\phase(a_j^*x)|a_j^*x_0)-\overline{\phase(\beta)}\E(\phase(a_j^*y)|a_j^*x_0,a_j^*y'')\right|\\
&\quad\quad \leq D\left|\left|\frac{\alpha_x}{||x'||}a_j^*x_0-\left(\frac{\alpha_y}{\beta ||x'||}a_j^*x_0 +\frac{a_j^* y''}{\beta ||x'||}\right)\right|\right|\\
&\quad\quad\leq D \left(||x-y|| \frac{|a_j^*x_0|}{||x_0||}
\left(\frac{1}{\sqrt{\epsilon}}+\frac{2}{\epsilon}\right)
+\frac{2}{\sqrt{\epsilon}} |a_j^* y''|\right)\\
&\quad\quad\leq D ||x-y|| \left( \frac{|a_j^*x_0|}{||x_0||}
\left(\frac{1}{\sqrt{\epsilon}}+\frac{2}{\epsilon}\right)
+\frac{2}{\sqrt{\epsilon}} \frac{|a_j^* y''|}{||y''||}\right).
\end{align*}
For the last two inequalities, we have used Equations \eqref{eq:var_x_prime} to \eqref{eq:var_y_seconde}. We finally take the expectation over $a_j^* y''$; by triangular inequality,
\begin{align*}
&\left|\E(\phase(a_j^*x)|a_j^*x_0)-\overline{\phase(\beta)}\E(\phase(a_j^*y)|a_j^*x_0)\right|\\
&\quad\quad
\leq D ||x-y|| \left( \frac{|a_j^*x_0|}{||x_0||}
\left(\frac{1}{\sqrt{\epsilon}}+\frac{2}{\epsilon}\right)
+\sqrt{\frac{\pi}{\epsilon}} \right)\\
&\quad\quad\leq C ||x-y||\left(1+\frac{|a_j^*x_0|}{||x_0||}\right),
\end{align*}
when $C>0$ is large enough. Additionally,
\begin{align*}
&\left|\E(\phase(a_j^*y)|a_j^*x_0)-\overline{\phase(\beta)}\E(\phase(a_j^*y)|a_j^*x_0)\right|\\
&\quad\quad \leq |1-\beta|\\
&\quad\quad \leq 2\frac{|1-\beta|}{|\beta|}\\
&\quad\quad \leq \frac{4}{\sqrt{\epsilon}}||y-x||.
\end{align*}
So by triangular inequality,
\begin{align*}
&\left|\E(\phase(a_j^*x)|a_j^*x_0)-\E(\phase(a_j^*y)|a_j^*x_0)\right|\\
&\quad\quad\leq C' ||x-y||\left(1+\frac{|a_j^*x_0|}{||x_0||}\right),
\end{align*}

We also have
\begin{equation*}
|\E(Z_j|a_j^*x_0)|\leq C,
\end{equation*}
for any constant $C\geq 2$, so
\begin{equation*}
|\E(Z_j|a_j^*x_0)|\leq C\min\left(1,||x-y||\left(1+\frac{|a_j^*x_0|}{||x_0||}\right)\right)
\end{equation*}
when $C>0$ is large enough.
\end{proof}

\subsubsection{Proof of Lemma \ref{lem:esp_exp_bound2}\label{sss:esp_exp_bound2}}

\begin{lem*}[Lemma \ref{lem:esp_exp_bound2}]
There exist constants $c,C'>0$, that depend only on $\gamma$ and $\epsilon$, such that, for any $\lambda\in[-c;c]$,
\begin{align*}
\log\left(\E\left(e^{\lambda\Re\left(|a_j^*x_0|^2\E(Z_j|Ax_0)-\E(|a_j^*x_0|^2Z_j)\right)}\right) \right)
&\leq C' \lambda^2 \gamma^{-2k},\\
\mbox{and }
\log\left(\E\left(e^{\lambda\Im\left(|a_j^*x_0|^2\E(Z_j|Ax_0)-\E(|a_j^*x_0|^2Z_j)\right)}\right) \right)
&\leq C' \lambda^2 \gamma^{-2k}.
\end{align*}
\end{lem*}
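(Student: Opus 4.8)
The plan is to reduce the statement to a one-dimensional concentration estimate, using that $\E(Z_j\mid Ax_0)=\E(Z_j\mid a_j^*x_0)$. After normalizing $\|x_0\|=1$, the quantity $W_j:=|a_j^*x_0|^2\,\E(Z_j\mid Ax_0)$ is then a function of the single standard complex Gaussian $a_j^*x_0$; write $W_j=R^2h$ with $R:=|a_j^*x_0|$ (so that $R^2\sim\mathrm{Exp}(1)$) and $h:=\E(Z_j\mid a_j^*x_0)$. The fixed vectors $x,y$ appearing here lie in $\mathcal{E}_n$, hence satisfy the hypotheses of Lemma~\ref{lem:maj_esp}; combining that lemma with $\|x-y\|\le 2^{-(k-1)}$ (as in Equation~\eqref{eq:maj_esp_gamma}) gives the two pointwise bounds $|h|\le 2C$ and $|h|\le 2C\gamma^{-k}(1+R)$, whence
\begin{gather*}
|W_j|\le 2CR^2,\qquad W_j^2\le (2C)^2\gamma^{-2k}R^4(1+R)^2,
\end{gather*}
and also $|\E W_j|\le\E|W_j|\le 2C\gamma^{-k}\,\E\!\big(R^2(1+R)\big)=:c_1\gamma^{-k}$. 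The first (rough) bound will supply exponential integrability at $\lambda$ of constant order, while the remaining (sharp) bounds will carry the factor $\gamma^{-2k}$.

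Set $V:=\Re W_j-\E\Re W_j$, so $\E V=0$ (the imaginary part is identical). From the elementary inequality $e^u-1-u\le\tfrac12 u^2e^{|u|}$ and $\E V=0$ we get
\begin{equation*}
\E e^{\lambda V}\le 1+\tfrac{\lambda^2}{2}\,\E\!\left(V^2e^{|\lambda|\,|V|}\right).
\end{equation*}
Next, $|V|\le|W_j|+|\E W_j|\le 2CR^2+c_1$ and $V^2\le 2|W_j|^2+2(\E|W_j|)^2\le\gamma^{-2k}\big(2(2C)^2R^4(1+R)^2+2c_1^2\big)$, so
\begin{equation*}
\E\!\left(V^2e^{|\lambda|\,|V|}\right)\le\gamma^{-2k}e^{c_1|\lambda|}\,\E\!\left[\big(2(2C)^2R^4(1+R)^2+2c_1^2\big)e^{2C|\lambda|R^2}\right].
\end{equation*}
Taking $c:=\tfrac1{4C}$, for $|\lambda|\le c$ we have $2C|\lambda|\le\tfrac12$; since $R^2\sim\mathrm{Exp}(1)$ one has $\E[R^qe^{\theta R^2}]\le 2^{q/2+1}\Gamma(q/2+1)$ for all $\theta\le\tfrac12$ and $q\ge0$, and expanding $R^4(1+R)^2=R^4+2R^5+R^6$ bounds the last expectation by a finite constant $\kappa$ depending only on $C$ (hence on $\epsilon$). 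Thus $\E e^{\lambda V}\le 1+C'\gamma^{-2k}\lambda^2$ for $|\lambda|\le c$, and $\log\E e^{\lambda V}\le\E e^{\lambda V}-1\le C'\gamma^{-2k}\lambda^2$; running the same argument with $\Im$ in place of $\Re$ gives the second inequality. Here $c$ and $C'$ depend only on the constant of Lemma~\ref{lem:maj_esp} and on $\gamma$, i.e.\ only on $\epsilon$ and $\gamma$.

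The point worth flagging is why a more standard route is not enough. A naive Bernstein/moment bound for $W_j$ would force the admissible range of $\lambda$ to shrink like $\gamma^{-2k}$: the sharp estimate on $h$ only controls $\E|W_j|^2=O(\gamma^{-2k})$, whereas for $p\ge3$ the factor $(1+R)^p$ together with $R^{2p}$ produces a $\Gamma(3p/2)$-type growth, corresponding to a $|a_j^*x_0|^3$-like tail (sub-Weibull of exponent $2/3$) whose moment generating function is infinite for every positive argument. The decoupling above circumvents this: in $V^2e^{|\lambda|\,|V|}$ the factor $V^2$ is estimated by the \emph{sharp} bound on $h$ (contributing $\gamma^{-2k}$), while $e^{|\lambda|\,|V|}$ is estimated by the \emph{rough} bound $|W_j|\le 2C|a_j^*x_0|^2$, which only ever yields $e^{\theta|a_j^*x_0|^2}$ with $\theta=2C|\lambda|<1$ — finite in expectation precisely because $|a_j^*x_0|^2$ is exponentially distributed. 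Verifying the finiteness and values of these Gaussian integrals, and checking that the centering term $\E W_j$ contributes only bounded factors, are the only remaining computations, and present no genuine difficulty beyond bookkeeping.
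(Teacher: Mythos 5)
Your proof is correct, and it takes a genuinely different path from the paper's. The paper bounds $\log\E e^{\lambda\Re\mathcal{Z}_j}\le\E\bigl(e^{\lambda\Re\mathcal{Z}_j}-\lambda\Re\mathcal{Z}_j-1\bigr)$, then uses the monotone function $f(x)=e^{\lambda x}-\lambda x-1$ together with the envelope $|\mathcal{Z}_j|\le C''(1+R)^2\min\bigl(1,\gamma^{-k}(1+R)\bigr)$ to reduce to a one-dimensional radial integral in $R=|a_j^*x_0|$, which it then splits at the threshold $r=\gamma^k$ and estimates piecewise. You bypass the integral splitting entirely via the decoupling inequality $e^u-1-u\le\tfrac12u^2e^{|u|}$, which lets you apply the sharp bound $|h|\le 2C\gamma^{-k}(1+R)$ (carrying the $\gamma^{-2k}$ factor) to the quadratic factor $V^2$, while applying the rough bound $|W_j|\le 2CR^2$ (giving a bounded Gaussian MGF for $|\lambda|\le 1/(4C)$) to the exponential factor $e^{|\lambda||V|}$. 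Cauchy--Schwarz is not even needed because the product form is already friendly. This is the same underlying observation that motivates the paper's proof --- the rough bound controls the tail, the sharp bound controls the scale --- but your route implements it as a single moment estimate instead of a case-by-case integral analysis, and avoids introducing the crossover point $X_0$ and the auxiliary function $f_\lambda$. The price, if any, is that the resulting constant $C'$ is likely less optimal, which is immaterial here; what it buys is a considerably shorter and more transparent argument, and the dependence of $c,C'$ on only $\epsilon$ (and of $C'$ trivially on $\gamma$) is immediate from the construction.
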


\begin{proof}[Proof of Lemma \ref{lem:esp_exp_bound2}]
We only prove the first inequality; the proof of the second one is identical. We assume that $\lambda$ is positive; the same reasoning holds with minor modifications when $\lambda$ is negative.

To simplify the notations, we set
\begin{equation*}
\mathcal{Z}_j=|a_j^*x_0|^2\E(Z_j|a_j^*x_0)
-\E(|a_j^*x_0|^2Z_j).
\end{equation*}
We recall from Equation \eqref{eq:maj_esp_gamma} that
\begin{equation*}
|a_j^*x_0|^2 |\E(Z_j|a_j^*x_0)|\leq 2C |a_j^*x_0|^2 \min(1,\gamma^{-k}(1+|a_j^*x_0|)).
\end{equation*}
As a consequence, because $a_j^*x_0$ is a complex Gaussian random variable with variance $||x_0||^2=1$,
\begin{align*}
|\E(|a_j^*x_0|^2Z_j)|
&=|\E\left(|a_j^*x_0|^2 \E(Z_j|a_j^*x_0)\right)|\\
&\leq 2C \E\left( |a_j^*x_0|^2 \min(1,\gamma^{-k}(1+|a_j^*x_0|))\right)\\
&\leq 2C\gamma^{-k}\E(|a_j^*x_0|^2(1+|a_j^*x_0|))\\
&=2C\left(1+\frac{3}{4}\sqrt{\pi}\right)\gamma^{-k}.
\end{align*}
Combining this with Equation \eqref{eq:maj_esp_gamma}, we see that there exists a constant $C''>0$ such that
\begin{align*}
|\mathcal{Z}_j|=
\Big||a_j^*x_0|^2\E(Z_j|a_j^*x_0)
-\E(|a_j^*x_0|^2Z_j)\Big| \leq C''(1+|a_j^*x_0|)^2\min(1,\gamma^{-k}(1+|a_j^*x_0|)).
\end{align*}
Let us note that, because $\E(\mathcal{Z}_j)=0$,
\begin{align*}
\log(\E(e^{\lambda\Re(\mathcal{Z}_j)}))
&\leq \E(e^{\lambda\Re(\mathcal{Z}_j)})-1\\
&=\E(e^{\lambda\Re(\mathcal{Z}_j)}-\lambda\Re(\mathcal{Z}_j)-1).
\end{align*}
The function $f:x\to e^{\lambda x}-\lambda x-1$ is non-decreasing over $\R^+$, and satisfies $f(x)\leq f(|x|)$ for any $x\in\R$. Hence,
\begin{align}
&\log(\E(e^{\lambda\Re(\mathcal{Z}_j)}))\nonumber\\
\leq \E&\left(
e^{\lambda C''(1+|a_j^*x_0|)^2\min(1,\gamma^{-k}(1+|a_j^*x_0|))}
-\lambda C''(1+|a_j^*x_0|)^2\min(1,\gamma^{-k}(1+|a_j^*x_0|))
-1\right)\nonumber\\
&=\frac{1}{\pi}\int_\C \left(e^{\lambda C''(1+|z|)^2\min(1,\gamma^{-k}(1+|z|))}-\lambda C''(1+|z|)^2\min(1,\gamma^{-k}(1+|z|))-1\right)e^{-|z|^2}d^2z\nonumber\\
&=2\int_0^{+\infty} \left(e^{\lambda C''(1+r)^2\min(1,\gamma^{-k}(1+r))}-\lambda C''(1+r)^2\min(1,\gamma^{-k}(1+r))-1\right)re^{-r^2}dr\nonumber\\
&=2\int_1^{+\infty} \left(e^{\lambda C''r^2\min(1,\gamma^{-k}r)}-\lambda C''r^2\min(1,\gamma^{-k}r)-1\right)(r-1)e^{-(r-1)^2}dr\nonumber\\
&\leq C'''\int_0^{+\infty}\left(e^{\lambda C''r^2\min(1,\gamma^{-k}r)}-\lambda C''r^2\min(1,\gamma^{-k}r)-1\right)e^{-r^2/2}dr\nonumber\\
&= C'''\int_0^{\gamma^k}\left(e^{\lambda C''r^3\gamma^{-k}}-\lambda C''r^3\gamma^{-k}-1\right)e^{-r^2/2}dr\label{eq:esp_term1}\\
&\hskip 2cm
+ C'''\int_{\gamma^k}^{+\infty}\left(e^{\lambda C''r^2}-\lambda C''r^2-1\right)e^{-r^2/2}dr.\label{eq:esp_term2}
\end{align}
We need to show that both components \eqref{eq:esp_term1} and \eqref{eq:esp_term2} are upper bounded by $C'\lambda^2\gamma^{-2k}$ for some constant $C'>0$ sufficiently large, provided that $|\lambda|\leq c$ for some constant $c>0$.

For Term \eqref{eq:esp_term1}, we use the fact that, when $r\leq C''^{-1/3}\gamma^{k/3}\lambda^{-1/3}$,
\begin{gather*}
\lambda C''r^3\gamma^{-k}\leq 1;\\
\Rightarrow\quad
 e^{\lambda C''r^3\gamma^{-k}}-\lambda C''r^3\gamma^{-k}-1
\leq (\lambda C''r^3\gamma^{-k})^2.
\end{gather*}
It yields:
\begin{align}
\mbox{\eqref{eq:esp_term1}}
&\leq C'''\int_0^{\min(\gamma^k,C''^{-1/3}\gamma^{k/3}\lambda^{-1/3})} (\lambda C''r^3\gamma^{-k})^2e^{-r^2/2}dr\nonumber\\
&\hskip 2cm+
C'''\int_{\min(\gamma^k,C''^{-1/3}\gamma^{k/3}\lambda^{-1/3})}^{\gamma^k}e^{\lambda C''r^3\gamma^{-k}}e^{-r^2/2}dr\nonumber\\
&\leq C''' C''^2\lambda^2\gamma^{-2k}\int_0^{+\infty}r^6e^{-r^2/2}dr
+
C'''\int_{\min(\gamma^k,C''^{-1/3}\gamma^{k/3}\lambda^{-1/3})}^{\gamma^k}e^{\lambda C''r^3\gamma^{-k}}e^{-r^2/2}dr.\label{eq:esp_term1_tmp}
\end{align}
For the second term of this sum, if we assume that
\begin{equation*}
\lambda<\frac{1}{4C''},
\end{equation*}
we have
\begin{align*}
\int_{\min(\gamma^k,C''^{-1/3}\gamma^{k/3}\lambda^{-1/3})}^{\gamma^k}e^{\lambda C''r^3\gamma^{-k}}e^{-r^2/2}dr&= \int_{\min(\gamma^k,C''^{-1/3}\gamma^{k/3}\lambda^{-1/3})}^{\gamma^k}e^{r^2\left(\lambda C''r\gamma^{-k}-\frac{1}{2}\right)}dr\\
&\leq \int_{\min(\gamma^k,C''^{-1/3}\gamma^{k/3}\lambda^{-1/3})}^{\gamma^k}e^{r^2\left(\lambda C''-\frac{1}{2}\right)}dr\\
&\leq \int_{\min(\gamma^k,C''^{-1/3}\gamma^{k/3}\lambda^{-1/3})}^{\gamma^k}e^{-r^2/4}dr\\
&\leq \int_{C''^{-1/3}\gamma^{k/3}\lambda^{-1/3}}^{+\infty}e^{-r^2/4}dr\\
&\leq C''' \frac{e^{-\left(C''^{-1/3}\gamma^{k/3}\lambda^{-1/3}\right)^2/4}}{C''^{-1/3}\gamma^{k/3}\lambda^{-1/3}}\\
&\leq C'''' \lambda^2\gamma^{-2k}.
\end{align*}
For the last inequality, we have used the fact that there exists a constant $D>0$ such that $e^{-x}\leq D x^{-5/2}$, for all $x>0$.

Plugging this into Equation \eqref{eq:esp_term1_tmp}, we get
\begin{equation}\label{eq:esp_term1_final}
\mbox{\eqref{eq:esp_term1}}\leq C'\lambda^2\gamma^{-2k}.
\end{equation}
For Term \eqref{eq:esp_term2}, still under the assumption $\lambda<1/(4C'')$,
\begin{align}
\mbox{\eqref{eq:esp_term2}}&\leq
C''' \int_{\gamma^k}^{\max(\gamma^k,(\lambda C'')^{-1/2})}(\lambda C''r^2)^2 e^{-r^2/2}dr
+C''' \int_{\max(\gamma^k,(\lambda C'')^{-1/2})}^{+\infty}e^{\lambda C''r^2}e^{-r^2/2}dr\nonumber\\
&\leq C'''C''^2 \lambda^2\int_{\gamma^k}^{+\infty}r^4e^{-r^2/2}dr
+C'''\int_{\max(\gamma^k,(\lambda C'')^{-1/2})}^{+\infty}e^{-r^2/4}dr\nonumber\\
&\leq C''''\left(\lambda^2 \gamma^{3k}e^{-\gamma^{2k}/2}+\min\left(\frac{e^{-\gamma^{2k}/4}}{\gamma^k},\sqrt{\lambda C''}e^{-1/(4\lambda C'')} \right)\right)\nonumber\\
&\overset{(*)}{\leq} \tilde C(\lambda^2 \gamma^{-2k}+\min(\gamma^{-4k},\lambda^4))\nonumber\\
&\leq 2\tilde C\lambda^2\gamma^{-2k}.\label{eq:esp_term2_final}
\end{align}
For Inequality $(*)$, we have used the existence of a constant $D$ such that, for all $k$, $\gamma^{3k}e^{-\gamma^{2k}/2}\leq D\gamma^{-2k}$ and, for all $\lambda$ staying in a bounded interval, $\sqrt{\lambda}e^{-1/(4\lambda C'')}\leq D\lambda^4$.

Equations \eqref{eq:esp_term1_final} and \eqref{eq:esp_term2_final}, combined with Equation \eqref{eq:esp_term2}, show that, when $\lambda<1/(4C'')$,
\begin{equation*}
\log(\E(e^{\lambda\Re(\mathcal{Z}_j)}))
\leq C' \lambda^2\gamma^{-2k},
\end{equation*}
for some constant $C'>0$ that depends only upon $\gamma$.
\end{proof}

\subsubsection{Proof of Lemma \ref{lem:min_f}\label{sss:min_f}}

\begin{lem*}[Lemma \ref{lem:min_f}]
For any $t\in\R^+$, we set
\begin{equation*}
f(t)=\E\left(\overline{Z_1}|Z_1|\phase\left(Z_1+t Z_2\right)\right).
\end{equation*}
The function $f$ is real-valued. For any $\gamma>0$, there exist $\delta>0$ such that
\begin{equation*}
\forall t\in[\gamma;+\infty[,\quad\quad
f(t)\geq \frac{1+\delta}{\sqrt{1+t^2}}.
\end{equation*}
\end{lem*}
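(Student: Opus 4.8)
The plan is to reduce $f$ to an explicit one-dimensional integral and then show that $f(t)\sqrt{1+t^2}$ stays above $1$. Real-valuedness comes for free: the conjugation $(Z_1,Z_2)\mapsto(\overline{Z_1},\overline{Z_2})$ preserves the joint law of $(Z_1,Z_2)$ and, because $\phase(\overline w)=\overline{\phase(w)}$, it sends $\overline{Z_1}|Z_1|\phase(Z_1+tZ_2)$ to its complex conjugate; hence $f(t)=\overline{f(t)}\in\R$.

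For the closed form I would Schwinger-parametrise both radicals: $|z|^{-1}=\pi^{-1/2}\int_0^\infty s^{-1/2}e^{-s|z|^2}\,ds$ applied to $|Z_1|^{-1}$ (parameter $\alpha$) and to $|Z_1+tZ_2|^{-1}$ (parameter $\beta$), combined with $\overline{Z_1}|Z_1|=\overline{Z_1}^2Z_1\,|Z_1|^{-1}$ and $\phase(Z_1+tZ_2)=(Z_1+tZ_2)\,|Z_1+tZ_2|^{-1}$. By Fubini,
\[
f(t)=\frac1\pi\int_0^\infty\!\!\int_0^\infty(\alpha\beta)^{-1/2}\,\E\!\left[\overline{Z_1}^2Z_1(Z_1+tZ_2)\,e^{-\alpha|Z_1|^2-\beta|Z_1+tZ_2|^2}\right]d\alpha\,d\beta .
\]
The inner expectation is a Gaussian moment: with $\mathbf z=(Z_1,Z_2)$, the full exponent (including the Gaussian density) equals $-\mathbf z^*M\mathbf z$ for $M=\left(\begin{smallmatrix}1+\alpha+\beta & t\beta\\ t\beta & 1+\beta t^2\end{smallmatrix}\right)$, $\det M=1+\alpha+\beta(1+t^2)+\alpha\beta t^2$, and Wick's theorem gives the inner expectation equal to $2(1+\beta t^2)/(\det M)^3$. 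Carrying out the two Euler--Beta integrals (over $\alpha$, then over $\beta$) and substituting $1+\beta(1+t^2)=1/y$ yields
\[
f(t)=\frac{3}{4(1+t^2)}\int_0^1\sqrt{\frac{y(y+t^2)}{1-y}}\,dy .
\]
As internal checks I would verify $f(t)\to 1$ as $t\to0$, $f(t)\sim 3\pi/(8t)$ as $t\to\infty$, and that the same computation \emph{without} the factor $|Z_1|$ reproduces $\E[\overline{Z_1}\phase(Z_1+tZ_2)]=\sqrt\pi/(2\sqrt{1+t^2})$.

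Now write $g(t)=f(t)\sqrt{1+t^2}=\frac{3}{4\sqrt{1+t^2}}\,\Psi(t^2)$ with $\Psi(\tau)=\int_0^1\sqrt{y(y+\tau)/(1-y)}\,dy$. It suffices to show $g(t)>1$ for every $t>0$, i.e. $\Psi(\tau)>\frac43\sqrt{1+\tau}$ for $\tau>0$; equality holds at $\tau=0$ since $\Psi(0)=\int_0^1 y(1-y)^{-1/2}\,dy=\frac43$. Differentiating under the integral, $\Psi'(\tau)=\frac12\int_0^1\sqrt{y}\,[(1-y)(y+\tau)]^{-1/2}\,dy$, so it is enough to prove
\[
\Theta(\tau):=\int_0^1\sqrt{\frac{(1+\tau)\,y}{(1-y)(y+\tau)}}\,dy>\frac43\qquad\text{for all }\tau\ge0 ,
\]
and then integrate. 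The key observation is that for each fixed $y\in(0,1)$ the map $\tau\mapsto(1+\tau)/(y+\tau)$ has derivative $(y-1)/(y+\tau)^2<0$, so $\Theta$ is strictly decreasing on $[0,\infty)$; by dominated convergence $\Theta(\tau)\to\int_0^1\sqrt{y/(1-y)}\,dy=\pi/2$ as $\tau\to\infty$. Hence $\Theta(\tau)>\pi/2>\frac43$, giving $\Psi'(\tau)>\frac{2}{3\sqrt{1+\tau}}$; integrating from $0$ to $\tau$ yields $\Psi(\tau)>\frac43+\frac43(\sqrt{1+\tau}-1)=\frac43\sqrt{1+\tau}$, i.e. $g(t)>1$.

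To upgrade this to the stated uniform bound on $[\gamma,\infty)$: $g$ is continuous on $(0,\infty)$, strictly $>1$ pointwise, and $g(t)\to 3\pi/8>1$ as $t\to\infty$; so pick $T\ge\gamma$ with $g\ge\frac12(1+3\pi/8)$ on $[T,\infty)$, let $m=\min_{[\gamma,T]}g>1$, and take $\delta=\min\{m,\frac12(1+3\pi/8)\}-1>0$, so $f(t)=g(t)/\sqrt{1+t^2}\ge(1+\delta)/\sqrt{1+t^2}$ on $[\gamma,\infty)$. The bulk of the work is the closed-form evaluation of $f$ — conceptually routine (Schwinger parameters, Wick's theorem, Beta integrals) but with many constants to keep straight, so that is where I expect errors could creep in; the only step needing a genuine idea is the reduction of $g>1$ to the monotonicity of $\Theta$, which sidesteps a delicate local analysis at $t=0$ where $g(0^+)=1$ exactly.
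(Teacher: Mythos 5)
Your proof is correct, and it takes a genuinely different — and in my view cleaner — route than the paper. Where the paper expands $f(t)=\pi\sum_{l\ge 0} C_l t^{-2l-1}$, computes $C_0,C_1,C_2$ exactly, bounds the tail for $t>2.5$, and then for $t\le 2.5$ performs a Taylor expansion of $g(t)=(1+t^2)f(t)$ with a Lipschitz bound on $g''$ and steps through intervals $[0.1,0.2],\dots$ via explicit computer evaluation, you bypass all of that by deriving the closed form
\begin{equation*}
f(t)=\frac{3}{4(1+t^2)}\int_0^1\sqrt{\frac{y(y+t^2)}{1-y}}\,dy
\end{equation*}
(which I checked both by re-running your Schwinger/Wick/Beta computation and by matching its $1/t$-expansion against the paper's $C_0=3/8$, $C_1=-15/64$, $C_2=105/512$, obtaining complete agreement). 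From there your reduction to $\Theta(\tau)=\int_0^1\sqrt{(1+\tau)y/((1-y)(y+\tau))}\,dy>\tfrac43$ is exactly right; note in fact you don't even need the monotonicity of $\Theta$: since $(1+\tau)/(y+\tau)>1$ for $y\in(0,1)$, one has directly $\Theta(\tau)>\int_0^1\sqrt{y/(1-y)}\,dy=\pi/2>\tfrac43$, which is one line. Integrating $\Psi'(\tau)>\tfrac{2}{3\sqrt{1+\tau}}$ from $0$ and using $\Psi(0)=4/3$ gives $g(t)>1$ pointwise for $t>0$, and your final compactness argument — relying on $g(t)\to 3\pi/8>1$ as $t\to\infty$ and continuity on $[\gamma,T]$ — is exactly what is needed to produce the uniform $\delta>0$ on $[\gamma,\infty)$. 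The real-valuedness argument is the same conjugation symmetry the paper uses. Overall: the paper's approach buys explicit numerical constants along the way; yours buys a self-contained, computer-free analytic proof.
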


\begin{proof}[Proof of Lemma \ref{lem:min_f}]
As $(\overline{Z}_1,\overline{Z}_2)$ has the same distribution as $(Z_1,Z_2)$,
\begin{equation*}
\forall t\in\R^+,\quad\quad
f(t) = \E(Z_1|Z_1|\phase(\overline{Z}_1+t\overline{Z}_2))=\overline{f(t)},
\end{equation*}
so $f(t)$ is a real number, for any $t\geq 0$.

Let us now show the second part of the result. We have
\begin{align*}
f(t)
&=\frac{1}{\pi^2}\int_{\C^2}\overline{z_1}|z_1|\phase(z_1+tz_2)e^{-|z_1|^2}e^{-|z_2|^2}d^2z_1d^2z_2\\
&=\frac{1}{\pi^2}\int_{\C^2}\overline{y_1}|y_1|\phase(y_2)e^{-|y_1|^2}e^{-|y_2-y_1/t|^2}d^2y_1d^2y_2\\
&=\frac{1}{\pi^2}\int_{\C^2}\overline{y_1}|y_1|\phase(y_2)e^{-|y_1|^2}e^{-|y_2|^2}
\left(\sum_{k\geq 0}\frac{1}{k!}\left(y_2\overline{y_1}/t+y_1\overline{y_2}/t-|y_1|^2/t^2\right)^k\right)
d^2y_1d^2y_2\\
&=\frac{1}{\pi^2}\sum_k
\sum_{k_1+k_2\leq k}\frac{(-1)^{k-(k_1+k_2)}}{k_1!k_2!(k-k_1-k_2)!}
\frac{1}{t^{2k-(k_1+k_2)}}\times\\
&\hskip 4cm
\int_{\C^2}y_1^{k-k_1} \overline{y_1}^{k-k_2+1}|y_1|y_2^{k_1}\overline{y_2}^{k_2}\phase(y_2)e^{-|y_1|^2}e^{-|y_2|^2}
d^2y_1d^2y_2\\
&\overset{(*)}{=}\frac{1}{\pi^2}\sum_k
\sum_{2k_1+1\leq k}\frac{(-1)^{k-2k_1-1}}{k_1!(k_1+1)!(k-2k_1-1)!}\frac{1}{t^{2k-2k_1-1}}
\int_{\C^2}|y_1|^{2(k-k_1)+1}|y_2|^{2k_1+1}e^{-|y_1|^2}e^{-|y_2|^2}
d^2y_1d^2y_2\\
&\overset{(**)}{=}\sum_{l}\frac{1}{t^{2l+1}}
\left(\frac{1}{\pi}\int_{\C^2}|y|^{2l+3}e^{-|y|^2}d^2y\right)\sum_{k_1\leq l}
\frac{(-1)^{k_1+l}}{k_1!(k_1+1)!(l-k_1)!}
\left(\frac{1}{\pi}\int_{\C^2}|y|^{2k_1+1}e^{-|y|^2}d^2y\right)\\
&\overset{(***)}{=}\sum_{l}\frac{\pi}{t^{2l+1}}
(l+1)(l+2)\binom{2(l+2)}{l+2}
\sum_{k_1\leq l}
(-1)^{k_1+l}\binom{2(k_1+1)}{k_1+1}
\binom{l}{k_1}
2^{-2(l+k_1+3)}
.
\end{align*}
Equality $(*)$ is true because the integral is zero if $k_2\ne k_1+1$, as can be seen with a change of variable $y_1\to uy_1$ for $u$ a complex number of modulus $1$. Equality $(**)$ is obtained by setting $l=k-k_1-1$. Equality $(***)$ is a consequence of the following inequality, valid for all odd $K$:
\begin{equation*}
\frac{1}{\pi}\int_\C |y|^Ke^{-|y|^2}d^2y=\sqrt{\pi}2^{-K}\frac{K!}{\left(\frac{K-1}{2}\right)!}.
\end{equation*}

This reasoning is valid only for $t$ large enough; for small values of $t$, the series may not converge. We see that, in order for all the involved series to be absolutely convergent, it is enough that the following one is absolutely convergent:
\begin{align*}
&\sum_k
\sum_{k_1+k_2\leq k}\frac{1}{k_1!k_2!(k-k_1-k_2)!}
\frac{1}{t^{2k-(k_1+k_2)}}\times\\
&\hskip 3cm
\int_{\C^2}\left|y_1^{k-k_1} \overline{y_1}^{k-k_2+1}|y_1|y_2^{k_1}\overline{y_2}^{k_2}\phase(y_2)e^{-|y_1|^2}e^{-|y_2|^2}\right|
d^2y_1d^2y_2.
\end{align*}
When $t\geq 2$, for example, this series can be upper bounded by
\begin{align*}
&\sum_k
\sum_{k_1+k_2\leq k}\frac{1}{k_1!k_2!(k-k_1-k_2)!}
\frac{1}{t^{2k-(k_1+k_2)}}
\int_{\C^2}|y_1|^{2k-(k_1+k_2)+2}|y_2|^{k_1+k_2}e^{-|y_1|^2}e^{-|y_2|^2}
d^2y_1d^2y_2\\
&=\int_\C\left(\sum_k \frac{1}{k!}|y_1|^2
\left(\frac{|y_1||y_2|}{t}+\frac{|y_1||y_2|}{t}+\frac{|y_1|^2}{t^2}\right)^k
e^{-|y_1|^2}e^{-|y_2|^2}\right)
d^2y_1d^2y_2\\
&=\int_\C |y_1|^2\exp\left(-|y_1|^2-|y_2|^2+2\frac{|y_1||y_2|}{t}+\frac{|y_1|^2}{t^2}
\right)d^2y_1d^2y_2\\
&\leq \int_\C |y_1|^2\exp\left(-\left(1-\frac{1}{t}-\frac{1}{t^2}\right)|y_1|^2-\left(1-\frac{1}{t}\right)|y_2|^2
\right)d^2y_1d^2y_2\\
&\leq \int_\C |y_1|^2\exp\left(-\frac{1}{4}|y_1|^2-\frac{1}{2}|y_2|^2
\right)d^2y_1d^2y_2<+\infty.
\end{align*}
So the series converge.

For any $l\in\N,k_1\in\{0,\dots,l\}$, we set
\begin{gather*}
c_{l,k_1}=\binom{2(k_1+1)}{k_1+1}
\binom{l}{k_1}
2^{-2(l+k_1+3)};\\
C_l=(l+1)(l+2)\binom{2(l+2)}{l+2}\sum_{k_1\leq l}(-1)^{k_1+l}c_{l,k_1}.
\end{gather*}
The series $\sum_{k_1\leq l}(-1)^{k_1+l}c_{l,k_1}$ is alternating, and we can check that
\begin{align*}
\max_{k_1\leq l}|c_{l,k_1}|=|c_{l,[l/2]}|.
\end{align*}
This allows us to see that
\begin{align*}
\left|\sum_{k_1\leq l}(-1)^{k_1+l}c_{l,k_1}\right|
&\leq \max_{k_1\leq l}|c_{l,k_1}|\\
&=c_{l,[l/2]}\\
&\leq \frac{1}{8\pi}\frac{1}{l2^l},
\end{align*}
We do not derive the second inequality in full detail: the principle is to compute the upper limit of the sequence $(c_{l,[l/2]}l2^l)_{l\in\N}$ with Sterling's formula, then to study the variations of this sequence, to show that it is bounded by its upper limit. Hence, using this inequality and the fact that, for any $s$, $\binom{2s}{s}\leq 2^{2s}/\sqrt{\pi s}$, we see that
\begin{equation*}
|C_l|\leq \frac{l+1}{l}.\sqrt{\frac{l+2}{\pi}}\frac{2^{l+1}}{\pi}.
\end{equation*}
So for any $l\geq 3$,
\begin{align*}
|C_l|&\leq \frac{l2^{l+1}}{\pi^{3/2}}.
\end{align*}
We explicitly compute $C_0,C_1,C_2$:
\begin{equation*}
C_0=\frac{3}{8};\quad\quad
C_1=-\frac{15}{64};\quad\quad
C_2=\frac{105}{512}.
\end{equation*}
Hence, combining the previous results, for any $t\geq 2$,
\begin{align*}
f(t)&=\pi \sum_{l\geq 0}\frac{C_l}{t^{2l+1}}\\
&\geq \pi \sum_{l=0}^2\frac{C_l}{t^{2l+1}}-
-\frac{1}{\sqrt{\pi}}\sum_{l=3}^{+\infty}\frac{l2^{l+1}}{t^{2l+1}}\\
&=\pi\left(\frac{3}{8}\frac{1}{t}-\frac{15}{64}\frac{1}{t^3}+\frac{105}{512}\frac{1}{t^5}\right)
-\frac{16}{\sqrt{\pi}t^5}\frac{3t^2-4}{t^4-4}.
\end{align*}
From here, we can easily verify with a computer that, for any $t>2.5$,
\begin{equation}\label{eq:t_large}
f(t)>\frac{1.05}{\sqrt{1+t^2}}.
\end{equation}

Let us now show that $f(t)>(1+t^2)^{-1/2}$ for any $t\in]0;2.5]$. If we set
\begin{equation*}
Y_1 = \frac{-tZ_1+Z_2}{\sqrt{1+t^2}}\quad\mbox{and}\quad
Y_2 = \frac{Z_1+tZ_2}{\sqrt{1+t^2}},
\end{equation*}
we see that $Y_1$ and $Y_2$ are independent Gaussian random variables, with variance $1$, and that
\begin{equation*}
f(t)=\frac{1}{1+t^2}\E\left((\overline{Y_2-tY_1})|Y_2-tY_1|\phase(Y_2)
\right).
\end{equation*}
We set
\begin{equation*}
g(t)=\E\left((\overline{Y_2-tY_1})|Y_2-tY_1|\phase(Y_2)\right).
\end{equation*}
A straight computation yields
\begin{gather}
g'(t)=\E\left(
\left(-\frac{3}{2}\overline{Y}_1|Y_2-tY_1|-\frac{1}{2}Y_1\frac{(\overline{Y_2-tY_1})^2}{|Y_2-tY_1|}\right)\phase(Y_2)
\right);
\label{eq:g_prime}\\
g''(t)=\E\left(
\left(\frac{3}{2}|Y_1|^2\phase(\overline{Y_2-tY_1})
+\frac{3}{4}\overline{Y}_1^2\phase(Y_2-tY_1)\right.\right.\nonumber\\
\left.\left.\hskip 6cm-\frac{1}{4} Y_1^2\phase(\overline{Y_2-tY_1})^3
\right)
\phase(Y_2)\right).\label{eq:g_seconde}
\end{gather}
For any $u,t>0$, we see by triangular inequality that
\begin{align*}
&\left|\phase(Y_2-tY_1)-\phase(Y_2-uY_1)\right|\\
&\quad\quad\leq 
\left|\frac{Y_2-tY_1}{|Y_2-tY_1|}-\frac{Y_2-uY_1}{|Y_2-tY_1|}\right|
+\left|\frac{Y_2-uY_1}{|Y_2-tY_1|}-\frac{Y_2-uY_1}{|Y_2-uY_1|}\right|\\
&\quad\quad\leq 2|t-u|\frac{|Y_1|}{|Y_2-tY_1|},
\end{align*}
which also implies
\begin{align*}
\left|\phase(Y_2-tY_1)^3-\phase(Y_2-uY_1)^3\right|\leq 6|t-u|\frac{|Y_1|}{|Y_2-tY_1|}.
\end{align*}
Plugging this into Equation \eqref{eq:g_seconde}:
\begin{align*}
|g''(t)-g''(u)|
&\leq 6|t-u| \E\left(\frac{|Y_1|^3}{|Y_2-tY_1|}\right)\\
&\leq 6|t-u| \E\left(\frac{|Y_1|^3}{|Y_2|}\right)\\
&=\frac{9}{2}\pi |t-u|.
\end{align*}
We deduce from here that, for any $u,t$ such that $0\leq u\leq t$,
\begin{align*}
g(t)&=g(u)+(t-u)g'(u)+\frac{(t-u)^2}{2}g''(u)+\int_u^t(t-s)(g''(s)-g''(u))ds\\
&\geq g(u)+(t-u)g'(u)+\frac{(t-u)^2}{2}g''(u)-\frac{9}{2}\pi\int_u^t(t-s)(s-u)ds\\
&= g(u)+(t-u)g'(u)+\frac{(t-u)^2}{2}g''(u)-\frac{9}{2}\pi \frac{(t-u)^3}{6}.
\end{align*}
In $u=0$, Equations \eqref{eq:g_prime} and \eqref{eq:g_seconde} allow us to compute $g'(0)$ and $g''(0)$: we have $g'(0)=0$ and $g''(0)=\frac{3}{2}$. Thus, from the last equation, for any $t\geq 0$,
\begin{equation*}
g(t)\geq 1 + \frac{3}{4}t^2-\frac{3}{4}\pi t^3,
\end{equation*}
which allows us to verify (with a computer) that, for any $t\in]0;0.1]$,
\begin{equation*}
f(t)=\frac{g(t)}{1+t^2}\geq \frac{1+\frac{3}{4}t^2-\frac{3}{4}\pi t^3}{1+t^2}>\frac{1}{\sqrt{1+t^2}}.
\end{equation*}
We can apply the same reasoning to values of $u$ that are different from $0$. Equations \eqref{eq:g_prime} and \eqref{eq:g_seconde} do not appear to have a simple analytic expression when $u\ne 0$. They can however be computed with a computer. We do so for $u=0.1,0.2,0.3,0.4,\dots,2.4$, and successively show that the previous inequality also holds on the intervals $[0.1;0.2],[0.2,0.3],\dots,[2.7,2.5]$. 

We have thus proven that $f(t)>(1+t^2)^{-1/2}$ for any $t\in]0;2.5]$. By compacity (as $f$ is continuous), it means that there exists $\delta>0$ such that
\begin{equation*}
\forall t\in[\gamma;2.5],\quad\quad
f(t)\geq \frac{1+\delta}{\sqrt{1+t^2}}.
\end{equation*}
Together with Equation \eqref{eq:t_large}, this implies the lemma.

\end{proof}

\subsubsection{Proof of Proposition \ref{prop:controle_G}\label{sss:controle_G}}

\begin{prop*}[Proposition \ref{prop:controle_G}]
Let us define the function
\begin{equation*}
\begin{array}{rccc}
G:&\C^2&\to&\C\\
&(a,b)&\to&1-\frac{1}{\pi}\Re\int_\C\phase(\overline{z+a})\phase(z+b)e^{-|z|^2}d^2z.
\end{array}
\end{equation*}
For some constant $c_1>0$, the following inequalities are true:
\begin{align*}
\forall a,b\in\C,\quad\quad
|\Re G(a,b)|&\leq c_1 |a-b|^2\max\left(1,\log\left(|a-b|^{-1}\right)\right),\\
|\Im G(a,b)|&\leq c_1 |a-b|.
\end{align*}
\end{prop*}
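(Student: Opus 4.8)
The plan is to reduce the statement to an elementary estimate on the angle between two complex numbers and then to integrate. Since the $\Re$ in the definition is immaterial for the first bound and the second bound really concerns the imaginary part of the integral, I will write
\[
  I(a,b)=\frac{1}{\pi}\int_\C\phase(\overline{z+a})\,\phase(z+b)\,e^{-|z|^2}\,d^2z,
\]
so that $\Re G(a,b)=1-\Re I(a,b)$ and $\Im G(a,b)=-\Im I(a,b)$; note $|I(a,b)|\le 1$ since the integrand has modulus $1$, and we may assume $a\ne b$ (for $a=b$ the integrand is identically $1$ and $G=0$). Put $c=b-a$. Substituting $w=z+a$ gives $I(a,b)=\frac1\pi\int_\C\phase(\bar w)\phase(w+c)\,e^{-|w-a|^2}\,d^2w$. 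For almost every $w$ one has $\phase(\bar w)\phase(w+c)=e^{i\theta(w)}$, where $\theta(w)\in(-\pi,\pi]$ is the angle measured from $w$ to $w+c$; hence $1-\phase(\bar w)\phase(w+c)=2\sin^2(\theta(w)/2)-i\sin\theta(w)$, with
\[
  \cos\theta(w)=\frac{\Re(\bar w(w+c))}{|w|\,|w+c|},\qquad
  \sin\theta(w)=\frac{\Im(\bar w\,c)}{|w|\,|w+c|}
\]
(using $\Im(\bar w w)=0$). Two elementary facts carry the argument: \emph{(a)} $|\sin\theta(w)|\le |c|/\max(|w|,|w+c|)$, because $|\Im(\bar w c)|\le|w|\,|c|$ and also $\le|w+c|\,|c|$; \emph{(b)} if $|w|\ge|c|$ then $\Re(\bar w(w+c))=|w|^2+\Re(\bar w c)\ge|w|^2-|w|\,|c|\ge 0$, i.e.\ $|\theta(w)|\le\pi/2$. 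Combining \emph{(b)} with the identity $2\sin^2(\theta/2)=\sin^2\theta/(2\cos^2(\theta/2))\le\sin^2\theta$ (valid for $|\theta|\le\pi/2$) yields the pointwise bounds $0\le 2\sin^2(\theta(w)/2)\le|c|^2/|w|^2$ whenever $|w|\ge|c|$, and $2\sin^2(\theta(w)/2)\le 2$ always.

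The imaginary part is immediate. By \emph{(a)}, $|\Im I(a,b)|\le\frac{|c|}{\pi}\int_\C\frac{e^{-|w-a|^2}}{|w+c|}\,d^2w$; substituting $u=w+c$ (so the weight becomes $e^{-|u-b|^2}$) and splitting at $|u|=1$, using $\int_{|u|\le1}|u|^{-1}d^2u=2\pi$ and $\int_\C e^{-|u-b|^2}d^2u=\pi$, gives $\int_\C\frac{e^{-|w-a|^2}}{|w+c|}\,d^2w\le 3\pi$, uniformly in $a$. Hence $|\Im G(a,b)|\le 3|c|=3|a-b|$.

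For the real part, $\Re G(a,b)=1-\Re I(a,b)=\frac1\pi\int_\C 2\sin^2(\theta(w)/2)\,e^{-|w-a|^2}\,d^2w\ge 0$, and we split the $w$-domain at $|w|=|c|$. On the disk $\{|w|\le|c|\}$ we use the crude bound $2\sin^2(\theta/2)\le 2$ together with $\int_{|w|\le|c|}e^{-|w-a|^2}d^2w\le\pi|c|^2$, contributing $\le 2|c|^2$. On $\{|w|>|c|\}$ fact \emph{(b)} applies, so the integrand is $\le|c|^2/|w|^2$, and this part is $\le\frac{|c|^2}{\pi}\int_{|w|>|c|}\frac{e^{-|w-a|^2}}{|w|^2}\,d^2w$. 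If $|c|\ge 1$ the last integral is $\le\int_{|w|>1}e^{-|w-a|^2}d^2w\le\pi$; if $|c|<1$, splitting at $|w|=1$ bounds it by $2\pi\log(1/|c|)+\pi$. In all cases $\Re G(a,b)\le c_1|c|^2\max(1,\log(1/|c|))$ for an absolute constant $c_1$, which is the claimed estimate (for $|c|\ge 1$ it also follows at once from $|I|\le1$).

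The only genuinely delicate point is the logarithmic factor in the bound on $\Re G$: it comes precisely from the fact that, in two real dimensions, $|w+c|^{-2}$ (equivalently $1/|z+b|^2$) is not locally integrable near the point where $z+b$ vanishes, which rules out a clean $|c|^2$ bound. What rescues the argument is the dichotomy \emph{(b)}: the angle $\theta$ can leave $[-\pi/2,\pi/2]$ only where $|w|<|c|$, a disk of area $\pi|c|^2$ on which the trivial bound $2\sin^2(\theta/2)\le 2$ already gives an $O(|c|^2)$ contribution, while everywhere else the sharp quadratic bound holds and the remaining singularity — now just $|w|^{-2}$ at the origin, cut off at radius $|c|$ — contributes at most the stated $|c|^2\log(1/|c|)$. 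Matching these two regimes cleanly, and checking that all constants are uniform in $a$ (which is clear, since the Gaussian enters only through its total mass $\pi$ and the pointwise bound $e^{-|w-a|^2}\le 1$), is the part that requires care.
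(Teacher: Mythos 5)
Your proof is correct and follows essentially the same structure as the paper: shift the variable so the integrand becomes $\phase(\bar w)\phase(w+c)$ with $c=b-a$, split the integration domain at a radius comparable to $|c|$ to isolate a small disc where the trivial bound applies, use a pointwise bound of order $|c|^2/|w|^2$ (for the real part) and $|c|/|w+c|$ (for the imaginary part) on the complement, and split once more at $|w|=1$ so that the near-annulus contributes the logarithmic factor and the tail contributes $O(|c|^2)$. The one place where you genuinely deviate is in establishing the quadratic pointwise bound: the paper appeals to the fact that $u\mapsto\Re(1-\phase(1+u))$ is $\mathcal{C}^\infty$ on $\{|u|<1/2\}$ with vanishing derivative at $0$, invoking compactness to obtain an unspecified constant $c_2$. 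You instead derive the bound explicitly from the trigonometric identity $1-\cos\theta=2\sin^2(\theta/2)\le\sin^2\theta$ valid for $|\theta|\le\pi/2$, together with the elementary estimates $|\sin\theta(w)|\le|c|/\max(|w|,|w+c|)$ and the observation that $\Re(\bar w(w+c))\ge 0$ whenever $|w|\ge|c|$. This buys you explicit constants and removes the soft smoothness argument; it also makes the sign $\Re G\ge 0$ transparent, which the paper's absolute-value manipulations obscure, although nothing downstream uses that sign.
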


\begin{proof}[Proof of Proposition \ref{prop:controle_G}]
\begin{align*}
|\Re G(a,b)|&=\frac{1}{\pi}\left|\Re\int_\C\left(1-\phase(\overline{z+a})\phase(z+b)\right)e^{-|z|^2}d^2z \right|\\
&=\frac{1}{\pi}\left|\Re\int_\C\left(1-\phase\left(1+\frac{b-a}{z+a}\right)\right)e^{-|z|^2}d^2z \right|\\
&\leq \frac{1}{\pi}\left|\Re\int_{|z+a|>2|b-a|}  \left(1-\phase\left(1+\frac{b-a}{z+a}\right)\right)e^{-|z|^2}d^2z \right|\\
&\quad\quad+\frac{2}{\pi}\left|\int_{|z+a|\leq 2|b-a|} e^{-|z|^2}d^2z \right|\\
&\overset{(a)}{\leq} \frac{c_2}{\pi}\left|\int_{|z+a|>2|b-a|}  \left|\frac{b-a}{z+a}\right|^2 e^{-|z|^2}d^2z \right|
+\frac{2}{\pi}\left|\int_{|z+a|\leq 2|b-a|} 1d^2z \right|\\
&\leq \frac{c_2}{\pi}\left|\int_{1\geq |z+a|>2|b-a|}  \left|\frac{b-a}{z+a}\right|^2 e^{-|z|^2}d^2z \right| 
+ \frac{c_2}{\pi}\left|\int_{|z+a|>1}  \left|\frac{b-a}{z+a}\right|^2 e^{-|z|^2}d^2z \right|
+8|b-a|^2\\
& \leq \frac{c_2}{\pi}\left|\int_{1\geq |z+a|>2|b-a|}  \left|\frac{b-a}{z+a}\right|^2d^2z \right| 
+ \frac{c_2}{\pi}|b-a|^2\left|\int_{\C}  e^{-|z|^2}d^2z \right|
+8|b-a|^2\\
& \leq  c_1 |b-a|^2\max\left(1,\log\left(|b-a|^{-1}\right)\right).
\end{align*}
Inequality (a) comes from the fact that $z\in\C\to \Re(1-\phase(1+z))\in\R$ is a $\mathcal{C}^\infty$ function on $\{z\in\C,|z|<1/2\}$, and its derivative in $0$ is $0$ (because the function reaches a local minimum at this point). So by compacity, there exists a constant $c_2>0$ such that, for any $z$ verifying $|z|<1/2$,
\begin{equation*}
\left|\Re(1-\phase(1+z))\right|\leq c_2 |z|^2.
\end{equation*}

The proof of the second inequality is identical, except that we bound $\left|\Im\left(1-\phase\left(1+\frac{b-a}{z-a}\right)\right)\right|$ by $c_2\left|\frac{b-a}{z+a}\right|$ on the set $\{z,|z+a|>2|b-a|\}$.
\end{proof}

\subsection{Proof of Lemma \ref{lem:inside_net}\label{ss:inside_net}}

\begin{lem*}[Lemma \ref{lem:inside_net}]
For any $c>0$, there exist $C_1,C_2,C_3>0$ such that, with probability at least
\begin{equation*}
1-C_1\exp(-C_2 m^{1/8}),
\end{equation*}
the following property holds for any unit-normed $x,y\in\C^n$, when $m\geq 2n^2$:
\begin{equation*}
|\scal{Ax_0}{b\odot\phase(Ax)}-\scal{Ax_0}{b\odot\phase(Ay)}|
\leq C_3||x_0||^2 nm^{1/4}
\quad\mbox{if }||x-y||\leq cm^{-7/2}.
\end{equation*}
\end{lem*}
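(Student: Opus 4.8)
The plan is to reduce the uniform-in-$(x,y)$ bound to a union bound over a net of the sphere taken at the finest scale that appears in the statement, and then to exploit the fact that the slack ``$\|x_0\|^2nm^{1/4}$'' is generous enough to beat the cardinality of that net.

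First I would rewrite things algebraically: since $b=|Ax_0|$, $\scal{Ax_0}{b\odot\phase(Ax)}=\sum_{j=1}^m|a_j^*x_0|^2\,\overline{\phase(a_j^*x_0)}\,\phase(a_j^*x)$, so
\[
\Big|\scal{Ax_0}{b\odot\phase(Ax)}-\scal{Ax_0}{b\odot\phase(Ay)}\Big|\le\sum_{j=1}^m|a_j^*x_0|^2\,\big|\phase(a_j^*x)-\phase(a_j^*y)\big|.
\]
Fix a $cm^{-7/2}$-net $\mathcal N$ of the unit sphere of $\C^n$, with $|\mathcal N|\le(3m^{7/2}/c)^{2n}\le m^{8n}$. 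If $\|x-y\|\le cm^{-7/2}$ and $p\in\mathcal N$ satisfies $\|x-p\|\le cm^{-7/2}$, then $\|y-p\|\le 2cm^{-7/2}$, and, writing $D(u):=\scal{Ax_0}{b\odot\phase(Au)}$, one has $D(x)-D(y)=(D(x)-D(p))-(D(y)-D(p))$; so it suffices to bound $|D(u)-D(p)|$ by $\tfrac12 C_3\|x_0\|^2nm^{1/4}$ for every $p\in\mathcal N$ and every $u$ with $\|u-p\|\le 2cm^{-7/2}$.

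For that pointwise estimate I would split $\{1,\dots,m\}$ at the threshold $\theta:=m^{-5/2}$. On the (easy, high-probability) events $\|Ax_0\|^2\le 2m\|x_0\|^2$ and $\max_j\|a_j\|\le 4m^{1/4}$ — the latter using $m\ge 2n^2$, hence $\max_j\|a_j\|^2$ is of order $n\le\sqrt{m/2}$ plus fluctuations, so $\le 16m^{1/2}$, with probability at least $1-\exp(-cm^{1/8})$ — the ``far'' indices ($|a_j^*p|\ge\theta$) satisfy $|a_j^*(u-p)|\le 8cm^{-13/4}\le\theta/2$, hence $|\phase(a_j^*u)-\phase(a_j^*p)|\le 2|a_j^*(u-p)|/|a_j^*p|\le 16cm^{-3/4}$, and contribute at most $16cm^{-3/4}\|Ax_0\|^2\le 32c\,m^{1/4}\|x_0\|^2$, which is $\le \tfrac14 C_3\|x_0\|^2nm^{1/4}$ once $C_3\ge 128c$. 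The ``near'' indices ($|a_j^*p|<\theta$) contribute at most $2\sum_{j:\,|a_j^*p|<\theta}|a_j^*x_0|^2$, so the whole lemma reduces to showing that, with probability $\ge 1-C_1\exp(-C_2m^{1/8})$, $\sum_{j:\,|a_j^*p|<\theta}|a_j^*x_0|^2\le \tfrac18 C_3\|x_0\|^2nm^{1/4}$ for every $p\in\mathcal N$.

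This last statement is the heart of the proof, and I would obtain it by a union bound over $\mathcal N$, the point being that for a single fixed $p$ the failure probability is $\exp(-\Theta(nm^{1/4}))$, which beats $|\mathcal N|=\exp(\Theta(n\log m))$ because $m^{1/4}\gg\log m$. For fixed unit $p$ write $a_j^*x_0=(p^*x_0)(a_j^*p)+a_j^*w_p$ with $w_p:=x_0-(p^*x_0)p\perp p$, $\|w_p\|\le\|x_0\|$; orthogonality of $w_p$ and $p$ makes the families $(a_j^*p)_j$ and $(a_j^*w_p)_j$ independent. On $\{|a_j^*p|<\theta\}$ one has $|a_j^*x_0|^2\le 2\|x_0\|^2\theta^2+2|a_j^*w_p|^2$, so with $T_p=\{j:|a_j^*p|<\theta\}$,
\[
\sum_{j\in T_p}|a_j^*x_0|^2\le 2\|x_0\|^2\theta^2 m+2\sum_{j\in T_p}|a_j^*w_p|^2 ,
\]
the first term being a negligible $2\|x_0\|^2m^{-4}$. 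Conditionally on $(a_j^*p)_j$ (hence on $|T_p|$), $\sum_{j\in T_p}|a_j^*w_p|^2$ equals $\|w_p\|^2$ times a $\mathrm{Gamma}(|T_p|,1)$ variable, while $|T_p|\sim\mathrm{Binom}(m,q_\theta)$ with $q_\theta\le\theta^2=m^{-5}$. Using $P(|T_p|=k)\le(emq_\theta/k)^k$ and $P(\mathrm{Gamma}(k,1)>t_0)\le e^{-t_0}(et_0)^k$ with $t_0:=C_3 nm^{1/4}/32$, summing over $k\ge 1$ and noting $e^2m^{-4}t_0<\tfrac12$ for $m$ large (here $n\le\sqrt{m/2}$ is used), one gets $P\big(\sum_{j\in T_p}|a_j^*w_p|^2>\tfrac1{32}C_3\|x_0\|^2nm^{1/4}\big)\le e^{-t_0}$; multiplying by $|\mathcal N|\le m^{8n}$ gives failure probability $\le\exp(8n\log m-C_3 nm^{1/4}/32)\le\exp(-C_3 m^{1/8}/64)$ for $m$ large. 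Collecting the three events and choosing $C_2$ below $C_3/64$ yields the claim (in fact the same argument gives the stronger exponent $m^{1/4}$).

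The main obstacle is precisely this per-net-point tail estimate: a crude bound — e.g.\ controlling $|T_p|$ and using $\max_j|a_j^*x_0|$ — gives only a polynomially small (in $m$) probability, hopelessly weak against a net of $\exp(\Theta(n\log m))$ points; what saves the day is the Gaussian orthogonal decomposition $a_j^*x_0=(p^*x_0)(a_j^*p)+a_j^*w_p$, which renders the dangerous sum conditionally Gamma-distributed and hence light-tailed at scale $nm^{1/4}$, together with the observation that the permitted error $nm^{1/4}$ is itself large enough (relative to $n\log m$) to close the union bound. A secondary technical point to watch is that the bound $\max_j\|a_j\|\le 4m^{1/4}$, needed to handle the Lipschitz displacement of the phase on the far indices, relies genuinely on the hypothesis $m\ge 2n^2$.
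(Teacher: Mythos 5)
Your proof is correct, and it takes a genuinely different path from the paper's once the near/far split has been made. Both approaches isolate the sum $\sum_{j\,:\,|a_j^*u|\le\theta}|a_j^*x_0|^2$ as the delicate term and control it by a union bound over a fine net of the unit sphere. The paper, however, factors this control into two separate uniform statements: Lemma~\ref{lem:Card_Ix}, which bounds $\Card I_x$ by $nm^{1/8}$ simultaneously for all unit $x$ (this is where the $m^{1/8}$ in the exponent of the probability originates, since it comes from a Chernoff bound on a Binomial that must survive a $\exp(\Theta(n\log m))$-sized net), and Lemma~\ref{lem:A_I}, which bounds $\sum_{i\in I}|(Ax_0)_i|^2$ by $C n m^{1/4}\|x_0\|^2$ simultaneously over all index sets $I$ with $|I|\le nm^{1/8}$. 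You bypass this factorization and estimate the sum directly per net point $p$ via the orthogonal decomposition $a_j^*x_0=(p^*x_0)(a_j^*p)+a_j^*w_p$ with $w_p\perp p$: conditionally on $(a_j^*p)_j$ the sum over the near set $T_p$ becomes $\|w_p\|^2$ times a $\mathrm{Gamma}(|T_p|,1)$ variable, and pairing a Binomial tail for $|T_p|$ with a Gamma tail at threshold $\sim nm^{1/4}$ yields failure probability $e^{-\Theta(nm^{1/4})}$ for a fixed $p$, easily beating the net cardinality. This is a more compact argument; it also yields a strictly stronger exponent ($\exp(-cm^{1/4})$ rather than $\exp(-cm^{1/8})$), which you correctly note in passing. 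The two secondary events ($\|Ax_0\|^2\lesssim m\|x_0\|^2$, $\max_j\|a_j\|\lesssim m^{1/4}$ using $m\ge 2n^2$) and the far-index Lipschitz step are essentially the same as in the paper (there the far bound uses $|||A|||\le 3\sqrt m$ and contributes $O(\|x_0\|^2)$, while yours contributes $O(\|x_0\|^2 m^{1/4})$; both are absorbed).
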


\begin{proof}[Proof of Lemma \ref{lem:inside_net}]
We write
\begin{align*}
&|\scal{Ax_0}{b\odot\phase(Ax)}-\scal{Ax_0}{b\odot\phase(Ay)}|\\
&\quad =\left| \sum_{i=1}^m\overline{(Ax_0)_i}|(Ax_0)_i|\left(\phase((Ax)_i)-\phase((Ay)_i)\right)\right|\\
&\quad\leq \sum_{i=1}^m |(Ax_0)_i|^2\left|\phase((Ax)_i)-\phase((Ay)_i)\right|\\
&\quad\leq 2\sum_{i=1}^m |(Ax_0)_i|^2\min\left(1,\frac{\left|(Ax)_i-(Ay)_i\right|}{|(Ax)_i|}\right)\\
&\quad\leq 2\,\sum_{\mathclap{|(Ax)_i|\leq 1/m^2}} |(Ax_0)_i|^2
+2\,\sum_{\mathclap{|(Ax)_i|> 1/m^2}} |(Ax_0)_i|^2\frac{\left|(Ax)_i-(Ay)_i\right|}{|(Ax)_i|}\\
&\quad\leq 2\,\sum_{\mathclap{|(Ax)_i|\leq 1/m^2}} |(Ax_0)_i|^2
+2 m^2 |||A|||^3\, ||x-y|| \,||x_0||^2.
\end{align*}

From Proposition \ref{prop:davidson}, if $m\geq 2n^2\geq 2n$, $|||A|||\leq 3\sqrt{m}$ with probability at least
\begin{equation*}
1-2\exp(-m).
\end{equation*}
On this event, we can deduce from the previous inequality that, for any $x,y$ such that $||x-y||\leq cm^{-7/2}$,
\begin{align*}
&|\scal{Ax_0}{b\odot\phase(Ax)}-\scal{Ax_0}{b\odot\phase(Ay)}|\\
&\quad \leq 2\,\sum_{\mathclap{|(Ax)_i|\leq 1/m^2}} |(Ax_0)_i|^2
+ 54 m^{7/2}||x-y||\,||x_0||^2\\
&\quad \leq 2\,\sum_{\mathclap{|(Ax)_i|\leq 1/m^2}} |(Ax_0)_i|^2
+ 54 c ||x_0||^2.
\end{align*}

To upper bound the first term of the right-hand side, we use two auxiliary lemmas, proven in Paragraphs \ref{sss:Card_Ix} and \ref{sss:A_I}.

\begin{lem}\label{lem:Card_Ix}
For any unit-normed $x\in\C^n$, we define $I_x=\left\{i\in\{1,\dots,m\},|(Ax)_i|\leq \frac{1}{m^2}\right\}$. There exist $C_1,C_2>0$ such that, when $m\geq n^2$, the event
\begin{equation*}
\Big( \forall x, \Card I_x< nm^{1/8}\Big)
\end{equation*}
has probability at least
\begin{equation*}
1-C_1\exp(-C_2m^{1/8}).
\end{equation*}
\end{lem}

\begin{lem}\label{lem:A_I}
There exist $C>0$ such that, with probability at least
\begin{equation*}
1-\exp(-nm^{1/4}),
\end{equation*}
for any $I\subset\{1,\dots,m\}$ such that $\Card I \leq nm^{1/8}$,
\begin{equation*}
\sum_{i\in I}|(Ax_0)_i|^2 \leq C ||x_0||^2 nm^{1/4}.
\end{equation*}
\end{lem}

We combine these lemmas with the last inequality. This proves that, with probability at least
\begin{equation*}
1-C_1\exp(-C_2m^{1/8}),
\end{equation*}
(for some constants $C_1,C_2>0$ possibly different from the ones introduced in Lemma \ref{lem:Card_Ix}),
\begin{align*}
|\scal{Ax_0}{b\odot\phase(Ax)}-\scal{Ax_0}{b\odot\phase(Ay)}|
&\leq ||x_0||^2 \left(2Cnm^{1/4} +54c \right),\\
&\leq C_3||x_0||^2 nm^{1/4},
\end{align*}
for all $x,y$ verifying $||x-y||\leq cm^{-7/2}$.

\end{proof}

\subsubsection{Proof of Lemma \ref{lem:Card_Ix}\label{sss:Card_Ix}}

\begin{lem*}[Lemma \ref{lem:Card_Ix}]
For any unit-normed $x\in\C^n$, we define $I_x=\left\{i\in\{1,\dots,m\},|(Ax)_i|\leq \frac{1}{m^2}\right\}$. There exist $C_1,C_2>0$ such that, when $m\geq n^2$, the event
\begin{equation*}
\Big( \forall x, \Card I_x< nm^{1/8}\Big)
\end{equation*}
has probability at least
\begin{equation*}
1-C_1\exp(-C_2m^{1/8}).
\end{equation*}
\end{lem*}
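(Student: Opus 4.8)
The plan is to replace the uniform-in-$x$ statement by a finite union bound over a net of the complex unit sphere, using the operator-norm estimate of Proposition~\ref{prop:davidson} to control the discontinuity of $x\mapsto\Card I_x$.

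First I would place myself on the event $\{|||A|||\le 3\sqrt{m}\}$, which by Proposition~\ref{prop:davidson} (with $t=1$) has probability at least $1-2e^{-m}$ as soon as $m\ge 2n$. On this event, if $x$ and $y$ are unit-normed with $\|x-y\|\le\frac{1}{3m^{5/2}}$ then, for every $i$,
\[
\left|(Ax)_i-(Ay)_i\right|\le\|A(x-y)\|\le|||A|||\,\|x-y\|\le\frac{1}{m^2},
\]
so that $|(Ax)_i|\le\frac{1}{m^2}$ forces $|(Ay)_i|\le\frac{2}{m^2}$; writing $\widetilde I_y=\{i:|(Ay)_i|\le\frac{2}{m^2}\}$ we get $I_x\subseteq\widetilde I_y$, hence $\Card I_x\le\Card\widetilde I_y$. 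Let $\mathcal N$ be a $\frac{1}{3m^{5/2}}$-net of the unit sphere of $\C^n$; by \citep[Lemma 5.2]{vershynin} it may be taken with $\Card\mathcal N\le(1+6m^{5/2})^{2n}\le e^{6n\log m}$ for $m$ larger than some absolute constant. It then suffices to prove that, with high probability, $\Card\widetilde I_y<nm^{1/8}$ for all $y\in\mathcal N$.

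Next I would handle a single $y\in\mathcal N$. Since $\|y\|=1$, the entries $(Ay)_i=a_i^*y$ are i.i.d.\ complex Gaussian of variance $1$, so $P\big(|(Ay)_i|\le\frac{2}{m^2}\big)=1-e^{-4/m^4}\le\frac{4}{m^4}$ and $\Card\widetilde I_y$ is stochastically dominated by a $\mathrm{Binomial}(m,4/m^4)$ random variable. Using the elementary tail bound $P(\mathrm{Bin}(m,p)\ge k)\le\binom{m}{k}p^k\le(epm/k)^k$, with $k=\lceil nm^{1/8}\rceil$ (note $k\ge nm^{1/8}\ge m^{1/8}$, and $k\le m$ since $m\ge n^2$) and $p=\frac{4}{m^4}$, one obtains
\[
P\big(\Card\widetilde I_y\ge k\big)\le\left(\frac{4e}{km^3}\right)^k\le\left(\frac{4e}{m^{25/8}}\right)^{nm^{1/8}}\le e^{-2nm^{1/8}\log m}
\]
for $m$ large enough. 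A union bound over $\mathcal N$ then gives $P\big(\exists\,y\in\mathcal N:\Card\widetilde I_y\ge k\big)\le e^{6n\log m-2nm^{1/8}\log m}\le e^{-m^{1/8}}$ for $m$ large. Adding the $2e^{-m}$ from the operator-norm event and enlarging the constant to absorb the finitely many small values of $m$ yields the claimed bound $C_1e^{-C_2m^{1/8}}$.

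The main point requiring care — essentially the only obstacle — is making the exponents line up: the net must be fine enough ($\sim m^{-5/2}$) that the $3\sqrt m$ bound on $|||A|||$ moves each $|(Ax)_i|$ by less than $1/m^2$, yet its cardinality $e^{O(n\log m)}$ must stay negligible against the binomial tail $e^{-\Theta(nm^{1/8}\log m)}$. This closes precisely because the target count $nm^{1/8}$ carries the same factor $n$ as the net exponent, with an extra $m^{1/8}\gg1$ of slack; it is also why the rate in the conclusion is only $m^{1/8}$ rather than, say, $m^{1/2}$, so that the estimate survives for every $n$ up to $\sqrt m$.
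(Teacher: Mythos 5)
Your proof is correct and follows essentially the same route as the paper: an $m^{-5/2}$-scale net of the unit sphere, a high-probability bound on the action of $A$ to transport the condition $|(Ax)_i|\le 1/m^2$ to the nearest net point (at threshold $2/m^2$), and a binomial-tail estimate with a union bound over the net. The only cosmetic differences are that the paper controls each row norm $\|a_i^*\|\le\sqrt m$ rather than the operator norm $|||A|||$, and invokes a Bennett-type concentration bound where you use the elementary $\binom{m}{k}p^k$ estimate; both choices deliver the same exponent.
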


\begin{proof}[Proof of Lemma \ref{lem:Card_Ix}]
Let $\mathcal{M}\geq 1$ be temporarily fixed.

For any $n,m$, let $\mathcal{N}_{n,m}$ be a $\frac{1}{\mathcal{M}m^2}$-net of the unit sphere of $\C^n$. From \citep[Lemma 5.2]{vershynin}, there is one of cardinality at most
\begin{equation*}
\left(1+4\mathcal{M} m^2\right)^{2n}\leq (5\mathcal{M}m^2)^{2n}.
\end{equation*}
We define two events:
\begin{gather*}
\mathcal{E}_1=\left\{\forall x\in\mathcal{N}_{n,m},\Card\left\{i,|(Ax)_i|\leq \frac{2}{m^2}\right\}<nm^{1/8} \right\};\\
\mathcal{E}_2=\{\forall i\in\{1,\dots,m\},||a_i^*||\leq \mathcal{M} \}.
\end{gather*}
(We recall that $a_i^*$ is the $i$-th line of $A$.)

On the intersection of these two elements, we have $\Card I_x<nm^{1/8}$ for any unit-normed $x\in\C^n$. Indeed, for any such $x$, there exists $x'\in\mathcal{N}_{n,m}$ such that $||x-x'||\leq 1/(\mathcal{M}m^2)$. For any $i\in I_x$,
\begin{align*}
|(Ax')_i| &\leq |(Ax)_i| + |a_i^*(x-x')|\\
&\leq \frac{1}{m^2} + ||a_i^*||\,||x-x'||\\
&\leq \frac{2}{m^2}.
\end{align*}
As a consequence, $I_x\subset\left\{i,|(Ax')_i|\leq \frac{2}{m^2}\right\}$, whose cardinality is strictly less than $nm^{1/8}$ because we are on event $\mathcal{E}_1$.

Let us find lower bounds on the probabilities of $\mathcal{E}_1$ and $\mathcal{E}_2$.

For any $x\in\mathcal{N}_{n,m}$, for any $i=1,\dots,m$,
\begin{equation*}
P\left(|(Ax)_i|\leq\frac{2}{m^2}\right)= 1-e^{-\frac{4}{m^4}}\leq \frac{4}{m^4},
\end{equation*}
because $(Ax)_i$ is a complex Gaussian random variable with variance $1$. So by Hoeffding's inequality, for $x$ fixed,
\begin{align*}
P\left(\Card\left\{i,|(Ax)_i|\leq \frac{2}{m^2}\right\}\geq nm^{1/8} \right)
&=P\left(\sum_{i=1}^m 1_{|(Ax)_i|\leq 2/m^2}\geq nm^{1/8}\right)\\
&\leq P\left(\sum_{i=1}^m 1_{|(Ax)_i|\leq 2/m^2}\geq m\E\left(1_{|(Ax)_1|\leq 2/m^2}\right) + \left(nm^{1/8}-\frac{4}{m^3}\right)\right)\\
&\leq \exp\left(-\frac{4}{m^3}h\left(\frac{m^{3+1/8}n}{4}-1\right)\right),
\end{align*}
where $h$ is the function $t\to (1+t)\log(1+t)-t$.

We simplify:
\begin{align*}
P\left(\Card\left\{i,|(Ax)_i|\leq \frac{2}{m^2}\right\}\geq 2n \right)
&\leq \exp\left(-nm^{1/8} \log(m^{3+1/8}n/4)+nm^{1/8}-\frac{4}{m^3}\right)\\
&\leq \exp\left(-nm^{1/8} \left(\log(m^{3+1/8}n)-3\right) \right).
\end{align*}
Finally, as the cardinality of $\mathcal{N}_{n,m}$ is at most $(5\mathcal{M}m^2)^{2n}$,
\begin{align}
P(\mathcal{E}_1)
&\geq 1 - (5\mathcal{M}m^2)^{2n}e^{-nm^{1/8} \left(\log(m^{3+1/8}n)-3\right)}\nonumber\\
&=1 - \exp\left(-nm^{1/8} \left(\log(m^{3+1/8}n)-3\right) + 2n\log(5\mathcal{M}m^2))\right).\label{eq:PE1}
\end{align}

Let us now consider $\mathcal{E}_2$. For any $i$, $a_i^*$ is a random vector with $n$ independent random complex Gaussian coordinates, of variance $1$. Gaussian measure concentration results (see for example \citep[Proposition 2.2]{barvinok}) imply that, for any $\delta>0$,
\begin{align*}
P\left(||a_i^*||> \sqrt{n+\delta} \right)
&\leq \left(1+\frac{\delta}{n}\right)^ne^{-\delta}.
\end{align*}
For $\delta = \mathcal{M}^2-n$, we get
\begin{align*}
P\left(||a_i^*||> \mathcal{M} \right)
&\leq \left(\frac{\mathcal{M}^2}{n}\right)^n e^{-\left(\mathcal{M}^2-n\right)}\\
&\leq 3 \mathcal{M}^{2n}e^{-\mathcal{M}^2}.
\end{align*}
As a consequence,
\begin{equation}\label{eq:PE2}
P(\mathcal{E}_2)\geq 1- 3m\mathcal{M}^{2n}e^{-\mathcal{M}^2}.
\end{equation}
We can take, for example, $\mathcal{M}=\sqrt{m}$. We evaluate Equations \eqref{eq:PE1} and \eqref{eq:PE2} for this value of $\mathcal{M}$ and get, when $m\geq n^2$,
\begin{equation*}
P(\mathcal{E}_1\cap\mathcal{E}_2)
\geq 1 - C_1e^{-C_2 m^{1/8}}.
\end{equation*}

\end{proof}

\subsubsection{Proof of Lemma \ref{lem:A_I}\label{sss:A_I}}

\begin{lem*}[\ref{lem:A_I}]
There exist $C>0$ such that, with probability at least
\begin{equation*}
1-\exp(-nm^{1/4}),
\end{equation*}
for any $I\subset\{1,\dots,m\}$ such that $\Card I \leq nm^{1/8}$,
\begin{equation*}
\sum_{i\in I}|(Ax_0)_i|^2 \leq C ||x_0||^2 n m^{1/4}.
\end{equation*}
\end{lem*}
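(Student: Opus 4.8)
The plan is to reduce, by homogeneity in $x_0$, to a concentration statement about i.i.d.\ exponential random variables, and to obtain the bound uniformly over the (exponentially many) sets $I$ by truncation rather than by a union bound. Normalizing $\|x_0\|=1$ (which is harmless, since both sides of the desired inequality are homogeneous of degree $2$ in $x_0$ and the probability does not depend on $x_0$), the quantities $Y_i := |(Ax_0)_i|^2 = |a_i^*x_0|^2$ become i.i.d.\ real random variables, each exponentially distributed with parameter $1$, because $a_i^*x_0$ is a standard complex Gaussian of variance $1$.

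The first step is a deterministic pointwise inequality. Put $k := \lfloor n m^{1/8}\rfloor$, $\tau := m^{1/8}$, and $T := \sum_{i=1}^m Y_i\,1_{Y_i\geq\tau}$. Given any $I\subseteq\{1,\dots,m\}$ with $\Card I\leq k$, split $I$ according to whether $Y_i\geq\tau$: the indices with $Y_i\geq\tau$ contribute at most $T$ to $\sum_{i\in I}Y_i$, while the remaining ones are at most $k$ in number and each is smaller than $\tau$, so that
\begin{equation*}
\sum_{i\in I} Y_i \leq T + k\tau \leq T + n m^{1/4}.
\end{equation*}
It therefore suffices to show that $T\leq (C-1)\,n m^{1/4}$ with probability at least $1-\exp(-n m^{1/4})$ for a suitable absolute constant $C$.

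The second step is a Chernoff bound on $T$. A direct computation gives, for each $i$, $\E\big(e^{\frac12 Y_i 1_{Y_i\geq\tau}}\big)= 1-e^{-\tau}+2e^{-\tau/2}\leq 1+2e^{-\tau/2}$, hence by independence $\E\big(e^{T/2}\big)\leq \exp\big(2m\,e^{-\tau/2}\big)=\exp\big(2m\,e^{-m^{1/8}/2}\big)$. Since $m\mapsto m\,e^{-m^{1/8}/2}$ tends to $0$ and is bounded on $[1,\infty[$, the number $\kappa := 2\sup_{m\geq 1} m\,e^{-m^{1/8}/2}$ is a finite absolute constant, and Markov's inequality gives $P(T\geq s)\leq e^{-s/2}\E(e^{T/2})\leq \exp(\kappa - s/2)$ for every $s>0$. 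Taking $s=(C-1)n m^{1/4}$ with $C:=2\kappa+3$, so that $\tfrac{C-1}{2}=\kappa+1$, and using $n m^{1/4}\geq 1$, the exponent equals $\kappa-\kappa\,n m^{1/4}-n m^{1/4}\leq -n m^{1/4}$. Combined with the deterministic inequality of the first step, and after undoing the normalization of $x_0$, this yields the lemma.

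The argument is essentially routine; the only point that needs a little care is making the estimate uniform over all subsets $I$ while keeping it valid for every $m\geq 1$ (not merely asymptotically). This is exactly what the truncation at level $\tau=m^{1/8}$ achieves: the at most $k$ ``small'' coordinates are absorbed into $k\tau = n m^{1/4}$, so no union bound over the $\binom{m}{k}$ subsets is required, and the finiteness of $\sup_{m\geq 1} m\,e^{-m^{1/8}/2}$ removes any need to restrict to large $m$.
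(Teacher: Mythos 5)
Your proof is correct, and it takes a genuinely different route from the paper. The paper fixes a set $I$ of cardinality $nm^{1/8}$, applies Bernstein's inequality for subexponential variables to bound $\sum_{i\in I}|(Ax_0)_i|^2$, and then takes a union bound over the at most $m^{nm^{1/8}}$ subsets of that size; the union bound closes because the Bernstein tail $\exp(-2nm^{1/4})$ dominates the entropy term $\exp(nm^{1/8}\log m)$ for $m$ large. You instead truncate at level $\tau=m^{1/8}$ and observe the deterministic inequality $\sum_{i\in I}Y_i\leq T+k\tau$ with $T=\sum_i Y_i1_{Y_i\geq\tau}$, which makes the bound automatically uniform over all subsets $I$ of cardinality at most $k$, so that a single MGF/Chernoff estimate on $T$ suffices and no union bound is needed. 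Your computation of $\E(e^{T/2})\leq\exp(2m\,e^{-m^{1/8}/2})$ and the choice $C=2\kappa+3$ with $\kappa=2\sup_{m\geq1}m\,e^{-m^{1/8}/2}$ are correct, and since $nm^{1/4}\geq1$ the exponent $\kappa-(\kappa+1)nm^{1/4}$ is indeed $\leq -nm^{1/4}$. A small bonus of your argument is that it holds for every $m\geq1$ rather than only for $m$ sufficiently large, sidestepping a caveat the paper leaves implicit; the trade-off is that the paper's route is a one-line invocation of a standard inequality plus a counting bound, whereas yours requires the small (but elementary) truncation observation.
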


\begin{proof}[Proof of Lemma \ref{lem:A_I}]
By homogeneity, we can assume $||x_0||=1$.

The random variables $(Ax_0)_1,\dots,(Ax_0)_m$ are independent and (complex) Gaussian with variance $1$. Hence, by Bernstein's inequality for subexponential variables, there exist a constant $c>0$ such that, for any $t>0$, and for any fixed $I\subset\{1,\dots,m\}$,
\begin{equation*}
P\left(\sum_{i\in I}|(Ax_0)_i|^2 \geq \Card I +t  \right)
\leq \exp\left(-c\min\left(t,\frac{t^2}{\Card I}\right)\right).
\end{equation*}
In particular, if $\Card I = nm^{1/8}$,
\begin{equation*}
P\left(\sum_{i\in I}|(Ax_0)_i|^2 \geq nm^{1/8} + \frac{2}{c}nm^{1/4}  \right)
\leq \exp\left(-2nm^{1/4}\min\left(1,\frac{2}{c}m^{1/8}\right)\right).
\end{equation*}
So as soon as $m$ is large enough,
\begin{equation*}
P\left(\sum_{i\in I}|(Ax_0)_i|^2 \geq \frac{3}{c}nm^{1/4}  \right)
\leq \exp(-2n m^{1/8}).
\end{equation*}
There are less than $m^{nm^{1/8}}=e^{nm^{1/8}\log m}$ subsets of $\{1,\dots,m\}$ with cardinality $nm^{1/8}$, so
\begin{equation*}
P\left(\exists I\mbox{ s.t. }\Card I\leq nm^{1/8},\sum_{i\in I}|(Ax_0)_i|^2 \geq \frac{3}{c}n m^{1/4}  \right)
\leq \exp(-n m^{1/4}).
\end{equation*}
\end{proof}

\bibliographystyle{plainnat}
\bibliography{../bib_articles.bib,../bib_proceedings.bib,../bib_livres.bib,../bib_misc.bib}

\end{document}